\DeclareSymbolFont{AMSb}{U}{msb}{m}{n}
\documentclass[reqno,centertags,11pt,a4paper,noamsfonts]{amsart}
\pdfoutput=1
\usepackage[english]{babel}
\usepackage[babel=true,expansion=alltext,protrusion=alltext-nott,final]{microtype}
\usepackage[margin={3cm},marginparwidth={2.5cm}]{geometry}
\usepackage[svgnames]{xcolor}
\usepackage[debug=true]{hyperref}
\hypersetup{
	linktoc=all,colorlinks=true,linkcolor=Green,citecolor=Orange,urlcolor=DarkBlue,
	pdftitle={Variational methods for a singular SPDE yielding the universality of the magnetization ripple},
	pdfauthor={R. Ignat, F. Otto, T. Ried, P. Tsatsoulis}, pdfsubject={60H17, 35J60, 78A30, 82D40},
	pdfkeywords={singular stochastic PDE, nonlocal elliptic PDE, regularity theory, renormalized energy, Gamma-convergence, micromagnetics, Burgers equation}} 
\usepackage[utf8]{inputenc}
\usepackage{libertine}
\usepackage[libertine]{newtxmath}
\useosf
\usepackage[varqu,varl]{inconsolata}
\usepackage[T1]{fontenc}
\usepackage{textcomp}
\usepackage{amsthm}
\usepackage{mhequ} 
\usepackage[inline]{enumitem}
\numberwithin{equation}{section}
\usepackage[normalem]{ulem}
\usepackage{tikzsymbols}
\providecommand{\mr}[1]{\href{http://www.ams.org/mathscinet-getitem?mr=#1}{MR~#1}}
\providecommand{\zbl}[1]{\href{https://zbmath.org/?q=an:#1}{Zbl~#1}}
\providecommand{\arxiv}[1]{\href{https://arxiv.org/abs/#1}{arXiv:#1}}

\newcommand{\RR}{\mathbb{R}}

\newcommand{\NN}{\mathbb{N}}
\newcommand{\ZZ}{\mathbb{Z}}

\newcommand{\EE}{\mathcal{E}}
\newcommand{\HH}{\mathcal{H}}
\newcommand{\TT}{\mathbb{T}}
\renewcommand{\SS}{\mathbb{S}}
\newcommand{\LL}{\mathcal{L}}
\newcommand{\GG}{\mathcal{G}}
\newcommand{\C}{\mathcal{C}}

\newcommand{\ii}{\mathrm{i}}
\newcommand{\dd}{\mathrm{d}}
\newcommand{\dotB}{\dot{\mathcal{B}}}


\newcommand{\Hnorm}[2]{\|\kern-.2ex| #2 \|\kern-.2ex|_{#1}}

\newcommand{\eps}{\varepsilon}
\newcommand{\lng}{\langle}
\newcommand{\rng}{\rangle}

\newcommand{\triple}{\mathcal{T}}

\DeclareMathOperator{\sgn}{sgn}

\theoremstyle{plain}
\newtheorem{theorem}{Theorem}[section]
\newtheorem{proposition}[theorem]{Proposition}
\newtheorem{corollary}[theorem]{Corollary}
\newtheorem{lemma}[theorem]{Lemma}
\newtheorem{assumption}[theorem]{Assumption}
\newtheorem*{theorem*}{Theorem}

\theoremstyle{definition}
\newtheorem{definition}[theorem]{Definition}
\newtheorem{remark}[theorem]{Remark}

\begin{document}
\title[Variational methods for a singular SPDE]{Variational methods for a singular SPDE yielding the 
universality of the magnetization ripple}

\author[R.~Ignat]{Radu Ignat}
\address[R.~Ignat]{Institut de Math\'ematiques de Toulouse \& Institut Universitaire de France, UMR 5219, Universit\'e de Toulouse, CNRS, UPS
IMT, F-31062 Toulouse Cedex 9, France}
\email{Radu.Ignat@math.univ-toulouse.fr}

\author[F.~Otto]{Felix Otto}
\address[F.~Otto]{Max-Planck-Institut für Mathematik in den Naturwissenschaften, Inselstraße 22, 04103 Leipzig, Germany}
\email{Felix.Otto@mis.mpg.de}

\author[T.~Ried]{Tobias Ried}
\address[T.~Ried]{Max-Planck-Institut für Mathematik in den Naturwissenschaften, Inselstraße 22, 04103 Leipzig, Germany}
\email{Tobias.Ried@mis.mpg.de}

\author[P.~Tsatsoulis]{Pavlos Tsatsoulis}
\address[P.~Tsatsoulis]{Max-Planck-Institut für Mathematik in den Naturwissenschaften, Inselstraße 22, 04103 Leipzig, Germany}
\email{Pavlos.Tsatsoulis@mis.mpg.de}

\date{\today, version \jobname}
\subjclass[2020]{60H17, 35J60; 78A30, 82D40}
\keywords{Singular stochastic PDE, nonlocal elliptic PDE, regularity theory, renormalized energy, $\Gamma$-convergence, micromagnetics, Burgers equation.}
\thanks{\textcopyright 2022 by the authors. Faithful reproduction of this article, in its entirety, by any means is permitted for noncommercial purposes.}
\begin{abstract}
The magnetization ripple is a microstructure formed in thin ferromagnetic films. It can be described by 
minimizers of a nonconvex energy functional leading to a nonlocal and nonlinear elliptic 
SPDE in two dimensions driven by white noise, which is singular. We address the universal character of the magnetization
ripple using variational methods based on $\Gamma$-convergence. Due to the infinite energy of the system, the (random) energy functional has to be renormalized. 
Using the topology of $\Gamma$-convergence, we give a sense to the law of the renormalized functional that is independent of the way white noise is approximated.
More precisely, this universality holds in the class of (not necessarily Gaussian) approximations to white noise 
satisfying the spectral gap inequality, which allows us to obtain sharp stochastic estimates. As a corollary, we 
obtain the existence of minimizers with optimal regularity.

\end{abstract}

\maketitle
{
\setcounter{tocdepth}{1}
\tableofcontents}
\setcounter{tocdepth}{2} 
\section{Introduction}\label{sec:introduction}

We study minimizers of the energy functional 
\begin{align}\label{eq:total-energy}
	E_{tot}(u) &:= \int_{\TT^2} (\partial_1 u)^2\,\mathrm{d}x +\int_{\TT^2}(|\partial_1|^{-\frac{1}{2}} (\partial_2 u - \partial_1 \frac{1}{2} u^2))^2\,\mathrm{d}x -2\sigma\int_{\TT^2}\xi u\,\mathrm{d}x
\end{align} 
where $\xi$ is (periodic) white noise, $\sigma\in \RR$, $\TT^2=[0,1)^2$ is the two-dimensional torus and $u:\TT^2 \to \RR$ is a periodic function with vanishing average in $x_1$, i.e.,
\begin{align*}
	\int_0^{1} u\,\dd x_1=0 \quad \textrm{ for all } \quad x_2\in [0,1).
\end{align*}

The energy functional $E_{tot}$ was considered in \cite{IO19} as a reduced model describing the 
magnetization ripple, a microstructure formed by the magnetization in a thin ferromagnetic film, which is a result of the polycrystallinity of the material. In 
 thin films, the magnetization can approximately be described by a two-dimensional unit-length vector field (in 
the film plane); the ripple is a perturbation of small amplitude of the constant state, say $(1,0)$. 
In this context, the function $u$ corresponds to the transversal component of the magnetization, after a
suitable rescaling. The theoretical treatment in the physics literature \cite{Hof68,Har68}
takes it for granted that the ripple is universal, in the sense that 
it does not depend on the precise composition and geometry of the polycrystalline material. Our 
main result gives a rigorous justification of the universal behavior of the ripple (see Remark~\ref{rem:univers}).   

The first term in $E_{tot}$ can be interpreted as the exchange energy, an attractive short-range interaction of the spins. 
The second term is the energy of the stray field generated by the magnetization; the fractional structure is due to the 
scaling of the stray field in the thin film regime. On the scales that are 
relevant for the description of the magnetization ripple,
the noise acts like a random transversal field of white-noise character.
It comes via the crystalline anisotropy from the fact that the material is made up of randomly oriented grains that are 
smaller than the ripple scale, which is set to unity in the above model.
In view of its origin, it is reasonable to assume that this noise, which is quenched as opposed to thermal in character, is isotropic, nevertheless the nonlocal 
interaction given by the stray field energy leads to an \emph{anisotropic} response of the 
magnetization. For a more in-depth description and formal derivation of the energy $E_{tot}$ we 
refer to the discussion in \cite[Section 2]{IO19}, which in turn follows \cite{SSWMO12}.

Formally, critical points of $E_{tot}$ are solutions to the Euler-Lagrange equation 
\begin{equs}
	(-\partial_1^2-|\partial_1|^{-1}\partial_2^2) u 
	+ P\left(u R_1\partial_2 u - \frac{1}{2} u R_1 \partial_1 u^2\right) + \frac{1}{2}
	R_1 \partial_2  u^2  
	= \sigma P\xi \quad \text{in } \TT^2, \label{eq:ripple}
\end{equs}
where $R_1=|\partial_1|^{-1}\partial_1$ is the Hilbert transform acting on the $x_1$ variable, see \eqref{eq:hilbert}, and $P$ is the $L^2$-orthogonal projection on functions of zero average in $x_1$ (extended to periodic Schwartz distributions in the natural way). 
One of the main challenges of this equation is that the right-hand side of \eqref{eq:ripple} is too irregular to make sense of the
nonlinear terms, even though the nonlocal elliptic operator 
\begin{align*}
	\mathcal{L}:= -\partial_1^2-|\partial_1|^{-1}\partial_2^2 
\end{align*}
has the expected regularizing properties. 

If we endow our space with a Carnot--Carathéodory metric which respects the natural scaling induced 
by $\mathcal{L}$, that is,
one derivative in the $x_1$ direction costs as much as $\frac{2}{3}$ derivatives in the $x_2$ 
direction, the effective dimension in terms of scaling is given by $\mathrm{dim}=\frac{5}{2}$. It is well-known 
that in this case $\xi$ is a Schwartz distribution of regularity just below $-\frac{\mathrm{dim}}{2}$, i.e., a Schwartz distribution of order $-\frac{5}{4}-$ (measured in a scale 
of H\"older spaces $\mathcal{C}^{\alpha}$ associated to this Carnot--Carathéodory metric; see Section \ref{sec:hoelder} 
below for the definition), where for $\alpha\in \RR$, we use the notation $\alpha-$ to denote 
$\alpha-\varepsilon$ for any $\varepsilon>0$ (suitably small). 

We now argue that the nonlinear term $uR_1\partial_2u$ on the left-hand side of \eqref{eq:ripple} is 
ill-defined: 
On the one hand, Schauder theory for the operator $\mathcal{L}$ improves regularity by $2$ degrees on the H\"older scale, indicating that the expected regularity of a solution $u$ is $(2-\frac{5}{4})- = \frac{3}{4}-$. 
On the other hand, in our anisotropic scaling, one derivative in the $x_2$ direction reduces regularity by $\frac{3}{2}$, while the Hilbert transform has a negligible effect on the regularity. Hence the regularity of the Schwartz distribution $R_1\partial_2u$ is $-\frac{3}{4}-$. 
It is well-known that the product of a function and a Schwartz distribution can be classically and unambiguously defined only if the regularities of the individual terms sum up to a strictly positive number. 
In the case of the product $uR_1\partial_2u$ of the function $u$ and the Schwartz distribution 
$R_1\partial_2u$, the sum of regularities is $0-$, not allowing its treatment by means of classical 
analysis.

This is a common problem in the theory of \emph{singular} Stochastic Partial 
Differential Equations (SPDEs), which has become a very active field in the recent years. Here the 
word singular relates to the fact that the driving noise of these equations is so 
irregular that their nonlinear terms (which usually involve products of the 
solution and its derivatives) are not classically defined. We refer the reader to \cite{Ha14} 
for a more detailed exposition of the theory of singular SPDEs. 

In \cite{IO19} the well-posedness of \eqref{eq:ripple} for noise strength 
$|\sigma|$ below a -- random -- threshold was studied based on Banach's fixed point argument. The ill-defined product $uR_1\partial_2u$ was 
treated via a more direct renormalization technique (in contrast to the more general one appearing 
in the framework of Regularity Structures), known as Wick renormalization. In fact, a similar 
technique had been introduced by Da Prato and Debussche in their work \cite{DD03} on the 
stochastic quantization equations of the $\mathcal{P}(\varphi)_2$-Euclidean Quantum Field theory. 

One of the goals of this paper is to get rid of the smallness condition from \cite{IO19}. \emph{Without loss of generality we may therefore assume that the parameter $\sigma=1$, which we will always do in the following.}\footnote{Note however, that all our results also hold for $\sigma\neq 1$ by considering $\sigma v$ instead of $v$ given in \eqref{eq:linearized_ripple}.} 
This means in particular that we have to give up the use of a fixed point theorem on the level of the Euler-Lagrange equation, and use instead the direct method of the calculus of variations on the level of the functional.
The functional $E_{tot}$ is in need of a renormalization. This is indicated by the fact that if $v$ is the unique 
solution with zero average in $x_1$ to the linearized Euler--Lagrange equation\footnote{From now on, given $\xi$, we denote by $v$ the unique solution with zero average in $x_1$ to the linearized equation $\mathcal{L}v = P\xi$ from \eqref{eq:linearized_ripple}.
}
\begin{equation}\label{eq:linearized_ripple}
	(-\partial_1^2-|\partial_1|^{-1}\partial_2^2) v=P\xi \quad \text{in } \TT^2,
\end{equation}
which is explicit on the level of its Fourier transform, one has that $E_{tot}(v) = -\infty$ almost surely, see Proposition~\ref{prop:inf-energy}. 
As in \cite{DD03} and \cite{IO19}, we decompose any admissible configuration $u$ in \eqref{eq:total-energy} into $u= v+w$, where the remainder $w$ is a periodic function
with vanishing average in $x_1$. 

As is usual in renormalization, we may approximate white noise by a probability measure that is supported on smooth $\xi$'s.
This allows for a pathwise approach: For smooth (and periodic) $\xi$, the solution $v$ of \eqref{eq:linearized_ripple} is smooth, too (see Lemma~\ref{lem:qualitative-regularity-linear}). 
In view of the almost sure divergence of $E_{tot}(v)$ in case of the white noise, we consider the renormalized functional in $w = u-v$,
\begin{equs}
 E_{ren}:= E_{tot}(v+\cdot) - E_{tot}(v) \quad  \text{with} \quad v = \mathcal{L}^{-1} P \xi. \label{eq:ren_energy_intro}
\end{equs}
It follows from Lemma~\ref{lem:qualitative-regularity-linear} that for smooth $\xi$, $E_{ren}$ is well-defined (with values in $\mathbb{R}$) on the space
\begin{equ}
\mathcal{W}:= \left\{w\in L^2(\TT^2) : \int_0^1 w\,\mathrm{d}x_1 = 0 \, \text{ for every } x_2\in[0,1), \, \mathcal{H}(w)<\infty 
\right\}, \label{eq:W}
\end{equ} 
endowed with the strong $L^2$-topology, where $\HH$ denotes the harmonic energy, i.e., the quadratic part of $E_{tot}$ given by
\begin{equs}
	\HH(w) &:= \int_{\TT^2} (\partial_1 w)^2\,\mathrm{d}x 
	+\int_{\TT^2}(|\partial_1|^{-\frac{1}{2}} \partial_2 w)^2\,\mathrm{d}x.
	\label{eq:harmonic-energy}
\end{equs}

Loosely speaking, the task now is to show that $E_{ren}$ can still be given a sense as we approximate the 
white noise. We will consider approximations that belong to the following class of probability measures:\footnote{In the following we specify the topology on the space where we define a (probability) measure, the $\sigma$-algebra always being given by the Borel $\sigma$-algebra 
associated to that topology.} 

\begin{assumption} \label{ass} We consider the class of probability measures $\langle\cdot\rangle$ on the space of periodic Schwartz distributions\footnote{In our notation, we do not distinguish between the probability measure and its expectation, and use $\langle\cdot\rangle$ to denote in particular the latter. 
In the probability jargon, $\xi$ plays the role of a dummy 
variable like the popular $\omega$. We prefer to adopt this point of view, but sometimes it is convenient to also think of $\xi$ as a random variable taking values in the space of periodic Schwartz distributions by identifying it with the canonical evaluation $\xi\mapsto \mathbf{ev}(\xi)$, where 
$\mathbf{ev}(\xi)(\varphi) := \xi(\varphi)$ for all $\varphi \in \mathcal{C}^{\infty}(\TT^2)$. In our notation, when we refer to the law of $\xi$, we mean the law of the random variable $\mathbf{ev}$
or rather the probability measure $\lng\cdot\rng$.} $\xi$ (endowed with the Schwartz topology),
satisfying the following:

\begin{enumerate}[label=(\roman*)]
	\item \label{item:Def-centered} $\langle\cdot\rangle$ is centered: $\langle \xi\rangle = 0$, that is,
	$\lng |\xi(\varphi)| \rng<\infty$ and $\langle \xi(\varphi)\rangle = 0$ for 
	all $\varphi \in \mathcal{C}^{\infty}(\TT^2)$.

	\item \label{item:Def-stationary} $\langle\cdot\rangle$ is stationary, that is, for every shift vector $h\in\mathbb{R}^2$, 
	$\xi$ and $\xi(\cdot+h)$ have the same law.\footnote{Namely, for any test function $\varphi\in\C^{\infty}(\TT^2)$ and shift vector $h\in\RR^2$, $\xi(\varphi) = \xi(\varphi(\cdot-h))$ in law.}
	\item \label{item:Def-reflection} $\langle\cdot\rangle$ is invariant under reflection in $x_1$, that is, $\xi$ and 
	$x\mapsto \xi(-x_1,x_2)$ have the same law.\footnote{That is, for any test function $\varphi\in\C^{\infty}(\TT^2)$, denoting $\tilde\varphi(x) = \varphi(-x_1, x_2)$, there holds $\xi(\varphi) = \xi(\tilde\varphi)$ in law. We note that for our results to hold one could also ask for invariance under reflection in $x_2$.}

	\item \label{item:Def-SG} $\langle\cdot\rangle$ satisfies the spectral gap inequality (SGI), meaning that\footnote{Without loss of generality we have set the constant equal to one.}
		\begin{equs}
		 			 \left\lng |G(\xi) - \lng G(\xi) \rng|^2\right\rng^{\frac{1}{2}}
			 \leq \big\lng \big\|\frac{\partial}{\partial \xi} G(\xi)\big\|_{L^2}^{2} 
			 \big\rng^{\frac{1}{2}}, \label{eq:SG}
		 		\end{equs} 
		for every functional $G$ on the space of Schwartz distributions such that $\lng |G(\xi)| \rng <\infty$ and $G$ is well-approximated by cylindrical functionals. More precisely, for cylindrical functionals
		$G(\xi)=g(\xi(\varphi_1), \dots, \xi(\varphi_n))$ with $g\in \mathcal{C}^{\infty}(\RR^n)$ which itself and all its derivatives have at most polynomial growth, 
		$\varphi_1, \dots, \varphi_n\in\mathcal{C}^{\infty}(\TT^2)$, and $n\in\NN$, we define the Malliavin derivative as the random field
		\begin{align*}
			\frac{\partial}{\partial \xi} G(\xi) := \sum_{i=1}^n \partial_i g(\xi(\varphi_1), \dots, \xi(\varphi_n)) \varphi_i
		\end{align*}
		and complete the space of all cylindrical functionals with respect to the norm 
		$\big\langle |G(\xi)|^2 \big\rangle^{\frac{1}{2}} + \big\langle \big\|\frac{\partial}{\partial \xi} G(\xi) \big\|_{L^2}^2 \big\rangle^{\frac{1}{2}}$. 
		Assuming closability of $\frac{\partial}{\partial \xi}$, i.e., the closure of the graph of $G(\xi) \mapsto \frac{\partial}{\partial \xi} G(\xi)$ on the space of cylindrical functionals with respect to the product topology of $\lng|\cdot|^2\rng^{\frac{1}{2}}$ and $\lng\|\cdot\|^2\rng^{\frac{1}{2}}$ remains a graph\footnote{Closability implies that the definition of the Malliavin derivative for a functional $G$ does not depend on the specific approximation of $G$ from cylindrical functionals. Note that closability in $L^2_{\lng\cdot\rng}\times L^2_{\lng\cdot\rng}L^2_x$ implies closability in $L^p_{\lng\cdot\rng}\times L^p_{\lng\cdot\rng}L^2_x$ for every $p\geq 2$.}, the resulting space can be identified with a subspace of $L^2_{\langle\cdot\rangle}$ for which \eqref{eq:SG} is well-defined\footnote{Incidentally, for cylindrical functionals $G$ we also have the relation 
		\begin{align*}
			\big\|\frac{\partial}{\partial \xi} G(\xi) \big\|_{L^2}^2 
			= \sup_{\substack{\delta\xi\in L^2(\TT^2)\\ \|\delta\xi\|_{L^2} \leq 1}} \lim_{t\to 0} \big| \frac{G(\xi + t \delta\xi)-G(\xi)}{t} \big|^2.
		\end{align*}
		}.
\end{enumerate}
\end{assumption}

\begin{remark}
Note that for a finite dimensional Gaussian probability measure on $\RR^n$ with covariance matrix $A^{-1}\in \RR^{n\times n}$, the spectral gap inequality holds in 
the form
\begin{equs}
 \left\lng |G - \lng G \rng|^2\right\rng^{\frac{1}{2}}
 \leq \left\lng \nabla G \cdot A^{-1} \nabla G \right\rng^{\frac{1}{2}}, 
\end{equs}
for every bounded continuously differentiable $G:\RR^n \to \RR$ (see for example \cite[Theorem 2.1]{Hel98}). In the case of white noise this translates
into \eqref{eq:SG}. Furthermore, it is easy to check that if we convolve white noise with a smooth mollifier, the resulting random field 
satisfies \eqref{eq:SG}.
\end{remark}

\begin{remark} \label{rem:wn}
For a linear functional $G$, i.e. $G$ of the form $G(\xi)=\xi(\varphi)$, for some $\varphi\in\C^\infty(\TT^2)$,
the spectral gap inequality \eqref{eq:SG} turns into $\lng\xi(\varphi)^2\rng\le\int_{\TT^2}\varphi^2\,\dd x$, which is a defining property of white noise turned
into an inequality. Note that this allows us to extend $\xi(\varphi)$ to $\varphi\in L^2(\TT^2)$ as a centered random variable in $L^2_{\lng\cdot\rng}$
which is admissible in \eqref{eq:SG}. In our application, the spectral gap inequality implies 
that $\lng\cdot\rng$ is supported on the H\"older space $\C^{-\frac{5}{4}-}$ (see Proposition~\ref{Satz1_pavlos} below), yielding the same regularity as in the
case of white noise. 

The merit of SGI is that it also applies to nonlinear $G$ (in this paper, we need it for quadratic $G$)\footnote{Actually, in order to obtain the $L^p$ version of SGI in 
Proposition~\ref{prop:SG_mult} we need \eqref{eq:SG} for more general $G$, but the main application concerns the quadratic functional in Lemma~\ref{lem:commutator}.}.
In addition, it allows us to obtain sharp stochastic estimates for non-Gaussian measures (see Proposition~\ref{prop:xi_v_clean} and Proposition~\ref{prop:F_approx_tight}) by providing a
substitute for Nelson's hyper-contractivity. To the best of our knowledge, the use of the spectral gap inequality is new in the context of singular SPDEs.
\end{remark}

The second, and more subtle, goal of this paper, is to establish universality of the ripple. 
By this we mean that the limiting law of the renormalized energy functional $E_{ren}$ in \eqref{eq:ren_energy_intro} is independent of the way white noise is 
approximated, provided the natural symmetry condition in form of Assumption~\ref{ass}~\ref{item:Def-reflection}, is satisfied.
In view of its physical origin, $\langle\cdot\rangle$ derives from the random orientation of the grains. Such a model could be based on random tessellations, which suggests a modelling through a non-Gaussian process.\footnote{Incidentally, random tessellations based on Poisson point processes are known to satisfy a variant of the spectral gap inequality, see \cite{DG20}.}  
 This motivates our interest in non-Gaussian approximations of white noise.
Our substitute for Gaussian calculus is the spectral gap inequality\footnote{In a Gaussian setting the right-hand side of \eqref{eq:SG} would correspond to having $L^2$ as the Cameron--Martin space.} \eqref{eq:SG}, see Assumption~\ref{ass}~\ref{item:Def-SG}.
 
Since we cannot expect almost-sure uniqueness of the absolute minimizer $w$ due to non-convexity of $E_{ren}$, this universality is better expressed on the level of 
the variational problems $E_{ren}$ themselves.
Hence, rather than considering the (ill-defined) random fields $w$ of minimal configurations, we consider the random functionals $E_{ren}$. 
The latter notion calls for a topology on the space of variational problems, that is, of lower semicontinuous functionals $E$ on ${\mathcal W}$
(taking values in $\RR\cup\{+\infty\}$) that have compact 
sublevel sets (with respect to the strong $L^2$-topology). 
The appropriate topology is the one generated by $\Gamma$-convergence\footnote{$\Gamma$-convergence with respect to the strong $L^2$-topology in the domain $\mathcal{W}$ on which the functionals $E$ are defined.}; this is 
tautological since $\Gamma$-convergence of functionals in this topology is essentially equivalent to convergence of the 
minimizers, which do exist provided the functionals have compact sublevel sets. 
Hence we are led to consider probability measures on the space of lower semicontinuous functionals $E$ on $\mathcal{W}$ 
endowed with the topology of $\Gamma$-convergence\footnote{The general framework of $\Gamma$-convergence (w.r.t.\ the convergence of minimizers and their existence for functionals with compact sublevel sets) is explained in the book of Dal Maso \cite[Theorem 7.8]{DM93}. Recall that the space of lower semicontinuous functionals 
$E:\mathcal{W} \to \RR\cup\{+\infty\}$ is a compact space, see \cite[Theorem 8.5 and Theorem 10.6]{DM93}.}. From 
this point of view, the universality of the ripple takes the following form:

\begin{theorem}\label{thm:Thm1}
Every probability measure $\langle\cdot\rangle$ on the space of periodic Schwartz distributions $\xi$ satisfying Definition 
\ref{ass} gives rise to a probability measure $\lng\cdot\rng^{\mathrm{ext}}$ on the product space of periodic Schwartz distributions $\xi$ in the 
H\"older space $\C^{-\frac{5}{4}-}$ and lower semicontinuous functionals $E$ on ${\mathcal W}$ endowed with the topology of $\Gamma$-convergence with
the following three properties:
\begin{enumerate}[label=(\roman*)]
	\item \label{item:thm:Thm1-xi_char} The law of $\xi$ under $\lng\cdot\rng^{\mathrm{ext}}$ is $\lng\cdot\rng$, that is, for every 
	continuous and bounded functional $G$ on the space of periodic Schwartz distributions, $\lng G(\xi) \rng^{\mathrm{ext}} = \lng G(\xi) \rng$.
	\item \label{item:thm:Thm1-smooth} If $\xi$ is smooth $\langle\cdot\rangle$-almost surely, then 
	$E=E_{ren}$ for $\lng\cdot\rng^{\mathrm{ext}}$-almost every $(\xi, E)$, where 
	$E_{ren}$ is given by \eqref{eq:ren_energy_intro}.
	\item \label{item:thm:Thm1-convergence} If a sequence $\{\langle\cdot\rangle_\ell\}_{\ell\downarrow0}$ of probability
	measures that satisfy Assumption~\ref{ass} converges weakly to $\langle\cdot\rangle$ (which automatically satisfies Assumption~\ref{ass}), then
	$\{\langle\cdot\rangle_\ell^{\mathrm{ext}}\}_{\ell\downarrow0}$ converges weakly to $\lng\cdot\rng^{\mathrm{ext}}$.
\end{enumerate}
\end{theorem}

\begin{remark} As we mentioned in Remark~\ref{rem:wn} (see also Proposition~\ref{Satz1_pavlos} below), Assumption~\ref{ass} implies that the original measure $\lng\cdot\rng$ in Theorem~\ref{thm:Thm1}
is supported on the H\"older space $\C^{-\frac{5}{4}-}$. From this point of view, the measure $\lng \cdot \rng^{\mathrm{ext}}$ in Theorem~\ref{thm:Thm1} 
can be seen as an extension to the product space of the H\"older space $\C^{-\frac{5}{4}-}$ and the space of lower semicontinuous functionals on ${\mathcal W}$.  
\end{remark}

\begin{remark} \label{rem:univers}
Let us explain why Theorem~\ref{thm:Thm1} expresses the desired universality of the ripple.
We are given a sequence $\{\langle\cdot\rangle_\ell\}_{\ell\downarrow0}$ which converges weakly to white noise
$\langle\cdot\rangle$, such that $\langle\cdot\rangle_\ell$ satisfies Assumption~\ref{ass}, and such that for $\ell>0$, 
$\xi$ is smooth $\langle\cdot\rangle_\ell$-almost surely.
In view of Theorem~\ref{thm:Thm1}~\ref{item:thm:Thm1-smooth}, as long as $\ell>0$, the pathwise defined $E_{ren}$, 
see \eqref{eq:ren_energy_intro}, can be identified with the random functional $E$ associated to $\langle\cdot\rangle_\ell^{\mathrm{ext}}$.
According to Theorem~\ref{thm:Thm1}~\ref{item:thm:Thm1-convergence}, as $\ell\downarrow 0$, the law of $(\xi, E_{ren})$ 
under $\langle\cdot\rangle_\ell^{\mathrm{ext}}$ converges weakly to $\lng\cdot\rng^{\mathrm{ext}}$ associated to the
law of white noise $\langle\cdot\rangle$.
\end{remark}

As a corollary of our results we have the following stronger statement.

\begin{corollary}\label{cor:regularity}
	Assume that the probability measure $\langle\cdot\rangle$ satisfies Assumption~\ref{ass} and consider its extension $\lng\cdot\rng^{\mathrm{ext}}$ to the 
	product space of periodic Schwartz distributions $\xi$ in the H\"older space $\C^{-\frac{5}{4}-}$ and lower semicontinuous
	functionals $E$ on $\mathcal{W}$ endowed with the topology of $\Gamma$-convergence. Then 
	minimizers of $E$ exist in $\mathcal{W}$ for $\lng\cdot\rng^{\mathrm{ext}}$-almost every $(\xi,E)$. Moreover, for every $1\leq p<\infty$, 
	the following estimate holds,\footnote{With the convention that the infimum in \eqref{eq:min_hoelder_bd} is $+\infty$ if 
	$\mathrm{argmin} E = \emptyset$.}
	\begin{equs}
	 \left\lng \inf_{w\in\mathrm{argmin}E} [w]_{\frac{5}{4}-}^p \right\rng^{\mathrm{ext}} \leq C,
	 \label{eq:min_hoelder_bd}
	\end{equs}
	for a constant $C$ that only depends on $p$, uniformly in the class of probability measures $\lng\cdot\rng$ satisfying Assumption~\ref{ass}.
\end{corollary}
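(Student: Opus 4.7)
The plan is to deduce the corollary from Theorem~\ref{thm:Thm1} via a smooth approximation, combined with the direct method of the calculus of variations in the smooth case and uniform stochastic estimates coming from the spectral gap inequality. First, I would mollify $\lng\cdot\rng$ at scale $\ell$; by Remark~\ref{rem:SG_mol} the resulting $\lng\cdot\rng_\ell$ still satisfies Assumption~\ref{ass}, is supported on smooth $\xi$, and converges weakly to $\lng\cdot\rng$ as $\ell\downarrow 0$. Theorem~\ref{thm:Thm1}~\ref{item:thm:Thm1-smooth}--\ref{item:thm:Thm1-convergence} then identifies the extended law $\lng\cdot\rng_\ell^{\mathrm{ext}}$ with the pathwise functional $E=E_{ren}(v_\ell;\cdot)$, where $v_\ell=\mathcal{L}^{-1}P\xi$, and provides the weak convergence $\lng\cdot\rng_\ell^{\mathrm{ext}}\to\lng\cdot\rng^{\mathrm{ext}}$ on the product space of Schwartz distributions and lower semicontinuous functionals.

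For a fixed smooth realization of $\xi$, I would obtain a minimizer $w_\ell\in\mathcal{W}$ of $E_{ren}(v_\ell;\cdot)$ by the direct method: the harmonic energy $\HH(w)$ is coercive on $\mathcal{W}$, the cubic and quartic cross-terms in $w$ with coefficients involving $v_\ell$ can be absorbed into $\tfrac{1}{2}\HH(w)$ by interpolation and Young's inequality (using the smoothness of $v_\ell$), and the functional is lower semicontinuous along sequences with bounded $\HH$. This step is essentially classical once the quantitative control on the cross-terms has been recorded.

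The heart of the matter is the uniform stochastic Hölder estimate. Since $v_\ell$ is smooth, $u_\ell=v_\ell+w_\ell$ satisfies the Euler--Lagrange equation~\eqref{eq:ripple}, which upon Da Prato--Debussche decomposition becomes an equation for $w_\ell$ whose source involves Wick-renormalized quadratic functionals of $v_\ell$ (such as the renormalized $R_1\partial_2 v_\ell^2$ handled via Lemma~\ref{lem:commutator}) together with nonlinear terms in $w_\ell$ of subcritical regularity. Schauder theory for $\mathcal{L}$ on the anisotropic Hölder scale $\C^{\alpha}$ then bootstraps $w_\ell$ to the optimal regularity $\C^{\frac{5}{4}-}$, with a seminorm estimated by a polynomial in stochastic norms of $v_\ell$ and its Wick products. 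Applying the sharp SGI-based moment bounds (Proposition~\ref{Satz1_pavlos} and its extension to quadratic functionals of $\xi$) yields
\[
\lng [w_\ell]_{\frac{5}{4}-}^p \rng_\ell \le C(p),
\]
with $C(p)$ independent of $\ell$, as the SGI is preserved under mollification.

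It then remains to pass to the limit. By Skorokhod's representation applied to $\lng\cdot\rng_\ell^{\mathrm{ext}}\to\lng\cdot\rng^{\mathrm{ext}}$, I would realize $(\xi_\ell,E_\ell)\to(\xi,E)$ almost surely with $E_\ell\to E$ in the sense of $\Gamma$-convergence. Sublevel sets of $[\cdot]_{\frac{5}{4}-}$ being compact in $L^2(\TT^2)$, the uniform $L^p$ bound together with Fatou's lemma produces, along a subsequence, a strong $L^2$-limit $w_*$ of the $w_\ell$; the equi-coercivity of $\{E_\ell\}$ and the convergence of minimizers under $\Gamma$-convergence force $w_*$ to minimize $E$, while the $L^2$-lower semicontinuity of $[\cdot]_{\frac{5}{4}-}$ together with a final use of Fatou transfers the bound to $w_*$ and yields~\eqref{eq:min_hoelder_bd}, so in particular $\mathrm{argmin}\,E\neq\emptyset$ almost surely. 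The main obstacle will be the uniform stochastic Hölder estimate in the third step: it requires combining SGI-based sharp moment bounds for Wick-renormalized quadratic functionals of $\xi$ with the anisotropic Schauder theory for $\mathcal{L}$, in a way that is insensitive both to the mollification scale $\ell$ and to the (possibly non-Gaussian) character of $\lng\cdot\rng$.
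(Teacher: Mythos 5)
Your overall strategy — mollify, get quantitative bounds in the smooth case, and pass to the limit using lower semicontinuity of the Hölder seminorm — is the same as the paper's, but two points differ, one a genuine confusion and one a legitimate alternative route.

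The confusion is in your second step. You claim the cubic/quartic remainder terms can be ``absorbed into $\tfrac12\HH(w)$ by interpolation and Young's inequality (using the smoothness of $v_\ell$).'' This cannot work: $\HH$ grows only quadratically in $w$, while $\GG$ contains genuinely cubic terms such as $2\int vw\,R_1\eta_w\,\dd x$. Absorbing a cubic into a quadratic forces a smallness condition on $v$, which is exactly the restriction from \cite{IO19} that this paper is designed to remove; smoothness of $v_\ell$ regularizes but does not change the growth rates in $w$. The paper's coercivity (Theorem~\ref{thm:minimizers}~\ref{it:coercivity}) instead absorbs $\GG$ into the \emph{anharmonic} energy $\EE$, exploiting the superquadratic (cubic in suitable Besov norms) growth of $\EE$ that comes from the Burgers operator via the Howarth--K\'arm\'an--Monin identities (Propositions~\ref{prop:HKM} and \ref{prop:a_priori_est}). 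For the corollary itself you do not need to re-derive this — you can and should simply invoke Theorem~\ref{thm:minimizers}~\ref{it:existence_min} — but the reasoning as written would reintroduce the very smallness assumption the paper eliminates.

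The legitimate difference is in the passage to the limit. You propose Skorokhod representation for $\lng\cdot\rng^{\mathrm{ext}}_\ell\to\lng\cdot\rng^{\mathrm{ext}}$, then compactness of Hölder balls and Fatou. The paper instead introduces the functional $G(E)=\inf_{w\in\mathrm{argmin}E}[w]_{\frac54-2\eps}$ (with $g$ extended by $+\infty$ off $\C^{\frac54-2\eps}$), proves directly that $G$ is lower semicontinuous with respect to $\Gamma$-convergence (using compactness of $\C^{\frac54-2\eps}\hookrightarrow\C^{\frac54-3\eps}$ and the fact that $\Gamma$-limits of minimizers minimize the limit), and then applies the Portmanteau inequality $\lng G^p\rng^{\mathrm{ext}}\leq\liminf_\ell\lng G^p\rng^{\mathrm{ext}}_\ell$. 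Both routes are correct; the paper's avoids Skorokhod (and the attendant measurable-selection worries for the minimizers $w_\ell$) by encoding everything in a single Borel functional $G$, and delivers nonemptiness of $\mathrm{argmin}\,E$ almost surely for free from $G(E)<\infty$. You should also make explicit that before applying the a priori estimate (Proposition~\ref{prop:hoelder_regularity}, which assumes $w\in\mathcal{W}\cap\C^{\frac54-2\eps}$), one needs the qualitative bootstrap through Proposition~\ref{prop:H3} and Lemma~\ref{lem:hoelder-embedding} — you gesture at this with ``Schauder theory bootstraps'' but it is a separate step from the quantitative estimate.
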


\begin{remark} Since under $\lng\cdot\rng^{\mathrm{ext}}$ functionals $E$ are non-convex (see the discussion below 
Proposition~\ref{Satz2}) we do not expect uniqueness of minimizers. In that sense, Corollary~\ref{cor:regularity} 
shows the existence of minimizers $w\in \mathcal{W}$ with finite $\C^{\frac{5}{4}-}$-norm. However, we do not know
if all minimizers have this regularity. The $\C^{\frac{5}{4}-}$-regularity in \eqref{eq:min_hoelder_bd} 
relies on the Euler--Lagrange equation \eqref{eq:ripple_rem} (see the discussion below Proposition~\ref{Satz2}). 
If minimizers were unique, \eqref{eq:min_hoelder_bd} would imply tightness of their law in the class of probability measures satisfying 
Assumption~\ref{ass}. This in turn would imply that if a sequence $\{\langle\cdot\rangle_\ell\}_{\ell\downarrow0}$ satisfying Assumption~\ref{ass}
converges weakly to $\langle\cdot\rangle$, then the law of the unique minimizer under $\langle\cdot\rangle_\ell^{\mathrm{ext}}$ converges weakly to the law of 
the unique minimizer under $\langle\cdot\rangle^{\mathrm{ext}}$.
\end{remark}

In establishing Theorem \ref{thm:Thm1}, we follow very much the spirit of rough path theory of a clear separation between a stochastic and a deterministic (pathwise) ingredient. 
The genuinely stochastic ingredient is formulated in Proposition \ref{Satz1_pavlos}, where we extend the probability distribution of $\xi$'s to a (joint) law of $(\xi,v,F)$. Compared to rough paths, $v$ is the
analogue of (multi-dimensional) Brownian motion $B$, and $F$ similar to the iterated ``integrand''\footnote{As opposed to its integral $\int B \,\dd B$, which is called the iterated integral. We refer to 
\cite[Chapter 3]{FH14} for details.} $B\frac{dB}{dt}$, see Proposition~\ref{Satz1_pavlos}~\ref{it:Satz1_pavlos_F}, which relates $F$ to $v R_1\partial_2 v$.
The degree of indeterminacy reflected by the difference between Stratonovich (midpoint rule) and Itô (explicit) is suppressed by the symmetry condition in Assumption~\ref{ass}~\ref{item:Def-reflection}, 
which feeds into the characterizing property given by \eqref{eq:F_pavlos}. The crucial stability of this construction is provided by 
Proposition~\ref{Satz1_pavlos}~\ref{it:Satz1_pavlos_cont}.

\medskip

Like for rough paths this unfolding into several random building blocks allows for a pathwise solution theory, i.e.\ the construction of a continuous solution map on this augmented space. In our variational
case this turns into a continuous map from the space of $(\xi,v,F)$'s into the space of functionals, with the above-advertised topology of $\Gamma$-convergence, see Proposition \ref{Satz2}).

\begin{proposition}\label{Satz1_pavlos} Every probability measure
$\langle\cdot\rangle$ satisfying Assumption~\ref{ass} is supported on the H\"older space $\C^{-\frac{5}{4}-}$ and lifts to a probability 
measure $\lng\cdot\rng^{\mathrm{lift}}$ on the space of triples $(\xi,v,F)$ in ${\mathcal C}^{-\frac{5}{4}-}\times{\mathcal C}^{\frac{3}{4}-}\times {\mathcal C}^{-\frac{3}{4}-}$
with the following properties:
\begin{enumerate}[label=(\roman*)]
	\item \label{it:Satz1_pavlos_xi} The law of $\xi$ under $\lng\cdot\rng^{\mathrm{lift}}$ is $\lng\cdot\rng$. 
	\item \label{it:Satz1_pavlos_v} $v=\mathcal{L}^{-1}P\xi$ $\lng\cdot\rng^{\mathrm{lift}}$-almost surely.
	\item \label{it:Satz1_pavlos_F} The law of $F$ under $\lng\cdot\rng^{\mathrm{lift}}$ is characterized by  
		\begin{align}\label{eq:F_pavlos}
		\lim_{t=2^{-n}\downarrow0}\left\langle[F-vR_1\partial_2v_t]_{-\frac{3}{4}-}^p\right\rangle^{\mathrm{lift}}=0,
		\end{align}
		for every $1\leq p<\infty$, where $v_t$ denotes the convolution of $v$ with a suitable semigroup $\psi_t$.\footnote{Actually, $\psi_t$ is the semigroup associated to the 
		operator $|\partial_1|^3-\partial_2^2$ (see also \eqref{eq:psiThat}).} 
		Moreover, if $\xi$ is smooth $\lng\cdot\rng$-almost surely, we have that 
		$F=vR_1\partial_2v$ $\langle\cdot\rangle^{\mathrm{lift}}$-almost surely.
	\item \label{it:Satz1_pavlos_cont} Finally, if a sequence $\{\langle\cdot\rangle_\ell\}_{\ell\downarrow0}$ of probability measures that satisfy 
	Assumption~\ref{ass} converges weakly to a probability measure $\lng\cdot\rng$, then also $\{\lng \cdot\rng_\ell^{\mathrm{lift}}\}_{\ell\downarrow0}$
	converges weakly to $\lng\cdot\rng^{\mathrm{lift}}$.
\end{enumerate}
\end{proposition}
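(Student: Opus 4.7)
The plan is to treat Proposition \ref{Satz1_pavlos} in the spirit of a rough-path lift: first establish the regularity of the ``linear'' objects $\xi, v$ from the spectral gap inequality, then construct the ``quadratic'' object $F$ by a renormalization-free Cauchy argument that exploits the reflection symmetry in Assumption~\ref{ass}~\ref{item:Def-reflection}, and finally deduce uniqueness, the smooth-case identification, and weak continuity from uniform estimates.

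\textbf{Regularity of $\xi$ and $v$.} Applying \eqref{eq:SG} to the linear functional $G(\xi)=\xi(\phi_{x,r})$, where $\phi_{x,r}$ is a bump localized at scale $r$ around $x$ in the Carnot--Carathéodory metric adapted to $\mathcal{L}$, gives $\lng\xi(\phi_{x,r})^2\rng \leq \|\phi_{x,r}\|_{L^2}^2 \sim r^{-5/2}$, consistent with the effective dimension $5/2$ and the expected regularity $-5/4-$ of $\xi$. Higher moments follow from an $L^p$-spectral-gap bound (cf.\ the Proposition~\ref{prop:SG_mult} referenced in the text), which combined with a Kolmogorov/Besov criterion yields $\lng[\xi]_{-5/4-}^p\rng < \infty$ for every $p<\infty$. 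Since $\mathcal{L}^{-1}P$ gains two derivatives in the anisotropic Hölder scale, the definition $v=\mathcal{L}^{-1}P\xi$ produces $v\in \C^{3/4-}$ almost surely, establishing \ref{it:Satz1_pavlos_xi} and \ref{it:Satz1_pavlos_v}.

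\textbf{Construction of $F$.} Choose a natural regularization $v_t$ (e.g.\ via a mollification or semigroup associated to $\mathcal{L}$) with $v_t\to v$ as $t\downarrow 0$. I would show $\{vR_1\partial_2 v_t\}_{t=2^{-n}}$ is Cauchy in $L^p(\lng\cdot\rng;\C^{-3/4-})$. The crucial point is that Assumption~\ref{ass}~\ref{item:Def-reflection} makes the mean vanish pointwise: under $x_1\mapsto -x_1$ the Hilbert transform $R_1$ flips sign while $v$ and $\partial_2$ transform covariantly, so $vR_1\partial_2 v_t$ is odd in law, and $\lng v R_1 \partial_2 v_t \rng = 0$. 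This eliminates the otherwise divergent Wick counterterm that plagues products at the borderline regularity sum $\tfrac{3}{4}+(-\tfrac{3}{4})=0$. For the fluctuation, I would apply \eqref{eq:SG} to the tested functional $G(\xi)=(vR_1\partial_2 v_t-vR_1\partial_2 v_s)(\phi)$, compute $\partial_\xi G$ by the Leibniz rule using $\partial v/\partial\xi=\mathcal{L}^{-1}P$, and bound its $L^2$-norm by Schauder/kernel estimates on $\mathcal{L}^{-1}P$ and the regularization to obtain a rate of the form $|t-s|^\alpha$ with $\alpha>0$; summing the geometric series over dyadic scales closes the Cauchy property. Passing from tested functionals to $\C^{-3/4-}$-moments is a standard Besov-paraproduct argument. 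The limit defines $F$ and satisfies \eqref{eq:F_pavlos}.

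\textbf{Smooth case and continuity.} When $\xi$ is smooth $\lng\cdot\rng$-almost surely, Lemma~\ref{lem:qualitative-regularity-linear} makes $v$ smooth, so $v R_1\partial_2 v_t\to vR_1\partial_2 v$ in strong topologies, forcing $F=vR_1\partial_2 v$ by uniqueness of limits and establishing \ref{it:Satz1_pavlos_F}. For \ref{it:Satz1_pavlos_cont}, the key is that all constants in the moment bounds above depend only on the SGI constant (normalized to $1$) and on the qualitative symmetries, both of which are preserved under weak limits. Hence the laws of $(\xi,v,F)$ under $\lng\cdot\rng_\ell^{\mathrm{lift}}$ are tight in $\C^{-5/4-}\times\C^{3/4-}\times\C^{-3/4-}$. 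Any weak subsequential limit has $\xi$-marginal $\lng\cdot\rng$; the condition $v=\mathcal{L}^{-1}P\xi$ is closed under weak convergence; and \eqref{eq:F_pavlos}, which characterizes $F$ uniquely given $\xi$, is inherited from the uniform Cauchy estimate by a diagonal argument. Uniqueness of the limit promotes subsequential to full convergence.

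\textbf{Main obstacle.} The hard part is the Cauchy estimate: one must verify that the bound extracted from SGI on the Malliavin derivative of the tested bilinear object is strictly subcritical, i.e.\ produces a positive Hölder exponent $\alpha$ in $|t-s|$, in the borderline situation where the deterministic regularity sum is zero. This is the single point where the reflection symmetry is indispensable---without it, an explicit divergent counterterm of Wick type would have to be introduced and renormalized along the approximation, breaking the clean pathwise lift.
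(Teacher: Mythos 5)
Your outline matches the paper's architecture: stochastic estimates on $\xi$ and $v$ from the spectral gap inequality, construction of $F$ via a Cauchy argument on $\{v R_1\partial_2 v_t\}_{t\downarrow0}$ that uses reflection symmetry to kill the mean, and weak continuity from uniform moment bounds. However, the single point you flag as ``the main obstacle'' --- obtaining a positive rate in $|t-s|$ for the fluctuation --- is left genuinely unresolved, and the direct route you propose does not work as stated.

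If you Malliavin-differentiate $G(\xi)=(vR_1\partial_2 v_t - vR_1\partial_2 v_s)(\phi)$, the Leibniz rule gives a term $\delta v\, R_1\partial_2(v_t-v_s)(\phi)$ plus $v\, R_1\partial_2(\delta v_t - \delta v_s)(\phi)$. The second term is manageable because $\delta v$ has extra regularity (by $\delta\xi\in L^2$), but the first term pairs $\delta v\in\C^{3/4}$ with $\partial_2(v_t-v_s)$ whose H\"older regularity is $-3/4-$; the regularity sum is $0-$, precisely borderline, and there is no obvious smallness in $s,t$ coming out of the naive $L^2$-estimate of this term's operator norm. The paper resolves this with the \emph{convolution-commutator} $\lceil v,(\cdot)_s\rceil(\partial_2R_1v)_s=v(\partial_2R_1v)_{2s}-(v(\partial_2R_1v)_s)_s$, whose integrand contains the increment $v(x)-v(x-y)$ smeared against $\psi_s(y)$ (see Lemma~\ref{lem:commutator}): the increment gains a factor $d(0,y)^{3/4-\eps}$ and $\psi_s$ localizes to $|y|\lesssim s^{1/3}$, so one extracts a positive power $(s^{1/3})^{3/4-\eps}$; a dyadic telescoping (Proposition~\ref{prop:F_approx_tight}) then converts these commutator bounds into the Cauchy estimate. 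Without identifying this structural gain, the Cauchy claim is an assertion rather than a proof. It would also be cleaner to separate the two roles: reflection symmetry (Assumption~\ref{ass}~\ref{item:Def-reflection}) removes the mean so no Wick counterterm is needed, whereas the positive rate on the centered fluctuation comes entirely from the commutator structure and the spectral gap bound, not from the symmetry. Finally, the paper recovers H\"older-moment bounds by pointwise moment estimates plus a Kolmogorov-type criterion (Lemma~\ref{lem:mod}), not a paraproduct argument on tested functionals; your route is plausible but would need its own verification that it produces the required $\C^{-3/4-\eps}$-norm control uniformly over the class.
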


\begin{remark} Let us point out that \eqref{eq:F_pavlos} implies that $F$ is actually a $\langle\cdot\rangle$-measurable function of $\xi$. Indeed, by \ref{it:Satz1_pavlos_xi}, \ref{it:Satz1_pavlos_v},
and the triangle inequality for $s,t\in(0,1]$ dyadic (i.e.\ $t=2^{-n}$, $s = 2^{-m}$ for $m,n\in\NN$) we have
\begin{equs}
 \left\lng [vR_1\partial_2v_s - vR_1\partial_2v_t]_{-\frac{3}{4}-}^p \right\rng^{\frac{1}{p}} & = 
 \left(\left\lng [vR_1\partial_2v_s - vR_1\partial_2v_t]_{-\frac{3}{4}-}^p \right\rng^{\mathrm{lift}}\right)^{\frac{1}{p}} 
 \\
 & \leq
 \left(\left\lng [F - vR_1\partial_2v_s]_{-\frac{3}{4}-}^p \right\rng^{\mathrm{lift}}\right)^{\frac{1}{p}} 
 + \left(\left\lng [F - vR_1\partial_2v_t]_{-\frac{3}{4}-}^p \right\rng^{\mathrm{lift}}\right)^{\frac{1}{p}},
\end{equs}
which in turn implies that the sequence $\{vR_1\partial_2 v_t\}_{t=2^{-n}\downarrow0}$ is Cauchy in $L^p_{\lng\cdot\rng}\C^{-\frac{3}{4}-}$. Hence it converges to a random variable 
$F(\xi)\in L^p_{\lng\cdot\rng}\C^{-\frac{3}{4}-}$ and it is easy to check that $F = F(\xi)$ $\lng\cdot\rng^{\mathrm{lift}}$-almost surely. This allows us to identify the 
lift measure $\lng\cdot\rng^{\mathrm{lift}}$ as the joint law of $(\xi, \mathcal{L}^{-1}P\xi, F(\xi))$ under $\lng\cdot\rng$.
\end{remark}

The main idea of the deterministic ingredient, Proposition \ref{Satz2}, is to extend the definition \eqref{eq:ren_energy_intro} of $E_{ren}$ from 
only depending on $(\xi,v)$\footnote{In fact, $E_{ren}$ depends effectively only on $\xi$, as $v= \mathcal{L}^{-1}P\xi$. However, we drop $\xi$ in the notation for $E_{ren}$ and keep the dependence on $v$.} to depending on $(\xi,v,F)$, in such a way that the definitions (formally) 
coincide for $F=vR_1\partial_2v$. This is achieved by\footnote{If $u=v+w$, this can be seen by the identity $\eta_u=\eta_{v}+\eta_w-\partial_1(v w)$ for the Burgers operator
$\eta_u = \partial_2 u - \partial_1 \frac{1}{2} u^2$,
as well as the equality
\begin{align*}
 \int_{\TT^2} \left(\partial_1 w \, \partial_1 v +\partial_2 w \, |\partial_1|^{-1} \partial_2 
 v-  w  \, \xi\right) \, \dd x=0,
\end{align*}
which follows from testing \eqref{eq:linearized_ripple} with $w$.
}
\begin{align}\label{eq:renormalized-energy}
	E_{ren}(v,F;w):= \EE(w) + \GG(v, F;w),
\end{align}
where the anharmonic energy $\mathcal{E}$ is given by the first two contributions of $E_{tot}$, 
\begin{equs}
	\EE(w) &:= \int_{\TT^2} (\partial_1 w)^2\,\mathrm{d}x 
	+\int_{\TT^2}\left(|\partial_1|^{-\frac{1}{2}} (\partial_2 w - 
	\partial_1 \tfrac{1}{2} w^2)\right)^2\,\mathrm{d}x.
	\label{eq:energy}
\end{equs}
Note that $\EE$ contains the Burgers nonlinearity $\eta_w:= \partial_2 w - 
	\partial_1 \tfrac{1}{2} w^2$, which will play an important role in our analysis. Only the remainder $\mathcal{G}$ depends on $v$ and $F$, and is given by 
\begin{align}
	\mathcal{G}(v,F;w) 
	: = \int_{\TT^2} \Big( & w^2 R_1 \partial_2 v
	+ v^2 R_1 \eta_w 
	+ 2 v w R_1 \eta_w 
	+ 2 w F
	 - w  v R_1 \partial_1 v^2 
	+ (R_1 |\partial_1|^{\frac{1}{2}}(v w))^2 \Big) \,\mathrm{d}x.
	\label{eq:G}
\end{align}
Equipped with these definitions, we now may state the main deterministic ingredient.

\begin{proposition}\label{Satz2}
The application $(\xi,v,F)\mapsto E_{ren}$ described through \eqref{eq:renormalized-energy} is well-defined and continuous from the space of $(\xi,v,F)$ endowed with the norm ${\mathcal C}^{-\frac{5}{4}-}\times{\mathcal C}^{\frac{3}{4}-}\times{\mathcal C}^{-\frac{3}{4}-}$ into the space of lower semicontinuous functionals $E_{ren}$ on ${\mathcal W}$ endowed with the topology of $\Gamma$-convergence (with respect to the $L^2$-topology on $\mathcal{W}$).
\end{proposition}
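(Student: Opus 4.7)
The plan is to split the proof of Proposition~\ref{Satz2} into a term-by-term deterministic continuity estimate for $\mathcal{G}(v,F;w)$ and a short $\Gamma$-convergence argument. Concretely, the main task is to establish, for each of the six contributions to $\mathcal{G}$ in \eqref{eq:G}, an estimate of the form
\[
|\mathcal{G}(v,F;w) - \mathcal{G}(v',F';w)| \le \omega\bigl(\|v-v'\|_{\C^{3/4-}} + \|F-F'\|_{\C^{-3/4-}}\bigr)\,\bigl(1+\mathcal{E}(w)\bigr)^{k},
\]
with modulus $\omega\to 0$ and some fixed exponent $k$, together with pointwise $L^2$-continuity in $w$ on sublevel sets $\{w\in\mathcal{W} : \mathcal{E}(w)\le M\}$ for every $M<\infty$. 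Once these joint continuity estimates are in place, the equi-coercivity $\mathcal{E}(w)\gtrsim\mathcal{H}(w)$, up to a lower-order term absorbable into $\mathcal{G}$ whenever $\|v\|_{\C^{3/4-}} + \|F\|_{\C^{-3/4-}}$ is bounded, combined with the $L^2$-compactness of sublevel sets of $\mathcal{H}$ for functions of vanishing $x_1$-average, gives the $\Gamma$-convergence directly: the recovery sequence is the constant $w_n=w$ (by continuity in $(v,F)$), and the $\Gamma$-$\liminf$ inequality follows from the $L^2$-lower semicontinuity of $\mathcal{E}$ along sequences of bounded $\mathcal{E}$ together with pointwise continuity of $\mathcal{G}(v_n,F_n;w_n)$.

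To derive the term-by-term estimates, I would systematically use that $R_1$ and $|\partial_1|^{\pm 1/2}$ are self-adjoint, in order to transfer derivatives onto the factor of higher regularity and land in an anisotropic Hölder--$L^2$ pairing of strictly positive total regularity. Finiteness of $\mathcal{E}(w)$ provides $\partial_1 w\in L^2$ together with the Burgers object $|\partial_1|^{-1/2}\eta_w\in L^2$, and the anisotropic Sobolev--Hölder embeddings in the effective dimension $\dim=5/2$ give $w\in L^p$ for every $p<\infty$ and the fractional control needed to pair $w$ against $F\in\C^{-3/4-}$. With this toolbox, $\int v^2 R_1\eta_w\,\mathrm{d}x$ becomes the pairing of $|\partial_1|^{1/2}R_1 v^2\in\C^{1/4-}$ with $|\partial_1|^{-1/2}\eta_w\in L^2$; $\int w^2 R_1\partial_2 v\,\mathrm{d}x$ a duality between $w^2$ and $R_1\partial_2 v\in\C^{-3/4-}$; $\int wvR_1\partial_1 v^2\,\mathrm{d}x$ reduces after integration by parts to the pairing of $|\partial_1|^{1/2}v^2$ with $|\partial_1|^{-1/2}\partial_1(wv)$; and $\int 2wF\,\mathrm{d}x$ is the direct $\mathcal{W}$--$\C^{-3/4-}$ duality. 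Since each piece is multilinear of total degree at most four in $v$ and one in $F$, subtracting produces the continuity bound above.

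The main obstacle will be the trilinear term $\int 2vw R_1\eta_w\,\mathrm{d}x$ and the quadratic term $\int (R_1|\partial_1|^{1/2}(vw))^2\,\mathrm{d}x$, because $|\partial_1|^{1/2}$ does not distribute naively over the product $vw$ when $v$ has only Hölder $3/4-$ regularity. This is precisely where the commutator estimate of Lemma~\ref{lem:commutator} becomes indispensable: decomposing $R_1|\partial_1|^{1/2}(vw) = v\,R_1|\partial_1|^{1/2}w + [R_1|\partial_1|^{1/2},v]w$, the leading paraproduct is controlled by $\|v\|_{L^\infty}\,\mathcal{H}(w)^{1/2}$, while the commutator gains regularity and is estimated in terms of $\|v\|_{\C^{3/4-}}$ together with a Sobolev norm of $w$ bounded by $\mathcal{H}(w)^{1/2}$. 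The same paraproduct/commutator bookkeeping closes the trilinear term after writing $vwR_1\eta_w = v(wR_1\eta_w)$ and expanding. Once these two delicate terms are under control, all remaining pieces of $\mathcal{G}$ are handled by the integration-by-parts strategy of the previous paragraph, and the $\Gamma$-convergence statement follows from the equi-coercivity and continuity outlined in the first paragraph.
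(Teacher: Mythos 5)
Your overall architecture — term-by-term bounds on $\mathcal{G}$, constant recovery sequences, compactness from coercivity, $\Gamma$-$\liminf$ from lower semicontinuity of $\mathcal{E}$ plus continuity of $\mathcal{G}$ — matches the paper's route (Theorem~\ref{thm:minimizers} together with Corollary~\ref{cor:gamma-convergence}). However, there are two concrete problems.

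First, you cite Lemma~\ref{lem:commutator} as a deterministic paraproduct-commutator estimate to handle $\int vwR_1\eta_w\,\dd x$ and $\int(R_1|\partial_1|^{1/2}(vw))^2\,\dd x$, but that lemma is a stochastic estimate on the convolution-commutator $\lceil v,(\cdot)_s\rceil(\partial_2R_1v)_s$, used exclusively in Section~\ref{sec:nongaussian} to build the random object $F$. It says nothing about distributing $|\partial_1|^{1/2}$ over the product $vw$. What the paper actually uses for those two terms is the fractional Leibniz rule (Lemma~\ref{lem:leibniz}), which bounds $\|vw\|_{\dotB^{2/3}_{2;1}}$ by $\|v\|_{L^\infty}\|w\|_{\dotB^{2/3}_{2;1}}+[v]_{2/3}\|w\|_{L^2}$, and then absorbs via the Besov/$L^p$ estimates of Proposition~\ref{prop:a_priori_est}. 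Your proposed decomposition $R_1|\partial_1|^{1/2}(vw)=vR_1|\partial_1|^{1/2}w+[R_1|\partial_1|^{1/2},v]w$ would work in spirit if you had a Kato--Ponce-type commutator bound with the correct anisotropic Hölder gain, but that bound is not in the paper and you have not supplied one; the Leibniz rule sidesteps the commutator entirely.

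Second — and this is the more serious gap — you state, but never address, the ``pointwise $L^2$-continuity in $w$ on sublevel sets of $\mathcal{E}$.'' This is precisely the delicate step. For a sequence $w_\ell\to w$ in $L^2$ with $\sup_\ell\mathcal{E}(w_\ell)<\infty$, the nonlinear quantities $\partial_1 w_\ell$, $|\partial_1|^{-1/2}\eta_{w_\ell}$, and $|\partial_2|^{2/3}w_\ell$ only converge \emph{weakly} in $L^2$, and one must pair each such weak limit against something that converges \emph{strongly}. The paper's Lemma~\ref{lem:strong-convergence} establishes exactly which derived objects converge strongly (namely $|\partial_1|^{s_1}w_\ell$ for $s_1<1$ and $|\partial_2|^{s_2}w_\ell$ for $s_2<\tfrac{2}{3}$, via the compact embedding), and the continuity proof of Theorem~\ref{thm:minimizers}~\ref{it:continuity} carefully organizes each $\mathcal{G}_k$ as a weak-against-strong pairing (for instance $\mathcal{G}_3$ requires $|\partial_1|^{1/2}(v_\ell w_\ell)\to|\partial_1|^{1/2}(vw)$ \emph{strongly} to pair against $R_1|\partial_1|^{-1/2}\eta_{w_\ell}\rightharpoonup R_1|\partial_1|^{-1/2}\eta_w$). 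A continuity estimate that is polynomial in $\mathcal{E}(w)$ but Lipschitz only in $(v,F)$, as you propose, is insufficient: without the $w$-continuity you cannot conclude $\mathcal{G}(v_\ell,F_\ell;w_\ell)\to\mathcal{G}(v,F;w)$ for a \emph{varying} sequence $w_\ell$, which is what $\Gamma$-$\liminf$ requires.

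Minor: the phrase ``equi-coercivity $\mathcal{E}(w)\gtrsim\mathcal{H}(w)$'' is not correct as stated. The relation is superlinear, $\mathcal{H}(w)\lesssim 1+\mathcal{E}(w)^{3/2+\kappa}$ (Proposition~\ref{prop:bound-harmonic}); this still yields $L^2$-compactness of $\mathcal{E}$-sublevel sets, which is what the argument needs, but one should not present it as a linear domination.
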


\begin{figure}[ht]
\begin{tikzpicture}

\node at (0,0) {$\lng \cdot\rng\sim\xi$};
\node at (.66,2.25) {$\lng\cdot\rng^{\mathrm{lift}}\sim(\xi,v,F)$};
\node at (6.25,2.25) {$\lng\cdot\rng^{\mathrm{ext}}\sim \left(\xi,E_{ren}(v,F;\cdot)\right)$};

\draw[->] (-.32,.35) -- (-.32,1.85);
\draw[->] (2,2.25) -- (4.25,2.25);
\draw[->,dashed] (0,.25) -- (4.4,1.9);
 
\end{tikzpicture}
\caption{\label{fig:ripple} Construction of the extension measure $\lng\cdot\rng^{\mathrm{ext}}$. The vertical arrow corresponds to the 
probabilistic step (Proposition~\ref{Satz1_pavlos}), while the horizontal arrow is the deterministic step (Proposition~\ref{Satz2}). 
For smooth $\xi$'s, $F$ is given by $vR_1\partial_2 v$.}
\end{figure}
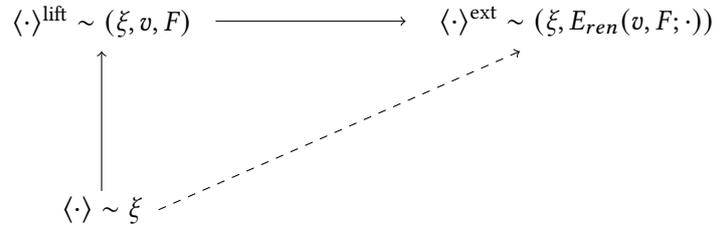

On the level of the Euler--Lagrange equation, minimizers $w$ of $E_{ren}(v,F;\cdot)$ are weak solutions of
\begin{equation}
\begin{split}
	\mathcal{L} w & + P \left(  F + w R_1\partial_2 v + v R_1\partial_2 w + w 
	R_1\partial_2  w - \frac{1}{2}  (v+w) R_1 \partial_1 (v+w)^2 \right) \\
	& + \frac{1}{2} R_1 \partial_2  (v+w)^2 = 0,
\end{split}
\label{eq:ripple_rem}
\end{equation}
whose existence is established by Theorem~\ref{thm:Thm1} (see also Theorem~\ref{thm:minimizers}~\ref{it:existence_min} for the 
validity of \eqref{eq:ripple_rem} in the sense of Schwartz distributions). By a simple power counting the
expected regularity of solutions $w$ to \eqref{eq:ripple_rem} is $\frac{5}{4}-$, which justifies the existence of minimizers 
$w\in \mathcal{W}$ with finite $\C^{\frac{5}{4}-}$ norm proved in Corollary \ref{cor:regularity}. This generalizes
the existence of solutions to \eqref{eq:ripple_rem} in \cite{IO19} which was shown for small values of the noise strength $|\sigma|$.

From a variational point of view, the main challenge is to establish the coercivity of the renormalized 
energy $E_{ren}(v, F;\cdot)$. Ideally, one would like to control the remainder 
$\GG(v, F;\cdot)$ by the ``good'' term of the renormalized energy, namely, the anharmonic 
energy $\EE(w)$ given in \eqref{eq:energy}. At first sight, this is not obvious since the remainder $\GG(v, F;\cdot)$
contains quadratic and cubic terms in $w$, and it is not immediate that the anharmonic energy $\EE(w)$ provides 
higher than quadratic control to absorb these terms. Hence, we need to exploit the control on the nonlinear
part coming from the Burgers operator $\eta_w$.\footnote{Incidentally, despite different physical origins,
the inviscid Burgers part $\eta_w$ arises as in the KPZ equation from expanding a square root nonlinearity. Not unlike there, 
the coercivity comes from the interaction between the first and second term in $\mathcal{E}(w)$,
the first term being the analogue to the viscosity in KPZ.} We do this using tools from fluid mechanics,
more precisely, the Howarth--K\'arm\'an--Monin
identities \eqref{eq:KHM_mod} and \eqref{eq:KHM_clas}, following \cite{GJO15}. Based on these identities we can prove that 
the anharmonic energy $\EE(w)$ grows cubically in suitable Besov spaces (see Proposition~\ref{prop:a_priori_est}).
This allows us to absorb $\GG(v, F;\cdot)$ and obtain the coercivity of the renormalized energy 
(see Theorem~\ref{thm:minimizers} \ref{it:coercivity}).

Let us point out that here, we prove existence of solutions for any value of the noise strength 
$\sigma$ using the coercivity of the renormalized energy functional through the direct method of the calculus of 
variations. Recent works on the dynamic $\Phi^4$ model (or stochastic Ginzburg--Landau model), 
where the system is favoured by the ``good'' sign of the cubic nonlinearity, have used coercivity 
on the level of the Euler--Lagrange equation. For example, in \cite{MW17b, TW18, MW17c} energy estimates have been used 
to obtain global-in-time existence in the parabolic case, while in \cite{GH19} both the parabolic and the 
elliptic cases have been treated based on a different approach that uses coercivity through a maximum principle.
A maximum principle has been used also in \cite{MW19} where the parabolic model is considered in the full subcritical regime.

A further challenge, which turns out to be more on the technical side, comes
from the fact that $\mathcal{L}$ is nonlocal. We recall that this feature arises
completely naturally from the magnetostatic energy in the thin-film limit 
(see \cite[Section 2]{IO19}), but resonates well with the recent surge in activity on nonlocal operators. 
It was worked out in \cite[Lemma 5]{IO19} that the robust approach of \cite{OW19} to negative (parabolic) 
H\"older spaces and Schauder theory extends to this situation.
This approach involves a suitable convolution semigroup $\psi_t$; the fact that it extends from the
smooth parabolic symbol $k_1^2+ik_2$ to our nonsmooth symbol $k_1^2+|k_1|^{-1}k_2^2$
is not obvious due to the poor decay properties of the corresponding convolution kernel.

Variational problems that in a singular limit require subtraction of a divergent term are 
well-known in deterministic settings. A famous example concerns $\SS^1$-valued harmonic maps 
defined in a two-dimensional smooth bounded simply-connected domain $D$. The aim there is to minimize the 
Dirichlet energy of maps $u:D\to \SS^1$ that satisfy a smooth boundary condition $g:\partial D \to 
\SS^1$. When $g$ carries a nontrivial winding number $N>0$\footnote{For simplicity, we assume $N>0$; the 
case $N<0$ follows by complex conjugation.}, the problem is 
singular, that is, every configuration $u$ has infinite energy as they generate vortex point singularities.
The question is 
to determine the least ``infinite'' Dirichlet energy of a harmonic $\SS^1$-valued map satisfying the 
boundary condition $g$ on $\partial D$. The seminal book of Bethuel--Brezis--H\'elein \cite{BBH94} 
presents two methods to achieve this goal, both reaching the same renormalized energy associated to 
the problem. 

\begin{enumerate}[label=]
\item \textsc{First approach:} One prescribes $N>0$ vortex 
points $a_1, \dots, a_N$ in $D$ and determines the unique harmonic $\SS^1$-valued map $u_*$ with 
$u_*=g$ on $\partial D$ that has the prescribed singularities $a_1, \dots, a_N$ in $D$, each one 
carrying a winding number equal to one\footnote{In fact, $u_*$ belongs to the Sobolev space $W^{1,1}(D, \SS^1)$ 
and the nonlinear PDE satisfied by $u_*$, i.e., $-\Delta u_*=|\nabla u_*|^2 u_*$ in $D$, can be written in a ``linear'' way in terms of the current 
$j(u_*)=u_*\times \nabla u_*\in L^1(D)$ of $u_*$ that satisfies the system $\nabla \times j(u_*)=2\pi \sum_k \delta_{a_k}$ 
in $D$, and $\nabla \cdot j(u_*)=0$ in $D$. In terms of the
so-called conjugate harmonic function $\phi$ given by $\nabla^\perp \phi=j(u_*)$, the problem becomes 
$-\Delta \phi=2\pi \sum_k \delta_{a_k}$ in $D$ and $\partial_\nu \phi=g\times \partial_\tau g$ on 
$\partial D$. One could think of $\phi$ as playing the role of our solution $v$ to the linearized 
Euler-Lagrange equation \eqref{eq:linearized_ripple} that carries the ``infinite'' part of the energy.}.
Then one cuts-off disks $B(a_k, r)$ centered at $a_k$ of small radius $r>0$ carrying the 
diverging logarithmic energy of $u_*$ and introduces the renormalized energy
\begin{equs}
W(a_1, \dots , a_N)=\lim_{r\to 0} \left(\int_{D\setminus \cup_k  B(a_k, r)} |\nabla u_*(x)|^2 \,\dd x -2\pi N\log \frac1r\right) .
\end{equs}
The minimum of the renormalized energy 
\begin{equs}
\min_{a_1, \dots a_N \in D} W(a_1, \dots , a_N) \label{min_reno}
\end{equs}
represents the minimal second order term in the expansion of the Dirichlet energy and yields 
optimal positions of the $N$ vortex point singularities (which might not be unique in general).

\item \textsc{Second approach:} One considers a nonlinear approximation of the harmonic map 
problem 
given by the Ginzburg-Landau model for a small parameter $\eps>0$:
\begin{equs}
E_\eps(u)=\int_D |\nabla u|^2+\frac1{\eps^2} (1-|u|^2)^2\, \dd x, \quad u:D\to \RR^2, \quad u=g 
\text{ on } \partial D.
\end{equs}
Note that the maps $u$ are no longer with values into $\SS^1$, but their distance to $\SS^1$ is 
strongly penalized as $\eps\to 0$. It is proved in \cite[Theorem X.1]{BBH94}
that if $u_\eps$ is a minimizer of the above Ginzburg--Landau problem, then for a subsequence, $u_\eps\rightharpoonup u_*$ weakly in $W^{1,1}(D)$ as $\eps\to 
0$ where $u_*$ is an $\SS^1$ valued harmonic map whose $N$ vortex points of winding number one 
correspond to a minimizer of the renormalized energy \eqref{min_reno}. Moreover,
\begin{equs}
E_\eps(u_\eps)=2\pi N \log \frac1\eps+\min_{a_1, \dots a_N \in D} W(a_1, \dots , a_N)+N\gamma+o(1), 
\quad \text{as } \eps\to 0,
\end{equs}
where $\gamma$ is a constant coming from the nonlinear penalization in $E_\eps$.
\end{enumerate}
We also refer to \cite{SS07,SS15}, \cite{K06}, \cite{IM16}, and \cite{IJ19} for similar renormalized energies. 

\subsection{Notation}
For a periodic function $f: \TT^2 \to \RR$ we define its Fourier coefficients by 
\begin{align*}
\widehat{f}(k)=\int_{\TT^2} e^{-\mathrm{i} k \cdot x} f(x)\, \mathrm{d}x \quad \text{for} \quad k\in (2\pi \ZZ)^2,
\end{align*}
which extends to periodic Schwartz distributions in the natural way. We also denote by $P$ the $L^2$-orthogonal projection onto 
the set of functions of vanishing average in $x_1$, extended in the natural way to periodic Schwartz distributions. 

For $p\in[1,\infty]$ we write $\|\cdot\|_{L^p}$ to denote the usual  
$L^p$ norm on $\TT^2$, unless indicated otherwise. For example, we write
$\|\cdot\|_{L^p(\RR^2)}$ for the $L^p$ norm of a function defined on $\RR^2$. We sometimes write 
$L^p_x$ (respectively $L^p_{x_j}$, $j=1,2$) to denote the $L^p$ space with 
respect to the $x$ (respectively $x_j$) variable. We also write $L^p_{\lng\cdot\rng}$ to denote 
the usual $L^p$ space with respect to the measure $\lng\cdot\rng$.

We will often make use of the notation $a \lesssim b$ meaning that there exists a constant 
$C>0$ such that $a \leq C b$. Moreover, for $\kappa\in \RR$, the notation $\lesssim_\kappa$ will be used to 
stress the dependence of the implicit constant $C$ on $\kappa$, i.e., $C\equiv C(\kappa)$.
Similarly, $a\sim b$ means $a\lesssim b$ and $b\lesssim a$. 

\subsubsection{Hilbert transform}
We will frequently make use of the 
\emph{Hilbert transform} $R_j$ for $j=1,2$, acting on periodic functions $f: \TT^2 \to \RR$ in $x_j$ as
\begin{align}\label{eq:hilbert}
R_j:=\frac{\partial_j}{|\partial_j|}, \quad \textrm{i.e.,} \quad \widehat{R_j f}(k)=\begin{cases} \mathrm{i} \sgn(k_j) \widehat{f}(k) & \text{if } k_j \in 2\pi \ZZ\setminus \{0\},\\
0 & \text{if } k_j=0,
\end{cases}
\end{align}
where $\sgn$ is the sign function. In particular, $R_jP=PR_j=R_j$.

\subsubsection{Anisotropic metric and kernel}
The leading-order operator $\mathcal{L}=-\partial_1^2-|\partial_1|^{-1}\partial_2^2$ suggests to endow the space $\TT^2$ with a Carnot--Carath\'eodory metric that is homogeneous with respect to the scaling 
$(x_1,x_2)=(\ell \hat x_1,\ell^\frac{3}{2}\hat x_2)$. 
The simplest expression is given by
\begin{align*}
d(x,y):=|x_1-y_1|+|x_2-y_2|^\frac{2}{3},\quad x, y\in {\TT^2},
\end{align*}
which in particular means that we take the $x_1$ variable as a reference.

We now introduce the convolution semigroup used in \cite{IO19}. This is the 
``heat kernel'' $\{\psi_T\}_{T>0}$ of the operator 
\begin{equs}
\mathcal{A}:=|\partial_1|^3-\partial_2^2=|\partial_1|\LL,
\end{equs}
which, in Fourier space $\RR^2$, is given by
\begin{align}\label{eq:psiThat}
\widehat{\psi_T}(k)=\exp(-T(|k_1|^3+k_2^2)), \quad \text{for all}\quad k\in \RR^2.
\end{align}
It is easy to check that the kernel has scaling properties in line with the metric $d$, that is,
\begin{align}
\psi_T(x_1,x_2)=\frac{1}{(T^{\frac{1}{3}})^{1+\frac{3}{2}}}
\psi\left(\frac{x_1}{T^{\frac{1}{3}}},\frac{x_2}{(T^{\frac{1}{3}})^\frac{3}{2}}\right), \quad \text{for all}  \quad x\in \RR^2,
\label{eq:heat_ker}
\end{align}
where for simplicity we write $\psi:=\psi_1$. Note that $\psi$ is a symmetric smooth function with integrable derivatives and we have for
every $p\in [1, \infty]$ (see \cite[Proof of Lemma 10]{IO19}),
\begin{equation}
\label{lp_psi}
\|D_1^\alpha D_2^\beta \psi_T\|_{L^p(\RR^2)}\sim (T^{\frac13})^{-\alpha-\frac32\beta-\frac52(1-\frac1p)},
\end{equation}
for every $ T>0$, $D_j\in \{\partial_j, |\partial_j|\}$,  $j=1,2$, and $\alpha, \beta\geq 0$.
For a periodic Schwartz distribution $f$, we denote by $f_T$ its convolution with $\psi_T$, i.e., $f_T=\psi_T*f$, which yields a smooth periodic function.
Notice that $\{\psi_T\}_{T>0}$ is a convolution semigroup, so that 
\begin{equ}
	(f_t)_T=f_{t+T}\quad\text{for all}\quad t,T>0.
\end{equ}
%

\begin{remark} \label{rem:perio}
By the space of periodic Schwartz distributions $f$ we understand the (topological) dual of the space of $\C^\infty$-functions $\varphi$ on the
torus (endowed with the family of seminorms $\{\|\partial_1^j\partial_2^l \varphi\|_{L^\infty}\}_{j,l\ge 0}$). 

For a $\C^\infty$-function $\psi$ on $\RR^2$ with integrable derivatives, i.e. 
$\int_{\mathbb{R}^2}|\partial_1^j\partial_2^l\psi| \, \dd x <\infty$ for all $j,l\ge 0$,
and a periodic Schwartz distribution $f$ we write $(f*\psi)(x)$  to denote $f(\Psi(x-\cdot))$, where $\Psi:=\sum_{z\in\mathbb{Z}^2}\psi(\cdot-z)$ is 
the periodization of $\psi$, which is well-defined and belongs to $\C^\infty(\TT^2)$.
In particular, if $\Psi_T$ denotes the periodization of our ``heat kernel'' $\psi_T$, then $\Psi_T$ is a smooth semigroup whose Fourier
coefficients are given by $\hat{\psi}_T(k)$ in \eqref{eq:psiThat} for $k\in (2\pi \ZZ)^2$, yielding for any $T\in (0,1]$, 
$D_j\in \{\partial_j, |\partial_j|\}$, $j=1,2$, and $\alpha, \beta\geq 0$:
\footnote{Indeed, for $p=1$, we have
\begin{equs}\textstyle
 \| D_1^\alpha D_2^\beta \Psi_T\|_{L^1} 
 \leq \sum_{z\in \ZZ^2}\int_{\TT^2} |D_1^\alpha D_2^\beta \psi_T(x-z)|\,\dd x
 = \|D_1^\alpha D_2^\beta \psi_T\|_{L^1(\RR^2)},
\end{equs}
while for $p=\infty$, 
\begin{equs}
 \|D_1^\alpha D_2^\beta \Psi_T\|_{L^\infty} & \leq 1 + \textstyle\sum_{k\in (2\pi \ZZ)^2\setminus \{(0,0)\}} |k_1|^\alpha |k_2|^\beta |\hat\psi_T(k)|
  \lesssim 1 + \int_{\RR^2}
 |\xi_1|^\alpha |\xi_2|^\beta \exp(-T(|\xi_1|^3+\xi_2^2))\, \dd \xi 
 \\
 & \lesssim 
 (T^{\frac13})^{-\alpha-\frac32\beta-\frac52}\stackrel{\eqref{lp_psi}}{\lesssim} \|D_1^\alpha D_2^\beta \psi_T\|_{L^\infty(\RR^2)}.
\end{equs}
For $p\in (1,\infty)$, one argues by interpolation.
}
\begin{equation}
\label{lp_psi_per}
\|D_1^\alpha D_2^\beta \Psi_T\|_{L^p} \lesssim \|D_1^\alpha D_2^\beta \psi_T\|_{L^p(\RR^2)}\stackrel{\eqref{lp_psi}}{\lesssim} 
(T^{\frac13})^{-\alpha-\frac32\beta-\frac52(1-\frac1p)}.
\end{equation}
Therefore, for a periodic function $f\in L^q$, we will often use Young's inequality for convolution with $1+\frac1r=\frac1p +\frac1q$
in the form 
\begin{equs}
 \|f*D_1^\alpha D_2^\beta \psi_T\|_{L^r} 
 &\leq \|f\|_{L^q}
 \|D_1^\alpha D_2^\beta \Psi_T\|_{L^p}
 \lesssim (T^{\frac13})^{-\alpha-\frac32\beta-\frac52(1-\frac1p)} \|f\|_{L^q}.
\end{equs}
\end{remark}

We sometimes write $\Gamma$ for the integral kernel of $\mathcal{L}^{-1}P$, given by
\begin{equs} \label{eq:gamma}
 \widehat{\Gamma}(k) = \frac{1}{k_1^2+ |k_1|^{-1} k_2^2}, \quad \text{for } k_1\neq 0, \quad \text{and} \quad \widehat{\Gamma}(k) = 0 \quad \text{for } k_1=0.
\end{equs}
Note that 
\begin{equs} \label{eq:gamma_finite}
 \|\Gamma\|_{L^2}^2 = \sum_{k_1\neq 0} \frac{1}{(k_1^2+ |k_1|^{-1} k_2^2)^2} 
 \lesssim  \sum_{k_1\neq 0} \frac{1}{d(0,k)^4} <\infty.  
\end{equs}

\subsubsection{Definition of H\"older spaces} \label{sec:hoelder}

We now introduce the scale of H\"older seminorms based on the distance function $d$,
where we restrict ourselves to the range $\alpha\in(0,\frac{3}{2})$ needed in this work (see \cite[Definition 1]{IO19}). 

\begin{definition}\label{def:pos_holder}
For a function $f:\TT^2 \to \RR$ and $\alpha\in(0,\frac{3}{2})$, we define
\begin{equs}
{[f]_\alpha}
:= 
\begin{cases}
	\sup_{x\not=y}\frac{|f(y)-f(x)|}{d^\alpha(y,x)} & \text{for }\alpha\in(0,1], \\[1.5ex]
	\sup_{x\not=y}\frac{|f(y)-f(x)-\partial_1f(x)(y-x)_1|}{d^\alpha(y,x)} & \text{for } \alpha\in(1,\frac{3}{2}).
\end{cases}
\end{equs}
We denote by $\C^\alpha$ the closure of periodic $\C^\infty$-functions $f:\TT^2 \to \RR$ with respect to the norm $\|f\|_{L^\infty}+[f]_\alpha$.
\end{definition}

We will also need the following H\"older spaces of negative exponents. We will restrict to the range required in this
work, namely $\beta\in(-\frac{3}{2},0)$ (see \cite[Definition 3]{IO19}).

\smallskip

\begin{definition}\label{def:neg_holder}
Let $f$ be a periodic Schwartz distribution on $\TT^2$. For $\beta\in(-1,0)$ we define
\begin{align*}
[f]_\beta:=\inf\{|c|+[g]_{\beta+1}+[h]_{\beta+\frac{3}{2}}\, : \, f=c+\partial_1g+\partial_2h\}
\end{align*}
and for $\beta\in(-\frac{3}{2},-1]$ we define
\begin{align*}
[f]_\beta:=\inf\{|c|+[g]_{\beta+2}+[h]_{\beta+\frac{3}{2}}\, : \, f=c+\partial_1^2g+\partial_2h\}.
\end{align*}
We denote by $\C^\beta$ the closure of periodic $\C^\infty$-functions $f:\TT^2\to\RR$ with respect to the norm $[f]_\beta$.
\end{definition}

In Appendix \ref{app:hoelder_spaces} we provide all the necessary estimates on H\"older spaces needed in this work.

\subsection{Strategy of the proofs}
Recall the set $\mathcal{W}$ defined in \eqref{eq:W}, endowed with the strong topology in $L^2(\TT^2)$. We 
will show that the harmonic energy $\mathcal{H}(w)$ defined in \eqref{eq:harmonic-energy} controls the 
anharmonic part $\mathcal{E}(w)$ defined in \eqref{eq:energy} of the total energy, that is,
\begin{equs}
 \mathcal{E}(w) \lesssim 1+\mathcal{H}(w)^{2},
\end{equs}
for every $w\in \mathcal{W}$, and vice-versa, the anharmonic energy controls the harmonic part, that is, 
for every $\kappa>0$ we have 
\begin{align*}
	\HH(w) \lesssim_{\kappa} 1+ \EE(w)^{\frac{3}{2}+\kappa},
\end{align*}
for any $w\in\mathcal{W}$, see Proposition~\ref{prop:bound-harmonic} below.
By standard embedding theorems (see Lemma~\ref{lem:frac-sobolev}), any sublevel 
set of $\mathcal{H}$ (respectively $\mathcal{E}$) over $\mathcal{W}$ is relatively compact in $L^2$ and $\mathcal{H}$ (respectively $\mathcal{E}$) is lower semicontinuous with respect to the $L^2$-norm (see \eqref{eq:lsc}). 

In the following, for $\varepsilon>0$ sufficiently small, we will also write
\begin{align*}
	\triple = \left\{(\xi, v, F) \in \mathcal{C}^{-\frac{5}{4}-\varepsilon} \times 
\mathcal{C}^{\frac{3}{4}-\varepsilon} \times \mathcal{C}^{-\frac{3}{4}-\varepsilon}\, : \, 
\mathcal{L}v = P\xi \right\}.
\end{align*}
Note that $\triple$ is a closed subspace of $\mathcal{C}^{-\frac{5}{4}-\varepsilon} 
\times 
\mathcal{C}^{\frac{3}{4}-\varepsilon} \times \mathcal{C}^{-\frac{3}{4}-\varepsilon}$ endowed with the norm given by
$\max\{[\cdot]_{-\frac{5}{4}-\varepsilon},[\cdot]_{\frac{3}{4}-\varepsilon},[\cdot]_{-\frac{3}{4}-\varepsilon}\}$. 

The deterministic ingredient in the proof of Theorem~\ref{thm:Thm1}, that is Proposition~\ref{Satz2}, is essentially a consequence of the following theorem. 

\begin{theorem} \label{thm:minimizers} \hfill
	\begin{enumerate}[label=(\roman*)]
		\item \label{it:coercivity} (Coercivity) For every $\lambda\in(0,1)$ and
		$M>0$, there exists a constant $C>0$ which depends on $\lambda$ and polynomially on $M$
		\footnote{We say that a constant $C>0$ depends polynomially on $M$ if there exist $c>0$ 
		and $N\geq 1$ such that $C \leq c (1+M^N)$.}
		such that for every $\varepsilon\in (0, \frac1{100})$  and 
		every $(\xi,v,F)\in \triple$ with
		$[\xi]_{-\frac{5}{4}-\varepsilon}, 
		[v]_{\frac{3}{4}-\varepsilon}, [F]_{-\frac{3}{4}-\varepsilon}\leq M$,
		the functional $\mathcal{G}$ defined in \eqref{eq:G} satisfies 
		\begin{align*}
			|\mathcal{G}(v, F; w)| \leq \lambda \mathcal{E}(w) + C, \quad \text{for 
			every } w\in\mathcal{W}.
		\end{align*}
		In particular, $E_{ren}(v,F;\cdot)$ defined in \eqref{eq:renormalized-energy} is coercive.
		\item \label{it:continuity} (Continuity) Let $\eps\in (0, \frac1{100})$ and $(\xi_{\ell}, v_{\ell}, F_{\ell}) \to (\xi, v, F)$ in $\triple$ and $w_{\ell} \to w$ in $\mathcal{W}$ 
		with the property that $\limsup_{\ell\to 0} \mathcal{E}(w_{\ell})<\infty$. Then 
		\begin{align*}
			\mathcal{G}(v_{\ell}, F_{\ell}; w_{\ell}) \to \mathcal{G}(v,F; w) \quad 
			\text{as} \quad \ell \to 0. 
		\end{align*}
		\item \label{it:compactness} (Compactness) Let $\eps\in (0, \frac1{100})$ and $(\xi,v,F) \in\triple$ be fixed. 
		Then for any $M\in\RR$ the sublevel sets of $E_{ren}(v,F;\cdot)$ defined in \eqref{eq:renormalized-energy}, given by 
		\begin{align*}
			\left\{w \in L^2 : \int_0^1 w \,\mathrm{d}x_1 = 0, \, 
			E_{ren}(v, F; w) \leq M \right\}, 
		\end{align*}
		are compact in the $L^2$-norm.
		\item \label{it:existence_min} (Existence of minimizers) If $\eps\in (0, \frac1{100})$ and $(\xi,v,F) \in\triple$, then there 
		exists a minimizer $w\in\mathcal{W}$ of the renormalized energy $E_{ren}(v,F;\cdot)$ which is a weak solution
		of \eqref{eq:ripple_rem}. 
	\end{enumerate}
\end{theorem}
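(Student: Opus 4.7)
The plan is to prove the four assertions in the order given; coercivity (i) is the key ingredient from which compactness (iii) and existence (iv) follow by standard variational arguments, while continuity (ii) reuses essentially the same multilinear estimates as (i). For \textbf{coercivity}, I would bound each of the six terms in $\mathcal{G}(v,F;w)$ from \eqref{eq:G} using the negative--positive Hölder duality developed in Appendix~\ref{app:hoelder_spaces}, paired with the crucial observation that, thanks to the Burgers nonlinearity hidden inside $\mathcal{E}(w)$, the anharmonic energy controls a Besov seminorm of $w$ \emph{cubically} rather than merely quadratically. Concretely, $\mathcal{E}(w)$ directly controls $\|\partial_1 w\|_{L^2}^2$ and $\||\partial_1|^{-1/2}\eta_w\|_{L^2}^2$, and Proposition~\ref{prop:a_priori_est}, based on the Howarth--Kármán--Monin identities of \cite{GJO15}, upgrades this to a bound of the form $[w]^3 \lesssim 1 + \mathcal{E}(w)$ in a suitable (low-regularity) Besov seminorm. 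The quadratic-in-$w$ terms $\int w^2 R_1\partial_2 v$ and $\int 2wF$ are estimated by duality against $[R_1\partial_2 v]_{-\frac{3}{4}-}, [F]_{-\frac{3}{4}-} \lesssim M$; the non-negative term $\int(R_1|\partial_1|^{\frac{1}{2}}(vw))^2$ only needs the upper bound $\lesssim M^2\|w\|_{L^2}^2$; the three terms involving $v^2$ exploit $[v^2]_{\frac{3}{4}-}\lesssim M^2$ paired against the regularized Burgers quantities. Applying Young's inequality $ab \le \varepsilon a^3 + C_\varepsilon b^{\frac{3}{2}}$ against the cubic bound then absorbs everything into $\lambda \mathcal{E}(w) + C$ with $C$ polynomial in $M$.

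For \textbf{continuity} (ii) I would recycle the same estimates in multilinear form: expanding $\mathcal{G}(v_\ell, F_\ell; w_\ell) - \mathcal{G}(v, F; w)$ as a telescoping sum yields expressions in which at least one slot carries a strong difference ($v_\ell-v$, $F_\ell - F$, or $w_\ell-w$) while the remaining factors are uniformly bounded thanks to $\limsup_\ell \mathcal{E}(w_\ell)<\infty$ and interpolation; the only slightly delicate point concerns terms containing $\eta_{w_\ell}$, which require strong $L^2$-convergence of $w_\ell^2$, obtained from $L^2$-convergence of $w_\ell$ together with a uniform higher-norm bound via $\mathcal{E}$. \textbf{Compactness} (iii) is then immediate from (i): a sublevel set $\{E_{ren}(v,F;\cdot)\le M\}$ lies in $\{\mathcal{E}\le C(M)\}$, which by Proposition~\ref{prop:bound-harmonic} lies in a sublevel set of $\mathcal{H}$, hence is relatively $L^2$-compact by Lemma~\ref{lem:frac-sobolev}; lower semicontinuity of $E_{ren}(v,F;\cdot)$ on $\mathcal{W}$ in the strong $L^2$-topology combines \eqref{eq:lsc} with the continuity of $\mathcal{G}(v,F;\cdot)$ from (ii) restricted to the same sublevel. \textbf{Existence} (iv) is then the direct method applied to a minimizing sequence, after which the Euler--Lagrange equation \eqref{eq:ripple_rem} follows by computing $\frac{d}{dt}\big|_{t=0} E_{ren}(v,F;w_*+t\varphi) = 0$ against test functions $\varphi\in\mathcal{C}^\infty(\TT^2)$ of vanishing $x_1$-average, the variation being justified by the same multilinear estimates.

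The \textbf{hard part} is decisively (i), and inside it the crucial difficulty is that $\mathcal{G}(v,F;w)$ contains terms which are only quadratic in $w$ but carry potentially large coefficients in $M$: if $\mathcal{E}(w)$ merely controlled $w$ quadratically in a natural seminorm, one could not absorb these terms with a constant depending only polynomially on $M$. It is precisely the strictly super-quadratic (cubic) behavior of $\mathcal{E}$ in the Besov scale---a genuinely nonlinear consequence of the interplay between the $\|\partial_1 w\|_{L^2}^2$ piece and the inviscid Burgers contribution---that makes the Young-inequality absorption close. Thus, conceptually the heart of the proof is reduced to invoking Proposition~\ref{prop:a_priori_est}; the remainder is a careful, but conceptually routine, bookkeeping of Hölder-duality pairings.
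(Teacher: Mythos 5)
Your plan captures the same overall structure as the paper's proof: term-by-term bounds for the six pieces of $\mathcal{G}$ in \eqref{eq:G_split}, with the coercivity closing via the cubic-type bounds of Proposition~\ref{prop:a_priori_est} plus Young's inequality; multilinear estimates reused for continuity; and compactness/existence by the usual variational machinery once (i) and (ii) are in hand, with the Euler--Lagrange equation following by differentiating $E_{ren}$. Your identification of the super-quadratic control from $\mathcal{E}$ as the conceptual crux is exactly the paper's point.

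One concrete slip: for $\mathcal{G}_6(v,F;w)=\int (R_1|\partial_1|^{\frac12}(vw))^2\,\dd x=\||\partial_1|^{\frac12}(vw)\|_{L^2}^2$, the bound $\lesssim M^2\|w\|_{L^2}^2$ is \emph{not} valid: the fractional derivative $|\partial_1|^{1/2}$ cannot be discarded against an $L^\infty$--$L^2$ split. One needs the fractional Leibniz rule (Lemma~\ref{lem:leibniz}), which gives $\||\partial_1|^{\frac12}(vw)\|_{L^2}\lesssim \|v\|_{L^\infty}\||\partial_1|^{\frac12}w\|_{L^2}+[v]_{\frac12+\eps}\|w\|_{L^2}$, after which \eqref{eq:besov_w^2_1/2_2_1_contr} and \eqref{eq:L^p_contr} yield $\mathcal{G}_6\lesssim M^2(\mathcal{E}(w)^{7/9}+\mathcal{E}(w)^{2/3})$; the exponents being $<1$ still makes the term absorbable. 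So the conclusion survives, but the claimed intermediate estimate is wrong as stated and would need to be repaired as in the paper's (T6). Similarly, for $\mathcal{G}_1$ the paper first integrates by parts against $g=|\partial_1|^{-1}\partial_2 v$ to land the derivative on $w$ rather than pairing $w^2$ directly against $R_1\partial_2 v$; a direct duality pairing as you sketch would require control of $[w^2]_{\alpha}$ for some $\alpha>\frac34$, which $\mathcal{E}$ does not furnish. Beyond these, for continuity the paper only needs \emph{weak} $L^2$-convergence of $|\partial_1|^{-1/2}\eta_{w_\ell}$ (from a uniform $L^2$-bound plus distributional convergence), together with strong convergence of $|\partial_1|^{1/2}(v_\ell w_\ell)$; strong $L^2$-convergence of $w_\ell^2$ per se is not what is used.
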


Note that $E_{ren}(v,F;0) = 0$, therefore every minimizer of $E_{ren}$ belongs to the 
sublevel set $M=0$ of $E_{ren}$. Using Theorem~\ref{thm:minimizers}, we obtain the following 
$\Gamma$-convergence
result. 

\begin{corollary}[$\Gamma$-convergence] \label{cor:gamma-convergence}
Let $\eps\in (0, \frac1{100})$ and $(\xi_{\ell}, v_{\ell}, F_{\ell}) \to (\xi, v, F)$ in $\triple$. Then 
	\begin{equs}
		E_{ren}(v_{\ell}, F_{\ell}; w) \to E_{ren}(v, F; w) \quad \text{for every} \, \, w\in \mathcal W \, \, \textrm{ as} \quad \ell \to 0.  
	\end{equs}
Also, $E_{ren}(v_{\ell}, F_{\ell}; \cdot) \to E_{ren}(v, F; \cdot)$ in the sense of 
$\Gamma$-convergence over $\mathcal W$, that is, 
	\begin{enumerate}[label=(\roman*)]
	 \item $(\Gamma-\liminf)$ For all sequences $\{w_{\ell}\}_{\ell \downarrow 0} \subset 
		\mathcal{W}$ with $w_\ell \to w $ strongly in $L^2$, we have 
		\begin{align*}
		\liminf_{\ell\to 0} E_{ren}(v_{\ell}, F_{\ell}; w_\ell) \geq E_{ren}(v,F;w). 
		\end{align*}
	 \item $(\Gamma-\limsup)$ For every $w\in \mathcal{W}$, there exists a sequence $\{w_{\ell}\}_{\ell \downarrow 0} \subset \mathcal{W}$
	 with $w_\ell \to w$ strongly in $L^2$ such that 
	 	\begin{align*} 
	 		\lim_{\ell\to 0} E_{ren}(v_{\ell}, F_{\ell}; w_\ell) = E_{ren}(v,F;w).
	 	\end{align*}
	\end{enumerate}
	\end{corollary}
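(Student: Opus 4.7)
The plan is to deduce everything from the decomposition $E_{ren}(v,F;w) = \mathcal{E}(w) + \mathcal{G}(v,F;w)$ together with the three ingredients of Theorem~\ref{thm:minimizers}: the coercivity estimate \ref{it:coercivity}, the continuity of $\mathcal{G}$ under joint convergence in $\triple\times\mathcal{W}$ with bounded $\mathcal{E}$ \ref{it:continuity}, and the $L^2$-lower semicontinuity of $\mathcal{E}$ recalled in the Strategy subsection. Pointwise convergence $E_{ren}(v_\ell,F_\ell;w)\to E_{ren}(v,F;w)$ for a fixed $w\in\mathcal W$ follows immediately by taking the constant sequence $w_\ell \equiv w$: then $\limsup_{\ell} \mathcal{E}(w_\ell)=\mathcal{E}(w)<\infty$, so Theorem~\ref{thm:minimizers}~\ref{it:continuity} applies and gives $\mathcal{G}(v_\ell,F_\ell;w)\to\mathcal{G}(v,F;w)$, while the $\mathcal{E}(w)$ term is unchanged.

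For the $\Gamma$-$\liminf$ inequality, let $w_\ell\to w$ strongly in $L^2$. We may assume $\liminf_\ell E_{ren}(v_\ell,F_\ell;w_\ell)<\infty$, otherwise there is nothing to prove, and pass to a subsequence (not relabelled) realizing the liminf and along which $E_{ren}(v_\ell,F_\ell;w_\ell)\le K$ for some $K<\infty$. Since $\{(\xi_\ell,v_\ell,F_\ell)\}$ is convergent in $\triple$, its H\"older seminorms are uniformly bounded by some $M$, so Theorem~\ref{thm:minimizers}~\ref{it:coercivity} applied with $\lambda=\tfrac12$ yields a constant $C=C(M)$ such that
\begin{equation*}
E_{ren}(v_\ell,F_\ell;w_\ell) \;\ge\; \mathcal{E}(w_\ell) - |\mathcal{G}(v_\ell,F_\ell;w_\ell)| \;\ge\; \tfrac12\mathcal{E}(w_\ell)-C,
\end{equation*}
hence $\mathcal{E}(w_\ell)\le 2(K+C)$. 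With this uniform bound in hand, Theorem~\ref{thm:minimizers}~\ref{it:continuity} applies and gives $\mathcal{G}(v_\ell,F_\ell;w_\ell)\to\mathcal{G}(v,F;w)$. Combining with the $L^2$-lower semicontinuity $\liminf_\ell\mathcal{E}(w_\ell)\ge\mathcal{E}(w)$ (which also confirms $w\in\mathcal{W}$) yields
\begin{equation*}
\liminf_{\ell\to 0} E_{ren}(v_\ell,F_\ell;w_\ell) \;\ge\; \mathcal{E}(w) + \mathcal{G}(v,F;w) \;=\; E_{ren}(v,F;w).
\end{equation*}

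For the $\Gamma$-$\limsup$ inequality, the recovery sequence is simply the constant sequence $w_\ell\equiv w$, which trivially converges to $w$ in $L^2$ and, by the pointwise convergence established in the first paragraph, satisfies $\lim_{\ell} E_{ren}(v_\ell,F_\ell;w) = E_{ren}(v,F;w)$. The main (and only real) point is the coercivity-driven extraction of the a priori bound on $\mathcal{E}(w_\ell)$ in the liminf step, without which Theorem~\ref{thm:minimizers}~\ref{it:continuity} could not be invoked; everything else is a direct combination of the ingredients already proved.
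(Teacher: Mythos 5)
Your proof is correct and follows essentially the same route as the paper's: deduce pointwise convergence from the continuity of $\mathcal{G}$ (Theorem~\ref{thm:minimizers}~\ref{it:continuity}) with a constant sequence, use coercivity (Theorem~\ref{thm:minimizers}~\ref{it:coercivity}) to extract a uniform bound on $\mathcal{E}(w_\ell)$ along the relevant subsequence, and combine continuity of $\mathcal{G}$ with the $L^2$-lower semicontinuity of $\mathcal{E}$ (see \eqref{eq:lsc}) for the $\Gamma$-$\liminf$; the $\Gamma$-$\limsup$ via the constant recovery sequence is also identical to the paper's argument.
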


\begin{proof}[Proof of Proposition~\ref{Satz2}]
	Corollary \ref{cor:gamma-convergence} establishes the continuity of the map that associates
	to each $(\xi, v, F)\in\triple$ the functional $E_{ren}(v,F; \cdot)$, when the space of
	(lower semicontinuous) functionals over $\mathcal{W}$ is equipped with the topology of $\Gamma$-convergence
	(with respect to the $L^2$-topology on $\mathcal{W}$). In particular, this map is Borel measurable when $\triple$ is
	endowed with its Borel $\sigma$-algebra. 
	\qedhere
\end{proof}

Taking the main stochastic ingredient from Proposition~\ref{Satz1_pavlos} for granted (which we prove in Section~\ref{sec:nongaussian}), 
we can now give the proof of Theorem~\ref{thm:Thm1}.

\begin{proof}[Proof of Theorem~\ref{thm:Thm1}]  
Let $\langle\cdot\rangle$ be a probability measure on the space of periodic Schwartz distributions $\xi$ that satisfies
Assumption~\ref{ass}. By Proposition~\ref{Satz1_pavlos} $\langle\cdot\rangle$ lifts 
to a probability measure $\langle\cdot\rangle^{\mathrm{lift}}$ on the space of triples 
$(\xi,v,F)\in{\mathcal C}^{-\frac{5}{4}-}\times{\mathcal C}^{\frac{3}{4}-}\times {\mathcal C}^{-\frac{3}{4}-}$.

By Proposition~\ref{Satz2} the mapping $(\xi,v,F) \mapsto E_{ren}(v,F;\cdot)$ is continuous. Hence, the push-forward $(E_{ren})_{\#}\lng\cdot\rng^{\mathrm{lift}}$
is well-defined as a probability measure on the space of lower semicontinuous functionals equipped with the 
Borel $\sigma$-algebra corresponding to the topology of $\Gamma$-convergence (based on the strong $L^2$-topology).
We now define $\lng\cdot\rng^{\mathrm{ext}}$ as the joint law of $\xi$ and $E_{ren}(v,F;\cdot)$.\footnote{Here we
understand $E_{ren}(v,F;\cdot)$ as a measurable function of $\xi$, which is a composition of the measurable function 
$\xi\mapsto (\xi,v,F)$ and the continuous function $(\xi,v,F) \mapsto E_{ren}(v,F;\cdot)$.} 
\begin{enumerate}[label=(\roman*)]
 \item This is immediate by the definition of $\lng\cdot\rng^{\mathrm{ext}}$ and the fact that by Proposition~\ref{Satz1_pavlos}, $\lng\cdot\rng$ is 
 supported on the H\"older space 
 $\C^{-\frac{5}{4}-}$.
 \item If $\xi$ is smooth $\langle\cdot\rangle$-almost surely, by Proposition~\ref{Satz1_pavlos}~\ref{it:Satz1_pavlos_F} 
 we have that $F=v\partial_2 R_1 v$ $\langle\cdot\rangle$-almost surely. In this case, $E_{ren}(v,F;\cdot) = E_{ren}(v, v\partial_2 R_1 v;\cdot)$
 $\lng \cdot \rng$-almost surely and agrees with the definition given in \eqref{eq:ren_energy_intro}.
 \item Let $\{\langle\cdot\rangle_{\ell}\}_{\ell\downarrow0}$ be a sequence of probability measures 
 that satisfy Assumption~\ref{ass} and converges weakly to
 $\langle\cdot\rangle$, which then automatically satisfies Assumption~\ref{ass}.
 Then by Proposition~\ref{Satz1_pavlos}~\ref{it:Satz1_pavlos_cont}, the sequence
 $\{\langle\cdot\rangle_{\ell}^{\mathrm{lift}}\}_{\ell\downarrow 0}$ converges weakly to $\langle\cdot\rangle^{\mathrm{lift}}$.
 Given a bounded continuous function $G:(\xi,E)\mapsto G(\xi,E)\in \RR$ we have that
 \begin{equs}
  \lng G \rng^{\mathrm{ext}}_\ell 
  = \lng G(\xi, E_{ren}(v,F;\cdot)) \rng_\ell^{\mathrm{lift}} \stackrel{\ell\downarrow0}{\longrightarrow} 
  \lng G(\xi, E_{ren}(v,F;\cdot)) \rng^{\mathrm{lift}} =  \lng G \rng^{\mathrm{ext}},
 \end{equs}
 which in turn implies that $\lng \cdot \rng^{\mathrm{ext}}_\ell\to \lng \cdot \rng^{\mathrm{ext}}$
 weakly as $\ell\downarrow0$. \qedhere
\end{enumerate}
\end{proof}

Finally, we have an a priori estimate for the $\C^{\frac{5}{4}-}$ norm of minimizers of $E_{ren}(v, F; \cdot)$, 
which we prove in Section \ref{sec:regularity}. 

\begin{proposition}[H\"older regularity] \label{prop:hoelder_regularity} For any $M>0$ and $\varepsilon\in (0, \frac1{100})$, there exists a constant $C>0$ 
		which depends on $\eps$ and polynomially on $M$ such that 
		\begin{align*}
		[w]_{\frac{5}{4}-2\varepsilon} \leq C,
		\end{align*}
		for every minimizer $w\in \mathcal W\cap \C^{\frac{5}{4}-2\epsilon}$ of $E_{ren}(v,F;\cdot)$ with $(\xi,v,F)\in \triple$ satisfying
		the bound $[\xi]_{-\frac{5}{4}-\varepsilon}, [v]_{\frac{3}{4}-\varepsilon}, [F]_{-\frac{3}{4}-\varepsilon}\leq M$.
\end{proposition}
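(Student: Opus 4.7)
My plan is to derive the H\"older bound from the Euler--Lagrange equation \eqref{eq:ripple_rem} via Schauder theory for $\LL$, which gains two degrees of regularity on the anisotropic H\"older scale (see \cite[Lemma~5]{IO19}).

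As a preliminary step, I would extract a quantitative a priori bound on $w$ in a lower H\"older norm. Since $w$ is a minimizer and $0\in\mathcal{W}$ satisfies $E_{ren}(v,F;0)=0$, one has $E_{ren}(v,F;w)\leq 0$. Applying Theorem~\ref{thm:minimizers}~\ref{it:coercivity} with $\lambda=\tfrac12$ then yields $\EE(w)\leq C(M)$, and via Proposition~\ref{prop:bound-harmonic} also $\HH(w)\leq C(M)$, with $C(M)$ polynomial in $M$. The embeddings into H\"older spaces collected in Appendix~\ref{app:hoelder_spaces} translate this into a bound $[w]_{\alpha_0}\leq C(M)$ for some $\alpha_0>0$ depending only on $\eps$, and in particular an $L^\infty$ bound on $w$.

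Next, I would rewrite \eqref{eq:ripple_rem} as $\LL w = \mathrm{RHS}$ and estimate $[\mathrm{RHS}]_{-\frac34-2\eps}$ term by term using the product rules in negative H\"older spaces from Appendix~\ref{app:hoelder_spaces}. Each of the six pieces on the right-hand side of \eqref{eq:ripple_rem} decomposes into a product of a function of regularity $\frac34-\eps$ or $\frac54-2\eps$ and a distribution of regularity no worse than $-\frac34-\eps$, and in every case the sum of the regularities is strictly positive so the Bony-type product estimates apply. Combining these bounds with Schauder theory for $\LL$ yields
\begin{equation*}
[w]_{\frac54-2\eps} \leq C\,[\mathrm{RHS}]_{-\frac34-2\eps} \leq C(M)\,\bigl(1+[w]_{\frac54-2\eps}^{\theta}\bigr)
\end{equation*}
for some exponent $\theta<1$, from which the desired estimate follows by standard absorption and Young's inequality.

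The main technical obstacle will be the terms that are quadratic or cubic in $w$, notably $wR_1\partial_2 w$ and $(v+w)R_1\partial_1(v+w)^2$. For these the critical norm $[w]_{\frac54-2\eps}$ must be made to appear at most to a strictly sublinear power, with the remaining $w$-factors bounded either in the lower H\"older norm $[w]_{\alpha_0}$ or in $L^\infty$ supplied by the preliminary step. The small cushion between the expected optimal regularity $\frac54-\eps$ and the target $\frac54-2\eps$ is precisely what permits the interpolation between these two scales and forces the exponent $\theta$ above to be strictly less than one, thereby enabling the final absorption and closing the argument.
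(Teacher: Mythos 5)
Your overall strategy --- pass to the Euler--Lagrange equation, apply Schauder theory for $\LL$, and estimate the right-hand side term by term so that the critical norm $[w]_{\frac54-2\eps}$ enters with a strictly sublinear exponent and can be absorbed --- is exactly the paper's, as is the observation that the preliminary bound on $\HH(w)$ comes from minimality and coercivity. The gap is in the \emph{mechanism} you invoke for the quadratic and cubic terms. You propose to use ``Bony-type product estimates,'' i.e.\ $[fg]_\beta\lesssim[f]_\gamma[g]_\beta$ with $\beta+\gamma>0$, together with interpolation between the $\frac54-2\eps$ norm and whatever the preliminary step supplies. This does \emph{not} close. Two problems compound:

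First, the preliminary step does not give a positive H\"older bound. The energy $\HH(w)$ embeds only into $\C^{-\frac14}$ (Lemma~\ref{lem:hoelder-embedding}; in the effective dimension $\frac52$ the exponent is $1-\frac{\mathrm{dim}}{2}=-\frac14$), and $\HH(w)$ controls $\|w\|_{L^{10}}$ but not $\|w\|_{L^\infty}$. So the only quantity at your disposal on the low end of the interpolation ladder is $[w]_{-\frac14}\lesssim\HH(w)^{\frac12}$.

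Second, and more importantly, with only $[w]_{-\frac14}$ and $[w]_{\frac54-2\eps}$ available, the classical product rules are not strong enough for $wR_1\partial_2 w$ and $w\partial_1 R_1 w^2$. Take $wR_1\partial_2 w$. Any admissible splitting via $[fg]_\beta\lesssim[f]_\gamma[g]_\beta$ forces $\gamma>-\beta$ on the $w$-factor and costs essentially a full $[w]_{\frac54-2\eps}$ on the $\partial_2 w$-factor; interpolating $[w]_\gamma$ against $[w]_{-\frac14}$ and $[w]_{\frac54-2\eps}$ then produces a \emph{total} exponent of $[w]_{\frac54-2\eps}$ at least $\frac43$. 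The same happens for the cubic term. The paper's decisive extra input is Proposition~\ref{prop:conv-commutator},
\begin{equation*}
[wf]_{-\frac34}\lesssim\HH(w)^{\frac12}[f]_{-\frac12},
\end{equation*}
which is \emph{not} a Bony product estimate: here the ``sum of regularities'' $-\frac14+(-\frac12)$ is negative, and the estimate only holds because the telescoping semigroup argument (Lemma~\ref{lem:L^2_com_est}) trades one copy of the H\"older norm for the $L^2$-based energy $\HH(w)^{1/2}$. It is precisely this gain that pushes the exponent below one --- e.g.\ $[w\partial_2 R_1 w]_{-\frac34}\lesssim\HH(w)^{\frac12}[w]_{1+\eps}\lesssim\HH(w)^{\frac{7}{12}-\kappa}[w]_{\frac54-2\eps}^{\frac56+2\kappa}$. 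Without this commutator estimate (or a substitute of comparable strength), the absorption step in your argument does not go through.
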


Combined with an approximation argument, this is the main ingredient in the proof of Corollary~\ref{cor:regularity}, which we 
give now. 

\begin{proof}[Proof of Corollary~\ref{cor:regularity}] For $\eps\in (0,\frac{1}{100})$, we define the functional $g:\mathcal{W} \to \RR\cup \{+\infty\}$ given by
\begin{equs}
 g(w) := 
 \begin{cases}
  [w]_{\frac{5}{4}-2\eps}, & \text{if} \quad w\in \C^{\frac{5}{4}-2\eps},
  \\
  +\infty, & \text{otherwise}.
 \end{cases}
\end{equs}
By \cite[Lemma 13]{IO19} we know that $g$ is lower semicontinuous on $\mathcal{W}$ endowed with the strong topology in $L^2(\TT^2)$. 
We define the non-negative functional $G:E\mapsto G(E)$ on the space of lower semicontinuous functionals $E$ on $\mathcal{W}$ by
\begin{equs}
 G(E) := 
 \begin{cases}
  \displaystyle{\inf_{w\in \mathrm{argmin} E} g(w)}, & \text{if} \quad \mathrm{argmin} E \neq \emptyset,
  \\
  +\infty, & \text{otherwise}.
 \end{cases}
\end{equs}
We claim that $G$ is lower semicontinuous, that is, if $E_\ell\to E$ as $\ell\downarrow0$ in the sense of $\Gamma$-convergence, then 
\begin{equs}
 G(E) \leq \liminf_{\ell\downarrow0} G(E_\ell). 
\end{equs}
Indeed, without loss of generality we may assume that $G(E_\ell) \to\liminf_{\ell\downarrow0} G(E_\ell)<\infty$ by possibly 
extracting a subsequence. This implies that $\sup_{\ell} G(E_\ell)<\infty$, hence by the definition of $G$ there exists a sequence
of minimizers $w_\ell$ of $E_\ell$ such that 
\begin{equs}
 {[w_\ell]}_{\frac{5}{4}-2\eps} \leq G(E_\ell) + \ell \leq \sup_{\ell} G(E_\ell) +1. 
\end{equs}
By Lemma \ref{lem:comp_emb} there exists $w\in \C^{\frac{5}{4}-2\eps}$ such that $w_\ell \to w$ in $\C^{\frac{5}{4}-3\eps}$ along a subsequence,
and 
\begin{equs}
 {} [w]_{\frac{5}{4}-2\eps} \leq \liminf_{\ell\downarrow0} [w_\ell]_{\frac{5}{4}-2\eps}.
\end{equs}
This, in particular, implies that $w_\ell\to w$ strongly in $L^2(\TT^2)$ and since $E_\ell\to E$ in the sense of $\Gamma$-convergence, 
$w$ is a minimizer of $E$. Thus, we have the estimate
\begin{equs}
 G(E) \leq [w]_{\frac{5}{4}-2\eps} \leq \liminf_{\ell\downarrow0} [w_\ell]_{\frac{5}{4}-2\eps} \leq \liminf_{\ell\downarrow0} G(E_\ell),  
\end{equs}
which proves the desired claim. 

Let now $\{\lng\cdot\rng_{\ell}\}_{\ell\downarrow0}$ be a sequence of probability measures such that $\lng\cdot\rng_\ell\to \lng\cdot\rng$ weakly and
for every $\ell\in(0,1]$, $\xi$ is smooth $\lng\cdot\rng_\ell$-almost surely. Since under $\lng\cdot\rng_{\ell}$, $\xi$ is smooth, 
by Lemma \ref{lem:qualitative-regularity-linear} $v$ is smooth. By Theorem \ref{thm:minimizers} \ref{it:existence_min} there
exists a minimizer $w\in \mathcal{W}$ of $E_{ren}(v,F;\cdot)$, which is a weak solution to \eqref{eq:ripple_rem}. If we let $u=v+w$, 
then $u\in \mathcal{W}$ and since $F=vR_1\partial_1 v$ $\lng\cdot\rng_{\ell}$-almost surely (see Proposition~\ref{Satz1_pavlos}~\ref{it:Satz1_pavlos_F}), $u$ is a weak solution to \eqref{eq:ripple}. 
By Proposition \ref{prop:H3} we know that $\||\partial_1|^s u\|_2 + \||\partial_2|^{\frac{2}{3}s}u\|_2 \lesssim 1$ for every $s<3$,
hence by Lemma~\ref{lem:hoelder-embedding} $u\in \C^{\frac{5}{4}-2\eps}$, which in turn implies that $w=u-v\in \C^{\frac{5}{4}-2\eps}$. By
Proposition~\ref{prop:hoelder_regularity} we have the estimate 
\begin{equs}
 {} [w]_{\frac{5}{4}-2\eps} \leq C,
\end{equs}
where the constant $C$ depends polynomially on $\max\{[\xi]_{-\frac{5}{4}-\eps}, [v]_{\frac{3}{4}-\eps}, [F]_{-\frac{3}{4}-\eps}\}$. In particular,
this implies that
\begin{equs}
 G(E_{ren}(v,F;\cdot)) \leq C.
\end{equs}
By Corollaries~\ref{cor:xi_v} and \ref{cor:F} we know that for every $1\leq p<\infty$,
\begin{equs}
 \sup_{\ell} \lng C^p \rng^{\mathrm{lift}}_\ell \lesssim_p 1.
\end{equs}
Hence, for the functional $G$ we have that
\begin{equs}
 \sup_{\ell}\lng G(E)^p \rng_\ell^{\mathrm{ext}} = \sup_{\ell}\lng G(E_{ren}(v,F;\cdot))^p \rng^{\mathrm{lift}}_\ell 
 \leq \sup_{\ell} \lng C^p \rng^{\mathrm{lift}}_\ell \lesssim_p 1.
\end{equs}
Since by Theorem \ref{thm:Thm1} \ref{item:thm:Thm1-convergence} $\lng\cdot\rng^{\mathrm{ext}}_\ell\to \lng\cdot\rng^{\mathrm{ext}}$ weakly and 
$G$ is lower semicontinuous we have that

\begin{equs}
 \lng G(E)^p \rng^{\mathrm{ext}} \leq \liminf_{\ell\downarrow0} \lng G^p \rng_\ell^{\mathrm{ext}} \lesssim_p 1,
\end{equs}
which completes the proof. 
\end{proof}

\subsection{Outline} In Section \ref{s:burgers} we show how the {Howarth--K\'arm\'an--Monin} 
identities can be used to control certain Besov and $L^p$ norms by the anharmonic energy 
$\mathcal{E}$. 

In Section \ref{sec:gamma-convergence} we prove Theorem~\ref{thm:minimizers} 
and the $\Gamma$-convergence result for the renormalized energy, see Corollary \ref{cor:gamma-convergence}. 

In Section \ref{sec:regularity} we prove the optimal H\"older regularity $\frac{5}{4}-$ of minimizers
of the renormalized energy, see Proposition~\ref{prop:hoelder_regularity}. 

In Section \ref{sec:nongaussian}, based on the spectral gap inequality \eqref{eq:SG}, we provide the 
stochastic arguments to prove Proposition~\ref{Satz1_pavlos}.

Last, in the \hyperlink{appendix}{Appendix} we include some technical results and proofs.  


\section{Estimates for the Burgers equation} \label{s:burgers}

In this section we bound certain Besov and $L^p$ norms of a function $w\in\mathcal{W}$ by the anharmonic energy $\mathcal{E}(w)$. These bounds will be used in later
sections to study the $\Gamma$-convergence of the renormalized energy \eqref{eq:renormalized-energy} and regularity properties of its minimizers 
(see Sections \ref{sec:gamma-convergence} and \ref{sec:regularity} below). The proof of these estimates is based on the application of the {Howarth--K\'arm\'an--Monin} identity for the Burgers operator.   

We first need to introduce (directional) Besov spaces. These spaces appear naturally through the application of the {Howarth--K\'arm\'an--Monin}
identity (see Proposition \ref{prop:HKM}). 

Throughout this section, for a function $f : \TT^2 \to \RR$ we write 
\begin{align*}
\partial_j^h f(x) := f(x+he_j)-f(x)
\end{align*}
where $x\in \TT^2$, $j\in\{1,2\}$, $h\in \RR$, $e_1=(1,0)$ and $e_2=(0,1)$. 

\begin{definition} \label{def:besov_s_p} 
	For a function $f:\TT^2\to\RR$, $j\in\{1,2\}$, $s\in (0,1]$ and $p\in [1, \infty)$ we define the following
	(directional) Besov seminorm\footnote{Note that one can take the supremum over all $h\in \RR$ in \eqref{eq:defSob} 
	by replacing $h^s$ with $|h|^s$. Indeed, if $h\in [-1,0)$ the quantity on the right-hand side of \eqref{eq:defSob} does 
	not change by symmetry. If $h\in \RR\setminus [-1,1]$, one writes $h=h_{\mathrm{fr}}+h_{\mathrm{int}}$ with $h_{\mathrm{fr}}\in (0,1]$ and $h_{\mathrm{int}}\in \ZZ$ and 
	uses that $\|\partial_j^h f\|_{L^p}=\|\partial_j^{h_{\mathrm{fr}}} f\|_{L^p}$ (by periodicity) while $\frac{1}{|h|^s}\leq \frac{1}{h_{\mathrm{fr}}^s}$. }
	\begin{equation}
		\|f\|_{\dotB^{s}_{p;j}}:= 
		\sup_{h\in(0,1]}\frac{1}{h^s}\left(\int_{\TT^2} |\partial_j^hf(x)|^p \, \dd x\right)^{\frac1p}. \label{eq:defSob}
	\end{equation}
\end{definition}

Notice that in comparison to standard Besov spaces our definition measures regularity in $x_1$ and $x_2$ 
separately. We have also omitted the second lower index which usually appears in standard Besov 
spaces since in our case it is always $\infty$ (corresponding to ${\dotB^{s}_{p,\infty}}$).  

\begin{remark}\label{rem:32}
	For $s\geq 0$, given a periodic function $f:\TT^2\to\RR$, we define $|\partial_j|^s f$ in Fourier space via 
	\begin{align*}
		\widehat{|\partial_j|^s f}(k) := |k_j|^s \widehat{f}(k), \quad k\in (2\pi \ZZ)^2.
	\end{align*}
	For $s<0$ and a periodic distribution $f$ of vanishing average in $x_j$\footnote{We say that a periodic distribution $f$ has vanishing average in $x_1$ if $f(\mathrm{e}^{-\ii k_2 \cdot}) = 0$ for all $k_2 \in 2\pi\ZZ$, and analogously for $f$ with vanishing average in $x_2$.}, we can define $|\partial_j|^s f$ in the same way for $k_j\neq 0$.
	For $p=2$, $s\in (0,1)$ and $s'\in(s,1)$, the Parseval identity implies the equivalence
	\footnote{Note that $C(s, s')\to \infty$ as $s'\searrow s$.}
	\begin{equation}
		\int_{\TT^2} \left||\partial_j|^s f\right|^2\,\mathrm{d}x = \sum_{k\in (2\pi \ZZ)^2} |k_j|^{2s} |\widehat{f}(k)|^2 
		= c_s \int_\RR \frac{1}{|h|^{2s}} \int_{\TT^2} |\partial_j^h f(x)|^2 \, \dd x \frac{\dd h}{|h|}
		\lesssim C(s,s') \|f\|_{\dotB^{s'}_{2;j}}^2 
		\label{eq:fourier}
	\end{equation}
	for some positive constant $C(s, s')$ depending only on $s$ and $s'$, where we used 
	\eqref{eq:sobolev_to_besov} below. 
\end{remark}

In the next proposition we prove two core estimates based on the 
{Howarth--K\'arm\'an--Monin} identities \cite[Lemma 4.1]{GJO15} 
for the Burgers operator. In \cite[Lemma 4.1]{GJO15} the authors deal with the operator $w\mapsto \partial_2 w + \partial_1 \frac{1}{2} w^2$, but the
same proof extends to our setting. 

\begin{proposition} \label{prop:HKM} There exists $C>0$ such that for every $w\in L^2(\TT^2)$ with vanishing average in $x_1$ and for every $h\in(0,1)$ we have
	\begin{align}
		\int_{\TT^2} |\partial_1^h w|^3 \, \dd x & \le C h^{\frac32} \EE(w), \label{eq:GJO_mod} 
		\\
		\sup_{x_2\in[0,1)} \frac{1}{h}\int_0^h \int_0^1 |\partial_1^{h'} w|^2 \,\mathrm{d}x_1 \mathrm{d} h' 
		& \leq C h^{\frac{1}{2}} \mathcal{E}(w).
		\label{eq:GJO_clas}
	\end{align}
\end{proposition}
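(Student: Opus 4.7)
The plan is to derive both estimates from a Howarth--Kármán--Monin (KHM) style identity for the Burgers operator $\eta_w = \partial_2 w - \partial_1 \tfrac{1}{2} w^2$. Writing $\delta := \delta^{h'}$ for the $x_1$-increment, I would first exploit the algebraic identity $\delta(w^2) = 2w\,\delta w + (\delta w)^2$ to get
$$
\delta\eta_w = \partial_2\delta w - \partial_1\bigl(w\,\delta w + \tfrac{1}{2}(\delta w)^2\bigr),
$$
and then test against $\phi(\delta w)$ for an odd $\phi$ with antiderivative $\Phi$. The $\partial_2$-contribution vanishes by periodicity in $x_2$, and two integrations by parts in $x_1$ (together with $\int_{\TT^2}\partial_1 \Psi(\delta w)\,\dd x = 0$ for $\Psi'(s) = s\,\phi(s)$) reduce the rest to
$$
\int_{\TT^2}\phi(\delta w)\,\delta\eta_w\,\dd x = -\int_{\TT^2}(\partial_1 w)\bigl(\phi(\delta w)\,\delta w - \Phi(\delta w)\bigr)\,\dd x.
$$
Specializing to $\phi(s) = s$ and $\phi(s) = s|s|$ (so $\Phi(s) = s^2/2$ and $\Phi(s) = |s|^3/3$) yields
$$
\int \delta w\,\delta\eta_w\,\dd x = -\tfrac{1}{2}\int (\partial_1 w)(\delta w)^2\,\dd x, \qquad \int |\delta w|\,\delta w\,\delta\eta_w\,\dd x = -\tfrac{2}{3}\int (\partial_1 w)|\delta w|^3\,\dd x.
$$
Differentiating $\int_{\TT^2}(\delta^h w)^3\,\dd x$ and $\int_{\TT^2}|\delta^h w|^3\,\dd x$ in $h$ and using $x_1$-periodicity to kill the total-derivative terms, then integrating in $h' \in [0,h]$, produces representations of these third-order structure functions as time-integrals of pairings of $\delta^{h'}w$ (respectively $|\delta^{h'}w|\,\delta^{h'}w$) against $\delta^{h'}\eta_w$.

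For \eqref{eq:GJO_mod}, the idea is then to estimate this pairing in Fourier. Since $w$ and $\eta_w$ both have vanishing $x_1$-average, Parseval and a weighted Cauchy--Schwarz give
$$
\biggl|\int_{\TT^2}\delta^{h'}w\cdot\delta^{h'}\eta_w\,\dd x\biggr| \le \|\partial_1\delta^{h'}w\|_{L^2}\,\||\partial_1|^{-1}\delta^{h'}\eta_w\|_{L^2}.
$$
The elementary bounds $|e^{\ii\theta}-1|\le 2$ and $|e^{\ii\theta}-1|^2 \le 2|\theta|$ applied to the two factors give the right-hand side $\lesssim \sqrt{h'}\,\|\partial_1 w\|_{L^2}\,\||\partial_1|^{-1/2}\eta_w\|_{L^2} \lesssim \sqrt{h'}\,\EE(w)$, and integrating in $h' \in [0,h]$ produces the desired $h^{3/2}\EE(w)$ bound. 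The analog for $|\delta^h w|^3$ (using $\phi(s) = s|s|$) is handled by the same pairing after controlling the extra $\|\delta^{h'}w\|_{L^4}^2$ factor slice-wise in $x_2$ via the one-dimensional Sobolev embedding $H^{1/4}(\TT_{x_1})\hookrightarrow L^4(\TT_{x_1})$.

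For \eqref{eq:GJO_clas}, the same Fourier argument performed slice-wise in $x_2$, combined with the interpolation bound $\int_0^h |e^{\ii k_1 h'} - 1|^2\,\dd h' \lesssim h^{3/2}|k_1|^{1/2}$ (from the two bounds $\int \lesssim h$ and $\int \lesssim |k_1|^2 h^3$ by AM--GM), reduces the claim to the trace-type inequality
$$
\sup_{x_2 \in [0,1)} \||\partial_1|^{1/4}w(\cdot, x_2)\|_{L^2_{x_1}}^2 \lesssim \EE(w).
$$
I would prove this via the one-dimensional Sobolev embedding $W^{1,1}(\TT_{x_2}) \hookrightarrow L^\infty(\TT_{x_2})$, bounding the $L^1_{x_2}$-norm of this quantity by Plancherel and Poincaré, and controlling its $x_2$-derivative via the Burgers relation $\partial_2 w = \eta_w + \partial_1 \tfrac{1}{2}w^2$ combined with a weighted Cauchy--Schwarz that brings in $\||\partial_1|^{-1/2}\eta_w\|_{L^2} \le \EE(w)^{1/2}$ for the linear part; the nonlinear $|\partial_1|^{1/2}(w^2)$-contribution would be absorbed via a slice-wise Gagliardo--Nirenberg estimate of the same flavor.

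The main obstacle I anticipate is this last trace inequality, since $\EE$ offers only anisotropic, partially nonlinear control and one must produce a bound that is linear in $\EE(w)$ (rather than polynomial as in the equivalence $\EE \sim \HH$). The crucial point is that the KHM mechanism has already absorbed the cubic information into $L^2$-pairings involving $\eta_w$, so no $L^\infty$-bound on $w$ is needed at any stage.
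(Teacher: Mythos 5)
Your strategy — expressing the cubic structure function via a K\'arm\'an--Howarth--Monin (KHM) type identity for the Burgers operator $\eta_w$ and estimating the resulting pairing — matches the paper's approach, but both of your estimation routes have genuine gaps.

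For \eqref{eq:GJO_mod}, your Parseval argument for the pairing $\int\delta^{h'}w\,\delta^{h'}\eta_w\,\dd x$ is correct but only bounds the \emph{signed} structure function $\int(\partial_1^{h'}w)^3\,\dd x$, not $\int|\partial_1^{h'}w|^3\,\dd x$. Your identity for $\phi(s)=s|s|$ is not the $h'$-derivative of $\int|\delta^{h'}w|^3$: that derivative equals $3\int(\partial_1 w)|\delta^{h'}w|\delta^{h'}w\,\dd x$, and the KHM identity \eqref{eq:KHM_mod} tests $\partial_1^{h'}\eta_w$ against $|\partial_1^{h'}w|$ — an \emph{even}, merely Lipschitz function of $\delta^{h'}w$ with no useful Fourier expression in terms of $\hat w$, so your Parseval / weighted Cauchy--Schwarz step does not transfer. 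The paper instead argues in physical space: it inserts $|\partial_1|^{\pm\frac12}$ in $\int\eta_w\,\partial_1^{-h'}|\partial_1^{h'}w|\,\dd x$ and uses $L^2$-interpolation together with the Lipschitz bound $\|\partial_1|g|\|_{L^2}\le\|\partial_1 g\|_{L^2}$ to gain the factor $h'$. Your invocation of an ``extra $\|\delta w\|_{L^4}^2$ factor'' does not point to a workable substitute for this.

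For \eqref{eq:GJO_clas} the gap is structural. You reduce the claim to the endpoint trace inequality $\sup_{x_2}\||\partial_1|^{1/4}w(\cdot,x_2)\|_{L^2_{x_1}}^2\lesssim\EE(w)$, which is strictly stronger than \eqref{eq:GJO_clas} (the converse only holds up to a dyadic/logarithmic loss), and you propose to prove it via $W^{1,1}(\TT_{x_2})\hookrightarrow L^\infty(\TT_{x_2})$ and the substitution $\partial_2 w=\eta_w+\partial_1\tfrac12 w^2$. The nonlinear part of the $x_2$-derivative then produces $\tfrac12\int\partial_1 w\cdot|\partial_1|^{\frac12}w^2\,\dd x$, which after Cauchy--Schwarz and Proposition~\ref{prop:a_priori_est}(iv) is only bounded by $\EE(w)^{\frac12}\cdot\EE(w)^{\frac34+}$ — superlinear in $\EE$ and therefore insufficient. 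The paper avoids this entirely: $W^{1,1}(\TT_{x_2})\subset L^\infty(\TT_{x_2})$ is applied to the $h$-averaged quantity $\int_0^h\int_0^1|\partial_1^{h'}w|^2\,\dd x_1\,\dd h'$, whose $x_2$-derivative is \emph{replaced via the KHM identity} \eqref{eq:KHM_clas} by $\tfrac13\int_0^1(\partial_1^hw)^3\,\dd x_1 + 2\int_0^h\int_0^1\partial_1^{h'}\eta_w\,\partial_1^{h'}w\,\dd x_1\,\dd h'$; the nonlinear part of $\partial_2 w$ is thereby absorbed into the $h'$-derivative of the cubic structure function, which in turn is controlled by \eqref{eq:GJO_mod}, so $\partial_1 w^2$ never appears on its own. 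This mechanism crucially requires the KHM identity \emph{pointwise in $x_2$}; your derivation integrates out $x_2$ from the start (``the $\partial_2$-contribution vanishes by periodicity''), discarding exactly the term on which the paper's $\sup_{x_2}$-argument rests.
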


\begin{proof} 
	By the {Howarth--K\'arm\'an--Monin} identities \cite[Lemma 4.1]{GJO15} for the Burgers operator we know that 
	for every $h'\in(0,1)$
	\begin{align}
		\partial_2\frac12\int_0^1|\partial_1^{h'} w| \partial_1^{h'} w \, \dd x_1-\partial_{h'}\frac16\int_0^1|\partial_1^{h'} w|^3 \, \dd x_1 & =
		\int_0^1 \partial_1^{h'} \eta_w |\partial_1^{h'} w| \, \dd x_1, \label{eq:KHM_mod} \\
		\partial_2 \frac{1}{2} \int_0^1 |\partial_1^{h'} w|^2 \,\mathrm{d}x_1 - \partial_{h'} \frac{1}{6} \int_0^1 (\partial_1^{h'} w)^3\,\mathrm{d}x_1
		& = \int_0^1 \partial_1^{h'} \eta_w \partial_1^{h'} w \,\mathrm{d}x_1. \label{eq:KHM_clas} 
	\end{align}

	To prove \eqref{eq:GJO_mod}, we integrate \eqref{eq:KHM_mod} over $x_2$ and use the periodicity of $w$ to obtain,
	\begin{align}
		\partial_{h'} \int_{\TT^2} |\partial_1^{h'} w|^3 \, \dd x = -6\int_{\TT^2} \eta_w \partial_1^{-h'} |\partial_1^{h'} w| \, \dd x. \label{eq:inter1}
	\end{align}
	The last term is estimated as follows,  
	\begin{align*}
		\left( \int_{\TT^2} \eta_w \partial_1^{-h'} |\partial_1^{h'} w| \, \dd x \right)^2 & \leq  \int_{\TT^2} (|\partial_1|^{-\frac12}\eta_w)^2 \, \dd x
		\int_{\TT^2} (|\partial_1|^{\frac12}\partial_1^{-h'} |\partial_1^{h'} w|)^2 \, \dd x\\
		&\lesssim |h'| \int_{\TT^2} (|\partial_1|^{-\frac12}\eta_w)^2 \, \dd x \int_{\TT^2} (\partial_1 w)^2 \, \dd x, 
	\end{align*}
	where we use that
	\begin{align*}
		\int_{\TT^2} (|\partial_1|^{\frac12}\partial_1^{-h'} |\partial_1^{h'} w|)^2 \, \dd x 
		& \lesssim \left(\int_{\TT^2} (\partial_1^{-h'} |\partial_1^{h'} w|)^2 \, \dd x\right)^{\frac12}
		\left(\int_{\TT^2} (\partial_1 |\partial_1^{h'} w|)^2 \, \dd x\right)^{\frac12} \\
		& \lesssim |h'| \int_{\TT^2} (\partial_1 |\partial_1^{h'} w|)^2 \, \dd x.
	\end{align*}
	Integrating \eqref{eq:inter1} over $h'\in (0,h)$, we obtain that
	\begin{align*}
		\int_{\TT^2} |\partial_1^h w(x)|^3 \, \dd x \lesssim h^{\frac32} \left( \int_{\TT^2} (\partial_1 w)^2 \, \dd x\right)^{\frac12}
		\left(\int_{\TT^2} (|\partial_1|^{-\frac12} \eta_w)^2 \, \dd x\right)^{\frac12}
	\end{align*}
	which in turns implies \eqref{eq:GJO_mod}.
	
	To prove \eqref{eq:GJO_clas}, we integrate \eqref{eq:KHM_clas} over $h'\in (0,h)$ to obtain with $\partial_1^0 w=0$
	\begin{align*}
		\partial_2 \frac{1}{2} \int_0^h \int_0^1 |\partial_1^{h'} w|^2 \,\mathrm{d}x_1 \mathrm{d} h' - \frac{1}{6} \int_0^1 (\partial_1^h w)^3\,\mathrm{d}x_1 
		= \int_0^h \int_0^1 \partial_1^{h'} \eta_w \partial_1^{h'} w \,\mathrm{d}x_1 \, \mathrm{d}h'. 
	\end{align*}
	By the Sobolev embedding $W^{1,1}(\TT) \subset L^{\infty}(\TT)$ on the torus $\TT=[0,1)$ in the form 
	\begin{align*}
		\sup_{z\in\TT}|f(z)| \leq \int_{\TT} |f(z)|\,\mathrm{d}z + \int_{\TT} |f'(z)|\,\mathrm{d}z,
	\end{align*}
	we can therefore estimate
	\begin{align*}
		\sup_{x_2\in[0,1)} \int_0^h \int_0^1 |\partial_1^{h'} w|^2 \,\mathrm{d}x_1 \mathrm{d}h' 
		&\lesssim \int_0^h \int_{\TT^2} |\partial_1^{h'} w|^2 \,\mathrm{d}x \mathrm{d}h' 
		+ \int_{\TT^2} |\partial_1^h w|^3\,\mathrm{d}x \\
		&\quad + \int_0^h \int_0^1 \left| \int_0^1 \partial_1^{h'} \eta_w \partial_1^{h'} w \,\mathrm{d}x_1 \right|\mathrm{d}x_2\mathrm{d}h'.
	\end{align*}
	The first term on the right-hand side can be bounded using 
	\begin{align*}
		\int_{\TT^2} |\partial_1^{h'} w|^2\,\mathrm{d}x \leq (h')^2 \int_{\TT^2} |\partial_1 w|^2\,\mathrm{d}x \leq (h')^2 \mathcal{E}(w).
	\end{align*}
	For the second term we use \eqref{eq:GJO_mod}. Last, for the third term, the same argument used to estimate the right-hand side of 
	\eqref{eq:inter1} leads to 
	\begin{align*}
		\int_0^h \int_0^1 \left| \int_0^1 \partial_1^{h'} \eta_w \partial_1^{h'} w \,\mathrm{d}x_1 \right|\mathrm{d}x_2\mathrm{d}h' 
		\lesssim \| |\partial_1|^{-\frac{1}{2}} \eta_w\|_{L^2} \|\partial_1 w\|_{L^2} \int_0^h (h')^{\frac{1}{2}} \,\mathrm{d}h' 
		\lesssim h^{\frac{3}{2}} \mathcal{E}(w). 
	\end{align*}
	Hence, we can bound 
	\begin{align*}
		\sup_{x_2\in[0,1)} \int_0^h \int_0^1 |\partial_1^{h'} w|^2 \,\mathrm{d}x_1 \mathrm{d}h' 
		&\lesssim (h^3 + h^{\frac{3}{2}}) \mathcal{E}(w) \lesssim h^{\frac{3}{2}} \mathcal{E}(w)
	\end{align*}
	for all $h\in(0,1)$. 
\end{proof}

We are now ready to prove the main result of this section. 

\begin{proposition} \label{prop:a_priori_est}
	We have the following estimates:
	\begin{align}
		& \text{(i)} && && && && & \|w\|_{\dotB^s_{3;1}} & \leq C \mathcal{E}(w)^{\frac{1}{3}}, \text{ for every } 
		s\in(0,\tfrac{1}{2}], && && && && \label{eq:besov_s_3_1_contr} \\
		& \text{(ii)} && && && && & \|w\|_{\dotB^s_{2;1}} &  \leq C \mathcal{E}(w)^{\frac{2s+1}{6}}, \text{ for every } s\in[\tfrac{1}{2},1], && && && && \label{eq:besov_s_2_1_contr} \\
		& \text{(iii)} && && && && & \|w\|_{L^p} & \leq C(p) \mathcal{E}(w)^{\frac{p-1}{2p}}, \text{ for every } p\in[3,7), && && && && \label{eq:L^p_contr} \\
		& \text{(iv)} && && && && & \|w^2\|_{\dotB^\frac{2p-6}{2p}_{2;1}} & \leq C(p) \mathcal{E}(w)^{\frac{2p-3}{2p}}, \text{ for every } p\in[6,7), && && && && \label{eq:besov_w^2_1/2_2_1_contr}
		%
		%
	\end{align}
	for every $w \in L^2(\TT^2)$ with vanishing average in $x_1$, where $C>0$ is a universal constant and $C(p)>0$ depends on $p$.
\end{proposition}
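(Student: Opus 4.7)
The plan is to treat the four claims in order, with (iii) being the substantive step. Parts (i) and (ii) are immediate from the Howarth–Kármán–Monin identities that were just established. For (i), I divide \eqref{eq:GJO_mod} by $h^{3s}$ and take cube roots: the resulting factor $h^{1/2-s}\mathcal{E}^{1/3}$ is bounded by $\mathcal{E}^{1/3}$ uniformly in $h\in(0,1]$ as soon as $s\le 1/2$. For (ii), at the endpoint $s=1/2$ the bound of (i) combined with the inclusion $L^3\subset L^2$ on the torus of unit measure gives $\|w\|_{\dotB^{1/2}_{2;1}}\le\|w\|_{\dotB^{1/2}_{3;1}}\le C\mathcal{E}^{1/3}$, and at the endpoint $s=1$ the identity $\partial_1^h w(x)=\int_0^h\partial_1 w(x+re_1)\,\mathrm{d}r$ together with Cauchy–Schwarz gives $\|\partial_1^h w\|_{L^2}\le h\|\partial_1 w\|_{L^2}\le h\mathcal{E}^{1/2}$, hence $\|w\|_{\dotB^{1}_{2;1}}\le\mathcal{E}^{1/2}$. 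Interpolating the integrand $\|\partial_1^h w\|_{L^2}$ as a geometric mean with weight $\theta=2s-1$ reproduces the claimed exponent $(2s+1)/6$.

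For (iii), the case $p=3$ is clean: since $w$ has zero mean in $x_1$, we have the pointwise identity $w(x)=-\int_0^1\partial_1^s w(x)\,\mathrm{d}s$, so Jensen (convexity of $|\cdot|^3$ and unit-mass measure on $s\in[0,1]$) yields $|w(x)|^3\le\int_0^1|\partial_1^s w(x)|^3\,\mathrm{d}s$; integrating in $x$ and inserting \eqref{eq:GJO_mod} gives $\|w\|_{L^3}^3\le C\mathcal{E}\int_0^1 s^{3/2}\,\mathrm{d}s\le C\mathcal{E}$. For $p\in(3,7)$ I would decompose $w=\sum_{j\ge 0}P_j^{(1)}w$ in Littlewood–Paley blocks along $x_1$. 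From (i) one has $\|P_j^{(1)}w\|_{L^3(\TT^2)}\le C\,2^{-j/2}\mathcal{E}^{1/3}$. Reading \eqref{eq:GJO_clas} through Parseval in $x_1$ (the condition $\int_0^h\|\partial_1^{h'}w(\cdot,x_2)\|_{L^2_{x_1}}^2\,\mathrm{d}h'\le Ch^{3/2}\mathcal{E}$ localizes the high-frequency mass of $w(\cdot,x_2)$ at scale $|k_1|\sim 2^j$ by $C\,2^{-j/2}\mathcal{E}$) produces the crucial uniform bound $\sup_{x_2}\|P_j^{(1)}w(\cdot,x_2)\|_{L^2_{x_1}}\le C\,2^{-j/4}\mathcal{E}^{1/2}$. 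Combining this with slice-wise 1D Bernstein ($\|P_j^{(1)}w(\cdot,x_2)\|_{L^\infty_{x_1}}\lesssim 2^{j/2}\|P_j^{(1)}w(\cdot,x_2)\|_{L^2_{x_1}}$) and the split $|P_j^{(1)}w|^p=|P_j^{(1)}w|^{p-3}|P_j^{(1)}w|^3$, one obtains the dyadic estimate $\|P_j^{(1)}w\|_{L^p(\TT^2)}\le C(p)\,2^{j(p-9)/(4p)}\mathcal{E}^{(p-1)/(2p)}$. Since $(p-9)/(4p)<0$ for $p<9$, the series is summable on the claimed range $p\in[3,7)$, giving (iii).

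Finally, (iv) follows from (iii) at the endpoint $p=6$: the telescoping identity $\partial_1^h(w^2)=\partial_1^h w\cdot(w+w(\cdot+he_1))$ together with the elementary bound $|a+b|^2\le 2(|a|^2+|b|^2)$ and translation invariance give $\|\partial_1^h(w^2)\|_{L^2}^2\le 4\int|\partial_1^h w|^2\,w^2\,\mathrm{d}x$. At $p=6$, Hölder with the conjugate pair $(3/2,3)$ yields $\le 4\|\partial_1^h w\|_{L^3}^2\|w\|_{L^6}^2$; inserting \eqref{eq:GJO_mod} and (iii) at $p=6$ gives $Ch\,\mathcal{E}^{2/3}\cdot\mathcal{E}^{5/6}=Ch\,\mathcal{E}^{3/2}$, which is precisely (iv) at $p=6$. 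For $p\in(6,7)$ one selects Hölder exponents that match the available $\|w\|_{L^p}$ bound from (iii) at the same $p$ and pairs it with \eqref{eq:GJO_mod}; a short algebraic check then produces the target Besov exponent $(2p-6)/(2p)$ and $\mathcal{E}$-power $(2p-3)/(2p)$.

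The principal obstacle is part (iii) for $p\in(3,7)$: the bare $L^3$-bound of (i) is insufficient for $p>3$, and the sharp power $\mathcal{E}^{(p-1)/(2p)}$ only emerges once one couples the fractional regularity encoded in \eqref{eq:GJO_mod} (yielding the Besov input at scale $2^{-j/2}$) with the uniform-in-$x_2$ regularity encoded in \eqref{eq:GJO_clas} (yielding the sharp slice bound at scale $2^{-j/4}$), which is the role of the Littlewood–Paley / Bernstein step above.
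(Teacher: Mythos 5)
Your proposal is correct, and parts (i), (ii), and the endpoint cases of (iii)--(iv) track the paper's proof closely; your $p=3$ argument via the zero-average identity $w=-\int_0^1\partial_1^s w\,\mathrm{d}s$ and Jensen is a clean alternative to the paper's Step~2 (which controls the stronger quantity $\int_0^1\sup_{x_1}|w|^3\,\mathrm{d}x_2$ via a slice-wise 1D Besov embedding into $L^\infty$). The genuinely different part is (iii) for $p\in(3,7)$. The paper works slice-wise throughout: it shows $\sup_{x_2}\|w(\cdot,x_2)\|_{L^p_{x_1}}\lesssim\mathcal{E}^{1/2}$ for $p\in[2,4)$ via \eqref{eq:1d_besov_emb}, \eqref{eq:h_aver_contr}, and \eqref{eq:GJO_clas}, then splits $\|w\|_{L^p}^p\le(\int\sup_{x_1}|w|^3\,\mathrm{d}x_2)\cdot\sup_{x_2}\int|w|^{p-3}\,\mathrm{d}x_1$ for $p\in[5,7)$ and interpolates in Lebesgue exponent for $p\in(3,5]$; the constraint $p<7$ arises from $p-3<4$. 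Your Littlewood--Paley route instead derives the dyadic decays $\|P_j^{(1)}w\|_{L^3(\TT^2)}\lesssim 2^{-j/2}\mathcal{E}^{1/3}$ and $\sup_{x_2}\|P_j^{(1)}w(\cdot,x_2)\|_{L^2_{x_1}}\lesssim 2^{-j/4}\mathcal{E}^{1/2}$ (the latter needs exactly Lemma~\ref{lem:besov-infty-p} to pass from the $h$-averaged bound in \eqref{eq:GJO_clas} to the pointwise-in-$h$ bound, and you should say so explicitly), then combines slice-wise Bernstein with the split $|P_jw|^p=|P_jw|^{p-3}|P_jw|^3$ to sum the dyadic pieces; this packages the estimate in a single decay rate $2^{j(p-9)/(4p)}$ and in fact covers $p<9$. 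The trade-off is that your approach relies on a Littlewood--Paley characterization of the directional Besov seminorms from Definition~\ref{def:besov_s_p}, which is standard but not developed in the paper, whereas the paper stays entirely within the difference-quotient formalism and the lemmas proved in Appendix~\ref{s:besov_spaces}. Finally, in (iv) for $p\in(6,7)$ your ``short algebraic check'' should be made explicit: the key point (which the paper carries out) is that $2p/(p-2)\in(2,3]$, so $\|\partial_1^h w\|_{L^{2p/(p-2)}}$ must be interpolated between the $L^2$ bound $h\mathcal{E}^{1/2}$ and the $L^3$ bound $h^{1/2}\mathcal{E}^{1/3}$ from \eqref{eq:GJO_mod}, and then paired with $\|w\|_{L^p}$ from (iii).
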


Note that a result similar to \eqref{eq:besov_s_3_1_contr} was obtained in \cite[Lemma 4]{OS10} using different 
techniques.

\begin{proof}
	\begin{enumerate}[label=(\roman*),leftmargin=0pt,labelsep=*,itemindent=*]
	\item This is immediate from \eqref{eq:GJO_mod}, Definition \ref{def:besov_s_p} and \eqref{eq:besov_s_p_q}.
		
	\item By interpolation we have for $s\in[\tfrac{1}{2},1]$:
	\begin{align*}
		\|w\|_{\dotB^s_{2;1}} \leq \|w\|_{\dotB^{\frac{1}{2}}_{2;1}}^{2(1-s)} \|w\|_{\dotB^1_{2;1}}^{2s-1}.
	\end{align*}
	Using \eqref{eq:besov_s_p_q} and \eqref{eq:besov_s_3_1_contr} we get
	\begin{align*}
		\|w\|_{\dotB^{\frac{1}{2}}_{2;1}} \leq 
		\|w\|_{\dotB^{\frac{1}{2}}_{3;1}} 
		\lesssim \mathcal{E}(w)^{\frac{1}{3}},
	\end{align*}
	with an implicit constant independent of $s$. We also have the bound
	\begin{align*}
		\|w\|_{\dotB^1_{2;1}} \leq \|\partial_1w\|_{L^2} \leq \mathcal{E}(w)^{\frac{1}{2}}.
	\end{align*}
	Combining these estimates implies \eqref{eq:besov_s_2_1_contr}.
	
	\item We divide the proof into several steps. 
	
	\begin{enumerate}[label=\textsc{Step \arabic*},leftmargin=0pt,labelsep=*,itemindent=*]
	\item \label{item:P24-1}\hskip-1ex: We first prove that
	\begin{align*}
		\sup_{x_2 \in [0,1)} \left(\int_0^1 |w(x)|^p \,\mathrm{d}x_1 \right)^{\frac{1}{p}} \lesssim_p \mathcal{E}(w)^{\frac{1}{2}} \quad \text{for all} \quad 2\leq p < 4.
	\end{align*}
	Indeed, by \eqref{eq:1d_besov_emb} for $p\in[2,4)$, $q=2$ and $f = w(\cdot,x_2)$ (with $x_2\in [0,1)$ fixed) we know that  
	\begin{align*}
		\sup_{x_2\in[0,1)} \left(\int_0^1 |w(x)|^p \, \dd x_1\right)^{\frac{1}{p}} \lesssim_p 
		\sup_{x_2\in[0,1)} \int_0^1 \left(\int_0^1 |\partial_1^hw(x)|^2 \, \dd x_1\right)^{\frac{1}{2}} 
		\frac{1}{h^{\frac{1}{2}-\frac{1}{p}}} \frac{\dd h}{h}.
	\end{align*}
	Since $p<4$ we have that
	\begin{equs}
		\sup_{x_2\in[0,1)} \int_0^1 \left(\int_0^1 |\partial_1^hw(x)|^2 \, \dd x_1\right)^{\frac{1}{2}} 
		\frac{1}{h^{\frac{1}{2}-\frac{1}{p}}} \frac{\dd h}{h} 
		\lesssim_p  \sup_{x_2\in[0,1)} \sup_{h\in(0,1]} \frac{1}{h^{\frac{1}{4}}} \left(\int_0^1 |\partial_1^hw(x)|^2 \, \dd x_1 \right)^{\frac{1}{2}}.
	\end{equs}
	By \eqref{eq:h_aver_contr} and \eqref{eq:GJO_clas} we also know that
	\begin{equs}
		&\sup_{x_2\in[0,1)} \sup_{h\in(0,1]} \frac{1}{h^{\frac{1}{4}}} \left(\int_0^1 |\partial_1^hw(x)|^2 \, \dd x_1 \right)^{\frac{1}{2}} \\
		&\quad \lesssim \sup_{x_2\in[0,1)} \sup_{h\in(0,1]} \frac{1}{h^{\frac{1}{4}}} \left(\frac{1}{h}\int_0^h \int_0^1 |\partial_1^{h^\prime}w(x)|^2 \, \dd x_1 \dd h^\prime\right)^{\frac{1}{2}} 
		 \lesssim \mathcal{E}(w)^{\frac{1}{2}},
	\end{equs}
	which combined with the previous estimates implies the desired estimate. 
	
	\item \label{item:P24-2}\hskip-1ex: We prove that
	\begin{align*} 
		\left(\int_0^1 \sup_{x_1\in[0,1)} |w(x)|^3 \,\mathrm{d}x_2 \right)^{\frac{1}{3}} \lesssim \mathcal{E}(w)^{\frac{1}{3}}.
	\end{align*}
	By \eqref{eq:1d_besov_emb} for $p=\infty$, $q=3$ and $f = w(\cdot,x_2)$, we know that
	\begin{align*}
		\sup_{x_1\in[0,1)}|w(x)| & \lesssim \int_0^1 \left( \frac{1}{h} \int_0^1 |\partial_1^h w(x)|^3 \,
		\mathrm{d} x_1\right)^{\frac{1}{3}} \frac{\mathrm{d} h}{h}
	\end{align*}
	for every $x_2\in[0,1)$. Using Minkowski's inequality we obtain the bound
	\begin{align*}
		\left( \int_0^1 \sup_{x_1\in[0,1)}|w(x)|^3 \, \mathrm{d} x_2 \right)^{\frac{1}{3}}
		& \lesssim \int_0^1 \left( \frac{1}{h} \int_{\TT^2} |\partial_1^h w(x)|^3 \dd x \right)^\frac{1}{3} \frac{\mathrm{d} h}{h}.
	\end{align*}
	Using \eqref{eq:GJO_mod}, the last term in the above inequality is bounded by
	\begin{align*}
		\int_0^1 \left( \int_{\TT^2} |\partial_1^h w(x)|^3 \dd x \frac{1}{h} \right)^\frac{1}{3} \frac{\mathrm{d} h}{h} 
		& \lesssim \int_0^1 \frac{1}{h^{\frac{5}{6}}} \mathcal{E}(w)^\frac{1}{3} \,\dd h 
	\end{align*}
	which implies the desired estimate.  
	
	\item \label{item:P24-3}\hskip-1ex: We are now ready to prove \eqref{eq:L^p_contr}. For $5\leq p < 7$ this is immediate from \ref{item:P24-1} and \ref{item:P24-2}, since we have that
	\begin{align*}
		&\left(\int_{\TT^2} |w(x)|^{p} \,\mathrm{d} x\right)^\frac{1}{p} 
		\lesssim \left( \int_0^1 \sup_{x_1\in[0,1)} |w(x)|^3 \, \mathrm{d} x_2 \sup_{x_2\in[0,1)} \int_0^1 |w(x)|^{p-3} \, \mathrm{d} x_1 \right)^\frac{1}{p}
		\lesssim_p \mathcal{E}(w)^\frac{p-1}{2p}.
	\end{align*}
	\ref{item:P24-2} also implies that $\|w\|_{L^3} \lesssim \mathcal{E}(w)^{\frac{1}{3}}$ which proves the bound for $p=3$, so it remains to prove the bound for $p\in (3,5]\subset [3,6]$. 
	We proceed using interpolation for $\frac1p=\frac13\frac{6-p}p+\frac16 (2-\frac6p)$ to bound
		\begin{align*}
		\|w\|_{L^p} \leq \|w\|_{L^3}^{\frac{6-p}p} \|w\|_{L^6}^{2-\frac6p} \lesssim 
		\mathcal{E}(w)^{\frac{6-p}{3p}} \mathcal{E}(w)^{(2-\frac6p)\frac5{12}} = \mathcal{E}(w)^{\frac{p-1}{2p}}.
	\end{align*}
	\end{enumerate}

	\item We first notice that by H\"older's inequality, with exponents $\frac{p-2}{p}+\frac{2}{p} = 1$,  translation invariance and Minkowski's inequality, we have that
	\begin{align}
		\int_{\TT^2} \left(\partial_1^h w^2(x)\right)^2 \, \mathrm{d} x 
		&\leq \left( \int_{\TT^2} |\partial_1^h w(x)|^{\frac{2p}{p-2}} \, \mathrm{d} x \right)^\frac{p-2}{p}
		\left( \int_{\TT^2} |w(x+he_1)+w(x)|^p \, \mathrm{d} x \right)^{\frac{2}{p}} \nonumber \\
		&\leq 4 \left( \int_{\TT^2} |\partial_1^h w(x)|^{\frac{2p}{p-2}} \, \mathrm{d} x \right)^\frac{p-2}{p}
		\left( \int_{\TT^2} |w(x)|^p \, \mathrm{d} x \right)^{\frac{2}{p}}. \label{eq:w^2_first} 
	\end{align}
	As $p\in [6,7)$, we have $\frac{2p}{p-2}\in (2,3]$, and by interpolation we obtain the bound
	\begin{align*}
		\int_{\TT^2} |\partial_1^h w(x)|^{\frac{2p}{p-2}} \, \mathrm{d} x 
		&\leq \left( \int_{\TT^2} |\partial_1^h w(x)|^2 \, \mathrm{d} x \right)^{\frac{p-6}{p-2}}
		\left( \int_{\TT^2} |\partial_1^h w(x)|^3 \, \mathrm{d} x \right)^{\frac{4}{p-2}}. 
	\end{align*}
	Using that $\int_{\TT^2} |\partial_1^h w(x)|^2 \, \mathrm{d} x \lesssim h^2 \mathcal{E}(w)$ and \eqref{eq:GJO_mod}, the last inequality implies that
	\begin{align}
		\int_{\TT^2} |\partial_1^h w(x)|^{\frac{2p}{p-2}} \, \mathrm{d} x 
		& \lesssim h^{2\left(\frac{p-6}{p-2}\right)+\frac{3}{2}\left(\frac{4}{p-2}\right)} \, \mathcal{E}(w). \label{eq:w^2_sec} 
	\end{align}
	Combining \eqref{eq:w^2_first}, \eqref{eq:w^2_sec} and \eqref{eq:L^p_contr} we get 
	\begin{align*}
		\int_{\TT^2} \left(\partial_1^h w^2(x)\right)^2 \, \mathrm{d} x  \lesssim_p \left( h^{2\left(\frac{p-6}{p-2}\right)+\frac{3}{2}\left(\frac{4}{p-2}\right)} \, \mathcal{E}(w) \right)^\frac{p-2}{p} \mathcal{E}(w)^\frac{p-1}{p} 
		= h^{\frac{2p-6}{p}} \mathcal{E}(w)^\frac{2p-3}{p}
	\end{align*}
	which implies \eqref{eq:besov_w^2_1/2_2_1_contr}.  \qedhere
\end{enumerate}
\end{proof}

As \eqref{eq:sobolev_to_besov} and \eqref{eq:besov_w^2_1/2_2_1_contr} imply that $\mathcal{E}(w)$ (to some power) controls the quantity $\||\partial_1|^{\frac{1}{2}} w^2\|_2$, it follows that the harmonic part $\mathcal{H}(w)$ given in \eqref{eq:harmonic-energy} of the energy $\mathcal{E}(w)$
%
is also controlled by $\mathcal{E}(w)$. Moreover, $\mathcal{E}(w)$ controls the $L^2$ norm of the $\frac{2}{3}$-fractional derivative in $x_2$ because the harmonic part $\mathcal{H}(w)$ does. We summarize this in the next proposition, where we also prove that $\mathcal{E}(w)$ is controlled by $\mathcal{H}(w)$.

\begin{proposition} \label{prop:bound-harmonic} \hfill
\begin{enumerate}[label=(\roman*)]
	\item For every $\kappa\in (0, \frac1{14})$, there exists a constant $C(\kappa)>0$ such that 
	\begin{align}\label{eq:bound-harmonic}
		\mathcal{H}(w) \leq C(\kappa)\bigg(1 + \mathcal{E}(w)^{\frac{3}{2}+\kappa}\bigg),
	\end{align}
	for every $w \in L^2(\TT^2)$ with vanishing average in $x_1$.
	In addition, there exists a constant $C>0$ such that
	\begin{align}\label{eq:bound-2/3}
		\int_{\TT^2} |\partial_1 w|^2\,\mathrm{d}x + \int_{\TT^2} | |\partial_2|^{\frac{2}{3}} w |^2 \mathrm{d}x \leq C \mathcal{H}(w),
	\end{align}
	for every $w \in L^2(\TT^2)$ with vanishing average in $x_1$. In particular, $[w]_{-\frac{1}{4}} \lesssim \mathcal{H}(w)^{\frac{1}{2}}$.
	\item There exists a constant $C>0$ such that 
	\begin{align}\label{eq:E-H-bound}
		\EE(w) \lesssim 1+ \HH(w)^2 \quad \text{for every } w\in\mathcal{W}.
	\end{align} 
\end{enumerate}
\end{proposition}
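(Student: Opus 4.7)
Both parts start from the algebraic identity
\[
\EE(w)=\HH(w)+\|g\|_{L^2}^2-2\langle f,g\rangle_{L^2},
\qquad f:=|\partial_1|^{-\tfrac12}\partial_2 w,\quad g:=\tfrac12 R_1|\partial_1|^{\tfrac12}w^2,
\]
obtained by expanding the square $(|\partial_1|^{-1/2}\eta_w)^2$ in $\EE(w)$ and using that $R_1$ is an $L^2$-isometry together with $|\partial_1|^{-1/2}\partial_1=R_1|\partial_1|^{1/2}$. Since $\|f\|_{L^2}^2\le \HH(w)$, Cauchy--Schwarz reduces everything to bounding the ``nonlinear error'' $\|g\|_{L^2}=\tfrac12\||\partial_1|^{1/2}w^2\|_{L^2}$---by $\EE$ for \eqref{eq:bound-harmonic} and by $\HH$ for \eqref{eq:E-H-bound}.

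\medskip

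For \eqref{eq:bound-harmonic}, I would rearrange the identity as $\HH(w)\le \EE(w)+2\HH(w)^{1/2}\|g\|_{L^2}$. By Remark~\ref{rem:32}, $\||\partial_1|^{1/2}w^2\|_{L^2}\lesssim_\kappa \|w^2\|_{\dotB^{1/2+\kappa}_{2;1}}$ for any $\kappa>0$, and Proposition~\ref{prop:a_priori_est}\,(iv) with $p=6/(1-2\kappa)$---which lies in $(6,7)$ precisely when $\kappa\in(0,\tfrac{1}{14})$---gives
\[
\|g\|_{L^2}\lesssim_\kappa \EE(w)^{3/4+\kappa/2}.
\]
Young's inequality applied to the product $\HH(w)^{1/2}\EE(w)^{3/4+\kappa/2}$ absorbs $\tfrac{1}{2}\HH(w)$ into the left-hand side and delivers $\HH(w)\lesssim_\kappa \EE(w)+\EE(w)^{3/2+\kappa}\lesssim_\kappa 1+\EE(w)^{3/2+\kappa}$. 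The subordinate bound \eqref{eq:bound-2/3} is a one-line Plancherel computation based on the weighted AM--GM inequality $|k_2|^{4/3}\le \tfrac{2}{3}|k_2|^2|k_1|^{-1}+\tfrac{1}{3}|k_1|^2$ (applied for $k_1\neq 0$, which covers all relevant modes because $\widehat w(0,k_2)=0$), and $[w]_{-1/4}\lesssim \HH(w)^{1/2}$ then follows from \eqref{eq:bound-2/3} via the anisotropic embedding into negative H\"older spaces (Lemma~\ref{lem:hoelder-embedding}).

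\medskip

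For \eqref{eq:E-H-bound} in (ii), the same identity used with the opposite sign yields $\EE(w)\le \HH(w)+2\HH(w)^{1/2}\|g\|_{L^2}+\|g\|_{L^2}^2$, so it suffices to estimate $\|g\|_{L^2}$ by $\HH(w)$ alone. A Fourier-level integration by parts (using $\int\partial_1(w^2)R_1(w^2)\,\dd x=\sum_k|k_1||\widehat{w^2}(k)|^2$) gives
\[
\||\partial_1|^{1/2}w^2\|_{L^2}^2=2\int_{\TT^2}w\,\partial_1 w\cdot R_1(w^2)\,\dd x\lesssim \|w\|_{L^4}\|\partial_1 w\|_{L^2}\|w\|_{L^8}^2,
\]
where the last step combines H\"older with exponents $(4,2,4)$ and the $L^4$-boundedness of $R_1$. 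The anisotropic Sobolev embedding (Lemma~\ref{lem:frac-sobolev})---critical exponent $L^{10}$ for the effective dimension $\tfrac{5}{2}$ of $\LL$---together with Poincar\'e in $x_1$ (since $w$ has vanishing $x_1$-average) yields $\|w\|_{L^p}\lesssim \HH(w)^{1/2}$ for every $p\in[2,10]$. Hence $\|g\|_{L^2}\lesssim \HH(w)$, and $\EE(w)\lesssim \HH(w)+\HH(w)^{3/2}+\HH(w)^2\lesssim 1+\HH(w)^2$.

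\medskip

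The principal obstacle is the sharpness of the exponent in \eqref{eq:bound-harmonic}: a purely ``Sobolev-type'' estimate on $\|g\|_{L^2}$ would only be quadratic in $\EE$, which is \emph{not} absorbable through Young's inequality. The genuinely super-linear control $\|w^2\|_{\dotB^{1/2+\kappa}_{2;1}}\lesssim_\kappa \EE(w)^{3/4+\kappa/2}$ provided by the Howarth--K\'arm\'an--Monin identity in Proposition~\ref{prop:a_priori_est}\,(iv) is therefore essential, and the restriction $\kappa<\tfrac{1}{14}$ records exactly the endpoint $p<7$ baked into that estimate.
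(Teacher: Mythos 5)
Your proof is correct and, for part (i), follows essentially the same route as the paper: both expand the square inside $\EE$ (you phrase it through the identity $\EE(w)=\HH(w)-2\langle f,g\rangle+\|g\|_{L^2}^2$, the paper through the elementary estimate $\HH\lesssim\EE+\||\partial_1|^{1/2}w^2\|_{L^2}^2$, but these are the same algebra), both feed $\||\partial_1|^{1/2}w^2\|_{L^2}$ into Proposition~\ref{prop:a_priori_est}\,(iv) via Remark~\ref{rem:32}, and both pick $p=6/(1-2\kappa)$; the choices of where to apply Young differ slightly but produce the same bound. For part (ii) you depart from the paper in a meaningful way: instead of quoting Lemma~\ref{lem:1/2-1-3/4+} in its special case $f=g=w$, you derive $\||\partial_1|^{1/2}w^2\|_{L^2}\lesssim\HH(w)$ directly from the Plancherel identity $\||\partial_1|^{1/2}w^2\|_{L^2}^2=2\int w\,\partial_1 w\,R_1(w^2)\,\dd x$, Hölder with exponents $(4,2,4)$, $L^4$-boundedness of $R_1$, and the anisotropic embedding $\|w\|_{L^p}\lesssim\HH(w)^{1/2}$ for $p\in[2,10]$. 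This is shorter and more transparent than the bilinear Lemma~\ref{lem:1/2-1-3/4+}, which is overkill here (the paper needs the full bilinear version elsewhere, notably in the proof of Theorem~\ref{thm:minimizers}\,\ref{it:existence_min}, which is presumably why they invoke it). One small citation slip: for $\|w\|_{L^p}\lesssim\HH(w)^{1/2}$, Lemma~\ref{lem:frac-sobolev} as stated gives $L^2$-control of fractional derivatives, not $L^p$-embeddings; the correct reference is Lemma~\ref{lem:an_embedding} (for $L^{10}$) together with \eqref{eq:bound-2/3} and Poincar\'e (for $L^2$), then interpolation for the intermediate exponents. Your \eqref{eq:bound-2/3} argument by weighted AM--GM in Fourier matches the paper's Lemma~\ref{lem:fractional-H}.
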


\begin{proof}
\begin{enumerate}[label=(\roman*)]
	\item 	Fix $\kappa\in (0, \frac1{14})$ and choose $p=p(\kappa)\in(6,7)$ such that $\frac{2p-3}{p} = \frac{3}{2}+\kappa$. Recalling that $\eta_w = \partial_2 w - \partial_1 \frac{1}{2} w^2$, by \eqref{eq:sobolev_to_besov} and the fact that 
	$\frac{2p-6}{2p} \in(\frac{1}{2},1)$ we have
	\begin{align*}
		\mathcal{H}(w) 
		&\lesssim \int_{\TT^2} |\partial_1 w|^2\,\mathrm{d}x + \int_{\TT^2} | |\partial_1|^{-\frac{1}{2}} \eta_w |^2 \mathrm{d}x + \int_{\TT^2} | |\partial_1|^{\frac{1}{2}} w^2 |^2 \mathrm{d}x 
		\lesssim_{\kappa} \mathcal{E}(w) + \|w^2\|_{\dotB^{\frac{2p-6}{2p}}_{2;1}}^2.
	\end{align*}
	By \eqref{eq:besov_w^2_1/2_2_1_contr} we know that $\|w^2\|_{\dotB^{\frac{2p-6}{2p}}_{2;1}}^2 
	\lesssim_{\kappa}\mathcal{E}(w)^{\frac{2p-3}{p}}$, thus \eqref{eq:bound-harmonic} follows by Young's inequality. 
	
	Inequality \eqref{eq:bound-2/3} is proved in a more general context in Lemma~\ref{lem:fractional-H}
	and the last statement follows from Lemma~\ref{lem:hoelder-embedding}.
	
	\item We have  
	\begin{align*}
		\mathcal{E}(w) &= \HH(w) - \int_{\TT^2} \left( |\partial_1|^{-\frac{1}{2}} \partial_2 w \, |\partial_1|^{-\frac{1}{2}} \partial_1 w^2 \right) \,\dd x  +  \frac{1}{4} \int_{\TT^2} \left( |\partial_1|^{-\frac{1}{2}} \partial_1 w^2 \right)^2\,\dd x \\
		&\leq \HH(w) + \HH(w)^{\frac{1}{2}} \, \left( \int_{\TT^2} \left( |\partial_1|^{\frac{1}{2}} w^2 \right)^2\,\dd x\right)^{\frac{1}{2}} + \frac{1}{4} \int_{\TT^2} \left( |\partial_1|^{\frac{1}{2}} w^2 \right)^2\,\dd x.
	\end{align*} 
	By Lemma \ref{lem:1/2-1-3/4+} the claimed inequality \eqref{eq:E-H-bound} follows. \hfill \qedhere
\end{enumerate}
\end{proof}


\section{\texorpdfstring{$\Gamma$}{Gamma}-convergence of the renormalized energy} \label{sec:gamma-convergence}

In this section we study the $\Gamma$-convergence of the renormalized energy as the regularization of white noise is
removed, i.e., the limit $\ell\downarrow0$. As a consequence we will get the existence of minimizers of the limiting ``renormalized 
energy'', in particular, the existence of weak solutions of the Euler-Lagrange equation in \eqref{eq:ripple_rem}.


%
%

\subsection{Proof of Theorem~\ref{thm:minimizers}} 

We begin with the proof of coercivity statement \ref{it:coercivity} in Theorem~\ref{thm:minimizers}.

\begin{proof}[Proof of Theorem~\ref{thm:minimizers} \ref{it:coercivity}]
Let $(\xi, v, F) \in \triple$ be fixed. Since $v$ has vanishing average in $x_1$ we can 
estimate $\|v\|_{L^\infty} \lesssim [v]_{\frac{3}{4}-\varepsilon}$ (see e.g., \cite[Lemma 12]{IO19}), 
where the implicit constant is universal for small $\eps$ (e.g., $\eps\in (0, \frac1{100})$). We will use 
this estimate several times in what follows.
We split 
\begin{equs}
	 \mathcal{G}(v, F; w) 
	&  = \int_{\TT^2} \left( w^2 R_1 \partial_2 v 
	+ v^2 R_1 \eta_w 
	+ 2 v w R_1 \eta_w 
	+ 2 w  F
	- w  v R_1 \partial_1 v^2 
	+ (R_1 |\partial_1|^{\frac{1}{2}}(v w))^2 \right)\mathrm{d}x \\
	& =: \sum_{k=1}^6 \mathcal{G}_k(v, F; w), \label{eq:G_split}
\end{equs}
and bound each term separately: 
	\begin{enumerate}[label=\textbf{(T\arabic*)},leftmargin=0pt,labelsep=*,itemindent=*]
		\item \label{coercivityterm1}
		Notice that setting $g := |\partial_1|^{-1}\partial_2 v$ we have 
		\begin{align*}
			\partial_1 g = R_1 \partial_2 v
		\end{align*}
		and $g \in \mathcal{C}^{\frac{1}{4}-\varepsilon}$, with 
		$[g]_{\frac{1}{4}-\varepsilon} \lesssim [\xi]_{-\frac{5}{4}-\varepsilon}$, (see 
		Lemma~\ref{lem:regularised}). We can therefore integrate by parts 
		\begin{align*}
			\int_{\TT^2} w^2 R_1\partial_2 v \,\mathrm{d}x
			&= \int_{\TT^2} w^2 \partial_1 g \,\mathrm{d}x 
			= - 2\int_{\TT^2} w\partial_1 w \, g \,\mathrm{d}x
		\end{align*}
		and obtain the bound
		\begin{align*}
			\left| \mathcal{G}_1 (v, F; w) \right|
			&\leq \left| \int_{\TT^2} w^2 R_1\partial_2 v \,\mathrm{d}x \right| 
			\leq 2\|g\|_{L^{\infty}} \|w\|_{L^2} \|\partial_1 w\|_{L^2}
			\lesssim [g]_{\frac{1}{4}-\varepsilon} \|w\|_{L^3} \|\partial_1 w\|_{L^2} \\
			&\lesssim [g]_{\frac{1}{4}-\varepsilon} \mathcal{E}(w)^{\frac{1}{3} + 
			\frac{1}{2}} 
			\lesssim [\xi]_{-\frac{5}{4}-\varepsilon} \mathcal{E}(w)^{\frac{5}{6}},
		\end{align*}
		where we used Hölder's and Jensen's inequality, together with 
		\eqref{eq:L^p_contr}, 
		as well as $\|g\|_{L^{\infty}} \lesssim 
		[g]_{\frac{1}{4}-\varepsilon}$ because $g$ has zero average.
		By Young's inequality, it follows for any $\lambda\in (0,1)$
		\begin{align*}
			|\mathcal{G}_1(v, F; w)| \leq \lambda \mathcal{E}(w) + C_{\lambda}^{(1)} 
[\xi]_{-\frac{5}{4}-\varepsilon}^6.
		\end{align*}

		\item \label{coercivityterm2}
		For the term $\mathcal{G}_2$ we have that 
		\begin{align*} 
			\left|\mathcal{G}_2(v, F; w)\right| & = \left|\int_{\TT^2} (|\partial_1|^{\frac{1}{2}}v^2) \, (R_1|\partial_1|^{-\frac{1}{2}}\eta_w) \, \dd x \right|
			\leq \| |\partial_1|^{\frac{1}{2}}v^2 \|_{L^2} \| |\partial_1|^{-\frac{1}{2}}\eta_w \|_{L^2} \\
			&\lesssim \|v^2\|_{\dotB^{\frac{2}{3}}_{2;1}} 
			\mathcal{E}(w)^{\frac{1}{2}} 
			\lesssim [v^2]_{\frac{2}{3}}\mathcal{E}(w)^{\frac{1}{2}} 
			\lesssim [v]_{\frac{3}{4}-\varepsilon}^2 \mathcal{E}(w)^{\frac{1}{2}},
		\end{align*}
		where we used the Cauchy--Schwarz inequality, boundedness of $R_1$ on $L^2$, the estimates \eqref{eq:sobolev_to_besov}, 
		\eqref{eq:besov_to_holder_1} and \cite[Lemma 12]{IO19}. Hence, by Young's inequality, for any $\lambda\in (0,1)$,
		\begin{align*}
			|\mathcal{G}_2(v, F; w)| \leq \lambda \mathcal{E}(w) + C_{\lambda}^{(2)} 
			[v]_{\frac{3}{4}-\varepsilon}^4.
		\end{align*}
		\item \label{coercivityterm3}
		We estimate $\mathcal{G}_3$ using Cauchy--Schwarz, the boundedness of $R_1$ on $L^2$, and \eqref{eq:sobolev_to_besov} by
		\begin{align*}
			|\mathcal{G}_{3}(v, F; w)| 
			&= \left| 2\int_{\TT^2} ( |\partial_1|^{\frac{1}{2}} (v w)) (R_1 |\partial_1|^{-\frac{1}{2}} \eta_w)\,\mathrm{d}x \right| \\
			&\lesssim \||\partial_1|^{\frac{1}{2}}(vw)\|_{L^2} \||\partial_1|^{-\frac{1}{2}} \eta_w\|_{L^2} 
			\lesssim \|vw\|_{\dotB^{\frac23}_{2;1}} 
			\mathcal{E}(w)^{\frac{1}{2}}. 
		\end{align*}
		By the fractional Leibniz rule (Lemma~\ref{lem:leibniz} (i)) we can further bound
		\begin{align*}
			\|vw\|_{\dotB^{\frac23}_{2;1}}
			&\lesssim \|v\|_{L^\infty} \|w\|_{\dotB^{\frac23}_{2;1}} 
			+ [v]_{\frac23} \|w\|_{L^2}
			\lesssim [v]_{\frac{3}{4}-\varepsilon} 
			\left(\|w\|_{\dotB^{\frac23}_{2;1}} + \|w\|_{L^3} \right),
		\end{align*}
		where we also used Jensen's inequality. 
		Combined with \eqref{eq:besov_s_2_1_contr} and \eqref{eq:L^p_contr}, this gives
	\begin{align*}
		|\mathcal{G}_{3}(v, F; w)| & \lesssim [v]_{\frac{3}{4}-\varepsilon} 
		\left(\mathcal{E}(w)^{\frac{7}{18}} + \mathcal{E}(w)^{\frac{1}{3}}\right) 
		\mathcal{E}(w)^{\frac{1}{2}}
		\end{align*}
		so that Young's inequality yields for any $\lambda\in (0,1)$,
		\begin{align*}
			|\mathcal{G}_{3}(v, F; w)| \leq \lambda \mathcal{E}(w) + C_{\lambda}^{(3)}
			\left( [v]_{\frac{3}{4}-\varepsilon}^{6} + 
			[v]_{\frac{3}{4}-\varepsilon}^{9} \right).
		\end{align*}

		\item \label{coercivityterm4}
		By the duality Lemma~\ref{lem:dual_ineq}, $\mathcal{G}_{4}(v, F; w)=2\int_{\TT^2} wF\,\mathrm{d}x$ can be bounded 
				by 
		\begin{align*}
			|\mathcal{G}_{4}(v, F; w)| 
			&\lesssim \left(\|\partial_1 w\|_{L^2} 
			+ \| |\partial_2|^{\frac{2}{3}} w\|_{L^2} + 
			\|w\|_{L^1}\right) [F]_{-\frac{8}{9}} \\
			& \lesssim \left(\|\partial_1 w\|_{L^2} + 
			\| |\partial_2|^{\frac{2}{3}} w\|_{L^2} + \|w\|_{L^3}\right)
			[F]_{-\frac{3}{4}-\varepsilon}
		\end{align*}
		with a uniform implicit constant for every $\eps\in (0, \frac1{100})$,
		where in the second step we used Jensen's inequality and \cite[Remark 2]{IO19}. 
		With \eqref{eq:bound-harmonic}, \eqref{eq:bound-2/3} and \eqref{eq:L^p_contr}
		we obtain that
		\begin{align*}
			\|\partial_1 w\|_{L^2} + \| |\partial_2|^{\frac{2}{3}} w\|_{L^2} + \|w\|_{L^3} & 
			\lesssim 1+ \mathcal{E}(w)^{\frac{3}{4}+\kappa} + \mathcal{E}(w)^{\frac{1}{3}},
		\end{align*}
		where $\kappa>0$ can be chosen arbitrarily small (e.g., $\kappa=\frac1{100}$). This yields the estimate
		\begin{align*}
			|\mathcal{G}_{4}(v, F; w)| & \lesssim 
			\left(1 + \mathcal{E}(w)^{\frac{3}{4}+\kappa} + 
			\mathcal{E}(w)^{\frac{1}{3}}\right) [F]_{-\frac{3}{4}-\varepsilon}.
		\end{align*}
		It follows for any $\lambda\in (0,1)$,
		\begin{align*}
			|\mathcal{G}_{4}(v, F; w)| \leq \lambda \mathcal{E}(w) 
			+ C_{\lambda, \kappa}^{(4)} \left( [F]_{-\frac{3}{4}-\varepsilon} + 
			[F]_{-\frac{3}{4}-\varepsilon}^{\frac{3}{2}} + 
			[F]_{-\frac{3}{4}-\varepsilon}^{\frac{4}{1-4\kappa}} \right).
		\end{align*}
		\item \label{coercivityterm5}
		For $\mathcal{G}_5(v, F; w) = -\int_{\TT^2} w \, vR_1\partial_1 v^2 \, \dd x$, we use again the duality estimate
		Lemma~\ref{lem:dual_ineq},
		\begin{align*}
			|\mathcal{G}_{5}(v, F; w)| 
			&\lesssim \left(\|\partial_1 w\|_{L^2} 
			+ \| |\partial_2|^{\frac{2}{3}} w\|_{L^2} + \|w\|_{L^1}\right) 
			[vR_1\partial_1v^2]_{-\frac{2}{5}}.
		\end{align*}
		By \cite[Lemmata 6 and 12]{IO19} together with \eqref{eq:R_neg}, we have the uniform bound for any $\varepsilon\in (0, \frac1{100})$
		\begin{align*}
			[vR_1\partial_1v^2]_{-\frac{2}{5} } 
			\lesssim [v]_{\frac{1}{2}} [R_1 \partial_1 v^2]_{-\frac{2}{5}} 
			\lesssim [v]_{\frac{3}{4}-\varepsilon} [\partial_1 v^2]_{-\frac13}
			\lesssim [v]_{\frac{3}{4}-\varepsilon}^3.
		\end{align*}
		Hence, as in \ref{coercivityterm4}, we can bound $\mathcal{G}_{5}(v, F; w)$ for some small $\kappa>0$ (e.g., $\kappa=\frac1{100}$) by
		\begin{align*}
			|\mathcal{G}_{5}(v, F; w)| 
			&\lesssim \left(1+\mathcal{E}(w)^{\frac{3}{4}+\kappa} 
			+ \mathcal{E}(w)^{\frac{1}{3}}\right) [v]_{\frac{3}{4}-\varepsilon}^3.
		\end{align*}
		So, for any $\lambda\in (0,1)$, by Young's inequality,
		\begin{align*}
			|\mathcal{G}_{5}(v, F; w)| \leq \lambda \mathcal{E}(w) + C_{\lambda, \kappa}^{(5)}
			\left( [v]_{\frac{3}{4}-\varepsilon}^{\frac92} + [v]_{\frac{3}{4}-\varepsilon}^{3} 
			+ [v]_{\frac{3}{4}-\varepsilon}^{\frac{12}{1-4\kappa}} \right).
		\end{align*}
		
		\item \label{coercivityterm6}
		For the term $\mathcal{G}_6(v, F; w) = \int_{\TT^2} (R_1 |\partial_1|^{\frac{1}{2}}(v w))^2\,\mathrm{d}x$, we first notice 
		that by boundedness of $R_1$ on $L^2$ and the basic estimate \eqref{eq:sobolev_to_besov},
		\begin{align*}
			\mathcal{G}_6(v, F; w) = \||\partial_1|^{\frac{1}{2}}(v w)\|_{L^2}^2 \lesssim 
			\|vw\|_{\dotB^{\frac{2}{3}}_{2;1}}^2.
		\end{align*}
		Hence, by Lemma~\ref{lem:leibniz} and Jensen's inequality,
		\begin{align*}
			|\mathcal{G}_6(v, F; w)| 
			&\lesssim \|v\|_{L^\infty}^2 \|w\|_{\dotB^{\frac{2}{3}}_{2;1}}^2 
			+ [v]_{\frac{2}3}^2 \|w\|_{L^2}^2 
			\lesssim [v]_{\frac{3}{4}-\varepsilon}^2
			\left(\|w\|_{\dotB^{\frac{2}{3}}_{2;1}}^2
			+\|w\|_{L^3}^2 \right).
		\end{align*}
		Together with \eqref{eq:besov_w^2_1/2_2_1_contr} and \eqref{eq:L^p_contr} we can therefore estimate
		\begin{align*}
			|\mathcal{G}_6(v, F; w)| 
			&\lesssim [v]_{\frac{3}{4}-\varepsilon}^2 
			\left(\mathcal{E}(w)^{\frac79} 
			+ \mathcal{E}(w)^{\frac{2}{3}}\right).
		\end{align*}
		Young's inequality then yields for
		any $\lambda\in (0,1)$
		\begin{align*}
			|\mathcal{G}_6(v, F; w)| \leq \lambda \mathcal{E}(w) + C_{\lambda}^{(6)} 
			\left( [v]_{\frac{3}{4}-\varepsilon}^6 + 
			[v]_{\frac{3}{4}-\varepsilon}^{9} \right).
		\end{align*}
	\end{enumerate}
	\vspace{-0.5cm}
\end{proof}

In the proof of the continuity statement Theorem~\ref{thm:minimizers} \ref{it:continuity}, we need the 
following lemma.
 
\begin{lemma}\label{lem:strong-convergence}
	Let $\{w_{\ell}\}_{\ell\downarrow 0}\subset \mathcal{W}$ with uniformly bounded energy $\sup_{\ell}\mathcal{E}(w_{\ell})<\infty$, and assume that $w_{\ell} \to w$ strongly in $L^2$ as $\ell \to 0$ for some $w\in\mathcal{W}$. Then as $\ell \to 0$, 
	\begin{align*}
			\partial_1 w_{\ell} \rightharpoonup \partial_1 w,
			\quad 
			|\partial_1|^{-\frac{1}{2}} \partial_2 w_{\ell} \rightharpoonup |\partial_1|^{-\frac{1}{2}} \partial_2 w,
			\quad 
			|\partial_1|^{-\frac{1}{2}} \eta_{w_{\ell}} \rightharpoonup |\partial_1|^{-\frac{1}{2}} \eta_{w}, 
			\quad
			|\partial_2|^{\frac{2}{3}} w_{\ell} \rightharpoonup |\partial_2|^{\frac{2}{3}} w,  	
	\end{align*}
	weakly in $L^2$, and for any $s_1\in (0,1)$ and $s_2 \in (0, \frac{2}{3})$,
	\begin{align*}
		|\partial_1|^{s_1} w_{\ell} \to |\partial_1|^{s_1} w, \qquad |\partial_2|^{s_2} w_{\ell} \to |\partial_2|^{s_2} w 
		\qquad \text{strongly in } L^2.
	\end{align*}
\end{lemma}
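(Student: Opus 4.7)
\medskip

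\noindent\textbf{Proof plan.} The starting observation is that the uniform bound $\sup_\ell \mathcal{E}(w_\ell) < \infty$, together with \eqref{eq:bound-harmonic} in Proposition~\ref{prop:bound-harmonic}~(i), yields $\sup_\ell \mathcal{H}(w_\ell) < \infty$, and then \eqref{eq:bound-2/3} together with the definition of $\mathcal{H}$ and $\mathcal{E}$ gives uniform $L^2$-bounds on $\partial_1 w_\ell$, $|\partial_1|^{-1/2}\partial_2 w_\ell$, $|\partial_2|^{2/3} w_\ell$ and $|\partial_1|^{-1/2}\eta_{w_\ell}$. By weak $L^2$-compactness, each of these sequences admits a weakly convergent subsequence; it remains to identify the limits and to upgrade subsequential convergence to convergence of the whole sequence.

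For the linear operators $\partial_1$, $|\partial_1|^{-1/2}\partial_2$, and $|\partial_2|^{2/3}$, the identification of the limit is straightforward: since $w_\ell \to w$ strongly in $L^2$, hence as periodic Schwartz distributions, the images under each of these continuous operators on distributions converge to the corresponding image of $w$. Combined with the a~priori $L^2$-bound and uniqueness of the distributional limit, this gives weak $L^2$-convergence along the full sequence.

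The nonlinear term $|\partial_1|^{-1/2}\eta_{w_\ell} = |\partial_1|^{-1/2}(\partial_2 w_\ell - \tfrac{1}{2}\partial_1 w_\ell^2)$ is the main step. Here I would use the $L^p$-estimate \eqref{eq:L^p_contr}, e.g.\ with $p=6$, which gives $\sup_\ell \|w_\ell\|_{L^6} < \infty$. Combined with strong $L^2$-convergence and interpolation in Lebesgue spaces, this yields $w_\ell \to w$ in $L^q$ for every $q \in [2,6)$; in particular $w_\ell \to w$ in $L^4$, so that $w_\ell^2 \to w^2$ strongly in $L^2$. Applying the continuous operator $|\partial_1|^{-1/2}\partial_1$ (which is bounded on the orthogonal complement of constants in $x_1$) to this convergence, and using the already-established weak convergence of $|\partial_1|^{-1/2}\partial_2 w_\ell$, identifies the weak $L^2$-limit of $|\partial_1|^{-1/2}\eta_{w_\ell}$ as $|\partial_1|^{-1/2}\eta_w$.

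Finally, for the strong $L^2$-convergence statements, I would use interpolation between the trivial $L^2$-convergence $w_\ell \to w$ and the uniform higher-regularity bounds. For $s_1 \in (0,1)$, Parseval's identity gives
\begin{equation*}
\bigl\||\partial_1|^{s_1}(w_\ell-w)\bigr\|_{L^2} \leq \|w_\ell - w\|_{L^2}^{1-s_1}\,\bigl\|\partial_1(w_\ell - w)\bigr\|_{L^2}^{s_1},
\end{equation*}
and the right-hand side tends to $0$ because the first factor vanishes while the second is uniformly bounded. The same argument with $|\partial_2|^{2/3}$ in place of $\partial_1$ and exponent $s_2/(2/3)$ handles the $x_2$-direction for $s_2 \in (0, 2/3)$. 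The only nontrivial input needed in the whole argument is the cubic growth bound in Proposition~\ref{prop:bound-harmonic} together with the $L^p$-bound from Proposition~\ref{prop:a_priori_est}~(iii); the rest is standard functional analysis, and I do not foresee genuine obstacles beyond the treatment of the Burgers nonlinearity already sketched above.
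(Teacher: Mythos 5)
Your overall architecture matches the paper's (weak convergence from uniform $L^2$-bounds plus distributional convergence; strong convergence from higher-regularity bounds), but the details differ in two places, and one of your justifications is wrong even though its conclusion survives.

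For the Burgers term $|\partial_1|^{-\frac12}\eta_{w_\ell}$, you route through the $L^6$ bound \eqref{eq:L^p_contr}, Lebesgue interpolation to get $w_\ell\to w$ in $L^4$, and hence $w_\ell^2\to w^2$ in $L^2$. This works, but it is a detour: the paper simply notes that $w_\ell\to w$ in $L^2$ already yields $w_\ell^2\to w^2$ in $L^1$ (Cauchy--Schwarz on $(w_\ell-w)(w_\ell+w)$), which is enough to get $\eta_{w_\ell}\to\eta_w$ distributionally; the weak $L^2$-convergence of $|\partial_1|^{-\frac12}\eta_{w_\ell}$ then follows from the uniform $L^2$-bound exactly as for the linear terms. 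More seriously, your parenthetical claim that $|\partial_1|^{-\frac12}\partial_1$ is bounded on the orthogonal complement of constants in $x_1$ is false: this operator is $R_1|\partial_1|^{\frac12}$, with Fourier symbol $\ii\,\sgn(k_1)|k_1|^{\frac12}$, which is unbounded as $|k_1|\to\infty$. So you cannot ``apply a continuous operator to the $L^2$-convergence'' $w_\ell^2\to w^2$ to conclude convergence of $|\partial_1|^{-\frac12}\partial_1 w_\ell^2$ in $L^2$. Fortunately the step you need is only weak convergence, and the correct way to get it is the same pattern you already use elsewhere: strong $L^1$ (or $L^2$) convergence of $w_\ell^2$ gives distributional convergence of $|\partial_1|^{-\frac12}\partial_1 w_\ell^2$, and together with the uniform $L^2$-bound (from $\sup_\ell\mathcal{H}(w_\ell)<\infty$) this upgrades to weak $L^2$-convergence. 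So the conclusion of the step stands, but the stated justification does not.

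For the strong convergence of $|\partial_1|^{s_1}w_\ell$ and $|\partial_2|^{s_2}w_\ell$, your Fourier-side interpolation $\||\partial_1|^{s_1}f\|_{L^2}\le\|f\|_{L^2}^{1-s_1}\|\partial_1 f\|_{L^2}^{s_1}$ (and its $x_2$-analogue with exponent $\frac{3}{2}s_2$) is a valid and in fact cleaner argument than the paper's: you get convergence of the full sequence directly, with no subsequence extraction. The paper instead uses Lemma~\ref{lem:frac-sobolev} to place $|\partial_1|^{s_1}w_\ell$ in $\dot H^{\frac23(1-s_1)}$ uniformly and then invokes the compact embedding into $L^2$. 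Both are correct; yours is the more elementary functional-analytic route, though it does implicitly use $\|\partial_1 w\|_{L^2},\ \||\partial_2|^{\frac23}w\|_{L^2}<\infty$, which holds since $w\in\mathcal{W}$ (and can also be obtained from lower semicontinuity of $\HH$).
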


\begin{proof}
	For the first part, we use the fact that a uniformly bounded sequence in $L^2$ converging in the distributional sense converges weakly in $L^2$.
	By Proposition~\ref{prop:bound-harmonic}, we have that $\sup_\ell \mathcal{H}(w_{\ell})<\infty$ . Therefore, $\{\partial_1 w_{\ell}\}_\ell$, $\{|\partial_1|^{-\frac{1}{2}}\partial_2 w_{\ell}\}_{\ell}$, 
	$\{|\partial_1|^{-\frac{1}{2}} \eta_{w_{\ell}}\}_\ell$, and $\{|\partial_2|^{\frac{2}{3}}w_{\ell}\}_\ell$ are uniformly bounded in $L^2$. They
	also converge in the distributional sense since $w_{\ell} \to w$ strongly in $L^2$ 
	(in particular, $(w_{\ell})^2 \to w^2$ strongly in $L^1$, so $\eta_{w_{\ell}}\to \eta_w$ in the distributional sense).
	For the second part, by Lemma~\ref{lem:frac-sobolev}, for any $s_1\in (0,1)$, $s_2 \in (0, \frac{2}{3})$, we have that 
	$\{|\partial_1|^{s_1} w_{\ell}\}_\ell$ is uniformly bounded in the homogeneous Sobolev space $\dot{H}^{\frac{2}{3}(1-s_1)}$, 
	and $\{|\partial_2|^{s_2} w_{\ell}\}_\ell$ is uniformly bounded in $\dot{H}^{\frac{2}{3}-s_2}$. The compact embedding 
	$\dot{H}^{\min\{\frac{2}{3}(1-s_1), \frac{2}{3}-s_2\}}\hookrightarrow L^2$ (of periodic functions with vanishing average) yields the conclusion.
\end{proof}

\begin{proof}[Proof of Theorem~\ref{thm:minimizers} \ref{it:continuity}]
First, all the convergence statements from Lemma~\ref{lem:strong-convergence} hold for the sequence $\{w_{\ell}\}_\ell$. As
$(\xi_{\ell}, v_{\ell}, F_{\ell}) \to (\xi, v, F)$ in $\triple$, by
Lemma~\ref{lem:regularised} we also have that $g_\ell:=|\partial_1|^{-1} \partial_2 v_{\ell} \to 
|\partial_1|^{-1} \partial_2 v=:g$ in $\mathcal{C}^{\frac{1}{4}-\varepsilon}$.
We will prove the continuity of $\GG$ using the decomposition $\mathcal{G} = \sum_{k=1}^6 \mathcal{G}_k$ in \eqref{eq:G_split} 
and study each term $\mathcal{G}_k$ separately.
\begin{enumerate}[label=\textbf{(T$'$\arabic*)},leftmargin=0pt,labelsep=*,itemindent=*]
	\item \label{convergenceterm1}
	For the term $\mathcal{G}_1(v_{\ell}, F_{\ell}; w_{\ell}) = \int_{\TT^2} (w_{\ell})^2 R_1 \partial_2 v_{\ell}\,\mathrm{d}x$, 
	we use integration by parts, and that $w_{\ell}\to w$ strongly in $L^2$, $\partial_1 w_{\ell} \rightharpoonup \partial_1 w$ 
	weakly in $L^2$, and $g_\ell\to g$ uniformly on $\TT^2$,
		\begin{align*}
		\mathcal{G}_1(v_{\ell}, F_{\ell}; w_{\ell})
		&= \int_{\TT^2} (w_{\ell})^2 \partial_1 g_{\ell} \,\mathrm{d}x = - 2\int_{\TT^2} w_{\ell}\partial_1 w_{\ell} \, g_{\ell} \,\mathrm{d}x \\
		&\to -2\int_{\TT^2} w \partial_1 w \, g\,\mathrm{d}x 
		= \int_{\TT^2} w^2 \partial_1 g \,\mathrm{d}x 
		= \int_{\TT^2} w^2 R_1 \partial_2 v \,\mathrm{d}x 
		= \mathcal{G}_1(v, F; w).
	\end{align*}
	
	\medskip
	\item \label{convergenceterm2}
	For the term $\mathcal{G}_2(v_{\ell}, F_{\ell}; w_{\ell}) = \int_{\TT^2} v_{\ell}^2 R_1 \eta_{w_{\ell}}\,\mathrm{d}x$ we use that 
	$|\partial_1|^{-\frac{1}{2}} \eta_{w_{\ell}} \rightharpoonup |\partial_1|^{-\frac{1}{2}} \eta_{w}$ weakly in $L^2$ 
	(hence also $R_1|\partial_1|^{-\frac{1}{2}} \eta_{w_{\ell}} \rightharpoonup R_1|\partial_1|^{-\frac{1}{2}} \eta_{w}$ weakly in $L^2$), and 
	$|\partial_1|^{\frac{1}{2}} v_{\ell}^2 \to |\partial_1|^{\frac{1}{2}} v^2$ strongly
	in $\mathcal{C}^{\frac{1}{4}-\varepsilon}$ (see Lemma~\ref{lem:frac-deriv-hoelder}),
		\begin{align*}
			\mathcal{G}_2(v_{\ell}, F_{\ell}; w_{\ell})
			&= \int_{\TT^2} (|\partial_1|^{\frac{1}{2}}v_{\ell}^2) \, (R_1|\partial_1|^{-\frac{1}{2}}\eta_{w_{\ell}}) \, \dd x 
			\to \int_{\TT^2} (|\partial_1|^{\frac{1}{2}}v^2) \, (R_1|\partial_1|^{-\frac{1}{2}}\eta_{w}) \, \dd x
			= \mathcal{G}_2(v, F; w).
		\end{align*}
	
	\medskip
	\item \label{convergenceterm3}
	Since $\mathcal{G}_3(v_{\ell}, F_{\ell}; w_{\ell}) = 2 \int_{\TT^2} v_{\ell}w_{\ell} R_1 \eta_{w_{\ell}}\,\mathrm{d}x =
	2 \int_{\TT^2} |\partial_1|^{\frac{1}{2}}(v_{\ell}w_{\ell}) \, R_1|\partial_1|^{-\frac{1}{2}} \eta_{w_{\ell}} \,\mathrm{d}x$
	and, as in \ref{convergenceterm2}, $R_1|\partial_1|^{-\frac{1}{2}} \eta_{w_{\ell}} \rightharpoonup R_1|\partial_1|^{-\frac{1}{2}} \eta_{w}$
	weakly in $L^2$, the claimed convergence follows if we show that $|\partial_1|^{\frac{1}{2}}(v_{\ell} w_{\ell}) \to 
	|\partial_1|^{\frac{1}{2}} (v w)$ strongly in $L^2$. For this, we use the triangle inequality, Lemma~\ref{lem:leibniz} \ref{item:leibniz-2},
	and that $w_{\ell} \to w$, $|\partial_1|^{\frac{1}{2}} w_{\ell} \to |\partial_1|^{\frac{1}{2}} w$ strongly in $L^2$ as well as $v_{\ell} \to v$
	in $\mathcal{C}^{\frac{3}{4}-\varepsilon}\subset \mathcal{C}^\frac{2}{3}$ which yield as $\ell \to 0$,
	\begin{equs}
		\| |\partial_1|^{\frac{1}{2}}(v_{\ell} w_{\ell}) - |\partial_1|^{\frac{1}{2}} (vw) \|_{L^2} 
		& \leq \| |\partial_1|^{\frac{1}{2}}((v_{\ell}-v) w_{\ell}) \|_{L^2} + \| |\partial_1|^{\frac{1}{2}} (v(w_{\ell}-w)) \|_{L^2}
		\\
		& \lesssim \|v_{\ell} - v\|_{L^{\infty}} \||\partial_1|^{\frac{1}{2}} w_{\ell} \|_{L^2} + [v_{\ell} - v]_{\frac{2}3} \|w_{\ell}\|_{L^2} 
		\\
		& \quad +\|v\|_{L^{\infty}} \| |\partial_1|^{\frac{1}{2}} (w_{\ell}-w) \|_{L^2} + [v]_{\frac{2}{3}} \|w_{\ell}-w\|_{L^2}
		\\
		& \lesssim [v_{\ell}-v]_{\frac{3}{4}-\varepsilon} \left( \||\partial_1|^{\frac{1}{2}} w_{\ell} \|_{L^2} + \|w_{\ell}\|_{L^2} \right)
		\\
		& \quad + [v]_{\frac{3}{4}-\varepsilon} \left( \| |\partial_1|^{\frac{1}{2}} (w_{\ell}-w) \|_{L^2} + \|w_{\ell}-w\|_{L^2}  \right)\to 0.
	\end{equs}
	
	\medskip
	\item \label{convergenceterm4}
	The term $\mathcal{G}_4(v_{\ell}, F_{\ell}; w_{\ell}) = 2 \int_{\TT^2} w_{\ell} F_{\ell} \,\mathrm{d}x$ is treated by duality. Since 
	$F_{\ell}\to F$ in $\mathcal{C}^{-\frac{3}{4}-\varepsilon}\subset \mathcal{C}^{-\frac45}$ (see e.g., \cite[Remark 2]{IO19}), by Lemmata
	\ref{lem:strong-convergence} and \ref{lem:dual_ineq} we have for $\ell \to 0$,
	\begin{align*}
		& \left|\mathcal{G}_4(v_{\ell}, F_{\ell}; w_{\ell}) - \mathcal{G}_4(v,F;w)\right|
		\\
		& \quad \lesssim \left| \int_{\TT^2} (w_{\ell}-w) F_{\ell} \,\mathrm{d}x \right| + \left| \int_{\TT^2} w (F_{\ell}-F)\,\mathrm{d}x \right|
		\\
		& \quad \lesssim [F_{\ell}]_{-\frac{4}{5}} 
		\left( \||\partial_1|^{\frac{5}{6}} 
		(w_{\ell}-w)\|_{L^2} + \||\partial_2|^{\frac{2}{3}\cdot \frac56} 
		(w_{\ell}-w)\|_{L^2} + \|w_{\ell}-w\|_{L^2} \right)
		\\
		& \quad \quad + [F_{\ell}-F]_{-\frac{4}{5}}
		\left(\||\partial_1|^{\frac{5}6} w\|_{L^2}
		+ \||\partial_2|^{\frac{2}{3}\cdot \frac56}w\|_{L^2} + \|w\|_{L^2} \right) \to 0.
	\end{align*}
	
	\medskip
	\item \label{convergenceterm5}
	For the continuity of the term $\mathcal{G}_5(v_{\ell}, F_{\ell}; w_{\ell}) = -\int_{\TT^2} w_{\ell} \, 
	v_{\ell}R_1\partial_1 v_{\ell}^2 \, \dd x$ we again use the duality Lemma~\ref{lem:dual_ineq}. Here, the 
	situation is even easier than in \ref{convergenceterm4}, as $v_{\ell} R_1 \partial_1 v_{\ell}^2$ converges
	to the nonsingular product $v R_1 \partial_1 v^2$ in $\mathcal{C}^{-\frac{1}{4}-2\varepsilon}$.
	This convergence follows by 
	\begin{align*}
		& [v_{\ell} R_1 \partial_1 v_{\ell}^2 - v R_1\partial_1 v^2]_{-\frac{1}{4}-{2\varepsilon}}
		\\
		& \quad =[(v_{\ell}-v) R_1 \partial_1 v^2 + v_{\ell} R_1 \partial_1 ((v_{\ell}-v)(v_{\ell}+v))]_{-\frac{1}{4}-{2\varepsilon}}
		\\
		& \quad \lesssim [v_{\ell}-v]_{\frac{3}{4}-\varepsilon} [R_1 \partial_1 
		v^2]_{-\frac{1}{4}-{2\varepsilon}} + [v_{\ell}]_{\frac{3}{4}-\varepsilon} [R_1 
		\partial_1 ((v_{\ell}-v)(v_{\ell}+ v))]_{-\frac{1}{4}-{2\varepsilon}} 
		\\
		& \quad \lesssim [v_{\ell}-v]_{\frac{3}{4}-\varepsilon} [v^2]_{\frac{3}{4}-\varepsilon} + 
		[v_{\ell}]_{\frac{3}{4}-\varepsilon} 
		[(v_{\ell}-v)(v_{\ell}+v)]_{\frac{3}{4}-\varepsilon} 
		\\
		& \quad \lesssim [v_{\ell}-v]_{\frac{3}{4}-\varepsilon} \left( 
		[v]_{\frac{3}{4}-\varepsilon}^2 
		+ [v_{\ell}]_{\frac{3}{4}-\varepsilon}^2 \right) \to 0,
	\end{align*}
	where we used that $v_{\ell} \to v$ in $\mathcal{C}^{\frac{3}{4}-\varepsilon}$ and \cite[Lemmata 6, 7, and 12]{IO19}. We conclude as for
	$\mathcal{G}_4$ (with $v_{\ell} R_1 \partial_1 v_{\ell}^2$ corresponding to $F_{\ell}$ and $v R_1 \partial_1 v^2$ to $F$, using also that
	$\mathcal{C}^{-\frac{1}{4}-{2\varepsilon}}\subset \mathcal{C}^{-\frac{3}{4}-\varepsilon}$).
	
	\medskip
	\item \label{convergenceterm6} Noting that $\mathcal{G}_6(v_{\ell}, F_{\ell}; w_{\ell}) = \int_{\TT^2} (R_1 |\partial_1|^{\frac{1}{2}}(v_{\ell}w_{\ell}))^2\,\mathrm{d}x 
	= \| |\partial_1|^{\frac{1}{2}}(v_{\ell}w_{\ell}) \|_{L^2}^2$, continuity follows since $|\partial_1|^{\frac{1}{2}}(v_{\ell} w_{\ell}) \to |\partial_1|^{\frac{1}{2}} (v w)$ 
	in $L^2$ used in \ref{convergenceterm3}. \qedhere
\end{enumerate}
\end{proof}

We now prove the compactness Theorem~\ref{thm:minimizers} \ref{it:compactness} of the sublevel sets of $E_{ren}$ with respect to the strong 
topology in $L^2$. 

\begin{proof}[Proof of Theorem~\ref{thm:minimizers} \ref{it:compactness}]
	By the coercivity Theorem~\ref{thm:minimizers} \ref{it:coercivity} for $\lambda=\frac{1}{2}$, it follows that 
	\begin{align}
	\label{coerciv}
		E_{ren}(v,F; w) = \mathcal{E}(w) + \mathcal{G}(v,F; w) \geq \frac12
		\mathcal{E}(w) - C\geq -C.
	\end{align}
Thus $E_{ren}(v,F; \cdot)$ is bounded from below and the sublevel set $\{E_{ren}(v,F;\cdot)\leq M\}$ over $\mathcal W$ is included in a sublevel 
set of  $\mathcal E$ over $\mathcal W$ which is relatively compact in $L^2$ by Lemma~\ref{lem:frac-sobolev}. It remains to prove that the sublevel 
set $\{E_{ren}(v,F;\cdot)\leq M\}$ over $\mathcal W$ is closed in $L^2$. By the continuity of $\mathcal{G}(v,F;\cdot)$ (Theorem~\ref{thm:minimizers}
\ref{it:continuity}), it suffices to show that $\mathcal E$ is lower semicontinuous in $\mathcal W$, i.e., for every $\{w^\ell\}_{\ell \downarrow 0} 
\subset \mathcal W$ with $w^\ell\to w$ in $L^2$, there holds 
\begin{align}\label{eq:lsc}
		\liminf_{\ell \downarrow 0} \mathcal{E}(w^\ell) \geq \mathcal{E}(w).
	\end{align}
	Indeed, since $a^2 \geq b^2 + 2 (a-b) b$, it follows that 
	\begin{align*}
		\mathcal{E}(w^\ell) \geq \mathcal{E}(w) + 2 \int_{\TT^2} \left(\partial_1 w^\ell - \partial_1 w\right) \partial_1w \, \mathrm{d} x
		+ 2 \int_{\TT^2} \left(|\partial_1|^{-\frac12} \eta_{w^\ell} - |\partial_1|^{-\frac{1}{2}} \eta_w\right) 
		|\partial_1|^{-\frac{1}{2}} \eta_w \, \mathrm{d} x.
	\end{align*}
	Without loss of generality, we may assume that $\liminf_{\ell \downarrow 0} \mathcal{E}(w^\ell)=\limsup_{\ell \downarrow 0} \mathcal{E}(w^\ell)<\infty$.
	Hence, by Lemma~\ref{lem:strong-convergence}, $\partial_1 w^\ell \rightharpoonup \partial_1 w$ and $|\partial_1|^{-\frac{1}{2}} \eta_{w^\ell} \rightharpoonup |\partial_1|^{-\frac{1}{2}} \eta_w$
	weakly in $L^2$, and thus, \eqref{eq:lsc} follows.
	The same argument shows that $\HH$ is lower semicontinuous in $\mathcal{W}$.
\end{proof}

We are now ready to prove the existence of minimizers Theorem~\ref{thm:minimizers} \ref{it:existence_min}. 

\begin{proof}[Proof of Theorem~\ref{thm:minimizers} \ref{it:existence_min}]
Note that $E_{ren}(v,F; 0)=0$ and recall that $E_{ren}(v, F; \cdot)$ is bounded from below (see \eqref{coerciv}) and lower semicontinuous in $L^2$ over 
its zero sublevel set (due to \eqref{eq:renormalized-energy}, $\mathcal{G}(v, F; \cdot)$ being continuous over any sublevel set of $\mathcal E$ and 
$\mathcal{E}$ being lower semicontinuous in $L^2$). By the $L^2$-compactness of the zero sublevel set of $E_{ren}(v, F; \cdot)$ over $\mathcal W$ 
(Theorem~\ref{thm:minimizers} \ref{it:compactness}), the direct method in the calculus of variations yields the existence of minimizers. 

In order to show that a minimizer is a distributional solution of the Euler--Lagrange equation \eqref{eq:ripple_rem}, we do the following splitting: 
\begin{align*}
	E_{ren}(v,F;w) = \HH(w) + \left( \EE(w) -\HH(w) \right) + \mathcal{G}(v,F;w) 
	= \HH(w) + \sum_{j=1}^4 L_j(w),
\end{align*}
where 
\begin{align*}
	L_1(w) &= \int_{\TT^2} \left(2 w  F - w  v R_1 \partial_1 v^2 + v^2 R_1 \partial_2 w \right)\,\dd x \\
	L_2(w) &= \int_{\TT^2} \left( w^2 R_1 \partial_2 v - \frac{1}{2} v^2 R_1 \partial_1 w^2 + 2 v w R_1 \partial_2 w + (R_1 |\partial_1|^{\frac{1}{2}}(v w))^2 \right) \,\dd x \\
	L_3(w) &= \int_{\TT^2} \left(- R_1|\partial_1|^{\frac{1}{2}} w^2 |\partial_1|^{-\frac{1}{2}} \partial_2 w - v w R_1 \partial_1 w^2 \right) \,\dd x \\
	L_4(w) &= \int_{\TT^2} \frac{1}{4}\left(  |\partial_1|^{\frac{1}{2}} w^2 \right)^2.
\end{align*}
We will show that, given $(\xi, v, F)\in\triple$, the functional $E_{ren}(v,F;\cdot)$ is $\C^{\infty}$ on the space $\mathcal{W}$ endowed with the norm $\HH^{\frac{1}{2}}$, denoted by $(\mathcal{W}, \HH^{\frac{1}{2}})$. 

\begin{enumerate}[label=\textsc{Step \arabic*},leftmargin=0pt,labelsep=*,itemindent=*]
	\item (Estimating the linear functional $L_1$). We claim that $L_1$ is a continuous linear functional on $(\mathcal{W}, \HH^{\frac{1}{2}})$, i.e.,
	\begin{align}
		|L_1(w)| \leq C \HH(w)^{\frac{1}{2}},
	\end{align} 
	where $C$ depends polynomially on $[v]_{\frac{3}{4}-\epsilon}, [F]_{-\frac{3}{4}-\epsilon}$. 
	Indeed, as in (T4) in the proof of \ref{it:coercivity}, by the duality Lemma \ref{lem:dual_ineq} and Poincaré's inequality, we may bound
	\begin{align*}
		\left|\int_{\TT^2} 2 w  F \,\dd x \right| \lesssim \left( \|\partial_1 w\|_{L^2} + \||\partial_2|^{\frac{2}{3}} w\|_{L^2} + \|w\|_{L^2} \right) [F]_{-\frac{8}{9}} 
		\lesssim [F]_{-\frac{3}{4}-\epsilon} \HH(w)^{\frac{1}{2}}.
	\end{align*}
	By the same argument, see also (T5) above, we have 
	\begin{align*}
		\left| \int_{\TT^2} w  v R_1 \partial_1 v^2 \,\dd x \right| \lesssim \left( \|\partial_1 w\|_{L^2} + \||\partial_2|^{\frac{2}{3}} w\|_{L^2} + \|w\|_{L^2} \right) [v R_1\partial_1 v^2]_{-\frac{2}{5}} \lesssim [v]_{\frac{3}{4}-\epsilon}^3 \HH(w)^{\frac{1}{2}}.
	\end{align*}
	As in (T2), the last term of $L_1$ is estimated using Cauchy--Schwarz, \eqref{eq:sobolev_to_besov}, and \eqref{eq:besov_to_holder_1}, by 
	\begin{align*}
		\left| \int_{\TT^2}v^2 R_1 \partial_2 w \,\dd x \right| \leq \||\partial_1|^{\frac{1}{2}} v^2 \|_{L^2} \||\partial_1|^{-\frac{1}{2}} \partial_2 w \|_{L^2} 
		\lesssim [v]_{\frac{3}{4}-\epsilon}^2 \HH(w)^{\frac{1}{2}}.
	\end{align*}
	
	\item (Estimating the quadratic functional $L_2$). We claim that $L_2$ is a continuous quadratic functional on $(\mathcal{W}, \HH^{\frac{1}{2}})$, i.e., there exists a continuous bilinear functional $M_2$ given by
	\begin{align*}
		&M_2(w_1, w_2) \\ &\quad= \int_{\TT^2} \left( w_1 w_2 R_1 \partial_2 v - \frac{1}{2} v^2 R_1 \partial_1 (w_1 w_2) + 2 v w_1 R_1 \partial_2 w_2 + (|\partial_1|^{\frac{1}{2}}(v w_1))(|\partial_1|^{\frac{1}{2}}(v w_2)) \right) \,\dd x
	\end{align*}
	such that $L_2(w) = M_2(w,w)$, and satisfying the inequality 
	\begin{align}\label{eq:bilinear}
		|M_2(w_1, w_2)| \leq C \HH(w_1)^{\frac{1}{2}} \HH(w_2)^{\frac{1}{2}},
	\end{align}
	where $C$ depends polynomially on $[\xi]_{-\frac{5}{4}-\epsilon}, [v]_{\frac{3}{4}-\epsilon}, [F]_{-\frac{3}{4}-\epsilon}$.
	To prove \eqref{eq:bilinear}, we again treat each term separately. Similarly to (T1), let $g:= |\partial_1|^{-1}\partial_2 v$, such that $R_1 \partial_2 v = \partial_1 g$. Recall that by Lemma \ref{lem:regularised}, $[g]_{\frac{1}{4}-\epsilon} \lesssim [\xi]_{-\frac{5}{4}-\epsilon}$. Then integration by parts and Cauchy--Schwarz, together with Poincaré's inequality, gives 
	\begin{align*}
		\left| \int_{\TT^2} w_1 w_2 R_1\partial_2 v \,\dd x \right| 
		&= \left| \int_{\TT^2} \partial_1(w_1 w_2) g \,\dd x \right|
		\lesssim [g]_{\frac{1}{4}-\epsilon} \int_{\TT^2} |w_1\partial_1 w_2 + w_2 \partial_1 w_1|\,\dd x \\
		&\lesssim [\xi]_{-\frac{5}{4}-\epsilon} \HH(w_1)^{\frac{1}{2}} \HH(w_2)^{\frac{1}{2}}.
	\end{align*}
	Similarly, the second term can be estimated by 
	\begin{align*}
		\left| \int_{\TT^2} \frac{1}{2} v^2 R_1 \partial_1 (w_1 w_2) \,\dd x \right| 
		&= \left| \int_{\TT^2} \frac{1}{2} R_1 v^2 (w_1 \partial_1  w_2 + w_2 \partial_1 w_1) \,\dd x \right|
		\lesssim [R_1 v^2]_{\frac{3}{4}-2\epsilon} \HH(w_1)^{\frac{1}{2}} \HH(w_2)^{\frac{1}{2}} \\
		&\lesssim [v]_{\frac{3}{4}-\epsilon}^2 \HH(w_1)^{\frac{1}{2}} \HH(w_2)^{\frac{1}{2}}.
	\end{align*}
	By Cauchy--Schwarz, Lemma \ref{lem:leibniz} (i), interpolation and Poincaré's inequality, we can bound the third term by
	\begin{align*}
		&\left| \int_{\TT^2} 2 v w_1 R_1 \partial_2 w_2 \,\dd x \right| 
		= 2 \left| \int_{\TT^2} |\partial_1|^{\frac{1}{2}}(v w_1) R_1 |\partial_1|^{-\frac{1}{2}}(\partial_2 w_2) \,\dd x \right| 
		\leq 2 \| |\partial_1|^{\frac{1}{2}}(v w_1) \|_{L^2} \||\partial_1|^{-\frac{1}{2}} \partial_2 w\|_{L^2} \\
		&\qquad\qquad\qquad\qquad\lesssim \left( \||\partial_1|^{\frac{1}{2}} w_1 \|_{L^2} \|v\|_{L^{\infty}} + \|w_1\|_{L^2} [v]_{\frac{1}{2}+\epsilon} \right) \HH(w_2)^{\frac{1}{2}} 
		\lesssim [v]_{\frac{3}{4}-\epsilon} \HH(w_1)^{\frac{1}{2}} \HH(w_2)^{\frac{1}{2}}.
	\end{align*}
	Analogously, the fourth term is estimated by
	\begin{align*}
		\left| \int_{\TT^2} (|\partial_1|^{\frac{1}{2}}(v w_1))(|\partial_1|^{\frac{1}{2}}(v w_2)) \,\dd x \right| 
		&\leq \||\partial_1|^{\frac{1}{2}}(v w_1)\|_{L^2} \||\partial_1|^{\frac{1}{2}}(v w_2)\|_{L^2} 
		\lesssim [v]_{\frac{3}{4}-\epsilon}^2 \HH(w_1)^{\frac{1}{2}} \HH(w_2)^{\frac{1}{2}}.
	\end{align*}
	
	\item (Estimating the cubic functional $L_3$). We claim that $L_3$ is a continuous cubic functional on $(\mathcal{W}, \HH^{\frac{1}{2}})$, i.e., there exists a continuous three-linear functional $M_3$ given by 
	\begin{align*}
		M_3(w_1, w_2,w_3) = - \int_{\TT^2} \left( R_1|\partial_1|^{\frac{1}{2}} (w_1 w_2) |\partial_1|^{-\frac{1}{2}} \partial_2 w_3 + v w_1 R_1 \partial_1 (w_2 w_3) \right) \,\dd x,
	\end{align*}
	such that $L_3(w) = M_3(w,w,w)$, and $M_3$ is controlled by 
	\begin{align}\label{eq:trilinear}
		|M_3(w_1, w_2, w_3)| \leq C \HH(w_1)^{\frac{1}{2}} \HH(w_2)^{\frac{1}{2}} \HH(w_3)^{\frac{1}{2}} ,
	\end{align}
	where $C$ depends polynomially on $[v]_{\frac{3}{4}-\epsilon}, [F]_{-\frac{3}{4}-\epsilon}$.
	Indeed, the first term is estimated using Cauchy--Schwarz and Lemma \ref{lem:1/2-1-3/4+},
	\begin{align*}
		\left| \int_{\TT^2} R_1|\partial_1|^{\frac{1}{2}} (w_1 w_2) |\partial_1|^{-\frac{1}{2}} \partial_2 w_3 \,\dd x \right| 
		&\leq \| |\partial_1|^{\frac{1}{2}} (w_1 w_2) \|_{L^2} \| |\partial_1|^{-\frac{1}{2}} \partial_2 w_3\|_{L^2} \\
		&\lesssim \HH(w_1)^{\frac{1}{2}} \HH(w_2)^{\frac{1}{2}} \HH(w_3)^{\frac{1}{2}}.
	\end{align*}
	Similarly, Lemma \ref{lem:leibniz} (i) and Lemma \ref{lem:1/2-1-3/4+} imply that 
	\begin{align*}
		\left| \int_{\TT^2} v w_1 R_1 \partial_1 (w_2 w_3)\,\dd x \right| 
		\leq \| |\partial_1|^{\frac{1}{2}} (v w_1) \|_{L^2} \| |\partial_1|^{\frac{1}{2}} (w_2 w_3) \|_{L^2} 
		\lesssim [v]_{\frac{3}{4}-\epsilon} \HH(w_1)^{\frac{1}{2}} \HH(w_2)^{\frac{1}{2}} \HH(w_3)^{\frac{1}{2}}.
	\end{align*}
	
	\item (Estimating the quartic functional $L_4$). We claim that $L_4$ is a continuous quartic functional on $(\mathcal{W}, \HH^{\frac{1}{2}})$, i.e., there exists a continuous four-linear functional $M_4$ given by 
	\begin{align*}
		M_4(w_1, w_2,w_3, w_4) = \int_{\TT^2} \frac{1}{4}\left(  |\partial_1|^{\frac{1}{2}}(w_1 w_2) |\partial_1|^{\frac{1}{2}}(w_3 w_4) \right) \,\dd x,
	\end{align*}
	such that $L_4(w) = M_4(w, w, w, w)$. Indeed, Cauchy--Schwarz and Lemma \ref{lem:1/2-1-3/4+} implies that
	\begin{align}\label{eq:fourlinear}
		|M_4(w_1, w_2, w_3, w_4)| \leq C \HH(w_1)^{\frac{1}{2}} \HH(w_2)^{\frac{1}{2}} \HH(w_3)^{\frac{1}{2}} \HH(w_4)^{\frac{1}{2}},
	\end{align}
	where $C$ depends polynomially on $[v]_{\frac{3}{4}-\epsilon}, [F]_{-\frac{3}{4}-\epsilon}$. 
\end{enumerate}

Therefore, the gradient $\nabla_w E_{ren}(v,F;w)$ belongs to the dual space of $(\mathcal{W}, \HH^{\frac{1}{2}})$, in particular, it is a distribution, so that 
\begin{align*}
	\nabla_w E_{ren}(v,F;w) = 0
\end{align*}
is the Euler--Lagrange equation \eqref{eq:ripple_rem}.
\end{proof}

\subsection{\texorpdfstring{$\Gamma$}{Gamma}-convergence}

In view of Theorem~\ref{thm:minimizers}, we give the proof of $\Gamma$-convergence of the renormalized energy for sequences 
$(\xi_{\ell}, v_{\ell}, F_{\ell}) \to (\xi, v, F)$ in $\triple$ as $\ell\to0$. 

\begin{proof}[Proof of Corollary \ref{cor:gamma-convergence}]
	Assume that $(\xi_{\ell}, v_{\ell}, F_{\ell}) \to (\xi, v, F)$ in $\triple$ as $\ell\to 0$. By the decomposition of $E_{ren}$ in
	\eqref{eq:renormalized-energy} and the continuity of $\mathcal{G}$ in Theorem~\ref{thm:minimizers} \ref{it:continuity}, the pointwise
	convergence $E_{ren}(v_{\ell}, F_{\ell}; \cdot)\to E_{ren}(v, F; \cdot)$ over $\mathcal W$ is immediate. We proceed with the proof of the
	remaining statements. 
	\begin{enumerate}[label=(\roman*)]
	\item ($\Gamma-\liminf$): Without loss of generality, we may assume that 
	\begin{equs}
	 \textstyle\liminf_{\ell\to 0} E_{ren}(v_{\ell}, F_{\ell}; w^\ell)=\limsup_{\ell\to 0} E_{ren}(v_{\ell}, F_{\ell}; w^\ell)<\infty.
	\end{equs} 
	As $\{(\xi_{\ell}, v_{\ell}, F_{\ell})\}_\ell$ is uniformly bounded in $\triple$, the coercivity Theorem~\ref{thm:minimizers} \ref{it:coercivity}
	implies via \eqref{coerciv} the existence of a constant $C>0$ (uniform in $\ell$) such that $E_{ren}(v_{\ell}, F_{\ell}; w^\ell)\geq \frac12 \mathcal{E}(w^\ell)-C$,
	i.e., $\limsup_{\ell\to 0} \mathcal{E}(w^\ell)<\infty$. The desired inequality is a consequence of \eqref{eq:renormalized-energy} combined with the continuity of
	$\mathcal{G}$ (Theorem~\ref{thm:minimizers} \ref{it:continuity}) and the lower semicontinuity of $\mathcal E$ over $\mathcal W$ in \eqref{eq:lsc}. 
	
	\item ($\Gamma-\limsup$): For $w\in \mathcal W$, one sets $w_{\ell} = w$ for all $\ell\in(0,1]$ and the conclusion follows by the pointwise 
	convergence of $E_{ren}(v_{\ell}, F_{\ell}; \cdot)$ to $E_{ren}(v, F; \cdot)$. 
	
	\item (Convergence of minimizers):
	Let $\{w_{\ell}\}_{\ell\downarrow 0} \subset \mathcal{W}$ be a sequence of minimizers of the sequence of functionals $\{E_{ren}(v_{\ell}, F_{\ell}; \cdot)\}_{\ell\downarrow 0}$ 
	(the existence of minimizers follows from Theorem~\ref{thm:minimizers} \ref{it:existence_min}). As $\{(\xi_{\ell}, v_{\ell}, F_{\ell})\}_\ell$ is
	uniformly bounded in  $\triple$ as $\ell\to 0$, the coercivity Theorem~\ref{thm:minimizers} \ref{it:coercivity} implies via \eqref{coerciv}
	the existence of a constant $C>0$ (uniform in $\ell$) such that for all $\ell$
	\begin{align*}
	0=E_{ren}(v_{\ell}, F_{\ell}, 0)\geq E_{ren}(v_{\ell}, F_{\ell}; w^\ell)
	\geq \frac12 \mathcal{E}(w^\ell)-C.
	\end{align*}
	This implies that $\{w_{\ell}\}_{\ell\downarrow 0}$ belongs to the sublevel set $2C$ of the energy $\mathcal{E}$. Hence, by Lemma 
	\ref{lem:frac-sobolev}, there exists $w\in\mathcal{W}$ such that, upon a subsequence, $w_{\ell} \to w$ strongly in $L^2$. Moreover, $w$ 
	is a minimizer of $E_{ren}(v,F; \cdot)$ over $\mathcal W$ because for every $w_0\in \mathcal W$, by the $\Gamma-\liminf$ inequality 
	and the pointwise convergence of $E_{ren}(v_{\ell}, F_{\ell}; \cdot)$ to $E_{ren}(v, F; \cdot)$, we have
	\begin{align*}
	E_{ren}(v, F; w)&\leq \liminf_{\ell\to 0} E_{ren}(v_{\ell}, F_{\ell}; w^\ell)\leq \limsup_{\ell\to 0} E_{ren}(v_{\ell}, F_{\ell}; w^\ell)\\
	&\leq \limsup_{\ell\to 0} E_{ren}(v_{\ell}, F_{\ell}; w_0)=E_{ren}(v, F; w_0).
	\end{align*}
	Choosing $w_0=w$ in the above relation, we deduce that 
	\begin{equs}
	 E_{ren}(v, F; w)= \lim_{\ell\to 0} E_{ren}(v_{\ell}, F_{\ell}; w^\ell). 
	\end{equs}
	\qedhere
	\end{enumerate}
\end{proof}


\section{A priori estimate for minimizers in Hölder spaces} \label{sec:regularity}

In this section we prove an a priori estimate for minimizers of the renormalized energy $E_{ren}$, as stated in 
Proposition~\ref{prop:hoelder_regularity}. We first need the following proposition. 

\begin{proposition} \label{prop:conv-commutator} There exists $C>0$ such that for every $w\in \mathcal W$ and periodic distribution $f$,
\begin{equs}
 {[wf]}_{-\frac{3}{4}} \leq C \mathcal{H}(w)^{\frac{1}{2}} [f]_{-\frac{1}{2}}. 
\end{equs}
\end{proposition}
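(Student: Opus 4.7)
The plan is to construct a decomposition $wf = c + \partial_1 G + \partial_2 H$ realizing the infimum in Definition~\ref{def:neg_holder}. First I would fix an almost-optimal decomposition $f = c_f + \partial_1 g_f + \partial_2 h_f$ with $|c_f| + [g_f]_{\frac{1}{2}} + [h_f]_{1} \leq 2[f]_{-\frac{1}{2}}$ and, after absorbing constants into $c_f$, $\int g_f = \int h_f = 0$. By distributional integration by parts---justified for smooth $f$ and extended to general $f\in\C^{-\frac{1}{2}}$ by approximating via $f_\eps = \psi_\eps * f$ and passing to the limit using the continuity of the relevant maps---one obtains the algebraic identity
\begin{equation*}
wf = c_f\, w + \partial_1(w g_f) + \partial_2(w h_f) - g_f\, \partial_1 w - h_f\, \partial_2 w.
\end{equation*}

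Next, I would handle the spatial mean $c := \int_{\TT^2} wf$. Since $\int w = 0$, one has $c = -\int g_f\,\partial_1 w - \int h_f\,\partial_2 w$. By Cauchy--Schwarz, $\bigl|\int g_f\,\partial_1 w\bigr| \leq \|g_f\|_{L^\infty}\|\partial_1 w\|_{L^2} \lesssim [g_f]_{\frac{1}{2}}\HH(w)^{\frac{1}{2}}$; and by self-adjointness of $|\partial_1|^{\frac{1}{2}}$, $\bigl|\int h_f\,\partial_2 w\bigr| = \bigl|\int (|\partial_1|^{\frac{1}{2}} h_f)(|\partial_1|^{-\frac{1}{2}}\partial_2 w)\bigr| \leq \||\partial_1|^{\frac{1}{2}} h_f\|_{L^2}\HH(w)^{\frac{1}{2}}$, with the interpolation $\||\partial_1|^{\frac{1}{2}} h_f\|_{L^2}^2 \leq \|h_f\|_{L^2}\|\partial_1 h_f\|_{L^2} \lesssim [h_f]_1^2$ yielding $|c| \lesssim [f]_{-\frac{1}{2}}\HH(w)^{\frac{1}{2}}$.

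The core of the argument is to express the remainder as $\partial_1 G + \partial_2 H$ with the sharp Hölder bounds $[G]_{\frac{1}{4}}, [H]_{\frac{3}{4}} \lesssim \HH(w)^{\frac{1}{2}}[f]_{-\frac{1}{2}}$. Since the naive choice $G = wg_f$, $H = wh_f$ fails (as $w \notin L^\infty$, but only $w\in L^p$ for $p<\infty$ via the anisotropic Sobolev embedding of $H^1$), I would work at dyadic scales via the convolution semigroup $\psi_t$: integrating by parts against $\psi_t(x-\cdot)$ produces
\begin{equation*}
(wf)_t = c_f\, w_t + \partial_1(w g_f)_t + \partial_2(w h_f)_t - (g_f\,\partial_1 w)_t - (h_f\,\partial_2 w)_t,
\end{equation*}
from which one reconstructs the global decomposition via $wf - \overline{wf} = \int_0^\infty \mathcal{A}(wf)_t\,\mathrm{d}t$, splitting $\mathcal{A} = \partial_1(R_1\partial_1^2) + \partial_2(-\partial_2)$ into its $\partial_1$- and $\partial_2$-parts. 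Each piece at scale $t$ is estimated using Young's inequality with the $\psi_t$-kernel scalings~\eqref{lp_psi}, the $L^2$-bounds $\|\partial_1 w\|_{L^2}, \||\partial_1|^{-\frac{1}{2}}\partial_2 w\|_{L^2} \leq \HH(w)^{\frac{1}{2}}$, the Hölder bounds on $g_f, h_f$, and the fractional Leibniz rule of Lemma~\ref{lem:leibniz}.

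The main obstacle is the singular product $h_f\,\partial_2 w$: neither $\partial_2 h_f$ nor $\partial_2 w$ has an intrinsic $L^p$-bound from the data, so every occurrence must be rewritten through the self-adjoint identity $\int h_f\,\partial_2 w\,\varphi = \int (|\partial_1|^{-\frac{1}{2}}\partial_2 w)\,|\partial_1|^{\frac{1}{2}}(h_f\varphi)$ combined with $\||\partial_1|^{\frac{1}{2}}(h_f\varphi)\|_{L^2}$ bounds from fractional Leibniz. Carrying this through the $t$-integration while keeping the sharp $t$-scalings that match the target exponents $\frac{1}{4}$ and $\frac{3}{4}$ is the technical heart of the argument---this is, I believe, the commutator estimate from which the proposition takes its name.
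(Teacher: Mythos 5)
Your plan has the right scaffolding but misses the one ingredient that makes the estimate go through, and the route you choose cannot supply it. The issue is already visible at the level of regularity counting: from $\HH(w)<\infty$ one only gets $w\in\C^{-\frac14}$ (Lemma~\ref{lem:hoelder-embedding}), so every single term in your Leibniz identity $wf = c_f\, w + \partial_1(w g_f) + \partial_2(w h_f) - g_f\, \partial_1 w - h_f\, \partial_2 w$ lands in a H\"older class strictly worse than $\C^{-\frac34}$ (e.g. $w h_f\in\C^{-\frac14}$ so $\partial_2(w h_f)\in\C^{-\frac74}$; and $h_f\partial_2 w$ is outright ill-defined since $1+(-\frac74)<0$). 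Your duality trick gives $|(h_f\partial_2 w)_t|\lesssim (t^{\frac13})^{-\frac74}\HH(w)^{\frac12}[h_f]_1$, which is also worse than the target exponent $-\frac34$. Splitting $\mathcal{A}$ and integrating in $t$ does not rescue this: unwinding your semigroup reconstruction (apply \eqref{eq:pos_hoelder_A_char} to $G=\int_0^\infty R_1|\partial_1|^2(wf)_t\,\dd t$ and note $\mathcal{A}G_T = R_1|\partial_1|^2(wf)_T$) shows that what you actually need is exactly $\|(wf)_T\|_{L^\infty}\lesssim (T^{\frac13})^{-\frac34}\HH(w)^{\frac12}[f]_{-\frac12}$ — i.e.\ the estimate itself via \eqref{eq:neg_hoelder_char}. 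So the decomposition is a detour, and the Leibniz split discards precisely the cancellation you need.

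The missing idea, which the paper supplies, is the $L^2$ commutator bound
\begin{equation*}
\|(w f_t)_t - w f_{2t}\|_{L^2} \lesssim t^{\frac{1}{3}}\, \mathcal{H}(w)^{\frac{1}{2}}\, \|f_t\|_{L^\infty}
\end{equation*}
(Lemma~\ref{lem:L^2_com_est}), proved from the pointwise identity $\bigl((w f_t)_t - w f_{2t}\bigr)(x)=\int \psi_t(y)\bigl(w(x-y)-w(x)\bigr)f_t(x-y)\,\dd y$. This is the structure your plan lacks: the increments $w(x-y)-w(x)$ are controlled in $L^2$ by $\HH(w)^{\frac12}$ uniformly on the scale $d(0,y)\lesssim t^{\frac13}$, which yields the extra factor $t^{\frac13}$ that the Leibniz terms do not produce. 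The paper then never decomposes $f$ at all; it writes $(wf)_T = (w f_{T/2})_{T/2} + \sum_{t=T/2^k}\bigl((wf_t)_{T-t}-(wf_{2t})_{T-2t}\bigr)$, bounds the first term by Young's inequality together with $\|w\|_{L^{10}}\lesssim\HH(w)^{\frac12}$ and $\|f_{T/2}\|_{L^\infty}\lesssim (T^{\frac13})^{-\frac12}[f]_{-\frac12}$, and sums the telescope using the commutator lemma. You correctly sense that a commutator is the crux, but you must telescope the \emph{mollification of $f$} inside the product $(w f_t)_t$ rather than apply Leibniz to a $\C^{-\frac12}$-decomposition of $f$; only the former sees the crucial $w(x-y)-w(x)$ increments.
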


%
 %
 %
%

\begin{proof} 
\begin{enumerate}[label=\textsc{Case \arabic*},leftmargin=0pt,labelsep=*,itemindent=*]
	\item ($f\in L^2\cap \mathcal{C}^{-\frac{1}{2}}(\TT^2)$). Since $w\in\mathcal{W}$, the product $wf$ belongs to $L^1(\TT^2)$. 
	We estimate $ {[wf]}_{-\frac{3}{4}}$ via \eqref{eq:neg_hoelder_char} by studying the blow-up of $\|(wf)_T\|_{L^\infty}$ for $T\in (0,1]$. We use the ``telescopic'' decomposition
	\begin{equs}
	 (wf)_T = (w f_{\frac{T}{2}})_{\frac{T}{2}} +\sum_{k\geq 2, \, t = \frac{T}{2^k}} \big( (wf_t)_{T-t} - (w f_{2t})_{T-2t} \big).
	 \label{eq:telescopic_sum}
	\end{equs}

	\noindent\textsc{Step 1} (Bound on $\|(w f_{\frac{T}{2}})_{\frac{T}{2}}\|_{L^{\infty}}$): For $p=10$, Young's inequality for convolution in 
	Remark \ref{rem:perio}, Lemma \ref{lem:an_embedding}, \eqref{eq:bound-2/3} and \eqref{eq:supl_numa} yield for every $T\in (0,1]$,
	\begin{equs}
	 \|(w f_{\frac{T}{2}})_{\frac{T}{2}}\|_{L^{\infty}} & \lesssim \left(T^{\frac{1}{3}}\right)^{-\frac{5}{2p}} \|w f_{\frac{T}{2}}\|_{L^p} 
	 \lesssim \left(T^{\frac{1}{3}}\right)^{-\frac{1}{4}} \|w\|_{L^{10}} \|f_{\frac{T}{2}}\|_{L^\infty} \\
	 & \lesssim \left(T^{\frac{1}{3}}\right)^{-\frac14-\frac{1}{2}} \mathcal{H}(w)^{\frac{1}{2}} [f]_{-\frac{1}{2}}
	 = \left(T^{\frac{1}{3}}\right)^{-\frac{3}{4}} \mathcal{H}(w)^{\frac{1}{2}} [f]_{-\frac{1}{2}}.
	\end{equs}
	\noindent\textsc{Step 2} (Bound on the telescopic sum): By Young's inequality for convolution in Remark \ref{rem:perio} and Lemma 
	\ref{lem:L^2_com_est} (see below), we obtain via \eqref{eq:supl_numa} for every $T\in (0,1]$,
	\begin{align*}
	& \Big\|\sum_{k\geq 2, \,  t = \frac{T}{2^k}} \big( (wf_t)_{T-t} - (w f_{2t})_{T-2t} \big) \Big\|_{L^\infty} 
	\\
	& \quad \leq \sum_{k\geq 2, \, t = \frac{T}{2^k}} \big\|\big( (wf_t)_t - w f_{2t} \big)_{T-2t}\big\|_{L^\infty}
	\lesssim \sum_{k\geq 2, \,  t = \frac{T}{2^k}}  \left((T-2t)^{\frac{1}{3}}\right)^{-\frac{5}{4}} \|(wf_t)_t - w f_{2t}\|_{L^2} 
	\\
	& \quad \quad \lesssim \left(T^\frac{1}{3}\right)^{-\frac{5}{4}} \sum_{k\geq 2, \,  t = \frac{T}{2^k}} t^{\frac{1}{3}} \mathcal{H}(w)^{\frac{1}{2}} \|f_t\|_{L^\infty}
	\lesssim \left(T^\frac{1}{3}\right)^{-\frac{5}{4}} \sum_{k\geq 2, \,  t = \frac{T}{2^k}} (t^{\frac{1}{3}})^{\frac{1}{2}} \mathcal{H}(w)^{\frac{1}{2}} [f]_{-\frac{1}{2}} 
	\\
	& \quad \quad \lesssim \left(T^\frac{1}{3}\right)^{-\frac{3}{4}} \mathcal{H}(w)^{\frac{1}{2}} [f]_{-\frac{1}{2}}.
	\end{align*}
	\noindent\textsc{Step 3} (H\"older regularity): By \textsc{Step} 1 and \textsc{Step} 2 we know that for every $T\in (0,1]$,
	\begin{equs}
	 \|(wf)_T\|_{L^\infty} & \lesssim \left(T^{\frac{1}{3}}\right)^{-\frac{3}{4}} \mathcal{H}(w)^{\frac{1}{2}} [f]_{-\frac{1}{2}},
	\end{equs}
	which combined with \eqref{eq:neg_hoelder_char} completes the proof. 

	\medskip
	\item ($f\in \mathcal{C}^{-\frac{1}{2}}(\TT^2)$). We consider an arbitrary approximation $f_{\ell} \in L^2\cap\mathcal{C}^{-\frac{1}{2}}(\TT^2)$ of $f$ with respect to $[\cdot]_{-\frac{1}{2}}$. By \textsc{Case 1} we deduce that $w f_{\ell}$ is a Cauchy sequence in $\mathcal{C}^{-\frac{3}{4}}$, therefore it converges to the product $wf$ by the same argument as in \cite[Lemma 6]{IO19}. \hfill \qedhere
	\end{enumerate}
\end{proof}

\begin{lemma} \label{lem:L^2_com_est} 
There exists a constant $C>0$ such that for every $t\in (0,1]$, $w\in \mathcal W$ and periodic distribution $f$,
	\begin{align*}
		\|(w f_t)_t - w f_{2t}\|_{L^2} \leq C t^{\frac{1}{3}} \mathcal{H}(w)^{\frac{1}{2}} \|f_t\|_{L^\infty}.
	\end{align*}
\end{lemma}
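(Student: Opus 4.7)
The plan is to exploit the semigroup property $(f_t)_t = f_{2t}$ to rewrite the difference as the commutator between convolution with $\psi_t$ and multiplication by $w$ acting on $f_t$. Specifically,
\begin{equs}
\big( (w f_t)_t - w f_{2t} \big)(x) = \int \psi_t(x-y) \big( w(y) - w(x) \big) f_t(y) \, \mathrm{d}y,
\end{equs}
and after bounding $|f_t(y)| \le \|f_t\|_{L^\infty}$ and applying Minkowski's inequality in $L^2$ (with the change of variable $h = x-y$ and $w$ extended periodically to $\RR^2$), one is reduced to proving
\begin{equs}
\int_{\RR^2} |\psi_t(h)| \, \|w(\cdot - h) - w\|_{L^2} \, \mathrm{d}h \lesssim t^{\frac{1}{3}} \HH(w)^{\frac{1}{2}}.
\end{equs}

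The core of the argument is then to control $\|w(\cdot - h) - w\|_{L^2}$ by an \emph{anisotropic} modulus of continuity matching the Carnot--Carath\'eodory scaling of $\psi_t$. Using Parseval and the splitting $|e^{-\ii k\cdot h} - 1|^2 \lesssim |e^{-\ii k_1 h_1} - 1|^2 + |e^{-\ii k_2 h_2} - 1|^2$, together with the elementary estimates $|e^{-\ii k_1 h_1} - 1| \le |k_1 h_1|$ and $|e^{-\ii k_2 h_2} - 1|^2 \lesssim |k_2 h_2|^{4/3}$, I obtain
\begin{equs}
\|w(\cdot - h) - w\|_{L^2} \lesssim |h_1| \|\partial_1 w\|_{L^2} + |h_2|^{\frac{2}{3}} \big\| |\partial_2|^{\frac{2}{3}} w \big\|_{L^2} \lesssim (|h_1| + |h_2|^{\frac{2}{3}}) \, \HH(w)^{\frac{1}{2}},
\end{equs}
where the last step invokes the bound \eqref{eq:bound-2/3} from Proposition \ref{prop:bound-harmonic}. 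Note that $|h_1| + |h_2|^{2/3}$ is precisely the anisotropic distance $d(0,h)$, which is no coincidence.

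The remaining computation is a weighted integral against $|\psi_t|$, which is routine thanks to the scaling identity \eqref{eq:heat_ker}. Changing variables $y_1 = h_1/t^{\frac{1}{3}}$ and $y_2 = h_2/t^{\frac{1}{2}}$ extracts exactly one power of $t^{\frac{1}{3}}$ per term:
\begin{equs}
\int_{\RR^2} |\psi_t(h)| (|h_1| + |h_2|^{\frac{2}{3}}) \, \mathrm{d}h = t^{\frac{1}{3}} \int_{\RR^2} |\psi(y)| (|y_1| + |y_2|^{\frac{2}{3}}) \, \mathrm{d}y,
\end{equs}
and the integral on the right is a finite universal constant. The only obstacle I anticipate is verifying this last integrability: since $\widehat{\psi}(k) = \exp(-|k_1|^3 - k_2^2)$ is only of finite smoothness at $k_1 = 0$, one needs that $\psi$ still decays at infinity fast enough to be integrable against the polynomial weights $|y_1|$ and $|y_2|^{2/3}$. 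This follows from standard decay arguments combined with the rapid decay of $\widehat{\psi}$ away from $k_1 = 0$, and is the only non-mechanical step in the proof.
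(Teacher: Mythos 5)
Your proof is correct and reaches the same bound, but it takes a genuinely different route in the step where the modulus of continuity of $w$ is controlled. Both proofs begin identically: write $(wf_t)_t - wf_{2t}$ as the convolution commutator $\int_{\RR^2}\psi_t(y)(w(x-y)-w(x))f_t(x-y)\,\dd y$, pull out $\|f_t\|_{L^\infty}$ by Minkowski's inequality, and reduce to estimating $\int_{\RR^2}|\psi_t(y)|\|w(\cdot-y)-w\|_{L^2}\,\dd y$. From there the paper splits the increment into an $x_1$-step and an $x_2$-step and treats them asymmetrically: the $x_1$-step via the mean value theorem and the $L^1$-bound on $y_1\psi_t$; the $x_2$-step via a weighted Cauchy--Schwarz/Minkowski manipulation in the $y_2$-variable designed to reassemble the Gagliardo-type integral representation \eqref{eq:fourier} of $\||\partial_2|^{2/3}w\|_{L^2}^2$. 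You instead prove the uniform modulus-of-continuity bound
\begin{equs}
\|w(\cdot-h)-w\|_{L^2}\lesssim |h_1|\,\|\partial_1 w\|_{L^2}+|h_2|^{\frac23}\,\||\partial_2|^{\frac23}w\|_{L^2}\lesssim d(0,h)\,\HH(w)^{\frac12}
\end{equs}
by Parseval and the elementary bound $|e^{-\ii s}-1|\leq\min(2,|s|)$, and then integrate against $|\psi_t|$ with the weight $d(0,h)$, which is computed by the scaling identity \eqref{eq:heat_ker}. Your route is more symmetric and arguably cleaner, since it replaces the paper's $L^2_{y_2}$-duality against the Besov integral by a pointwise-in-$h$ Sobolev-into-Besov embedding $\dot H^{2/3}\subset \dotB^{2/3}_{2,\infty}$, at the negligible cost of a constant. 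The price is that you rely on the integrability $\int|\psi(y)|(|y_1|+|y_2|^{2/3})\,\dd y<\infty$; this is indeed the only nontrivial point, and it holds because $\psi$ factorizes as a Gaussian in $y_2$ times a kernel $\varphi(y_1)$ with $|y_1|^3\varphi\in L^\infty(\RR)$ (cf.\ the tensorization footnote in the proof of Lemma~\ref{lem:dual_ineq} and \cite[Lemma~10]{IO19}), so $\int|\varphi||y_1|\,\dd y_1<\infty$ while the Gaussian handles the $|y_2|^{2/3}$-weight. Both proofs then conclude via \eqref{eq:bound-2/3}.
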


\begin{proof} We start with the identity
$$\big((w f_t)_t - w f_{2t}\big)(x)=\int_{\RR^2} \psi_t(y) (w(x-y)-w(x))f_t(x-y)\, dy, \quad x\in \TT^2.$$
By Minkowski's inequality, we deduce
\begin{align*}
	\|(w f_t)_t - w f_{2t}\|_{L^2} 
	& \leq \|f_t\|_{L^\infty} \int_{\RR^2} |\psi_t(y)| \|w(\cdot - y) - w(\cdot)\|_{L^2} \, \dd y
	\\
	& \leq \|f_t\|_{L^\infty} \int_{\RR^2} |\psi_t(y)| \|\partial_1^{-y_1}w(x_1,x_2 - y_2)\|_{L^2_x} \, \dd y 
	\\
	& \quad + \|f_t\|_{L^\infty} \int_{\RR^2} |\psi_t(y)| 
	\|\partial_2^{-y_2}w\|_{L^2}\, \dd y,
\end{align*}
where we used that $w(x-y) - w(x) = \partial_1^{-y_1}w(x_1,x_2 - y_2) + \partial_2^{-y_2} w(x_1, x_2)$ for every $x\in \TT^2$ and $y\in \RR^2$. 

The first integral can be estimated using the mean value theorem and translation invariance of the torus by
\begin{align*}
	\int_{\RR^2} |\psi_t(y)| \left(\int_{\TT^2} |\partial_1^{-y_1}w(x_1,x_2 - y_2)|^2 \, \dd x\right)^{\frac{1}{2}} \, \dd y & \leq \int_{\RR^2} |y_1\psi_t(y)| \dd y \, \|\partial_1w\|_{L^2} \lesssim t^{\frac{1}{3}} \, \|\partial_1w\|_{L^2},
\end{align*}
since by Step 1 in \cite[proof of Lemma 10]{IO19} we know that $y \mapsto y_1 \psi(y)\in L^1(\RR^2)$. 

The second integral can be estimated using the Cauchy-Schwarz inequality and \eqref{eq:fourier} by
\begin{equs}
	& \int_{\RR^2} |\psi_t(y)| \left(\int_{\TT^2} |\partial_2^{-y_2}w(x)|^2 \, \dd x\right)^{\frac{1}{2}} \, \dd y \\
	& 
	\quad = (t^{\frac{1}{3}})^{\frac{3}{2}\left(\frac{2}{3} +\frac{1}{2}\right)}\int_\RR  \int_\RR 
	\left|\frac{y_2}{(t^{\frac13})^\frac32}\right|^{\frac{2}{3}+\frac{1}{2}}|\psi_t(y)| \, \dd y_1
	\left(\int_{\TT^2} \frac{|\partial_2^{-y_2}w(x)|^2}{|y_2|^{\frac{4}{3}}} \, \dd x\right)^{\frac{1}{2}} \, \frac{\dd y_2}{|y_2|^{\frac{1}{2}}} 
	\\
	& \quad \lesssim (t^{\frac{1}{3}})^{\frac{3}{2}\left(\frac{2}{3} +\frac{1}{2}\right)} 
	\left\|\int_\RR \left|\frac{y_2}{(t^{\frac13})^\frac32}\right|^{\frac{2}{3}+\frac{1}{2}}|\psi_t(y)| \, \dd y_1\right\|_{L^2_{y_2}(\RR)} 
	\||\partial_2|^{\frac{2}{3}} w\|_{L^2} 
	\lesssim t^{\frac{1}{3}} \||\partial_2|^{\frac{2}{3}} w\|_{L^2},
\end{equs}
where we also used Minkowski's inequality and a change of variables to deduce that
\begin{equs}
 \left\|\int_\RR \left|\frac{y_2}{(t^{\frac13})^\frac32}\right|^{\frac{2}{3}+\frac{1}{2}}|\psi_t(y)| \, \dd y_1\right\|_{L^2_{y_2}(\RR)} 
 \leq \frac{1}{(t^{\frac{1}{3}})^{\frac{3}{4}}} \big\|\| |y_2|^{\frac{2}{3}+\frac{1}{2}}\psi(y_1, y_2)\|_{L^2_{y_2}(\RR)}\big\|_{L^1_{y_1}(\RR)},
 \label{eq:y_2_5/6_bound}
\end{equs}
along with the fact that $y_1 \mapsto \| |y_2|^{\frac{2}{3}+\frac{1}{2}}\psi(y_1, y_2)\|_{L^2_{y_2}(\RR)}\in L^1_{y_1}(\RR)$.
\footnote{\label{footnote:y_2_5/6_bound} This follows easily from the bound  
\begin{equs}
 \big\|\| |y_2|^{\frac{2}{3}+\frac{1}{2}}\psi(y_1, y_2)\|_{L^2_{y_2}(\RR)}\big\|_{L^1_{y_1}(\RR)} 
 & \lesssim  \|(1+|y_1|)(1+|y_2|^2)\psi(y)\|_{L^2_y(\RR^2)}
\end{equs}
and Plancherel's identity, using that $\widehat{\psi}(k) = \mathrm{e}^{-|k_1|^3 - k_2^2}$,
see also Step 1 in \cite[proof of Lemma 10]{IO19} and Footnote \ref{footnote:tensorization}.}

Combining the previous estimates with \eqref{eq:bound-2/3} implies the desired bound. 
\end{proof}

We are now ready to prove Proposition~\ref{prop:hoelder_regularity}.

\begin{proof}[Proof of Proposition~\ref{prop:hoelder_regularity}] 
	By Theorem~\ref{thm:minimizers}~\ref{it:existence_min} if $w\in \mathcal{W}$ is a minimizer of $E_{ren}(v,F;\cdot)$, then $w$ is a weak solution to 
	\eqref{eq:ripple_rem}. By the Schauder theory for the operator $\mathcal{L}$ (see \cite[Lemma 5]{IO19}), if $w\in \mathcal W\cap \C^{\frac{5}{4}-2\epsilon}$
	satisfies \eqref{eq:ripple_rem}, we have that
	\begin{align}\label{eq:hoelder-schauder}
	\begin{split}
		[w]_{\frac{5}{4} - 2\varepsilon} 
		& \lesssim \bigg[P \big( F + w R_1 \partial_2 v 
		+ v R_1 \partial_2 w + w R_1 \partial_2 w \\
		& \qquad- \frac{1}{2} (v+w) R_1 \partial_1 
		(v+w)^2\big)
		+ \frac{1}{2} \partial_2 R_1 (v+w)^2 \bigg]_{-\frac{3}{4} - 2\varepsilon}.
	\end{split}
	\end{align}
	We estimate each term on the right-hand side of \eqref{eq:hoelder-schauder} separately. 
	The idea is to bound any term containing $w$ in the seminorm 
	$[\cdot]_{-\frac{3}{4} - 2\varepsilon}$ by a product 
	$\mathcal{H}(w)^{\gamma(\theta)} [w]_{\frac{5}{4} - 2\varepsilon}^{\theta}$ with $\theta\in (0,1)$
	and $\gamma(\theta)>0$. 
	To prove the statement, the main tools are 
	\begin{enumerate}[label=$\circ$,leftmargin=1em]
		\item the interpolation inequality in Lemma \ref{lem:interpolation},
		\item Lemma \ref{lem:hoelder-embedding} which yields that $\mathcal{H}(w)$ controls $[w]_{-\frac{1}{4}}^2$,
		\item \cite[Lemmata 6 and 12]{IO19} stating that for a distribution 
		$f\in \mathcal{C}^{\beta}$, $\beta\in (-\frac32,0)\setminus \{-1, -\frac12\}$, and two functions 
		$g\in\mathcal{C}^{\gamma}$, $\tilde g\in\mathcal{C}^{\tilde \gamma}$ with $\gamma, \tilde \gamma\in (0, \frac32)$ both of vanishing
		average, provided that $\beta + \gamma >0$ and $\tilde \gamma\geq \gamma$, the following estimates hold 
	\begin{align*}
		[fg]_{\beta} \lesssim [f]_{\beta} [g]_{\gamma}
		\quad \textrm{and} \quad
		[g\tilde g]_{\gamma} \lesssim [g]_{\gamma} [\tilde g]_{\tilde \gamma}.
	\end{align*}
	\end{enumerate}
	Also, we use that $\C^\alpha\subset \C^\beta$ for any $-\frac32<\beta<\alpha<\frac32$ with $\alpha, \beta\neq 0$, see \cite[Remark 2]{IO19}, and 
	Lemma \ref{lem:R} which implies that the Hilbert transform reduces the regularity by $\varepsilon$ on H\"older spaces. 
	 
	For $[\xi]_{-\frac{5}{4}-\varepsilon}, [v]_{\frac{3}{4}-\varepsilon}, [F]_{-\frac{3}{4}-\varepsilon}\leq M$, the following estimates 
	hold with an implicit constant depending on $M$ and $\eps$.
	\begin{enumerate}[label=\textbf{\arabic*.},leftmargin=0pt,labelsep=*,itemindent=*]
		\item \textbf{Terms independent of $w$:}
		First, we notice that $[PF]_{-\frac{3}{4}-2\varepsilon}\leq [F]_{-\frac{3}{4}-2\varepsilon} \lesssim [F]_{-\frac{3}{4}-\varepsilon}
		\lesssim1$. Also, by Definition \ref{def:neg_holder}, we have that
		\begin{align*}
			[\partial_2 R_1 v^2]_{-\frac{3}{4}-2\varepsilon} 
			&\lesssim [R_1 v^2]_{\frac{3}{4}-2\varepsilon} 
			\lesssim [v^2]_{\frac{3}{4}-\varepsilon} 
			\lesssim [v]_{\frac{3}{4}-\varepsilon}^2
			\lesssim 1, \\
			[v \partial_1 R_1v^2]_{-\frac{3}{4}-2\varepsilon} 
			&\lesssim [v \partial_1 R_1v^2]_{-\frac{1}{4}-2\varepsilon} 
			\lesssim [v]_{\frac{3}{4}-\varepsilon} [\partial_1 R_1v^2]_{-\frac{1}{4}-2\varepsilon} 
			\lesssim [v]_{\frac{3}{4}-\varepsilon} [v^2]_{\frac{3}{4}-\varepsilon} 
			\lesssim [v]_{\frac{3}{4}-\varepsilon}^3
			\lesssim 1.
		\end{align*}
		
		\item \textbf{Linear terms in $w$:} By the interpolation estimate in Lemma \ref{lem:interpolation} and Lemma \ref{lem:hoelder-embedding},
		we have 
		\begin{equs}
		 {[w]}_{\frac{3}{4}+3\varepsilon} & \lesssim [w]_{-\frac{1}{4}}^{\frac{1}{3} - 2\kappa_1} [w]_{\frac{5}{4} - 2\varepsilon}^{\frac{2}{3} + 2\kappa_1} 
		\lesssim \mathcal{H}(w)^{\frac{1}{6} - \kappa_1} [w]_{\frac{5}{4} - 2\varepsilon}^{\frac{2}{3} + 2\kappa_1},
		 \label{eq:interpolation-3/4} \\
		 {[w]}_{\frac{1}{4}+3\varepsilon} &\lesssim [w]_{-\frac{1}{4}}^{\frac{2}{3} - 2\kappa_2} 
		 [w]_{\frac{5}{4} - 2\varepsilon}^{\frac{1}{3} + 2\kappa_2} \lesssim \mathcal{H}(w)^{\frac{1}{3} - \kappa_2}
		 [w]_{\frac{5}{4} - 2\varepsilon}^{\frac{1}{3} + 2\kappa_2},
		 \label{eq:interpolation-1/4}
		\end{equs}
		where $\kappa_1, \kappa_2>0$ are small (as functions of $\varepsilon$) for $\varepsilon>0$ small enough. 
		This yields
		\begin{align*}
			[w\partial_2 R_1v]_{-\frac{3}{4}-2\varepsilon} 
			&\lesssim [w]_{\frac{3}{4}+3\varepsilon} [\partial_2 R_1 v]_{-\frac{3}{4}-2\varepsilon} 
			\lesssim \mathcal{H}(w)^{\frac{1}{6}-\kappa_1} [w]_{\frac{5}{4} - 2\varepsilon}^{\frac{2}{3}+2\kappa_1} [v]_{\frac{3}{4}-\varepsilon}
			\\
			& \lesssim \mathcal{H}(w)^{\frac{1}{6}-\kappa_1} [w]_{\frac{5}{4} - 2\varepsilon}^{\frac{2}{3}+2\kappa_1},
			\\
			[w \partial_1 R_1 v^2]_{-\frac{3}{4}-2\varepsilon} 
			& \lesssim [w \partial_1 R_1 v^2]_{-\frac{1}{4}-2\varepsilon} 
			\lesssim [w]_{\frac{1}{4}+3\varepsilon} [\partial_1 R_1 v^2]_{-\frac{1}{4}-2\varepsilon} 
			\\
			& \lesssim \mathcal{H}(w)^{\frac{1}{3}-\kappa_2} [w]_{\frac{5}{4} - 2\varepsilon}^{\frac{1}{3}+2\kappa_2} [v]_{\frac{3}{4}-\varepsilon}^2
			\lesssim \mathcal{H}(w)^{\frac{1}{3}-\kappa_2} [w]_{\frac{5}{4} - 2\varepsilon}^{\frac{1}{3}+2\kappa_2},
			\\
			[v \partial_2 R_1w]_{-\frac{3}{4}-2\varepsilon} 
			&\lesssim [v \partial_2 R_1w]_{-\frac{3}{4}+2\varepsilon} 
			\lesssim [v]_{\frac{3}{4}-\varepsilon} [\partial_2 R_1w]_{-\frac{3}{4}+2\varepsilon} 
			\lesssim [w]_{\frac{3}{4}+3\varepsilon} 
			\\
			& \lesssim  \mathcal{H}(w)^{\frac{1}{6}-\kappa_1} [w]_{\frac{5}{4} - 
			2\varepsilon}^{\frac{2}{3}+2\kappa_1},
			\\
			[\partial_2 R_1(vw)]_{-\frac{3}{4}-2\varepsilon} 
			&\lesssim  [vw]_{\frac{3}{4}-\varepsilon} 
			\lesssim [v]_{\frac{3}{4}-\varepsilon} [w]_{\frac{3}{4}+3\varepsilon}
			\lesssim \mathcal{H}(w)^{\frac{1}{6}-\kappa_1} [w]_{\frac{5}{4} - 2\varepsilon}^{\frac{2}{3}+2\kappa_1}, 
			\\
			[v \partial_1 R_1(vw)]_{-\frac{3}{4}-2\varepsilon}
			&\lesssim [v \partial_1 R_1(vw)]_{-\frac{1}{4}-2\varepsilon}
			\lesssim [v]_{\frac{3}{4}-\varepsilon} [\partial_1 R_1(vw)]_{-\frac{1}{4}-2\varepsilon}
			\lesssim  [vw]_{\frac{3}{4}-\varepsilon} \\
			&\lesssim [v]_{\frac{3}{4}-\varepsilon} {[w]}_{\frac{3}{4}+3\varepsilon}
			\lesssim \mathcal{H}(w)^{\frac{1}{6}-\kappa_1} [w]_{\frac{5}{4} - 2\varepsilon}^{\frac{2}{3}+2\kappa_1}.
		\end{align*}

		\item \textbf{Quadratic terms in $w$:}
		    
		\begin{enumerate}
			\item We start with the term $\partial_2 R_1w^2$. By the interpolation estimate in Lemma \ref{lem:interpolation} 
			for $\gamma=0$ and Lemma \ref{lem:hoelder-embedding}, we have
			\begin{equs}
			 \|w\|_{L^\infty} \lesssim [w]_{-\frac{1}{4}}^{\frac{5}{6}-2\kappa_3} 
			 [w]_{\frac{5}{4} - 2\varepsilon}^{\frac{1}{6}+2\kappa_3} \lesssim \mathcal{H}(w)^{\frac{5}{12}-\kappa_3} 
			 [w]_{\frac{5}{4} - 2\varepsilon}^{\frac{1}{6}+2\kappa_3}, \label{eq:interpolation-infty}
			\end{equs}
			where $\kappa_3>0$ is small for $\varepsilon>0$ small enough. 
			Together with \eqref{eq:interpolation-3/4}, it follows that
			\begin{align*}
				[\partial_2 R_1w^2]_{-\frac{3}{4}-2\varepsilon}
				&\lesssim [\partial_2 R_1w^2]_{-\frac{3}{4}+2\varepsilon} 
				\lesssim [w^2]_{\frac{3}{4}+3\varepsilon} 
				\lesssim \|w\|_{L^{\infty}} [w]_{\frac{3}{4}+3\varepsilon} \\
				&\lesssim \mathcal{H}(w)^{\frac{7}{12}-(\kappa_1+\kappa_3)} [w]_{\frac{5}{4} - 2\varepsilon}^{\frac{5}{6}+2(\kappa_1+\kappa_3)}.
			\end{align*}
			Similarly, we estimate 
			\begin{align*}
				[v \partial_1 R_1 w^2]_{-\frac{3}{4}-2\varepsilon} 
				&\lesssim [v \partial_1 R_1 w^2]_{-\frac{1}{4}+2\varepsilon} 
				\lesssim [v]_{\frac{3}{4}-\varepsilon} [\partial_1 R_1 w^2]_{-\frac{1}{4}+2\varepsilon} 
				\lesssim [w^2]_{\frac{3}{4}+3\varepsilon} \\
				&\lesssim  \mathcal{H}(w)^{\frac{7}{12}-(\kappa_1+\kappa_3)} [w]_{\frac{5}{4} - 2\varepsilon}^{\frac{5}{6}+2(\kappa_1+\kappa_3)}.
			\end{align*}
			      
			\item The term $w \partial_2R_1w$ is treated via Proposition \ref{prop:conv-commutator},
			\begin{equation*}
			[w \partial_2R_1w]_{-\frac{3}{4}} \lesssim \mathcal{H}(w)^{\frac{1}{2}}
			[\partial_2R_1 w]_{-\frac{1}{2}}\lesssim \mathcal{H}(w)^{\frac{1}{2}} [R_1w]_{1} \lesssim 
			\mathcal{H}(w)^{\frac{1}{2}} [w]_{1+\varepsilon}.
			\end{equation*}
			Then, Lemma \ref{lem:interpolation} and Lemma \ref{lem:hoelder-embedding} yield 
			\begin{equs}
			 {[w]}_{1+\varepsilon} \lesssim  [w]_{-\frac{1}{4}}^{\frac{1}{6} - 2\kappa_4} [w]_{\frac{5}{4}- 2\varepsilon}^{\frac{5}{6} + 2\kappa_4}\lesssim \mathcal{H}(w)^{\frac{1}{12} - \kappa_4} [w]_{\frac{5}{4}- 2\varepsilon}^{\frac{5}{6} + 2\kappa_4},
			\end{equs}
			where $\kappa_4>0$ is small for $\varepsilon>0$ small enough. Hence we have 
			\begin{equs}
			 {[w \partial_2R_1w]}_{-\frac{3}{4}-2\eps} \lesssim  {[w \partial_2R_1w]}_{-\frac{3}{4}} \lesssim \mathcal{H}(w)^{\frac{7}{12} - \kappa_4} [w]_{\frac{5}{4}- 2 \varepsilon}^{\frac{5}{6} + 2\kappa_4}.
			\end{equs}
			\item  We decompose $wR_1\partial_1(vw)$ into $wR_1(w\partial_1 v) + w R_1(v\partial_1 w)$ and treat each term separately. 
			By Proposition \ref{prop:conv-commutator}, we have  
			\begin{align*}
			{[}wR_1(v\partial_1 w){]}_{-\frac{3}{4}} & \lesssim \mathcal{H}(w)^{\frac{1}{2}} [R_1(v\partial_1 w)]_{-\frac{1}{2}} 
			\lesssim \mathcal{H}(w)^{\frac{1}{2}} [R_1(v\partial_1 w)]_{-\frac{1}{2}+\varepsilon} \\
			&\lesssim \mathcal{H}(w)^{\frac{1}{2}} [v\partial_1 w]_{-\frac{1}{2}+2\varepsilon}
			\lesssim  \mathcal{H}(w)^{\frac{1}{2}} [v]_{\frac{3}{4} - \varepsilon} [\partial_1 w]_{-\frac{1}{2}+2\varepsilon}
			\lesssim
			\mathcal{H}(w)^{\frac{1}{2}} [w]_{\frac{1}{2}+2\varepsilon}. 
			\end{align*}
			Again, Lemma \ref{lem:interpolation} and Lemma \ref{lem:hoelder-embedding} yield  
			\begin{equs}
			 {[w]}_{\frac{1}{2} + 2\varepsilon} & \lesssim [w]_{-\frac{1}{4}}^{\frac{1}{2}-2\kappa_5} [w]_{\frac{5}{4} - 2\varepsilon}^{\frac{1}{2}+2\kappa_5}
			 \lesssim \mathcal{H}(w)^{\frac{1}{4}-\kappa_5} [w]_{\frac{5}{4} - 2\varepsilon}^{\frac{1}{2}+2\kappa_5},
			 \label{eq:interpolation-1/2}
			\end{equs}
			where $\kappa_5>0$ is small for $\eps>0$ small enough. Hence we obtain that
			\begin{equs}
		 	 {[}wR_1(v\partial_1 w){]}_{-\frac{3}{4}-2\eps}\lesssim	 {[}wR_1(v\partial_1 w){]}_{-\frac{3}{4}} 
		 	 & \lesssim
			 \mathcal{H}(w)^{\frac{3}{4}-\kappa_5} [w]_{\frac{5}{4} - 2\varepsilon}^{\frac{1}{2}+2\kappa_5}.
			\end{equs}

			The term $wR_1(w\partial_1 v)$ can be estimated using \eqref{eq:interpolation-1/4} as
			\begin{align*}
			[wR_1(w\partial_1 v)]_{-\frac{3}{4}-2\varepsilon} 
			& \lesssim [wR_1(w\partial_1 v)]_{-\frac{1}{4}-2\varepsilon} 
			\lesssim [w]_{\frac{1}{4}+3\varepsilon} [R_1(w\partial_1v)]_{-\frac{1}{4} -2\varepsilon} 
			\\ 
			&\lesssim [w]_{\frac{1}{4}+3\varepsilon} [w\partial_1v]_{-\frac{1}{4} - \varepsilon} 
			\lesssim [w]_{\frac{1}{4}+3\varepsilon}^2  [\partial_1v]_{-\frac{1}{4} - \varepsilon} 
			\\
			&\lesssim \mathcal{H}(w)^{\frac{2}{3} - 2\kappa_2} 
			[w]_{\frac{5}{4} - 2\varepsilon}^{\frac{2}{3} + 4\kappa_2}[v]_{\frac{3}{4} -\varepsilon} 
			\lesssim \mathcal{H}(w)^{\frac{2}{3} - 2\kappa_2} [w]_{\frac{5}{4} - 2\varepsilon}^{\frac{2}{3} + 4\kappa_2}.
			\end{align*}
		\end{enumerate}
		
		\item \textbf{Cubic term in $w$:}
		The cubic term $w\partial_1R_1w^2$ is treated by Proposition \ref{prop:conv-commutator} which yields
		\begin{align*}
			[w\partial_1 R_1w^2]_{-\frac{3}{4}} 
			& \lesssim \mathcal{H}(w)^{\frac{1}{2}} [R_1 \partial_1 w^2]_{-\frac{1}{2}} 
			\lesssim \mathcal{H}(w)^{\frac{1}{2}} [R_1\partial_1 w^2]_{-\frac{1}{2}+ \varepsilon}
			\lesssim \mathcal{H}(w)^{\frac{1}{2}} [\partial_1 w^2]_{-\frac{1}{2}+2\varepsilon}\\
			& \lesssim \mathcal{H}(w)^{\frac{1}{2}} [w^2]_{\frac{1}{2}+2\varepsilon}  
			\lesssim \mathcal{H}(w)^{\frac{1}{2}} 
			\|w\|_{L^\infty} [w]_{\frac{1}{2}+2\varepsilon}.
		\end{align*}
		By \eqref{eq:interpolation-infty} and \eqref{eq:interpolation-1/2} we have that
		\begin{equs}
		 \|w\|_{L^\infty} [w]_{\frac{1}{2}+2\varepsilon} & \lesssim \mathcal{H}(w)^{\frac{2}{3} - (\kappa_3+\kappa_5)} 
		 [w]_{\frac{5}{4} - 2\varepsilon}^{\frac{2}{3} + 2(\kappa_3+\kappa_5)},
		\end{equs}
		which in turn implies that 
		\begin{equs}
		 {[w\partial_1 R_1w^2]}_{-\frac{3}{4}-2\eps} \lesssim {[w\partial_1 R_1w^2]}_{-\frac{3}{4}} & \lesssim \mathcal{H}(w)^{\frac{3}{2} - (\kappa_3+\kappa_5)} 
		 [w]_{\frac{5}{4} - 2\varepsilon}^{\frac{2}{3} + 2(\kappa_3+\kappa_5)}.
		\end{equs}
	\end{enumerate}
	Summing up, Young's inequality yields the bound
	\begin{equs}
	 {} [w]_{\frac{5}{4}-2\eps} \leq C (1+\mathcal{H}(w))^N, \label{eq:harm_contr} 
	\end{equs}
	for some $N\geq 1$, and by our estimates it is clear that the constant $C$ depends polynomially on $M$. To conclude, using the fact that 
	$w$ is a minimizer of $E_{ren}(v,F;\cdot)$, we have that $E_{ren}(v,F;w)\leq E_{ren}(v,F;0)=0$. Since $E_{ren}(v.F;\cdot) = \mathcal{E}+\mathcal{G}(v,F;\cdot)$
	and by Theorem~\ref{thm:minimizers}~\ref{it:coercivity} we know that $|\mathcal{G}(v,F;w)| \leq \frac{1}{2} \mathcal{E}(w) + C$ for some constant $C$ 
	which also depends polynomially on $M$, we obtain that $\mathcal{E}(w) \leq 2 C$. By \eqref{eq:bound-harmonic}, this implies that $H(w) \leq C$ for some constant
	$C$ which depends polynomially on $M$ and combining with \eqref{eq:harm_contr} we obtain the desired bound.
	\end{proof}


\section{Approximations to white noise under the spectral gap assumption}\label{sec:nongaussian} 

The main goal of this section is to prove Proposition \ref{Satz1_pavlos}. Given a probability measure $\lng \cdot \rng$ which satisfies Assumption~\ref{ass}, 
we prove that $\lng\cdot\rng$ is concentrated on $\C^{-\frac{5}{4}-}$, thus, by Schauder theory for the operator $\mathcal{L}$, 
$v:=\mathcal{L}^{-1}P\xi\in\C^{\frac{3}{4}-}$ $\lng\cdot\rng$-almost surely, and we construct $F$ as the $L^p_{\lng\cdot\rng}\C^{-\frac{3}{4}-}$-limit of 
the sequence $\{v\partial_2R_1 v_t\}_{t=2^{-n}\downarrow 0}$. This will allow us to lift $\langle\cdot\rangle$
to a probability measure $\langle\cdot\rangle^{\mathrm{lift}}$ on $\C^{-\frac{5}{4}-} \times \C^{\frac{3}{4}-} \times \C^{-\frac{3}{4}-}$ in a continuous way, 
i.e., given a sequence of probability measures $\{\lng \cdot \rng_{\ell}\}_{\ell\downarrow0}$ which satisfy Assumption~\ref{ass} and converge weakly 
to a limit $\langle\cdot\rangle$ as $\ell \downarrow 0$, then $\{\langle\cdot\rangle_{\ell}^{\mathrm{lift}}\}_{\ell\downarrow0}$ converges 
weakly to $\langle\cdot\rangle^{\mathrm{lift}}$.

The proof of Proposition \ref{Satz1_pavlos} is based on suitable estimates on the $p$-moments 
of multilinear expressions in the corresponding stochastic objects. In the case of Gaussian
approximations, it is enough to bound the second moments, since we can use Nelson's hypercontractivity estimate to bound the $p$-moments in a finite Wiener chaos by the second moments for every $p>2$ (see \cite[Lemmata 4 and 8]{IO19}). On the other hand, for non-Gaussian approximations 
one has to find alternative methods to estimate the $p$-moments. This has been achieved with great 
success in the last few years under very mild assumptions on the random field, see for example 
\cite{HS17,CH16}. In these works a direct computation of the $p$-moments is made through 
explicit formulas in terms of the cumulant functions of the random field and the final bounds are 
obtained by combinatorial arguments. 

Here we are interested in approximations of white noise that satisfy the spectral gap inequality 
\eqref{eq:SG}, uniformly in the approximation parameter $\ell$. This covers the Gaussian
case, but it allows for more general random fields. The basic observation is 
that one can bound the $p$-moments directly by estimating the derivative with respect to the noise,
based on the following consequence of the spectral gap assumption \eqref{eq:SG}.

\begin{proposition} \label{prop:SG_mult} The spectral gap inequality \eqref{eq:SG} implies 
\begin{equs}
 \left\lng \left|G(\xi) -
 \left\lng G(\xi) 
 \right\rng\right|^{2p}\right\rng^{\frac{1}{2p}}
 \leq C(p) \left\lng \left\|\frac{\partial}{\partial\xi} G(\xi) \right\|_{L^2}^{2p} 
 \right\rng^{\frac{1}{2p}},
 \label{eq:SG_mult}
\end{equs}
for every $1\leq p<\infty$ and every functional $G$ on periodic Schwartz distributions which can be approximated by 
cylindrical functionals with respect to the norm 
$\lng|G(\xi)|^{2p}\rng^{\frac{1}{2p}} + \lng \|\frac{\partial}{\partial\xi} G(\xi)\|_{L^2}^{2p} \rng^{\frac{1}{2p}}$,
where the constant $C(p)>0$ depends only on $p$.
\end{proposition}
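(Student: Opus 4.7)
The strategy is a standard bootstrap from the $L^{2}$ spectral gap inequality \eqref{eq:SG} to its $L^{2p}$ analogue by induction on integer $p\ge 1$; the non-integer case $p\in[1,\infty)$ then follows from $\|\cdot\|_{L^{2p}_{\lng\cdot\rng}}\le\|\cdot\|_{L^{2\lceil p\rceil}_{\lng\cdot\rng}}$. By the density built into Assumption~\ref{ass}~\ref{item:Def-SG}, it suffices to prove the estimate for cylindrical $G$, since the bound is continuous in the graph norm on both sides and therefore extends to the $L^{2p}$-completion. The base case $p=1$ is \eqref{eq:SG} itself with $C(1)=1$.

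For the inductive step, write $\tilde G := G-\lng G\rng$ and apply \eqref{eq:SG} to the cylindrical functional $\tilde G^p$. The chain rule gives $\frac{\partial}{\partial\xi}\tilde G^p = p\,\tilde G^{p-1}\,\frac{\partial G}{\partial\xi}$, so that
\begin{equation*}
\lng (\tilde G^p - \lng \tilde G^p\rng)^2\rng \le p^2 \,\lng \tilde G^{2(p-1)} \|\tfrac{\partial G}{\partial\xi}\|_{L^2}^2\rng.
\end{equation*}
Decomposing $\lng \tilde G^{2p}\rng = \mathrm{Var}(\tilde G^p) + \lng \tilde G^p\rng^2$, bounding $|\lng \tilde G^p\rng|^2 \le \lng \tilde G^2\rng\,\lng \tilde G^{2(p-1)}\rng$ by Cauchy--Schwarz, and applying H\"older with exponents $\tfrac{p}{p-1}$ and $p$ on the right-hand side of the displayed bound, I arrive at
\begin{equation*}
\lng \tilde G^{2p}\rng \le \lng \tilde G^2\rng\,\lng \tilde G^{2(p-1)}\rng + p^2\,\lng \tilde G^{2p}\rng^{(p-1)/p}\,\lng \|\tfrac{\partial G}{\partial\xi}\|_{L^2}^{2p}\rng^{1/p}.
\end{equation*}

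Young's inequality absorbs the $\lng\tilde G^{2p}\rng^{(p-1)/p}$ factor into the left-hand side, while the induction hypothesis (applied to $p$ replaced by $p-1$) together with the trivial monotonicity $\lng\|\tfrac{\partial G}{\partial\xi}\|_{L^2}^{2q}\rng^{1/(2q)}\le\lng\|\tfrac{\partial G}{\partial\xi}\|_{L^2}^{2p}\rng^{1/(2p)}$ for $q\le p$, together with \eqref{eq:SG} controlling $\lng\tilde G^2\rng$, controls $\lng\tilde G^{2(p-1)}\rng\lng\tilde G^2\rng$ by the right-hand side raised to the appropriate power. This closes the induction and produces a constant $C(p)$ depending only on $p$.

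The main obstacle is purely bookkeeping: making sure that \eqref{eq:SG} really does apply to $\tilde G^p$, and that the resulting bound transfers to the $L^{2p}$-closure. For cylindrical $G(\xi)=g(\xi(\varphi_1),\dots,\xi(\varphi_n))$ with $g$ smooth of polynomial growth, the functional $\tilde G^p$ has symbol $(g-\lng G\rng)^p$, which is again smooth of polynomial growth, so $\tilde G^p$ is itself cylindrical and \eqref{eq:SG} applies verbatim. The extension from cylindrical $G$ to general admissible $G$ in Proposition~\ref{prop:SG_mult} is then a standard density argument: if cylindrical $G_n\to G$ in the $L^{2p}$-graph norm, the proven inequality for $G_n-G_m$ shows $\{G_n-\lng G_n\rng\}$ is Cauchy in $L^{2p}_{\lng\cdot\rng}$, and passing to the limit yields \eqref{eq:SG_mult} for $G$.
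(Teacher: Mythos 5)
Your inductive argument for integer $p$ is sound and is a genuinely different (and more classical, Gross--Nelson-style) route than the paper's. Where the paper applies \eqref{eq:SG} to the regularized functional $F_\lambda(\xi)=(G(\xi)^2+\lambda^2)^{p/2}$ and runs the variance/Cauchy--Schwarz/H\"older/Young cycle once before letting $\lambda\searrow 0$, you apply it directly to $\tilde G^p$ and bootstrap from $p-1$ to $p$. Your chain rule computation, the decomposition $\lng\tilde G^{2p}\rng = \mathrm{Var}(\tilde G^p)+\lng\tilde G^p\rng^2$, the Cauchy--Schwarz bound $\lng\tilde G^p\rng^2\le\lng\tilde G^2\rng\lng\tilde G^{2(p-1)}\rng$, the H\"older split, and the absorption by Young's inequality all close the induction correctly for $p\in\NN$, and your density argument at the end is standard.

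However, the reduction from non-integer $p$ to $\lceil p\rceil$ is wrong, and this is a genuine gap. With $q=\lceil p\rceil\ge p$, the Jensen inequality $\lng|\tilde G|^{2p}\rng^{1/(2p)}\le\lng|\tilde G|^{2q}\rng^{1/(2q)}$ does reduce the left-hand side, and the integer case then gives a bound by $C(q)\lng\|\tfrac{\partial G}{\partial\xi}\|_{L^2}^{2q}\rng^{1/(2q)}$. But to reach \eqref{eq:SG_mult} you would then need $\lng\|\tfrac{\partial G}{\partial\xi}\|_{L^2}^{2q}\rng^{1/(2q)}\le C\lng\|\tfrac{\partial G}{\partial\xi}\|_{L^2}^{2p}\rng^{1/(2p)}$, and Jensen gives precisely the reverse inequality since $q\ge p$. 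So the argument proves a strictly weaker estimate (with $2\lceil p\rceil$-moments of the derivative on the right-hand side), not the proposition as stated.

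To close this gap you need a test functional that is smooth for arbitrary real $p>1$. The paper's choice $F_\lambda=(G^2+\lambda^2)^{p/2}$ does exactly this: the $\lambda$-regularization makes $F_\lambda$ a smooth cylindrical functional for every $p>1$, the chain rule yields $\tfrac{\partial}{\partial\xi}F_\lambda=p\,F_\lambda^{(p-2)/p}G\tfrac{\partial G}{\partial\xi}$ with $|F_\lambda^{(p-2)/p}G|\le F_\lambda^{(p-1)/p}$, and after the same variance/Cauchy--Schwarz/H\"older/Young steps one concludes by monotone convergence as $\lambda\searrow 0$. Unlike the bootstrap from $p-1$, this handles every real $p\ge 1$ in one pass and needs no interpolation in $p$. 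If you insist on your inductive scheme, you would have to use such a regularization (or an interpolation-theoretic device like Riesz--Thorin in $p$) to fill in the non-integer exponents; the naive $p\mapsto\lceil p\rceil$ step does not work.
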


\begin{remark} \label{rem:Lp_sg} As in Remark~\ref{rem:wn}, using \eqref{eq:SG_mult} we can extend $\xi(\varphi)$ for $\varphi\in L^2(\TT^2)$ as a centered random variable in $L^{2p}_{\lng\cdot\rng}$, 
admissible in \eqref{eq:SG_mult} for any $1\leq p <\infty$.
\end{remark}

\begin{proof} The proof follows \cite[Lemma 3.1]{JO20}. Let $p>1$. We assume that $G$ is a cylindrical functional on periodic Schwartz distributions of finite norm 
$\lng|G(\xi)|^{2p}\rng^{\frac{1}{2p}} + \lng \|\frac{\partial}{\partial\xi} G(\xi)\|_{L^2}^{2p} \rng^{\frac{1}{2p}}$. The general case 
follows by approximation. Without loss of generality we can assume that $\lng G(\xi) \rng =0$. For $\lambda\in(0,1]$ we consider the functional
	\begin{align*}
		F_\lambda(\xi) := (G(\xi)^2+\lambda^2)^\frac{p}{2}.
	\end{align*}
Noting that $\frac{\partial}{\partial \xi} F_\lambda(\xi) = p F_\lambda(\xi)^\frac{p-2}{p} G(\xi) \frac{\partial}{\partial \xi} G(\xi)$ and
using H\"older's inequality, \eqref{eq:SG} applied to $F_{\lambda}$ yields
\begin{equs}
 \left \lng \big| F_\lambda(\xi) - \lng F_\lambda(\xi) \rng \big|^2 \right\rng
 & \leq p^2 \left \lng F_\lambda(\xi)^\frac{2(p-2)}{p} G(\xi)^2 \left\|\frac{\partial}{\partial \xi} G(\xi)\right\|_{L^2}^2 \right \rng
   \leq p^2 \left \lng F_\lambda(\xi)^\frac{2(p-1)}{p} \left\|\frac{\partial}{\partial \xi} G(\xi)\right\|_{L^2}^2 \right \rng
 \\
 & \leq p^2 \left \lng F_\lambda(\xi)^2 \right \rng^{\frac{p-1}{p}}
 \left \lng \left\|\frac{\partial}{\partial \xi} G(\xi)\right\|_{L^2}^{2p} \right \rng^{\frac{1}{p}}.
 \label{eq:bnd1}
\end{equs}
By Cauchy--Schwarz we see that
\begin{equs}
 \lng F_\lambda(\xi) \rng^2 \leq \left\lng (G(\xi)^2+\lambda^2)^{p-1} \right\rng
 \left\lng G(\xi)^2+\lambda^2 \right\rng.
\end{equs}
By Hölder's inequality and \eqref{eq:SG} applied to $G(\xi)$ (recall that $\lng G(\xi) \rng = 0$) 
the two terms on the right hand side can be estimated as
\begin{equs}
 & \left\lng (G(\xi)^2+\lambda^2)^{p-1} \right\rng
 \leq \lng F_\lambda(\xi)^2 \rng^{\frac{p-1}{p}},
 \\
 & \left\lng G(\xi)^2+\lambda^2 \right\rng
 \leq \left\lng \left\|\frac{\partial}{\partial \xi} G(\xi)\right\|_{L^2}^2 \right\rng + \lambda^2
 \leq \left\lng \left\|\frac{\partial}{\partial \xi} G(\xi)\right\|_{L^2}^{2p}\right\rng^{\frac{1}{p}}
 + \lambda^2.
\end{equs}
Combining with \eqref{eq:bnd1} we get 
\begin{equs}
 \left \lng F_\lambda(\xi)^2 \right \rng
 & \lesssim \left\lng |F_\lambda(\xi) - \lng F_\lambda(\xi) \rng_\ell|^2 \right \rng
 + \left \lng F_\lambda(\xi) \right \rng^2
 \\
 & \lesssim_p \left \lng F_\lambda(\xi)^2 \right \rng^{\frac{p-1}{p}}
 \left \lng \left\|\frac{\partial}{\partial \xi} G(\xi)\right\|_{L^2}^{2p} \right \rng^{\frac{1}{p}}
 + \lambda^2 \lng F_\lambda(\xi)^2 \rng^{\frac{p-1}{p}}. 
\end{equs}
By Young's inequality we finally obtain that
\begin{equs}
 \left \lng (G(\xi)^2+\lambda^2)^p \right \rng = \left \lng F_\lambda(\xi)^2 \right \rng
 \lesssim_p 
 \left\lng \left\|\frac{\partial}{\partial \xi} G(\xi)\right\|_{L^2}^{2p}\right\rng + \lambda^{2p}.
\end{equs}
The conclusion follows by the monotone convergence theorem as $\lambda \searrow 0$.  
\end{proof}

Proposition \ref{prop:SG_mult} allows us to estimate the $p$-moments of multilinear expressions in $\xi$ (shifted by 
their expectation) by the operator norm of (random) linear functionals on $L^2(\TT^2)$ (in the case of white noise this is the 
Cameron--Martin space), after taking one derivative with respect to $\xi$. To estimate the operator norm of these linear 
functionals we use the regularizing properties of $\mathcal{L}^{-1}$ in Sobolev spaces. 

\subsection{Estimates on \texorpdfstring{$\xi$}{xi} and \texorpdfstring{$v$}{v}} \label{s:xi_v}

In this section, we prove several stochastic estimates for $\xi$ and $v:=\mathcal{L}^{-1}P\xi$ which are uniform in the class 
of probability measures satisfying Assumption~\ref{ass}.
As a corollary, we obtain that the law of $(\xi,v)$ is concentrated on $\C^{-\frac{5}{4}-\eps}\times \C^{\frac{3}{4}-\eps}$ (see Corollary~\ref{cor:xi_v}). A similar result
was proved in \cite[Lemma 4]{IO19} for $\lng\cdot\rng$ being the law of white noise. Here we consider more general probability measures which are not necessarily Gaussian. 
Some of the results of this section will be used in Section \ref{s:F} below in the construction of $F$.

We start with the following proposition. 

\begin{proposition} \label{prop:xi_v_clean} Let $\lng\cdot\rng$ satisfy Assumption~\ref{ass} and let 
$v := \mathcal{L}^{-1} P \xi$. For every $1\leq p<\infty$, $T\in(0,1]$, and $y\in\TT^2$ we have that
\begin{equs}
 \sup_{x\in \TT^2} \left \lng |\xi_T(x)|^p \right \rng^{\frac{1}{p}}
 & \leq C \left(T^{\frac{1}{3}}\right)^{-\frac{5}{4}}, \label{eq:xi_clean} 
 \\
 \sup_{x\in \TT^2} \left \lng |v(x-y) -v(x)|^p \right \rng^{\frac{1}{p}} 
 & \leq C d(0,y)^{\frac{3}{4}}, \label{eq:v_hoelder_clean}
 \\
 \sup_{x\in \TT^2} \left \lng |(\partial_2R_1v)_T(x)|^p \right \rng^{\frac{1}{p}}
 & \leq C \left(T^{\frac{1}{3}}\right)^{-\frac{3}{4}}, \label{eq:d2Rv_clean}
\end{equs}
where the constant $C$ depends only on $p$. 
\end{proposition}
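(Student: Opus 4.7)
The plan is to exploit that each of $\xi_T(x)$, $v(x-y)-v(x)$, and $(\partial_2 R_1 v)_T(x)$ is a \emph{linear} functional of $\xi$ with vanishing mean (by Assumption~\ref{ass}~\ref{item:Def-centered}), so that Proposition~\ref{prop:SG_mult} reduces each of the three bounds to estimating the $L^2(\TT^2)$-norm of the representing kernel (the Malliavin-type derivative, which for a linear functional coincides with the kernel itself). Recall that $\xi(\phi)$ makes sense for all $\phi\in L^2(\TT^2)$ by Remarks~\ref{rem:wn} and~\ref{rem:Lp_sg}, so all three kernels below may indeed be taken in $L^2$. The bound for arbitrary $p\geq 1$ follows from the bound for some integer $2q\geq p$ by Jensen.

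For \eqref{eq:xi_clean}, write $\xi_T(x)=\xi(\Psi_T(x-\cdot))$, with $\Psi_T$ the periodization of $\psi_T$ from Remark~\ref{rem:perio}. Combining Proposition~\ref{prop:SG_mult} with \eqref{lp_psi_per} applied with $p=2$ gives
\begin{equation*}
\lng |\xi_T(x)|^{2p}\rng^{\frac{1}{2p}}\leq C(p)\,\|\Psi_T\|_{L^2}\lesssim_p (T^{\frac{1}{3}})^{-\frac{5}{4}},
\end{equation*}
uniformly in $x\in\TT^2$.

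For \eqref{eq:d2Rv_clean}, note that $\psi_T*$ commutes with $\partial_2 R_1\mathcal{L}^{-1}P$, so $(\partial_2 R_1 v)_T(x)=\xi(h_{T,x})$ where $h_{T,x}\in L^2(\TT^2)$ is given in Fourier by
\begin{equation*}
\widehat{h_{T,x}}(k)=-\sgn(k_1)\,k_2\,\frac{\widehat{\psi_T}(k)}{k_1^2+|k_1|^{-1}k_2^2}\,e^{-\ii k\cdot x}\quad\text{for }k_1\neq 0,
\end{equation*}
and zero otherwise. Parseval together with the anisotropic scaling $(k_1,k_2)=(T^{-\frac{1}{3}}\tilde k_1,T^{-\frac{1}{2}}\tilde k_2)$ (under which both $k_1^2+|k_1|^{-1}k_2^2$ and $|k_1|^3+k_2^2$ become scale invariant) yields $\|h_{T,x}\|_{L^2}^2\lesssim(T^{\frac{1}{3}})^{-\frac{3}{2}}$, after comparing the Fourier sum to the corresponding Riemann integral, which is legitimate because the Gaussian factor $\widehat{\psi_T}$ effectively confines the relevant frequencies to $|k_1|\lesssim T^{-\frac{1}{3}}$, $|k_2|\lesssim T^{-\frac{1}{2}}$. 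Proposition~\ref{prop:SG_mult} then closes the argument.

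For \eqref{eq:v_hoelder_clean}, write $v(x)=\xi(\Gamma(x-\cdot))$ with $\Gamma$ from \eqref{eq:gamma}, and use translation invariance together with Parseval to reduce the estimate to
\begin{equation*}
\|\Gamma(\cdot-y)-\Gamma\|_{L^2}^2=\sum_{k_1\neq 0}\frac{|1-e^{-\ii k\cdot y}|^2}{(k_1^2+|k_1|^{-1}k_2^2)^2}.
\end{equation*}
Split the sum at $d(0,k)=d(0,y)^{-1}$: the elementary bounds $|k_1|\leq d(0,k)$, $|k_2|\leq d(0,k)^{\frac{3}{2}}$, $|y_1|\leq d(0,y)$, $|y_2|\leq d(0,y)^{\frac{3}{2}}$ yield $|1-e^{-\ii k\cdot y}|\lesssim d(0,k)\,d(0,y)$ in the low-frequency regime, whereas the trivial bound $|1-e^{-\ii k\cdot y}|\leq 2$ is used in the high-frequency regime. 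Because the effective dimension of $d$ is $\tfrac{5}{2}$, so that $\#\{k:d(0,k)\sim r\}\sim r^{\frac{5}{2}}$, both contributions are of order $d(0,y)^{\frac{3}{2}}$, giving $\|\Gamma(\cdot-y)-\Gamma\|_{L^2}\lesssim d(0,y)^{\frac{3}{4}}$. The main technical obstacle will be the clean execution of this anisotropic frequency decomposition, along with the analogous scaling computation behind \eqref{eq:d2Rv_clean}; the remaining ingredients are direct applications of the spectral gap inequality and of the standard heat-kernel bounds \eqref{lp_psi_per}.
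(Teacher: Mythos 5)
Your overall strategy --- apply the spectral gap inequality \eqref{eq:SG_mult} to each linear functional and then control the $L^2$-norm of the representing kernel --- is exactly the paper's strategy, and your treatment of \eqref{eq:xi_clean} is identical to the paper's. The difference is in how the kernel norms for \eqref{eq:v_hoelder_clean} and \eqref{eq:d2Rv_clean} are estimated. The paper avoids any explicit Fourier computation: for \eqref{eq:v_hoelder_clean} it bounds $|\delta v(x_0-y)-\delta v(x_0)|\le[\delta v]_{3/4}\,d(0,y)^{3/4}$ and invokes the deterministic Schauder/Sobolev estimate $[\delta v]_{3/4}\lesssim\|\delta\xi\|_{L^2}$ from Proposition~\ref{prop:sobolev_reg}, and for \eqref{eq:d2Rv_clean} it uses Young's convolution inequality together with $\|\partial_2\delta v\|_{L^{10/3}}\lesssim\|\delta\xi\|_{L^2}$ (also from Proposition~\ref{prop:sobolev_reg}) and boundedness of $R_1$ on $L^{10/3}$. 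You instead compute the Fourier representation of the kernel and evaluate $\|\cdot\|_{L^2}$ directly via Parseval, using the anisotropic scaling $k=(T^{-1/3}\tilde k_1,T^{-1/2}\tilde k_2)$ for \eqref{eq:d2Rv_clean} and a low/high frequency split at $d(0,k)=d(0,y)^{-1}$ together with the counting estimate $\#\{k:d(0,k)\sim r\}\sim r^{5/2}$ for \eqref{eq:v_hoelder_clean}. Both routes are valid; I checked the exponents in your scaling and dyadic counting, and they close (the integrand in \eqref{eq:d2Rv_clean} is integrable near $\tilde k_1=0$ since the substitution $\tilde k_2=|\tilde k_1|^{3/2}s$ yields a $|\tilde k_1|^{1/2}$ factor, and the lower bound $k_1^2+|k_1|^{-1}k_2^2\gtrsim d(0,k)^2$, which underlies \eqref{eq:gamma_finite}, is what you need in the split). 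The paper's route is shorter because it offloads all analysis to Proposition~\ref{prop:sobolev_reg}, which is established independently and reused elsewhere; your route is more self-contained and makes the scaling origin of the exponents $-\frac{3}{4}$ and $\frac{3}{4}$ transparent, at the cost of having to handle the Riemann-sum-to-integral comparison (which, as you anticipate, is unproblematic: for $T$ bounded away from $0$ both sides are $O(1)$ by \eqref{eq:gamma_finite}-type bounds, and for $T\downarrow 0$ the scaled lattice refines).
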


\begin{proof} The proof is based on a direct application of the spectral gap inequality \eqref{eq:SG_mult}. In the following, 
for every $\delta\xi\in L^2(\TT^2)$ we consider $\delta v$ the unique solution of zero average in $x_1$ of 
$\mathcal{L} \delta v = P \delta \xi$. 

\smallskip

\noindent\emph{Proof of \eqref{eq:xi_clean}}: Let $x_0\in\TT^2$ be fixed. We consider the linear functional $G(\xi)=\xi_T(x_0)$.
Since $\xi_T = \xi * \psi_T$, by Assumption~\ref{ass} \ref{item:Def-centered} we have that 
$\lng \xi_T(x_0) \rng=0$. Then, by \eqref{eq:SG_mult} applied to $G$ (which is a cylindrical functional), we get that for every $q\geq 1$
\begin{equs}
 \left \lng |\xi_T(x_0)|^{2q} \right \rng^{\frac{1}{2q}} \lesssim 
 \left\lng \left\| \frac{\partial}{\partial \xi} \xi_T(x_0)\right\|_{L^2}^{2q}\right \rng^{\frac{1}{2q}}. 
\end{equs}
It is easy to check that $\frac{\partial}{\partial \xi} \xi_T(x_0)=\Psi_T(x_0-\cdot)\in L^2(\TT^2)$ 
(with $\Psi_T$ the periodization of $\psi_T$), and by Remark \ref{rem:perio}
\begin{equs}
 \left\| \frac{\partial}{\partial \xi} \xi_T(x_0)\right\|_{L^2} 
 = \|\Psi_T(x_0-\cdot)\|_{L^2} = \|\Psi_T\|_{L^2}
 \lesssim \left(T^{\frac{1}{3}}\right)^{-\frac{5}{4}},
\end{equs}
which proves \eqref{eq:xi_clean} for every $2\leq p<\infty$, as the implicit constant above does not depend on $x_0$. For $p\in[1,2)$, 
the conclusion then follows by Jensen's inequality.

\smallskip

\noindent\emph{Proof of \eqref{eq:v_hoelder_clean}}: Let $x_0\in\TT^2$ be fixed. For every $y\in\TT^2$, recalling the kernel $\Gamma$ in \eqref{eq:gamma}, we consider the linear 
functional $G(\xi) = v(x_0-y) -v(x_0) = \xi(\Gamma(x_0-y-\cdot)) - \xi(\Gamma(x_0-\cdot))$, which is well-defined as a centered random variable
in $L^{2q}_{\lng\cdot\rng}$ for any $q\geq 1$ by Remark~\ref{rem:Lp_sg} and \eqref{eq:gamma_finite} and is admissible in \eqref{eq:SG_mult}. Then
\begin{equs}
 \frac{\partial}{\partial \xi}G(\xi) : \delta \xi \mapsto \delta v(x_0-y) -\delta v(x_0).
\end{equs}
By \eqref{eq:hoelder_reg} we know that 
\begin{equs}
 |\delta v(x_0-y) -\delta v(x_0)| \leq [\delta v]_{\frac{3}{4}} d(0,y)^{\frac{3}{4}} \lesssim d(0,y)^{\frac{3}{4}} \|\delta \xi\|_{L^2},
\end{equs}
which in turn implies that 
\begin{equs}
 \left\|\frac{\partial}{\partial \xi}G(\xi) \right\|_{L^2} \lesssim d(0,y)^{\frac{3}{4}}.
\end{equs}
%

Thus, by \eqref{eq:SG_mult} applied to $G$, we have for every $q\geq 1$,
\begin{equs}
 \left \lng |v(x_0-y) -v(x_0)|^{2q} \right \rng^{\frac{1}{2q}} \lesssim d(0,y)^{\frac{3}{4}},
\end{equs}
which proves \eqref{eq:v_hoelder_clean} for $2\leq p <\infty$, since the implicit constant does not depend on $x_0$. For $p \in[1,2)$, the
conclusion follows by Jensen's inequality. 

\smallskip

\noindent\emph{Proof of \eqref{eq:d2Rv_clean}}: Let $x_0\in\TT^2$ be fixed. We consider the linear functional $G(\xi) = (\partial_2R_1v)_T(x_0)$
(which is cylindrical). We have that
\begin{equs}
 \frac{\partial}{\partial \xi} G(\xi): \delta \xi \mapsto (\partial_2R_1\delta v)_T(x_0).
\end{equs}
By Young's inequality for convolution (see Remark \ref{rem:perio}), \eqref{eq:sobolev_reg_3}, and the fact that $R_1$ is bounded on
$L^{\frac{10}{3}}(\TT^2)$, we get that
\begin{equs}
 |(\partial_2R_1\delta v)_T(x_0)| \lesssim \left(T^{\frac{1}{3}}\right)^{-\frac{3}{4}} \|\partial_2R_1\delta v\|_{L^{\frac{10}{3}}} 
 \lesssim \left(T^{\frac{1}{3}}\right)^{-\frac{3}{4}} \|\partial_2\delta v\|_{L^{\frac{10}{3}}} \lesssim 
 \left(T^{\frac{1}{3}}\right)^{-\frac{3}{4}} \|\delta \xi\|_{L^2},
\end{equs}
yielding the estimate
\begin{equs}
 \left\|\frac{\partial}{\partial \xi} G(\xi) \right\|_{L^2} \lesssim \left(T^{\frac{1}{3}}\right)^{-\frac{3}{4}}.
\end{equs}
We also note that $\lng G(\xi)\rng = 0$. Indeed, since $\lng v(x)\rng = \lng \xi(\Gamma(x-\cdot)) \rng = 0$, for every $x\in \TT^2$, 
we get that $\lng(\partial_2R_1v)_T(x_0)\rng = 0$. The conclusion then follows as above. 
\end{proof}

As a corollary of \eqref{eq:xi_clean} and the Schauder theory for the operator $\mathcal{L}$, we prove that the 
laws of $\xi$ and $v$ are concentrated on $\C^{-\frac{5}{4}-\eps}$ and $\C^{\frac{3}{4}-\eps}$. 

\begin{corollary} \label{cor:xi_v} Let $\lng\cdot \rng$ satisfy Assumption~\ref{ass}. For every $\varepsilon\in (0, \frac1{100})$
and $1\leq p<\infty$ there holds
\begin{equs}
 & \left\lng [\xi]_{-\frac{5}{4}-\varepsilon}^p \right\rng^{\frac{1}{p}} \leq C, \label{eq:xi_bd}
 \\
 & \left\lng [v]_{\frac{3}{4}-\varepsilon}^p \right\rng^{\frac{1}{p}} \leq C, \label{eq:v_bd}
\end{equs}
where the constant $C$ depends only on $p$ and $\eps$. Moreover, $\lng v(x) \rng = 0$ for every $x\in \TT^2$.
\end{corollary}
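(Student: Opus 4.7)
The assertion $\lng v(x)\rng = 0$ is essentially tautological: write $v(x) = \xi(\Gamma(x-\cdot))$ with $\Gamma(x-\cdot) \in L^2(\TT^2)$ thanks to \eqref{eq:gamma_finite}, and note that $\xi$ extends to $L^2$ test functions as a centered random variable by Remark~\ref{rem:Lp_sg} (the centering of smooth cylindrical approximations survives the $L^2_{\lng\cdot\rng}$-limit enforced by applying \eqref{eq:SG_mult} to linear $G$). For the H\"older moment bounds, my plan is to upgrade the pointwise estimates \eqref{eq:xi_clean}--\eqref{eq:v_hoelder_clean} of Proposition~\ref{prop:xi_v_clean} to genuine H\"older/Besov estimates via Kolmogorov--Chentsov and Sobolev-type arguments, exploiting the anisotropic scaling of $(\TT^2,d)$ with effective dimension $Q=\tfrac{5}{2}$.

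For \eqref{eq:v_bd}, I would apply a Besov--H\"older embedding in the Carnot--Carath\'eodory space $(\TT^2,d)$: from \eqref{eq:v_hoelder_clean} one deduces, for every $p\in[1,\infty)$ and every $\beta<\tfrac{3}{4}-Q/p$,
\begin{equs}
\left\lng [v]_{\beta}^{p}\right\rng \lesssim \int_{\TT^2}\!\!\int_{\TT^2}\frac{\lng |v(x)-v(y)|^{p}\rng}{d(x,y)^{\beta p+Q}}\,\dd x\,\dd y \lesssim \int_{\TT^2}\!\!\int_{\TT^2}\frac{\dd x\,\dd y}{d(x,y)^{(\beta-\frac{3}{4})p+Q}}<\infty.
\end{equs}
Given $\eps\in(0,\tfrac{1}{100})$ and $p<\infty$, choosing an auxiliary exponent $p^{*}\geq p$ with $5/(2p^{*})<\eps$ makes $\beta=\tfrac{3}{4}-\eps$ admissible at level $p^{*}$; Jensen's inequality then transfers the bound back to the original $p$.

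For \eqref{eq:xi_bd}, I would invoke the semigroup characterization \eqref{eq:neg_hoelder_char} (as used in the proof of Proposition~\ref{prop:conv-commutator}), which bounds $[\xi]_{-\frac{5}{4}-\eps}$ by $\sup_{T\in(0,1]}(T^{1/3})^{\frac{5}{4}+\eps}\|\xi_{T}\|_{L^{\infty}}$ up to a constant whose expectation vanishes by Assumption~\ref{ass}~\ref{item:Def-centered}. I would discretize $T_{n}:=2^{-3n}$ and, via Young's inequality applied to the semigroup identity $\xi_{T_{n}}=\xi_{T_{n}/2}*\Psi_{T_{n}/2}$ together with \eqref{lp_psi_per}, estimate for any $q\geq 1$
\begin{equs}
\left\lng\|\xi_{T_{n}}\|_{L^{\infty}}^{q}\right\rng^{1/q}\leq \|\Psi_{T_{n}/2}\|_{L^{q'}}\left\lng\|\xi_{T_{n}/2}\|_{L^{q}}^{q}\right\rng^{1/q}\lesssim (T_{n}^{1/3})^{-\frac{5}{2q}}(T_{n}^{1/3})^{-\frac{5}{4}},
\end{equs}
where the second factor comes from integrating \eqref{eq:xi_clean} pointwise in $x$. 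Weighting by $(T_{n}^{1/3})^{\frac{5}{4}+\eps}=2^{-n(\frac{5}{4}+\eps)}$ produces a geometric series $\sum_{n}2^{-nq(\eps-5/(2q))}$, which converges as soon as $q\eps>\tfrac{5}{2}$; the embedding $\ell^{q}\hookrightarrow\ell^{\infty}$ then controls $\sup_{n}$, and the continuous range $T\in[T_{n+1},T_{n}]$ is handled by the uniform $L^{1}$-bound on $\Psi_{T-T_{n+1}}$. Smaller $p$ follow by Jensen once again.

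The main obstacle is precisely this passage from pointwise moment estimates to genuine supremum estimates: the $Q=\tfrac{5}{2}$-dimensional loss appears as $Q/p$ in the Besov embedding for $v$ and as $5/(2q)$ in the Young step for $\xi_{T_{n}}$, and in both places it is absorbed by the H\"older margin $\eps>0$ only upon choosing the exponent sufficiently large. This is legitimate because Proposition~\ref{prop:SG_mult} delivers \emph{all} $p$-moments directly from the spectral gap inequality~\eqref{eq:SG} --- precisely the substitute, in the non-Gaussian setting, for Nelson's hypercontractivity that was used to establish the analogous statement in \cite[Lemma~4]{IO19}.
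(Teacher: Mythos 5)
Your treatment of $\lng v(x)\rng=0$ and of \eqref{eq:xi_bd} matches the paper's argument: the centering follows from Remark~\ref{rem:Lp_sg} and \eqref{eq:gamma_finite} exactly as you write, and your discretization $T_n=2^{-3n}$ combined with Young's inequality is essentially the content of the paper's Lemma~\ref{lem:mod}, which is invoked directly for \eqref{eq:xi_bd} (Lemma~\ref{lem:mod} sums in $\ell^1$ via Minkowski after the semigroup/Young step, whereas you embed $\ell^q\hookrightarrow\ell^\infty$; both are correct and both require a large auxiliary exponent followed by Jensen).

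For \eqref{eq:v_bd} you take a genuinely different and less economical route. The paper does not apply any Kolmogorov--Chentsov-type argument to $v$: it simply uses Schauder theory for the operator $\mathcal{L}$ (namely \cite[Lemma~5]{IO19}), which gives the \emph{deterministic} estimate $[v]_{\frac34-\eps}=[\mathcal{L}^{-1}P\xi]_{\frac34-\eps}\lesssim[\xi]_{-\frac54-\eps}$, so that \eqref{eq:v_bd} falls out of \eqref{eq:xi_bd} at once, with no need for \eqref{eq:v_hoelder_clean} at this stage. Your Besov--H\"older route would also work but requires you to supply the anisotropic Garsia--Rodemich--Rumsey inequality on $(\TT^2,d)$, which the paper does not prove (it only provides the semigroup-based Lemma~\ref{lem:mod}, tailored to \emph{negative} H\"older norms). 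Moreover your display has an exponent slip: on a $Q$-regular metric measure space the GRR/Sobolev embedding bounds $[v]_{s-Q/p}$ by the Gagliardo seminorm $\bigl(\int\!\!\int\frac{|v(x)-v(y)|^p}{d(x,y)^{sp+Q}}\,\dd x\,\dd y\bigr)^{1/p}$, so writing the target H\"older exponent as $\beta=s-Q/p$ yields the denominator $d(x,y)^{\beta p+2Q}$, not $d(x,y)^{\beta p+Q}$. After inserting \eqref{eq:v_hoelder_clean} the integrand becomes $d(x,y)^{-(\beta-\frac34)p-2Q}$, which is integrable precisely when $\beta<\frac34-Q/p$; so your stated admissibility condition and your endgame (choose $p^*$ with $Q/p^*<\eps$, then Jensen) are correct, even though the displayed exponents are not. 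In short: the $v$-step is correct modulo the exponent slip and the extra work of establishing the anisotropic GRR inequality, but the paper's Schauder shortcut avoids this entirely.
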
 

\begin{proof} The proof of \eqref{eq:xi_bd} follows by \eqref{eq:xi_clean} and Lemma \ref{lem:mod}. To prove 
\eqref{eq:v_bd}, we use Schauder theory for the operator $\mathcal{L}$ (see \cite[Lemma 5]{IO19}), which implies 
\begin{equs}
 {} [v]_{\frac{3}{4}-\eps} = [\mathcal{L}^{-1} P\xi]_{\frac{3}{4}-\varepsilon} \lesssim {[P\xi]}_{-\frac{5}{4}-\varepsilon} 
 \lesssim {[\xi]}_{-\frac{5}{4}-\varepsilon}.
\end{equs}
As in the proof of Proposition \ref{prop:xi_v_clean}, $\lng v(x) \rng = \lng \xi(\Gamma(x-\cdot)) \rng=0$ for every $x\in \TT^2$,
by Remark~\ref{rem:Lp_sg} and \eqref{eq:gamma_finite}. 
\end{proof}

\subsection{Estimates on \texorpdfstring{$F$}{F}} \label{s:F}

In this section we use the spectral gap inequality \eqref{eq:SG_mult} and \eqref{eq:v_hoelder_clean} and \eqref{eq:d2Rv_clean} to construct $F$ as the 
$L^p_{\lng\cdot\rng} \C^{-\frac{3}{4}-\eps}$-limit of the sequence of random variables $\{v \partial_2 R_1 v_t\}_{t\downarrow 0}$. A similar result was proved
in \cite[Lemma 8]{IO19} for $\lng\cdot\rng$ being the law of white noise (but instead considering approximations $\{v_\ell \partial_2R_1v_\ell\}_{\ell\downarrow 0}$,
where $v_\ell:=\mathcal{L}^{-1}P\xi_\ell$ and $\xi_\ell:=\phi_\ell*\xi$, for a suitable mollifier $\phi_\ell$) using Nelson's hypercontractivity estimate. 
As in Section \ref{s:xi_v}, our estimate holds for more general probability measures which are not necessarily Gaussian. 

In what follows, we use the convolution-commutator
\begin{equs}
 \lceil v, (\cdot)_s\rceil (\partial_2R_1v)_s :=  v(\partial_2 R_1v)_{2s}-(v(\partial_2 R_1v)_s)_s. 
\end{equs}

We will need the following lemma, based on \eqref{eq:v_hoelder_clean} and \eqref{eq:d2Rv_clean}. 

\begin{lemma}\label{lem:commutator}
	Let $\lng\cdot\rng$ satisfy Assumption~\ref{ass} and for a periodic Schwartz distribution $\xi$ let 
	$v=\mathcal{L}^{-1}P\xi$. Then for any $1\leq p<\infty$ and $s,S\in(0,1]$ there holds 
\begin{equs}
 \sup_{x\in\TT^2} \left \lng \left|\left(\lceil v, (\cdot)_s\rceil (\partial_2R_1v)_s\right)_{S}(x)\right|^p \right \rng^{\frac{1}{p}}
 \leq C \left(\left(S^{\frac{1}{3}}\right)^{-\frac{1}{4}} \left(s^{\frac{1}{3}}\right)^{\frac{1}{4}} +
 \left(S^{\frac{1}{3}}\right)^{-\frac{3}{4}} \left(s^{\frac{1}{3}}\right)^{\frac{3}{4}}\right),
 \label{eq:F_main_com}
\end{equs}
where the constant $C$ depends only on $p$. 
\end{lemma}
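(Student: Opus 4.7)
I would prove \eqref{eq:F_main_com} by applying the multiplicative spectral gap inequality \eqref{eq:SG_mult} from Proposition~\ref{prop:SG_mult} to the quadratic functional $G(\xi):=\big(\lceil v,(\cdot)_s\rceil(\partial_2 R_1 v)_s\big)_S(x_0)$ for fixed $x_0$. The first step is to verify that $\lng G\rng=0$. Under the reflection $\tau:\xi(x_1,x_2)\mapsto\xi(-x_1,x_2)$, which preserves the law by Assumption~\ref{ass}~\ref{item:Def-reflection}, the operator $\mathcal{L}$ and the projection $P$ commute with $\tau$, so $v=\mathcal{L}^{-1}P\xi$ transforms covariantly; the Hilbert transform $R_1$ anti-commutes with $\tau$ (since its Fourier multiplier $\mathrm{i}\,\sgn(k_1)$ is odd in $k_1$); and the symmetric kernels $\psi_s,\psi_S$ commute with $\tau$. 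Combining, the commutator $\lceil v,(\cdot)_s\rceil(\partial_2 R_1 v)_s$ is anti-symmetric under $\tau$ (up to spatial reflection), the further $\psi_S$-convolution preserves this, and using stationarity (Assumption~\ref{ass}~\ref{item:Def-stationary}) to eliminate the $x_0$-dependence of $\lng G\rng$ yields $\lng G\rng=-\lng G\rng=0$.

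Next, applying \eqref{eq:SG_mult} reduces the task to bounding $\lng\|\partial G/\partial\xi\|_{L^2}^{2p}\rng^{1/(2p)}$. Since both $v$ and $\partial_2R_1v$ are linear in $\xi$, differentiating and writing $\delta v:=\mathcal{L}^{-1}P\delta\xi$ gives
\begin{equs}
\frac{\partial G}{\partial\xi}[\delta\xi] = \big(\lceil\delta v,(\cdot)_s\rceil(\partial_2 R_1 v)_s\big)_S(x_0) + \big(\lceil v,(\cdot)_s\rceil(\partial_2 R_1\delta v)_s\big)_S(x_0) =: T_1[\delta\xi]+T_2[\delta\xi],
\end{equs}
so that by the triangle inequality it suffices to bound the operator norms $\|T_1\|_{\mathrm{op}}$ and $\|T_2\|_{\mathrm{op}}$ pointwise in $\xi$ by random quantities whose $L^{2p}_{\lng\cdot\rng}$-norms control the right-hand side of \eqref{eq:F_main_com}.

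For $T_1$, the commutator representation
\begin{equs}
T_1[\delta\xi]=\iint\psi_S(x_0-z)\psi_s(z-y)(\delta v(z)-\delta v(y))(\partial_2R_1 v)_s(y)\,\dd y\,\dd z
\end{equs}
combined with the $L^2$-Hölder estimate $\|\Gamma(z-\cdot)-\Gamma(y-\cdot)\|_{L^2}\lesssim d(z,y)^{3/4}$ (which follows from \eqref{eq:gamma_finite} and a Parseval computation in the anisotropic Fourier setting, and encodes the $\C^{3/4-}$-regularity of $v$ at the level of the Cameron--Martin-type kernel) yields via Cauchy--Schwarz in the $(y,z)$-integral a pointwise bound of the form $\|T_1\|_{\mathrm{op}}\lesssim (s^{1/3})^{3/4}(\psi_{S+s}\ast|(\partial_2R_1 v)_s|)(x_0)$. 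Taking $L^{2p}_{\lng\cdot\rng}$-moments by Minkowski and inserting \eqref{eq:d2Rv_clean} then produces the contribution $(S^{1/3})^{-3/4}(s^{1/3})^{3/4}$. The second contribution $(S^{1/3})^{-1/4}(s^{1/3})^{1/4}$ arises from an alternative estimation that redistributes one derivative between the two convolutions (for instance by integrating $\partial_2$ by parts in $y$), trading Hölder exponent $3/4$ for $1/4$ against a reduced $\psi_S$-smoothing loss. The term $T_2$ is handled symmetrically: the stochastic factor $v(z)-v(y)$ is controlled via \eqref{eq:v_hoelder_clean}, and the linear functional $\delta\xi\mapsto(\partial_2R_1\delta v)_s(y)$ has $L^2$-operator norm bounded by $\|\partial_2R_1\mathcal{L}^{-1}P\psi_s\|_{L^2}$, computable via Parseval and the scaling \eqref{lp_psi}.

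The main obstacle is the anisotropic bookkeeping: the Hölder gain $(s^{1/3})^{3/4}$ from the commutator must be balanced against the $(s^{1/3})^{-3/4}$ blow-up of $\|(\partial_2R_1 v)_s\|_{L^\infty}$ and the corresponding loss from the outer $\psi_S$-convolution, with the two terms on the right-hand side of \eqref{eq:F_main_com} reflecting two natural ways of distributing this regularity trade-off. The reflection symmetry is essential: without Assumption~\ref{ass}~\ref{item:Def-reflection}, a non-vanishing mean $\lng G\rng$ would dominate and destroy the $s\to 0$ smallness required to construct $F$.
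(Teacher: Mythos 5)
Your high-level architecture matches the paper: apply the $L^p$ spectral gap inequality \eqref{eq:SG_mult} to $G(\xi)=\big(\lceil v,(\cdot)_s\rceil(\partial_2R_1v)_s\big)_S(x_0)$, verify $\lng G\rng=0$ via the reflection symmetry in Assumption~\ref{ass}~\ref{item:Def-reflection} together with stationarity and the oddness of $R_1$, and split $\frac{\partial G}{\partial\xi}$ into the two commutator pieces $T_1,T_2$. The zero-mean argument is correct and essentially the paper's. The commutator formula you write for $T_1$ is also the right one. The gap is in the quantitative estimates on $T_1$ and $T_2$.

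As you describe it, the $T_1$ estimate is: bound $\|\Gamma(z-\cdot)-\Gamma(y-\cdot)\|_{L^2}\lesssim d(z,y)^{\frac34}$, apply Minkowski, and then insert \eqref{eq:d2Rv_clean}. Track the powers of $s$: you gain $(s^{\frac13})^{\frac34}$ from $\int|\psi_s(y)|d(0,y)^{\frac34}\dd y$, lose $(s^{\frac13})^{-\frac34}$ from $\lng|(\partial_2R_1v)_s|^{2p}\rng^{\frac1{2p}}$, and pay only $\|\Psi_{S}\|_{L^1}\sim1$ for the outer convolution. These cancel exactly, and you obtain $O(1)$, \emph{not} $(S^{\frac13})^{-\frac34}(s^{\frac13})^{\frac34}$. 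The same occurs for your $T_2$: the operator norm $\|\partial_2R_1\mathcal{L}^{-1}P\Psi_s\|_{L^2}\sim(s^{\frac13})^{-\frac34}$ (by scaling the Parseval integral), which again cancels the $(s^{\frac13})^{\frac34}$ gained from $\lng|v(z)-v(y)|^{2p}\rng^{\frac1{2p}}\lesssim d(z,y)^{\frac34}$. An $O(1)$ bound is true but far too weak: the $(s^{\frac13})^\alpha(S^{\frac13})^{-\alpha}\to 0$ decay as $s\downarrow 0$ (with $S$ fixed) is precisely what drives the Cauchy property in Proposition~\ref{prop:F_approx_tight} and Corollary~\ref{cor:F}. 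Your "alternative estimation that redistributes one derivative" is too vague to close this gap; in fact, as described, neither of the two contributions on the right-hand side of \eqref{eq:F_main_com} is actually produced by your argument.

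What you are missing is the mechanism by which the paper converts $s$-smallness into a \emph{negative} power of $S^{\frac13}$ rather than canceling it. The paper does not use the $\frac34$-Hölder bound on the $\Gamma$-increment; instead it applies the pointwise mean-value representation
\begin{equs}
\delta v(x)-\delta v(x-y)=\int_0^1\big(y_1\,\partial_1\delta v(x-ty)+y_2\,\partial_2\delta v(x-ty)\big)\,\dd t,
\end{equs}
gaining a \emph{full} power of $|y_i|$ (so $\int|y_1||\psi_s|\sim s^{\frac13}$, $\int|y_2||\psi_s|\sim(s^{\frac13})^{\frac32}$, both strictly better than $(s^{\frac13})^{\frac34}$). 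It then applies Hölder's inequality in the $x$-variable with exponents $(\frac{10}{9},10)$ for the $\partial_1$-term and $(\frac{10}{7},\frac{10}{3})$ for the $\partial_2$-term, so that $\partial_i\delta v$ lands in a fixed $L^p$-space where it is controlled, \emph{with no $s$-dependence}, by $\|\delta\xi\|_{L^2}$ via the anisotropic Sobolev estimates \eqref{eq:sobolev_reg_1}, \eqref{eq:sobolev_reg_3}. The price is that $\Psi_S$ ends up measured in $L^{\frac{10}{9}}$ or $L^{\frac{10}{7}}$, which by \eqref{lp_psi_per} costs $(S^{\frac13})^{-\frac14}$ or $(S^{\frac13})^{-\frac34}$ respectively. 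Combining these factors with $\lng|(\partial_2R_1v)_s|^{2p}\rng^{\frac1{2p}}\lesssim(s^{\frac13})^{-\frac34}$ yields exactly the two contributions $(S^{\frac13})^{-\frac14}(s^{\frac13})^{\frac14}$ and $(S^{\frac13})^{-\frac34}(s^{\frac13})^{\frac34}$. The $T_2$-term is handled analogously, pairing $\|\Psi_S(x_0-\cdot)(v(\cdot)-v(\cdot-y))\|_{L^{\frac{10}{7}}}$ against $\|\partial_2\delta v\|_{L^{\frac{10}{3}}}\lesssim\|\delta\xi\|_{L^2}$, so the mollification $(\cdot)_s$ on $\partial_2R_1\delta v$ costs nothing. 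This Hölder-redistribution of exponents onto $\Psi_S$, combined with the MVT rather than a Hölder difference bound, is the heart of the argument and is absent from your proposal.
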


\begin{proof} \eqref{eq:F_main_com} is a consequence of the following two claims:
\begin{enumerate}[label=\textsc{Claim \arabic*},leftmargin=0pt,labelsep=*,itemindent=*]
	\item\label{claim1} For every $s, S\in(0,1]$ and $q\geq 1$,
\begin{equs}
 \sup_{x\in\TT^2} \left\lng\left\|\frac{\partial}{\partial \xi}\big(\lceil v, (\cdot)_s\rceil (\partial_2R_1v)_s\big)_{S}(x)\right\|_{L^2}^{2q}\right\rng^{\frac{1}{2q}}
 & \lesssim \left(S^{\frac{1}{3}}\right)^{-\frac{1}{4}} \left(s^{\frac{1}{3}}\right)^{\frac{1}{4}} +
 \left(S^{\frac{1}{3}}\right)^{-\frac{3}{4}} \left(s^{\frac{1}{3}}\right)^{\frac{3}{4}}. \quad
 \label{eq:F_main_com_1st_der}
\end{equs}
	\item\label{claim2} For every $s,S\in(0,1]$ and $x_0 \in \TT^2$,
\begin{equs}
 \lng \left(\lceil v, (\cdot)_s\rceil (\partial_2R_1v)_s\right)_{S}(x_0) \rng
 = \lng (v(\partial_2R_1v)_{2s})_S(x_0) \rng - 
 \lng (v(\partial_2R_1v)_{s})_{s+S}(x_0) \rng =0. \quad \label{eq:exp_van} 
\end{equs}
\end{enumerate}

Assuming that these claims hold, we may apply for fixed $x_0\in\TT^2$ the $L^p$-version \eqref{eq:SG_mult} of the spectral gap inequality to the functional  
$G: \xi\mapsto\left(\lceil v, (\cdot)_s\rceil (\partial_2R_1v)_s\right)_{S}(x_0)$, which yields \eqref{eq:F_main_com} by \ref{claim1} and \ref{claim2}
for $2\leq p<\infty$. For $p\in[1,2)$ \eqref{eq:F_main_com} follows by Jensen's inequality. 

It is easy to see that 
\begin{align*}
	G(\xi) = \left(\lceil v, (\cdot)_s\rceil (\partial_2R_1v)_s\right)_{S}(x_0)
	= \iint k(z,z') \xi(z) \xi(z') \, \dd z \, \dd z',
\end{align*}
where $k(z,z')$ is smooth in both variables and $\iint k(z,z')^2 \,\dd z \, \dd z' <\infty$. A straightforward calculation using the
Cauchy--Schwarz inequality with respect to $\lng\cdot\rng$ and \eqref{eq:v_hoelder_clean} and \eqref{eq:d2Rv_clean} we also get that
$\lng |G(\xi)| \rng \lesssim 1$. Hence, we may apply Lemma~\ref{lem:cylinder} which implies that $G$ is admissible in the spectral gap
inequality \eqref{eq:SG_mult}.  

\smallskip
\noindent\emph{Proof of \ref{claim1}:}
Let $x_0\in\TT^2$ be fixed. We first notice that for every $s, S\in(0,1]$, the derivative of the quadratic functional $G$ is given by
\begin{equs}
 \frac{\partial}{\partial \xi} G(\xi):
 \delta \xi \mapsto \left(\lceil \delta v, (\cdot)_s\rceil (\partial_2R_1 v)_s\right)_{S}(x_0)
 + \left(\lceil v, (\cdot)_s\rceil (\partial_2R_1 \delta v)_s\right)_{S}(x_0),
\end{equs}
where $\delta v$ is the unique solution of zero average in $x_1$ to $\mathcal{L}\delta v = P\delta \xi$ for $\delta \xi\in L^2(\TT^2)$.

\smallskip

\noindent\textsc{Step 1}: We first show that
\begin{align}
 \sup_{\|\delta\xi\|_{L^2} \leq 1} \left| \left(\lceil \delta v, (\cdot)_s\rceil (\partial_2R_1 v)_s\right)_{S}(x_0) \right| 
  &\lesssim \int_{\RR^2} |y_1||\psi_s(y)| \|\Psi_S(x_0-\cdot) (\partial_2R_1v)_s(\cdot-y)\|_{L^\frac{10}{9}} \, \dd y \nonumber
 \\
 & \hskip-5ex + \int_{\RR^2} |y_2| |\psi_s(y)| \|\Psi_S(x_0-\cdot) (\partial_2R_1v)_s(\cdot-y)\|_{L^\frac{10}{7}} \, \dd y,
 \label{eq:der_1}
 \end{align}
 \begin{align}
 \sup_{\|\delta\xi\|_{L^2} \leq 1} \left|\left(\lceil v, (\cdot)_s\rceil (\partial_2R_1 \delta v)_s\right)_{S}(x_0)\right|
 & \lesssim \int_{\RR^2} |\psi_s(y)| \|\Psi_S(x_0-\cdot) \left(v(\cdot-y) - v\right)\|_{L^{\frac{10}{7}}} \, \dd y.
 \label{eq:der_2}
\end{align}

\smallskip

\noindent\textsc{Step 1a} (Proof of \eqref{eq:der_1}): By Fubini and the mean value theorem we have 
\begin{equs}
 & \left(\lceil \delta v, (\cdot)_s\rceil (\partial_2R_1 v)_s\right)_{S}(x_0)
 \\
 &= \int_{\RR^2} \psi_S(x_0-x) \int_{\RR^2} \psi_s(y) \left(\delta v(x) -
  \delta v(x-y)\right) (\partial_2R_1 v)_s(x-y)  \, \dd y \, \dd x
 \\
 &=
 \int_{\RR^2} \psi_s(y) \int_{\TT^2} \Psi_S(x_0-x) \int_0^1 \left( y_1 \partial_1 \delta v(x-ty) + y_2 \partial_2 \delta v(x-ty) \right)\dd t \, (\partial_2R_1 v)_s(x-y) \, \dd x \, \dd y,
\end{equs}
where $\Psi_S$ is the periodization of $\psi_S$. By H\"older's and Minkowski's inequalities, as well as translation invariance, it follows that
\begin{equs}
 &\left|\left(\lceil \delta v, (\cdot)_s\rceil (\partial_2R_1 v)_s\right)_{S}(x_0)\right|
 \\ 
 & \quad \leq 
 \int_{\RR^2} |\psi_s(y)| \|\Psi_S(x_0-x) (\partial_2R_1v)_s(x-y)\|_{L^\frac{10}{9}_x} 
 \left\|\int_0^1 y_1 \partial_1\delta v(x-ty)\,\dd t \right\|_{L^{10}_x} \, \dd y
 \\
 & \quad \quad + 
 \int_{\RR^2} |\psi_s(y)| \|\Psi_S(x_0-x) (\partial_2R_1v)_s(x-y)\|_{L^\frac{10}{7}_x} 
 \left\|\int_0^1 y_2 \partial_2\delta v(x-ty)\,\dd t \right\|_{L^{\frac{10}{3}}_x} \, \dd y
 \\
 & \quad \lesssim 
 \int_{\RR^2} |y_1||\psi_s(y)| \|\Psi_S(x_0-x) (\partial_2R_1v)_s(x-y)\|_{L^\frac{10}{9}_x} \, \dd y
 \, \|\partial_1 \delta v\|_{L^{10}}
 \\
 & \quad \quad + 
 \int_{\RR^2} |y_2| |\psi_s(y)| \|\Psi_S(x_0-x) (\partial_2R_1v)_s(x-y)\|_{L^\frac{10}{7}_x} \, \dd y
 \, \|\partial_2 \delta v\|_{L^{\frac{10}{3}}}.
 %
\end{equs}
By \eqref{eq:sobolev_reg_1} and \eqref{eq:sobolev_reg_3} we have $\|\partial_1 \delta v\|_{L^{10}}, \|\partial_2 \delta v\|_{L^{\frac{10}{3}}} \lesssim \|\delta\xi\|_{L^2}$, hence 
\eqref{eq:der_1} follows after taking the supremum over $\|\delta \xi\|_{L^2}\leq 1$. 

\smallskip

\noindent\textsc{Step 1b} (Proof of \eqref{eq:der_2}): As in \textsc{Step 1A}, we have
\begin{equs}
 & \left(\lceil v, (\cdot)_s\rceil (\partial_2R_1 \delta v)_s\right)_{S}(x_0)
 \\
 & \quad = \int_{\RR^2} \psi_s(y) \int_{\TT^2} \Psi_S(x_0-x) \left(v(x) - v(x-y) \right) (\partial_2R_1 \delta v)_s(x-y)  
 \, \dd x \, \dd y,
\end{equs}
where $\Psi_S$ is the periodization of $\psi_s$. By H\"older's inequality, translation invariance, 
and the fact that $R_1$ is bounded on $L^{\frac{10}{3}}(\TT^2)$, we obtain that
\begin{equs}
 & |\left(\lceil v, (\cdot)_s\rceil (\partial_2R_1 \delta v)_s\right)_{S}(x_0)|
 \\
 & \quad \lesssim 
 \int_{\RR^2} |\psi_s(y)| \|\Psi_S(x_0-x) \left(v(x)-v(x-y)\right)\|_{L^{\frac{10}{7}}_x} 
 \|(\partial_2R_1 \delta v)_s(x-y)\|_{L^\frac{10}{3}_x} \, \dd y
 \\
 & \quad \lesssim 
 \int_{\RR^2} |\psi_s(y)| \|\Psi_S(x_0-x) \left(v(x)-v(x-y)\right)\|_{L^{\frac{10}{7}}_x} \, \dd y
 \, \|\partial_2\delta v\|_{L^\frac{10}{3}}.
%
\end{equs}
By \eqref{eq:sobolev_reg_3}, $\|\partial_2\delta v\|_{L^\frac{10}{3}} \lesssim \|\delta\xi\|_{L^2}$, which gives \eqref{eq:der_2} after taking the supremum over $\|\delta\xi\|_{L^2}\leq 1$. 

\smallskip

\noindent\textsc{Step 2}: For any $q\geq1$ and $x_0 \in \TT^2$, by \textsc{Step 1} and Minkowski's inequality (since $2q \geq \max\{\frac{10}{7}, \frac{10}{9}\}$), we get that
\begin{equs}
\left\lng\left\|\frac{\partial}{\partial \xi}G(\xi)\right\|_{L^2}^{2q}\right\rng^{\frac{1}{2q}}
&\lesssim \int_{\RR^2} |y_1||\psi_s(y)| \|\Psi_S(x_0-x) \lng|(\partial_2R_1v)_s(x-y)|^{2q}\rng^{\frac{1}{2q}}\|_{L^\frac{10}{9}_x} \, \dd y
 \\
 &\quad+ 
 \int_{\RR^2} |y_2| |\psi_s(y)| \|\Psi_S(x_0-x) \lng|(\partial_2R_1v)_s(x-y)|^{2q}\rng^{\frac{1}{2q}}\|_{L^\frac{10}{7}_x} \, \dd y
 \\
 &\quad +
 \int_{\RR^2} |\psi_s(y)| \|\Psi_S(x_0-x) \lng |v(x)-v(x-y)|^{2q} \rng^{\frac{1}{2q}} \|_{L^{\frac{10}{7}}_x} \, \dd y.
\end{equs}
By \eqref{eq:v_hoelder_clean} and \eqref{eq:d2Rv_clean} this implies the bound
\begin{equs}
 \left\lng\left\|\frac{\partial}{\partial \xi}G(\xi)\right\|_{L^2}^{2q}\right\rng^{\frac{1}{2q}}
 &\lesssim \|\Psi_S\|_{L^\frac{10}{9}} \left(s^{\frac{1}{3}}\right)^{-\frac{3}{4}} \int_{\RR^2} |y_1||\psi_s(y)| \, \dd y 
 + \|\Psi_S\|_{L^\frac{10}{7}} \left(s^{\frac{1}{3}}\right)^{-\frac{3}{4}} \int_{\RR^2} |y_2| |\psi_s(y)| \, \dd y 
 \\
 &\quad + \|\Psi_S\|_{L^{\frac{10}{7}}} \int_{\RR^2} d(0,y)^{\frac{3}{4}} |\psi_s(y)| \, \dd y
 \\
 &\lesssim \left(S^{\frac{1}{3}} \right)^{-\frac{1}{4}} \left(s^{\frac{1}{3}}\right)^{1-\frac{3}{4}} 
 + \left(S^{\frac{1}{3}} \right)^{-\frac{3}{4}} \left(s^{\frac{1}{3}}\right)^{\frac{3}{2}-\frac{3}{4}}
 + \left(S^{\frac{1}{3}} \right)^{-\frac{3}{4}} \left(s^{\frac{1}{3}}\right)^{\frac{3}{4}},
\end{equs}
which proves \textsc{Claim 2}, since the implicit constant does not depend on $x_0$.

\smallskip
\noindent\emph{Proof of \ref{claim2}:} By stationarity we know that $\lng (v (\partial_2R_1v)_{2s})_S(x_0) \rng = \lng (v (\partial_2R_1v)_{2s})_S(0) \rng$. 
If we denote by $\tilde v$ the solution to \eqref{eq:linearized_ripple} with $\xi$ replaced by $\tilde\xi(x) = \xi(-x_1,x_2)$, by the symmetry of $\Gamma$
(see \eqref{eq:gamma}) and the 
fact that $\xi$ and $\tilde\xi$ have the same law (see Assumption~\ref{ass}~\ref{item:Def-reflection}), we know that $\tilde v = v$ in law, hence 
$\lng (v (\partial_2R_1v)_{2s})_S(0) \rng = \lng (\tilde v (\partial_2R_1\tilde v)_{2s})_S(0) \rng$. By the symmetry of $\psi_t$ for $t\in(0,1]$ and the fact
that $R_1\tilde v(x) = - R_1 v(-x_1,x_2)$, we get $\lng (\tilde v (\partial_2R_1\tilde v)_{2s})_S(0) \rng = -\lng (v (\partial_2R_1v)_{2s})_S(0) \rng$, which 
implies that $\lng (v (\partial_2R_1v)_{2s})_S(0) \rng=0$. Similarly, $\lng (v(\partial_2R_1v)_{s})_{s+S}(0) \rng =0$.
\end{proof}

We then have the following proposition.

\begin{proposition}\label{prop:F_approx_tight} Under the assumptions of Lemma \ref{lem:commutator}, for every $\eps\in(0,\frac{1}{100})$,
$1\leq p<\infty$, and $0<\tau\leq t\leq 1$ dyadically related there holds
\begin{equs}
 \left\lng [(v \partial_2 R_1 v_\tau)_{t-\tau} - v \partial_2 R_1 v_t]_{-\frac{3}{4}-\eps}^p \right\rng^{\frac{1}{p}}
 \leq \left(t^{\frac{1}{3}}\right)^{\frac{1}{4}}. \label{eq:triple_com_est}
\end{equs}
Furthermore, the following bound holds,
\begin{equs}
 \sup_{t\in(0,1]} \left\lng [v\partial_2 R_1 v_t]_{-\frac{3}{4}-\eps}^p \right\rng^{\frac{1}{p}} \leq C. 
 \label{eq:F_approx_tight}
\end{equs}
In the above estimates, the constant $C$ depends only on $\eps$ and $p$. 
\end{proposition}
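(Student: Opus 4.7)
The plan is to prove \eqref{eq:triple_com_est} first via a dyadic telescoping in the approximation scale $\tau$, and then to deduce \eqref{eq:F_approx_tight} by applying the same telescoping at the outer scale $1$ combined with a direct endpoint bound at $t=1$.

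For \eqref{eq:triple_com_est}, I would set $t = 2^n\tau$ and $s_k = 2^k\tau$ for $k = 0, \ldots, n$, and use the semigroup identity $t - s_k = s_k + (t - s_{k+1})$ (together with $s_{k+1} = 2s_k$) to obtain the telescopic decomposition
\[
(v\partial_2R_1v_\tau)_{t-\tau} - v\partial_2R_1v_t = -\sum_{k=0}^{n-1}\bigl[\lceil v,(\cdot)_{s_k}\rceil(\partial_2R_1v)_{s_k}\bigr]_{t-s_{k+1}}.
\]
For each single-scale summand, Lemma~\ref{lem:commutator} provides the pointwise moment bound $\sup_x\lng|\cdot_S(x)|^p\rng^{1/p}\lesssim (S^{1/3})^{-\frac{1}{4}}(s_k^{1/3})^{\frac{1}{4}} + (S^{1/3})^{-\frac{3}{4}}(s_k^{1/3})^{\frac{3}{4}}$ at an arbitrary convolution scale $S$. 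I would upgrade this to the $\C^{-\frac{3}{4}-\eps}$-moment estimate $\lng[\lceil v,(\cdot)_{s_k}\rceil(\partial_2R_1v)_{s_k}]_{-\frac{3}{4}-\eps}^p\rng^{\frac{1}{p}}\lesssim (s_k^{1/3})^{\frac{1}{4}}$ via a Kolmogorov-type criterion (in the spirit of Lemma~\ref{lem:mod}), using $(s_k^{1/3})^{\frac{3}{4}}\leq (s_k^{1/3})^{\frac{1}{4}}$. Since further convolution with $\psi_{t-s_{k+1}}$ is non-expansive in the $\C^{-\frac{3}{4}-\eps}$ seminorm by the characterization \eqref{eq:neg_hoelder_char}, Minkowski's inequality bounds the full sum by the geometric series $\sum_{k=0}^{n-1}(s_k^{1/3})^{\frac{1}{4}} = (\tau^{1/3})^{\frac{1}{4}}\sum_{k=0}^{n-1}2^{k/12}\lesssim (t^{1/3})^{\frac{1}{4}}$, which is \eqref{eq:triple_com_est}.

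For \eqref{eq:F_approx_tight}, I would apply the identical telescoping but now between the outer scales $t$ and $1$. For dyadic $t = 2^{-n}\in (0,1]$, setting $\tilde s_j = 2^jt$ for $j = 0,\ldots,n$, the same computation yields
\[
v\partial_2R_1v_1 - (v\partial_2R_1v_t)_{1-t} = \sum_{j=0}^{n-1}\bigl[\lceil v,(\cdot)_{\tilde s_j}\rceil(\partial_2R_1v)_{\tilde s_j}\bigr]_{1-\tilde s_{j+1}},
\]
whose $L^p\C^{-\frac{3}{4}-\eps}$-moment is controlled by $\sum_{j=0}^{n-1}(\tilde s_j^{1/3})^{\frac{1}{4}} = (t^{1/3})^{\frac{1}{4}}\sum_{j=0}^{n-1}2^{j/12}\lesssim 1$ uniformly in $t$. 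Combined with the deterministic product estimate $[v\partial_2R_1v_1]_{-\frac{3}{4}-\eps}\leq [v\partial_2R_1v_1]_{\frac{3}{4}-\eps}\lesssim [v]_{\frac{3}{4}-\eps}^2$ (whose $L^p$-moments are controlled by Corollary~\ref{cor:xi_v}), this gives a uniform $L^p\C^{-\frac{3}{4}-\eps}$-bound on the smoothed quantity $(v\partial_2R_1v_t)_{1-t}$. To transfer this to a bound on $v\partial_2R_1v_t$ itself, I would apply the telescoping downward at the outer scale $t$ with approximation scale $t/2^m$,
\[
v\partial_2R_1v_t - (v\partial_2R_1v_{t/2^m})_{t(1-2^{-m})} = \sum_{l=1}^{m}\bigl[\lceil v,(\cdot)_{t/2^l}\rceil(\partial_2R_1v)_{t/2^l}\bigr]_{t(1-2^{-(l-1)})},
\]
whose sum is uniformly bounded in $m$ in $L^p\C^{-\frac{3}{4}-\eps}$ by the same geometric dominance; the tail $(v\partial_2R_1v_{t/2^m})_{t(1-2^{-m})}$ is then controlled as $m\to\infty$ by interlacing the uniform bounds on $(v\partial_2R_1v_s)_{1-s}$ established in the previous step (applied at $s = t/2^m$) with the Cauchy property \eqref{eq:triple_com_est} to identify its limit in $L^p\C^{-\frac{3}{4}-\eps}$.

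The main obstacle is the final transfer step from the smoothed quantity $(v\partial_2R_1v_t)_{1-t}$ to the original $v\partial_2R_1v_t$: this requires carefully interlacing the telescoping identities at several outer scales and invoking the Cauchy estimate \eqref{eq:triple_com_est} to control the tail term arising from the infinite downward iteration, since a naive triangle inequality between $\|(v\partial_2R_1v_t)_{1-t}\|$ and $\|v\partial_2R_1v_t\|$ in $\C^{-\frac{3}{4}-\eps}$ would only yield bounds that blow up as $t\downarrow 0$.
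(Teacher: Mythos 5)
Your proof of \eqref{eq:triple_com_est} is essentially the paper's: the same dyadic telescoping identity, the same single-scale moment bound from Lemma~\ref{lem:commutator}, and the same Kolmogorov-type upgrade via Lemma~\ref{lem:mod}, merely applied to each summand before summing the geometric series rather than after. This part is fine.

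There is, however, a genuine gap in your argument for \eqref{eq:F_approx_tight}. The upward telescoping from $t$ to the fixed scale $1$ does give a uniform bound on $\lng[(v\partial_2R_1v_t)_{1-t}]_{-\frac{3}{4}-\eps}^p\rng^{\frac{1}{p}}$, but by the characterization \eqref{eq:neg_hoelder_char} this seminorm only controls $(T^{\frac{1}{3}})^{\frac{3}{4}+\eps}\|(v\partial_2R_1v_t)_{(1-t)+T}\|_{L^\infty}$ for $T\in(0,1]$, i.e.\ the behaviour of $(v\partial_2R_1v_t)_{T'}$ at coarse scales $T'\geq 1-t\approx 1$. Bounding $[v\partial_2R_1v_t]_{-\frac{3}{4}-\eps}$ requires control of $\|(v\partial_2R_1v_t)_{T'}\|_{L^\infty}$ at \emph{all} $T'\in(0,1]$, and in particular in the regime $T'\lesssim t$, where no commutator smoothing is available and one must instead use the direct deterministic estimate $\|v\partial_2R_1v_t\|_{L^\infty}\lesssim(t^{\frac{1}{3}})^{-\frac{3}{4}-2\eps}[v]_{\frac{3}{4}-\eps}^2$ and then apply moments of $[v]_{\frac{3}{4}-\eps}$. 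Your proposed ``interlacing'' cannot recover this regime: the downward telescoping at outer scale $t$ is exactly the identity underlying \eqref{eq:triple_com_est} written out, and says only that the residual $(v\partial_2R_1v_{t/2^m})_{t-t/2^m}$ lies within $(t^{\frac{1}{3}})^{\frac{1}{4}}$ of $v\partial_2R_1v_t$ in $L^p_{\lng\cdot\rng}\C^{-\frac{3}{4}-\eps}$, so one is uniformly bounded if and only if the other is --- the argument is circular. Likewise, a Cauchy-in-$\tau$ estimate for $v\partial_2R_1v_\tau$ would itself require \eqref{eq:F_approx_tight} (this is precisely how Corollary~\ref{cor:F} uses Proposition~\ref{prop:sem_est}), so it cannot be invoked at this stage. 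The paper avoids the obstruction by proving the pointwise moment estimate $\sup_t\sup_x\lng|(v\partial_2R_1v_t)_T(x)|^p\rng^{\frac{1}{p}}\lesssim_{\eps,p}(T^{\frac{1}{3}})^{-\frac{3}{4}-\eps}$ with $T$ left free, splitting into $t\geq T/2$ (the deterministic $L^\infty$-bound) and $t<T/2$ (commutation down from a dyadic scale $T_*\in(\tfrac{T}{4},\tfrac{T}{2}]$, reducing to the first case), and only then applying Lemma~\ref{lem:mod}. Your argument captures the second regime but only at the single fixed outer scale $T\approx 1$, and is missing the deterministic ingredient needed when $T\lesssim t$.
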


\begin{proof} 

\smallskip

\noindent\textsc{Step 1}: We first prove that for every $1\leq p<\infty$, $0<\tau\leq t \leq 1$ (note that by assumption $\tau=\frac{t}{2^n}$ for some $n\geq 0$) 
and $T\in(0,1]$ there holds,
\begin{equs}
 \sup_{x\in\TT^2} \left \lng \left|\left(\lceil v, (\cdot)_{t-\tau}\rceil (\partial_2R_1v)_\tau\right)_{T}(x)\right|^{p} \right \rng^{\frac{1}{p}}
 \leq C \left(\left(T^{\frac{1}{3}}\right)^{-\frac{1}{4}} \left(t^{\frac{1}{3}}\right)^{\frac{1}{4}} + 
 \left(T^{\frac{1}{3}}\right)^{-\frac{3}{4}} \left(t^{\frac{1}{3}}\right)^{\frac{3}{4}}\right). \quad 
 \label{eq:pre_triple_com_est}
\end{equs}
Together with Lemma \ref{lem:mod} this implies \eqref{eq:triple_com_est}. The telescopic sum 
identity
\begin{equs}
 \lceil v, (\cdot)_{t-\tau}\rceil (\partial_2R_1v)_\tau = 
 \sum_{\substack{0\leq k\leq n-1\\s=\frac{t}{2^{k+1}}}} 
 \left(\lceil v, (\cdot)_s\rceil (\partial_2R_1v)_s\right)_{t-2s},
\end{equs}
combined with Lemma \ref{lem:commutator}, yields
\begin{equs}
 \left \lng \left|\left(\lceil v, (\cdot)_{t-\tau}\rceil (\partial_2R_1v)_t\right)_T(x)\right|^{p} \right \rng^{\frac{1}{p}}
 & \lesssim \sum_{\substack{0\leq k\leq n-1\\s=\frac{t}{2^{k+1}}}} 
 \left \lng \left|\left(\lceil v, (\cdot)_{s}\rceil (\partial_2R_1v)_s\right)_{T+t-2s}(x)\right|^{p} \right \rng^{\frac{1}{p}}
 \\
 & \lesssim \sum_{\substack{0\leq k\leq n-1\\s=\frac{t}{2^{k+1}}}} 
 \left(\left((T+t-2s)^{\frac{1}{3}}\right)^{-\frac{1}{4}} \left(s^{\frac{1}{3}}\right)^{\frac{1}{4}}
 + \left((T+t-2s)^{\frac{1}{3}}\right)^{-\frac{3}{4}} \left(s^{\frac{1}{3}}\right)^{\frac{3}{4}} \right)
 \\
 & \lesssim \left(T^{\frac{1}{3}}\right)^{-\frac{1}{4}} 
 \left(t^{\frac{1}{3}}\right)^{\frac{1}{4}} \sum_{0\leq k\leq n-1} 
 \frac{1}{2^{\frac{k+1}{12}}}
 + \left(T^{\frac{1}{3}}\right)^{-\frac{3}{4}} 
 \left(t^{\frac{1}{3}}\right)^{\frac{3}{4}} \sum_{0\leq k\leq n-1} 
 \frac{1}{2^{\frac{k+1}{4}}}
 \\
 & \lesssim 
 \left(T^{\frac{1}{3}}\right)^{-\frac{1}{4}} \left(t^{\frac{1}{3}}\right)^{\frac{1}{4}} + 
 \left(T^{\frac{1}{3}}\right)^{-\frac{3}{4}} \left(t^{\frac{1}{3}}\right)^{\frac{3}{4}},
\end{equs}
which proves the desired claim. 

\smallskip

\noindent\textsc{Step 2}: We now prove that for every $\eps\in(0,\frac{1}{100})$, $1\leq p<\infty$ and $T\in(0,1]$,
\begin{equs}
 \sup_{t\in(0,1]}\sup_{x\in\TT^2} \left\lng |(v\partial_2 R_1 v_t)_T(x)|^p\right\rng^{\frac{1}{p}} 
 \lesssim_{\eps,p} \left(T^{\frac{1}{3}}\right)^{-\frac{3}{4}-\eps}, \label{eq:pre_F_approx_tight}
\end{equs}
which together with Lemma \ref{lem:mod} implies \eqref{eq:F_approx_tight}.

We first assume that $t\in[\frac{T}{2},1]$. Then, by Definition \ref{def:neg_holder} 
of negative H\"older norms and \eqref{eq:R_pos} we know that $[\partial_2 R_1 v]_{-\frac{3}{4}-2\eps} \lesssim [R_1 v]_{\frac{3}{4}-2\eps} \lesssim [v]_{\frac{3}{4}-\eps}$.
Combined with \eqref{eq:neg_hoelder_char} and the fact that $\|v\|_{L^\infty}\lesssim [v]_{\frac{3}{4}-\eps}$, we have that
\begin{equs}
 |(v\partial_2 R_1 v_t)_T(x)| & \leq \|v\partial_2 R_1 v_t\|_{L^\infty} 
 \leq \|v\|_{L^\infty} \|\partial_2 R_1 v_t\|_{L^\infty} 
 \\
 & \lesssim \left(t^{\frac{1}{3}}\right)^{-\frac{3}{4}-2\eps} [v]_{\frac{3}{4}-\eps}^2
 \lesssim \left(T^{\frac{1}{3}}\right)^{-\frac{3}{4}-2\eps} [v]_{\frac{3}{4}-\eps}^2. \label{eq:1st_case_pointwise}
\end{equs}
By \eqref{eq:v_bd}, this implies the estimate
\begin{equs}
 \left\lng |(v\partial_2 R_1 v_t)_T(x)|^p \right\rng_\ell^{\frac{1}{p}} \lesssim
 \left(T^{\frac{1}{3}}\right)^{-\frac{3}{4}-2\eps} \left\lng [v]_{\frac{3}{4}-\eps}^{2p} \right \rng_\ell^{\frac{1}{p}} 
 \lesssim \left(T^{\frac{1}{3}}\right)^{-\frac{3}{4}-2\eps}, \label{eq:1st_case_exp}
\end{equs}
for every $t\in[\frac{T}{2},1]$, uniformly in $x\in \TT^2$ and $\ell\in(0,1]$.

We now assume that $t\in(0,\frac{T}{2}]$. Then, there exists $T_*\in(\frac{T}{4}, \frac{T}{2}]$ such that $t=\frac{T_*}{2^n}$,
for some $n\geq 1$. Using the semigroup property, we write
\begin{equs}
 (v \partial_2 R_1 v_t)_T = \left( (v\partial_2 R_1 v_t)_{T_*-t} - v \partial_2 R_1 v_{T_*} \right)_{T-T_*+t} 
 + \left(v \partial_2 R_1 v_{T_*} \right)_{T-T_*+t}.
\end{equs}
By \eqref{eq:triple_com_est}, the first term can be estimated as
\begin{equs}
 & \left\lng |\left( (v\partial_2 R_1 v_t)_{T_*-t} - v \partial_2 R_1 v_{T_*} \right)_{T-T_*+t}(x)|^p \right\rng^{\frac{1}{p}}
 \\
 & \quad \lesssim 
 \left((T-T_*+t)^{\frac{1}{3}}\right)^{-\frac{3}{4}} \left(T_*^{\frac{1}{3}}\right)^{\frac{3}{4}}
 + 
 \left((T-T_*+t)^{\frac{1}{3}}\right)^{-\frac{1}{4}} \left(T_*^{\frac{1}{3}}\right)^{\frac{1}{4}}
 \lesssim 1,
\end{equs}
where we also used that $T_* \leq \frac{T}{2} \leq T-T_*+t$. For the second term, noting that $T_*>\frac{T}{4}$ and proceeding
as in \eqref{eq:1st_case_pointwise} and \eqref{eq:1st_case_exp}, we obtain the bound
\begin{equs}
 \left\lng |\left(v \partial_2 R_1 v_{T_*} \right)_{T-T_*+t}(x)|^p \right\rng^{\frac{1}{p}} 
 \lesssim \left( T_*^{\frac{1}{3}} \right)^{-\frac{3}{4}-2\eps} 
 \left\lng [v]_{\frac{3}{4}-\eps}^{2p} \right \rng_\ell^{\frac{1}{p}} 
 \lesssim \left( T^{\frac{1}{3}} \right)^{-\frac{3}{4}-2\eps}. 
\end{equs}
Hence, we also proved that
\begin{equs}
 \left\lng |(v \partial_2 R_1 v_t)_T(x)|^p \right\rng^{\frac{1}{p}} \lesssim 
 1+\left( T^{\frac{1}{3}} \right)^{-\frac{3}{4}-2\eps}, \label{eq:2nd_case_exp}
\end{equs}
for $t\in(0,\frac{T}{4}]$, uniformly in $x\in\TT^2$.

Combining \eqref{eq:1st_case_exp} and \eqref{eq:2nd_case_exp} gives \eqref{eq:pre_F_approx_tight} upon relabelling $\eps$.
\end{proof}

As a corollary we obtain,

\begin{corollary} \label{cor:F} Let $\lng\cdot\rng$ satisfy Assumption~\ref{ass}. There exists a unique centered and stationary random variable 
$\xi \mapsto F(\xi)$ such that for every $\eps\in(0,\frac{1}{100})$ and $1\leq p<\infty$,
\begin{equs}
 \lim_{t=2^{-n}\downarrow 0}\left\lng [F - v\partial_2 R_1 v_t]_{-\frac{3}{4}-\eps}^p \right\rng^{\frac{1}{p}} = 0,
 \label{eq:F_uniform_char}  
\end{equs}
and the convergence is uniform in the class of probability measures satisfying Assumption~\ref{ass}. 
\end{corollary}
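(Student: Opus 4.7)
The plan is to show that $\{F_t\}_{t = 2^{-n}}$, with $F_t := v\partial_2 R_1 v_t$, is Cauchy in $L^p_{\lng\cdot\rng}\mathcal{C}^{-\frac{3}{4}-\eps}$ at a rate uniform across the class of probability measures satisfying Assumption~\ref{ass}, and then to define $F$ as its limit. The stochastic input is Proposition~\ref{prop:F_approx_tight}, which provides both the uniform moment bound \eqref{eq:F_approx_tight} and the commutator estimate \eqref{eq:triple_com_est}; the deterministic input is a continuity-of-semigroup estimate in negative H\"older spaces.

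For consecutive dyadic times $s$ and $2s$ I will split
\begin{equation*}
F_s - F_{2s} = \bigl(F_s - (F_s)_s\bigr) - \bigl(F_{2s} - (F_s)_s\bigr).
\end{equation*}
The second bracket is, by the very definition of the commutator $\lceil v, (\cdot)_s\rceil(\partial_2R_1v)_s$ used in Lemma~\ref{lem:commutator}, exactly the object bounded by \eqref{eq:triple_com_est} with $\tau = s$ and $t = 2s$, yielding an $L^p\mathcal{C}^{-\frac{3}{4}-\eps}$-bound of order $(s^{\frac{1}{3}})^{\frac{1}{4}}$. For the first bracket, I will invoke the deterministic estimate
\begin{equation*}
[f - f_T]_{\alpha - \delta} \lesssim (T^{\frac{1}{3}})^{\delta}\,[f]_\alpha \qquad \text{for } \alpha < 0,\; \delta \in (0,-\alpha),
\end{equation*}
which follows from the characterization \eqref{eq:neg_hoelder_char}, the heat-kernel bound $\|\mathcal{A}\psi_T\|_{L^1}\lesssim (T^{\frac{1}{3}})^{-3}$ obtained from \eqref{lp_psi}, and a dichotomy on the characterizing scale $S$: for $S\leq T$ one uses the triangle inequality and the bound $\|f_S\|_{L^\infty}\lesssim (S^{\frac{1}{3}})^\alpha[f]_\alpha$, while for $S\geq T$ one uses the mean value theorem along the semigroup combined with $\|\mathcal{A}\psi_{(S+\sigma)/2}\|_{L^1}\lesssim ((S+\sigma)^{\frac{1}{3}})^{-3}$. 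Applying this with $f = F_s$, $\alpha = -\frac{3}{4}-\frac{\eps}{2}$, $\delta = \frac{\eps}{2}$, $T = s$, and combining with \eqref{eq:F_approx_tight} used at parameter $\frac{\eps}{2}$ in place of $\eps$, I get an $L^p\mathcal{C}^{-\frac{3}{4}-\eps}$-bound of order $(s^{\frac{1}{3}})^{\frac{\eps}{2}}$ for the first bracket. Both contributions are summable in the dyadic index, so the sequence $\{F_{2^{-n}}\}$ is Cauchy.

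Defining $F$ as the $L^p\mathcal{C}^{-\frac{3}{4}-\eps}$-limit, the characterizing property \eqref{eq:F_uniform_char} follows by summing the dyadic tail, and uniqueness of $F$ is then immediate from the triangle inequality in $L^p\mathcal{C}^{-\frac{3}{4}-\eps}$. Stationarity is inherited from the stationarity of each $F_t$ (a consequence of Assumption~\ref{ass}\ref{item:Def-stationary}) through $L^1$-convergence, and centeredness is inherited from $\lng F_t\rng = 0$ as a distribution, which itself is a consequence of the reflection-invariance Assumption~\ref{ass}\ref{item:Def-reflection} by the argument underlying \eqref{eq:exp_van}. Uniformity of the convergence rate across the class of measures is automatic since every constant appearing is either universal or depends only on $p$ and $\eps$. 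The only non-routine step is the semigroup-continuity estimate above; however, this is a standard exercise in the anisotropic H\"older framework developed in the appendix, so I expect the main difficulty to be essentially notational rather than conceptual.
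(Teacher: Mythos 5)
Your proposal is correct and follows essentially the same route as the paper: the decomposition of $F_s - F_{2s}$ (or, in the paper, $F_\tau - F_t$ for general dyadically related $\tau \le t$) into a semigroup-continuity term and a commutator term, both estimated via Proposition~\ref{prop:F_approx_tight}, with centeredness and stationarity inherited from the approximants. The one redundancy is that the semigroup-continuity estimate $[f-f_T]_{\alpha-\delta}\lesssim (T^{1/3})^\delta[f]_\alpha$ you sketch from scratch via the dichotomy on the characterizing scale $S$ is exactly Proposition~\ref{prop:sem_est} in the appendix, which the paper's proof simply cites; your consecutive-dyadic telescoping is also a cosmetic variant of the paper's direct two-term bound for arbitrary dyadic $\tau \le t$, since \eqref{eq:triple_com_est} already carries the telescoping internally.
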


\begin{proof} We prove that for every $\eps\in(0,1]$, $1\leq p<\infty$ and $0\leq \tau \leq t \leq 1$ dyadic,
\begin{equs}
 \lim_{t,\tau\downarrow0}
 \left\lng [v \partial_2 R_1 v_\tau - v \partial_2 R_1 v_t]_{-\frac{3}{4}-\eps}^p \right\rng^{\frac{1}{p}} =0.
 \label{eq:cauchy}
\end{equs}
By the triangle inequality we have that 
\begin{equs}
 & \left\lng [v \partial_2 R_1 v_\tau - v \partial_2 R_1 v_t]_{-\frac{3}{4}-\eps}^p \right\rng^{\frac{1}{p}}
 \\ 
 & \quad \leq 
 \left\lng [v \partial_2 R_1 v_\tau - (v \partial_2 R_1 v_\tau)_{t-\tau}]_{-\frac{3}{4}-\eps}^p \right\rng^{\frac{1}{p}}
 + \left\lng [(v \partial_2 R_1 v_\tau)_{t-\tau} - v \partial_2 R_1 v_t]_{-\frac{3}{4}-\eps}^p \right\rng^{\frac{1}{p}}. 
\end{equs}
To estimate the first term we use Propositions \ref{prop:sem_est} and \eqref{eq:F_approx_tight} which imply that
\begin{equs}
 \left\lng [v \partial_2 R_1 v_\tau - (v \partial_2 R_1 v_\tau)_{t-\tau}]_{-\frac{3}{4}-\eps}^p \right\rng^{\frac{1}{p}} \lesssim 
 \left((t-\tau)^{\frac{1}{3}}\right)^{\frac{\eps}{2}} \left\lng [v \partial_2 R_1 v_\tau]_{-\frac{3}{4}-\frac{\eps}{2}}^p \right\rng^{\frac{1}{p}} 
 \lesssim \left((t-\tau)^{\frac{1}{3}}\right)^{\frac{\eps}{2}}.
\end{equs}
Using \eqref{eq:triple_com_est}, the second term is estimated as 
\begin{equs}
 \left\lng [(v \partial_2 R_1 v_\tau)_{t-\tau} - v \partial_2 R_1 v_t]_{-\frac{3}{4}-\eps}^p \right\rng^{\frac{1}{p}}
 \lesssim \left(t^{\frac{1}{3}}\right)^{\frac{1}{4}}. 
\end{equs}
Hence, we have proved that
\begin{equs}
 \left\lng [v \partial_2 R_1 v_\tau - v \partial_2 R_1 v_t]_{-\frac{3}{4}-\eps}^p \right\rng^{\frac{1}{p}} 
 \lesssim \left((t-\tau)^{\frac{1}{3}}\right)^{\frac{\eps}{2}} + \left(t^{\frac{1}{3}}\right)^{\frac{1}{4}},
\end{equs}
which implies \eqref{eq:cauchy} after taking $\tau,t\downarrow0$. This implies that the sequence $\{v \partial_2 R_1 v_t\}_{t\in(0,1]}$
is Cauchy in $L^p_{\lng\cdot\rng}\C^{-\frac{3}{4}-\eps}$, hence it converges to a limit $F$ in 
$L^p_{\lng\cdot\rng}\C^{-\frac{3}{4}-\eps}$. The fact that the convergence is uniform in the class of probability 
measures satisfying 
Assumption~\ref{ass} follows since our estimates depend on $\lng\cdot\rng$ only through the spectral gap inequality \eqref{eq:SG}.

Similarly to the proof of Lemma~\ref{lem:commutator}, we can show that $\lng v\partial_2 R_1 v_t(x) \rng = 0$ for every $x\in \TT^2$ and $t\in(0,1]$. 
Hence, as a limit in $L^p_{\lng\cdot\rng}\C^{-\frac{3}{4}-\eps}$, the fact that $F$ is centered and stationary follows from the corresponding 
properties of $v\partial_2 R_1 v_t$. 
\end{proof}

\subsection{Proof of Proposition \ref{Satz1_pavlos}}\label{sec:lifted_measure}

\begin{proof} 
%

Let $\lng\cdot\rng^{\mathrm{lift}}$ be given by $\lng G(\xi,v,F) \rng^{\mathrm{lift}}:= \lng G(\xi,v,F) \rng$ for every bounded and continuous 
$G:\C^{-\frac{5}{4}-\eps} \times \C^{\frac{3}{4}-\eps}\times \C^{-\frac{3}{4}-\eps}\to \RR$, with the convention that under $\lng\cdot\rng$, 
$v=\mathcal{L}^{-1}P\xi$ and $F$ is given by \eqref{eq:F_uniform_char} (under $\lng\cdot\rng^{\mathrm{lift}}$ we think of $(\xi,v,F)$ as a dummy 
variable in $\triple:=\C^{-\frac{5}{4}-\eps} \times \C^{\frac{3}{4}-\eps}\times \C^{-\frac{3}{4}-\eps}$). The fact that $\lng\cdot\rng^{\mathrm{lift}}$ defines a probability measure on $\triple$ is immediate by 
Corollaries~\ref{cor:xi_v} and \ref{cor:F}.

Statements \ref{it:Satz1_pavlos_xi} and \ref{it:Satz1_pavlos_v} are immediate by the construction of $\lng\cdot\rng^{\mathrm{lift}}$. The first
part of statement \ref{it:Satz1_pavlos_F}, that is, \eqref{eq:F_pavlos}, is immediate by Corollary \ref{cor:F}. For the second part, note that in 
the case when $\xi$ is smooth $\lng\cdot\rng$-almost surely, the product $v\partial_2R_1 v$ makes sense $\lng\cdot\rng$-almost surely and we also 
have that $v\partial_2 R_1 v_t \to v\partial_2R_1 v$ $\lng\cdot\rng$-almost surely. By \eqref{eq:F_uniform_char} we know that $v\partial_2 R_1 v_t\to F$
in $\C^{-\frac{3}{4}-\eps}$ $\lng\cdot\rng$-almost surely along a subsequence. Hence, we should have that $F = v\partial_2R_1 v$ $\lng\cdot\rng$-almost
surely.

It remains to prove the continuity statement \ref{it:Satz1_pavlos_cont}. Assume that $\{\lng\cdot\rng_\ell\}_{\ell\downarrow0}$ converges weakly to $\lng \cdot \rng$
and consider $\{\lng\cdot\rng_\ell^{\mathrm{lift}}\}_{\ell\downarrow0}$ and $\lng \cdot \rng^{\mathrm{lift}}$. 

\smallskip

\noindent\textsc{Step 1}: We prove that weak convergence of $\{\lng\cdot\rng_\ell\}_{\ell\downarrow0}$ to $\lng \cdot \rng$ in the Schwartz 
topology implies weak convergence in $\C^{-\frac{5}{4}-\eps}$. Indeed, let $G:\C^{-\frac{5}{4}-\eps} \to \RR$ be bounded and continuous. Then, 
we can write
\begin{equs}
 \lng G(\xi) \rng_\ell - \lng G(\xi) \rng = (\lng G(\xi) \rng_\ell - \lng G(\xi_t) \rng_\ell) + (\lng G(\xi_t) \rng_\ell - \lng G(\xi_t) \rng)
 + (\lng G(\xi_t) \rng - \lng G(\xi) \rng).  
\end{equs}
To treat the first term, we use \eqref{eq:sem_est} and \eqref{eq:xi_bd} (which holds uniform in the class 
of probability measures satisfying Assumption~\ref{ass}) yielding for every $1\leq p<\infty$,
\begin{equs}
 \sup_{\ell\in(0,1]} \lng [\xi -\xi_t]_{-\frac{5}{4}-\eps}^p \rng_\ell^{\frac{1}{p}} 
 & \lesssim t^{\frac{\eps}{6}} \sup_{\ell\in(0,1]} \lng [\xi]_{-\frac{5}{4}-\frac{\eps}{2}}^p \rng_\ell^{\frac{1}{p}} \lesssim t^{\frac{\eps}{6}}.
 \label{eq:xi_t_conv}
\end{equs}
For $\delta>0$ which we fix below, we write
\begin{equs}
 \lng G(\xi) \rng_\ell - \lng G(\xi_t) \rng_\ell = \lng (G(\xi) - G(\xi_t)) \mathbf{1}_{\{[\xi -\xi_t]_{-\frac{5}{4}-\eps} < \delta\}} \rng_\ell
 + \lng (G(\xi) - G(\xi_t)) \mathbf{1}_{\{[\xi -\xi_t]_{-\frac{5}{4}-\eps} \geq \delta\}} \rng_\ell.
\end{equs}
For $\eta>0$, by the continuity of $G$ we can choose $\delta$ sufficiently small such that 
\begin{equs}
 |\lng (G(\xi) - G(\xi_t)) \mathbf{1}_{\{[\xi -\xi_t]_{-\frac{5}{4}-\eps} < \delta\}} \rng_\ell| \leq 
 \frac{\eta}{4} 
\end{equs}
uniformly in $\ell\in(0,1]$. By \eqref{eq:xi_t_conv}, the boundedness of $G$, and Chebyshev's inequality, we can choose $t\in(0,1]$ sufficiently
small such that
\begin{equs}
 |\lng (G(\xi) - G(\xi_t)) \mathbf{1}_{\{[\xi -\xi_t]_{-\frac{5}{4}-\eps} \leq \delta\}} \rng_\ell|\leq 
 \|G\|_{L^\infty\left(\C^{-\frac{5}{4}-\eps}\right)} \sup_{\ell\in(0,1]}\lng \mathbf{1}_{\{[\xi -\xi_t]_{-\frac{5}{4}-\eps} \geq \delta\}} \rng_\ell 
 \leq \frac{\eta}{4}. 
\end{equs}
Hence, we obtain that for $t$ sufficiently small 
\begin{equs}
 \sup_{\ell\in(0,1]}|\lng G(\xi) \rng_\ell - \lng G(\xi_t) \rng_\ell| \leq \frac{\eta}{2}.
\end{equs}
Similarly, we can show that for $t$ sufficiently small
\begin{equs}
 |\lng G(\xi) \rng - \lng G(\xi_t) \rng| \leq \frac{\eta}{2}.
\end{equs}
Since $\{\lng\cdot\rng_\ell\}_{\ell\downarrow0}$ converges to $\lng \cdot \rng$ in the Schwartz topology, for every $t\in(0,1]$ we
know that $\lng G(\xi_t) \rng_\ell \to \lng G(\xi_t) \rng$ as $\ell\downarrow0$. In total, we have that 
\begin{equs}
 \limsup_{\ell\downarrow0} |\lng G(\xi) \rng_\ell - \lng G(\xi) \rng| \leq \eta,
\end{equs}
which proves that $\lng G(\xi) \rng_\ell\to \lng G(\xi) \rng$ as $\ell\downarrow0$ since $\eta$ is arbitrary.

\smallskip

\noindent\textsc{Step 2}: We now prove that $\{\lng\cdot\rng_\ell^{\mathrm{lift}}\}_{\ell\downarrow0}$ converges weakly to 
$\lng \cdot \rng^{\mathrm{lift}}$ in $\triple$. The argument is similar in spirit to \textsc{Step 1}. Let $G: \triple \to \RR$ be a 
bounded continuous function. Then, we write
\begin{equs}
 \lng G(\xi,v,F) \rng_\ell^{\mathrm{lift}} - \lng G(\xi,v,F) \rng^{\mathrm{lift}} & =
 \lng G(\xi, v, F) - G(\xi, v, v \partial_2 R_1 v_t) \rng_\ell 
 \\
 &  \quad +  \lng G(\xi, v, v\partial_2 R_1 v_t) -  G(\xi, v, F)\rng
 \\
 & \quad + \lng G(\xi, v, v \partial_2 R_1 v_t) \rng_\ell - \lng G(\xi, v, v\partial_2 R_1 v_t) \rng. 
\end{equs}
To estimate the first term, for $\delta>0$ to be fixed below, we use the decomposition
\begin{equs}
 \lng G(\xi, v, F) - G(\xi, v, v \partial_2 R_1 v_t) \rng_\ell & = 
 \lng (G(\xi, v, F) - G(\xi, v, v \partial_2 R_1 v_t)) \mathbf{1}_{\{[F- v\partial_2 R_1 v_t]_{-\frac{3}{4}-\eps}<\delta\}} \rng_\ell 
 \\
 & \quad + \lng (G(\xi, v, F) - G(\xi, v, v \partial_2 R_1 v_t)) \mathbf{1}_{\{[F- v\partial_2 R_1 v_t]_{-\frac{3}{4}-\eps}\geq\delta\}} \rng_\ell. 
\end{equs}
For $\eta >0$, by the continuity of $G$ we can choose $\delta>0$ sufficiently small such that
\begin{equs}
 |\lng (G(\xi, v, F) - G(\xi, v, v \partial_2 R_1 v_t)) \mathbf{1}_{\{[F- v\partial_2 R_1 v_t]_{-\frac{3}{4}-\eps}<\delta\}} \rng_\ell| 
 \leq \frac{\eta}{4}
\end{equs}
uniformly in $\ell\in(0,1]$. By \eqref{eq:F_uniform_char}, the boundedness of $G$, and Chebyshev's inequality, we can choose $t$ sufficiently small such that
\begin{equs}
 & \sup_{\ell\in(0,1]} \lng (G(\xi, v, F) - G(\xi, v, v \partial_2 R_1 v_t)) \mathbf{1}_{\{[F- v\partial_2 R_1 v_t]_{-\frac{3}{4}-\eps}\geq\delta\}} \rng_\ell
 \\
 & \quad \leq \|G\|_{L^\infty\left(\C^{-\frac{5}{4}-\eps} \times \C^{\frac{3}{4}-\eps}\times \C^{-\frac{3}{4}-\eps}\right)} 
 \sup_{\ell\in(0,1]} \lng \mathbf{1}_{\{[F- v\partial_2 R_1 v_t]_{-\frac{3}{4}-\eps}\geq\delta\}} \rng_\ell \leq \frac{\eta}{4}.
\end{equs}
Hence, we have proved that for $t$ sufficiently small
\begin{equs}
 \sup_{\ell\in(0,1]} \lng G(\xi, v, F) - G(\xi, v, v \partial_2 R_1 v_t) \rng_\ell \leq \frac{\eta}{2}. 
\end{equs}
In a similar way we can show that for $t$ sufficiently small
\begin{equs}
 \lng G(\xi, v, F) - G(\xi, v, v \partial_2 R_1 v_t) \rng \leq \frac{\eta}{2}. 
\end{equs}
Since $\lng \cdot \rng_\ell \to \lng \cdot \rng$ weakly as $\ell\downarrow0$, we have that $\lng G(\xi, v, v \partial_2 R_1 v_t) \rng_\ell \to \lng G(\xi, v, v\partial_2 R_1 v_t) \rng$ as $\ell\downarrow0$
for every $t\in(0,1]$. Altogether, we get that
\begin{equs}
 \lim_{\ell\downarrow0} (\lng G(\xi,v,F) \rng_\ell^{\mathrm{lift}} - \lng G(\xi,v,F) \rng^{\mathrm{lift}}) \leq \eta,
\end{equs}
which in turn implies that $\lng G(\xi,v,F) \rng_\ell^{\mathrm{lift}}\to \lng G(\xi,v,F) \rng^{\mathrm{lift}}$ as $\ell\downarrow0$ since $\eta$ is arbitrary. Thus $\{\lng\cdot\rng_\ell^{\mathrm{lift}}\}_{\ell\downarrow0}$ 
converges weakly to $\lng \cdot \rng^{\mathrm{lift}}$. 
\end{proof}
{
\addtocontents{toc}{\setcounter{tocdepth}{-10}}

\section*{Acknowledgements}

The authors would like to thank the Isaac Newton Institute for Mathematical Sciences, Cambridge, for support and hospitality during the programme ``The mathematical design of new materials'' where work on this paper was completed. This programme was supported by EPSRC grant no EP/K032208/1. 
F.O., T.R., and P.T. also thank the Centre International de Mathématiques et d’Informatique de Toulouse (CIMI) 
and R.I. thanks the Max Planck Institute for Mathematics in the Sciences for their kind hospitality.  

\addtocontents{toc}{\setcounter{tocdepth}{1}}
}

\appendix
\hypertarget{appendix}{}

\section{H\"older spaces} \label{app:hoelder_spaces}

The following equivalent characterization of H\"older norms relies on the ``heat kernel'' of the operator $\mathcal{A}$.

\begin{lemma}[{\cite[Lemma 10, Remark 1]{IO19}}] \label{lem:hoelder_char} 
Let $f$ be a periodic distribution on $\TT^2$.
\begin{enumerate}
\item For $\beta\in(-\frac{3}{2},0)\setminus \{-1,-\frac{1}{2}\}$, we have
\begin{align}\label{eq:neg_hoelder_char}
[f]_\beta\sim\sup_{T\in(0,1]} \left(T^\frac{1}{3}\right)^{-\beta}\|f_T\|_{L^\infty}.
\end{align}
In the critical cases $\beta\in \{-1, -\frac12\}$ we have
\begin{equation} \label{eq:supl_numa}
\sup_{T\in (0,1]} \left(T^\frac{1}{3}\right)^{-\beta} \|f_T\|_{L^\infty} \lesssim [f]_\beta.
\end{equation}
\item For $\beta\in (-\frac{3}{2},0)\setminus \{-1,-\frac{1}{2}\}$ and $f$ of vanishing average, we have
\begin{align}\label{eq:neg_hoelder_A_char}
[f]_\beta \sim\sup_{T\in(0,1]} \left(T^\frac{1}{3}\right)^{-\beta}\|T\mathcal{A}f_T\|_{L^\infty}.
\end{align}
In the critical cases $\beta\in \{-1, -\frac12\}$ we have\footnote{Indeed, using \eqref{eq:supl_numa} and 
\eqref{lp_psi} we have 
\begin{align*}
	\|\mathcal{A} f_T\|_{L^{\infty}}
	=\|f_{\frac{T}{2}}*\mathcal{A}\psi_{\frac{T}{2}}\|_{L^{\infty}}
	\leq \|f_{\frac{T}{2}}\|_{L^{\infty}} \|\mathcal{A}\psi_{\frac{T}{2}}\|_{L^1}
	\lesssim (T^{\frac{1}{3}})^{\beta}[f]_\beta \frac{1}{T}.
\end{align*}
}
\begin{equation} \label{eq:supl_in_A}
\sup_{T\in (0,1]} \left(T^\frac{1}{3}\right)^{-\beta} \|T\mathcal{A} f_T\|_{L^\infty} \lesssim [f]_\beta.
\end{equation}
\item For $\alpha\in (0,\frac{3}{2})\setminus \{1\}$ we have
\begin{align}\label{eq:pos_hoelder_A_char}
[f]_\alpha \sim\sup_{T\in(0,1]} \left(T^{\frac{1}{3}}\right)^{-\alpha}\|T\mathcal{A}f_T\|_{L^\infty}.
\end{align}
In the critical case $\alpha=1$ we have\footnote{Indeed, since $\mathcal{A}\psi_T$ has vanishing average, we write 
$\mathcal{A}f_T(x)=\int_{\RR^2} \mathcal A \psi_T(y)(f(x-y)-f(x)) \, \dd y$ and deduce via Step 2 in the proof of Lemma 10 in 
\cite{IO19} that 
\begin{equs}
	\|\mathcal{A}f_T\|_{L^{\infty}} 
	\leq [f]_\alpha \int_{\RR^2} |\mathcal{A} \psi_T(y)| d(y,0)^\alpha \, \dd y
	\lesssim [f]_\alpha \left(T^{\frac{1}{3}}\right)^{(-3+\alpha)}.
\end{equs}
}
\begin{equation}
\label{alpha=1}
\sup_{T\in(0,1]} \left(T^{\frac{1}{3}}\right)^{-1}\|T\mathcal{A}f_T\|_{L^\infty}\lesssim [f]_1.
\end{equation}
\end{enumerate}
In the case of periodic distributions $f$ \emph{of vanishing average on $\TT^2$}, one can consider the supremum over all $T>0$ in 
\eqref{eq:neg_hoelder_char}, \eqref{eq:supl_numa}, \eqref{eq:neg_hoelder_A_char}, and \eqref{eq:supl_in_A},
while in \eqref{eq:pos_hoelder_A_char} and \eqref{alpha=1} the suprema over $T\in(0,1)$ and $T>0$ are equivalent even for distributions of 
nonvanishing average. 
\end{lemma}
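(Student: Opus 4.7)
The plan is to establish the three equivalences uniformly by exploiting the semigroup structure $\{\psi_T\}_{T>0}$, the anisotropic scaling \eqref{eq:heat_ker}, and the kernel estimates \eqref{lp_psi}, together with the vanishing-integral properties $\int \partial_j \psi_T = 0$ and $\int \mathcal{A}\psi_T = 0$. Each equivalence splits into a forward bound (from the Hölder seminorm to the semigroup quantity) and a reverse bound (reconstructing the Hölder seminorm from the semigroup quantity); the critical cases supply only the forward bound, while the vanishing-average refinement removes the truncation $T \in (0,1]$.

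For the forward direction of Part (1), I would take a near-optimal decomposition $f = c + \partial_1 g + \partial_2 h$ (respectively $c + \partial_1^2 g + \partial_2 h$ for $\beta \in (-3/2,-1]$), apply $\psi_T\ast$, and use
$$\partial_j \varphi_T(x) = \int_{\RR^2} \partial_j \psi_T(y)(\varphi(x-y) - \varphi(x))\, dy$$
(similarly for $\partial_1^2$) to estimate $\|\partial_j \varphi_T\|_{L^\infty} \lesssim (T^{1/3})^{\beta} [\varphi]_{\beta+\sigma_j}$ via $\int |\partial_j \psi_T(y)|\, d(y,0)^{\beta+\sigma_j}\,dy \lesssim (T^{1/3})^{\beta}$, which follows from \eqref{lp_psi} after the change of variables $y = (T^{1/3}\hat y_1, T^{1/2}\hat y_2)$. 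Taking the infimum over decompositions yields \eqref{eq:supl_numa} at all $\beta \in (-3/2,0)$, including the critical values. Part (2)'s forward bound \eqref{eq:supl_in_A} then follows immediately by Young's inequality applied to $\mathcal{A} f_T = f_{T/2}\ast \mathcal{A}\psi_{T/2}$, using $\|\mathcal{A}\psi_{T/2}\|_{L^1} \lesssim T^{-1}$. Part (3)'s forward bound \eqref{alpha=1} uses the same mean-zero trick for $\mathcal{A}\psi_T$: $\mathcal{A} f_T(x) = \int \mathcal{A}\psi_T(y)(f(x-y) - f(x))\,dy$, with the first-order Taylor subtraction for $\alpha \in (1,3/2)$ legitimized by $\int y_1 \mathcal{A}\psi_T(y)\,dy = 0$.

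For the reverse directions in the non-critical ranges, the key identity is the semigroup telescoping
$$f = f_1 + \int_0^1 \mathcal{A} f_T\, dT = f_1 + \partial_1\!\left(\int_0^1 R_1 \partial_1^2 f_T\, dT\right) + \partial_2\!\left(-\int_0^1 \partial_2 f_T\, dT\right),$$
which provides an explicit decomposition of the form required by Definitions~\ref{def:pos_holder}-\ref{def:neg_holder}. In Part~(1), one verifies that $g := \int_0^1 R_1\partial_1^2 f_T\,dT \in \mathcal{C}^{\beta+1}$ and $h := -\int_0^1 \partial_2 f_T\,dT \in \mathcal{C}^{\beta+3/2}$ by inserting a secondary convolution $\psi_s$ and bounding $\|s\mathcal{A} g_s\|_{L^\infty}$, $\|s\mathcal{A} h_s\|_{L^\infty}$ via the hypothesis; the $T$-integrals converge because $\beta + 1 > 0$ and $\beta + 3/2 > 0$, with the endpoints producing logarithmic divergences at $\beta \in \{-1, -1/2\}$ (explaining the failure of the reverse bound there). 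For $\beta \in (-3/2, -1]$ one uses instead the splitting $|\partial_1|^3 = \partial_1^2 \cdot |\partial_1|$ to obtain a decomposition with $\partial_1^2 g$. Part~(2)'s reverse direction follows by dyadic telescoping $\|f_T - f_{2T}\|_{L^\infty} \leq \int_T^{2T}\|\mathcal{A} f_s\|_{L^\infty}\,ds$ and summing (which converges for $\beta < 0$), reducing to \eqref{eq:neg_hoelder_char}. Part~(3)'s reverse direction splits the $T$-integral at $T^{1/3} \sim d(y,0)$: for small $T$ use the $L^\infty$-bound on $T\mathcal{A} f_T$ directly in differences $f(x-y) - f(x)$, while for large $T$ move one derivative onto $\psi_{T/2}$ inside $f_T = f_{T/2}\ast\psi_{T/2}$ to gain a Lipschitz estimate scaling as $(T^{1/3})^{\alpha - 1}$; this works away from $\alpha = 1$. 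Finally, the vanishing-average refinement follows from the exponential decay of $\|f_T\|_{L^\infty}$ as $T \to \infty$ (since the zero Fourier mode has been removed and all nonzero modes decay like $e^{-T(|k_1|^3 + k_2^2)}$), making the extension of the supremum to all $T > 0$ harmless when $\beta \leq 0$ and equivalent in the positive case.

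The main obstacle will be keeping the reverse direction of Part~(1) clean in the sub-range $\beta \in (-3/2, -1]$, where the splitting $|\partial_1|^3 = \partial_1^2 \cdot |\partial_1|$ forces one to handle a Hilbert-transform-type factor inside $g$; this requires an auxiliary $L^1$-bound on $R_1 \partial_1^2 \psi_T$ (again via \eqref{lp_psi} and the smoothness of $\psi$ in $x_1$) and careful bookkeeping of the scaling. The critical exponents $\beta \in \{-1,-1/2\}$ and $\alpha = 1$ cannot admit a full equivalence, and the one-sided statements \eqref{eq:supl_numa}, \eqref{eq:supl_in_A}, \eqref{alpha=1} are the best possible; this matches the well-known failure of the semigroup characterization of Hölder spaces at integer-equivalent smoothness in the Euclidean setting.
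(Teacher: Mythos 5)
Your outline is correct and follows the standard semigroup characterization that the paper takes over from \cite[Lemma 10, Remark 1]{IO19}; in particular, your forward bounds for the critical cases \eqref{eq:supl_in_A} and \eqref{alpha=1} (Young's inequality with $\|\mathcal{A}\psi_{T/2}\|_{L^1}\lesssim T^{-1}$, respectively the vanishing moments of $\mathcal{A}\psi_T$) coincide with the only arguments the paper itself supplies, namely its two footnotes. The remaining equivalences are simply cited, and your reconstruction of them --- mean-zero kernel estimates for the forward bounds, and the telescoped decomposition $f=f_1+\int_0^1\mathcal{A}f_T\,\dd T$ rewritten as $c+\partial_1 g+\partial_2 h$ (resp.\ $c+\partial_1^2 g+\partial_2 h$) for the reverse bounds --- is essentially the argument of the cited lemma.
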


We also have the following interpolation inequality.
\begin{lemma} \label{lem:interpolation}
For every $-\frac{3}{2}<\beta<0 < \gamma < \alpha<\frac{3}{2}$ 
there exists a constant $C>0$ such that the following interpolation inequality holds for every $f:\TT^2 \to \RR$,
\begin{align} \label{eq:interpolation-hoelder}
	[f]_\gamma \leq C [f]_{\beta}^\lambda [f]_{\alpha}^{1-\lambda},
\end{align}
where $\lambda\in(0,1)$ is given by $\gamma = \lambda \beta + (1-\lambda) \alpha$. If $f$ has vanishing average in $\TT^2$, 
\eqref{eq:interpolation-hoelder} also holds for $\gamma =0$ with $[f]_\gamma$ replaced by $\|f\|_{L^\infty}$.   
\end{lemma}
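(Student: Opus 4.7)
The plan is to use the equivalent semigroup characterization of Hölder norms from Lemma~\ref{lem:hoelder_char}, namely
\[
[f]_\gamma \sim \sup_{T\in(0,1]} (T^{\frac{1}{3}})^{-\gamma}\, \|T\mathcal{A} f_T\|_{L^\infty},
\]
valid for $\gamma\in(0,\tfrac{3}{2})\setminus\{1\}$, and to reduce the interpolation to controlling $\|T\mathcal{A} f_T\|_{L^\infty}$ by two complementary scales, one coming from $[f]_\beta$ and the other from $[f]_\alpha$.

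The key step is to derive, for each $T\in(0,1]$, the two bounds
\[
\|T\mathcal{A} f_T\|_{L^\infty} \leq C\, (T^{\frac{1}{3}})^\beta\, [f]_\beta
\qquad \text{and} \qquad
\|T\mathcal{A} f_T\|_{L^\infty} \leq C\, (T^{\frac{1}{3}})^\alpha\, [f]_\alpha.
\]
The second is a direct consequence of \eqref{eq:pos_hoelder_A_char} (or \eqref{alpha=1} in the critical case $\alpha=1$). For the first, I would invoke the semigroup decomposition $T\mathcal{A} f_T=(T\mathcal{A}\Psi_{T/2})*f_{T/2}$, apply Young's inequality for convolution, bound $\|T\mathcal{A}\Psi_{T/2}\|_{L^1}\lesssim 1$ via the scaling estimate \eqref{lp_psi_per} applied to $\mathcal{A}=|\partial_1|^3-\partial_2^2$ (noting both $|\partial_1|^3\Psi_{T/2}$ and $\partial_2^2\Psi_{T/2}$ scale as $T^{-1}$ in $L^1$), and finally estimate $\|f_{T/2}\|_{L^\infty}\lesssim (T^{\frac{1}{3}})^\beta [f]_\beta$ by \eqref{eq:neg_hoelder_char}/\eqref{eq:supl_numa}. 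Combining the two bounds through the elementary inequality $\min(a,b)\leq a^\lambda b^{1-\lambda}$, valid for $a,b\geq 0$ and $\lambda\in[0,1]$, together with the definition $\lambda\beta+(1-\lambda)\alpha=\gamma$, yields
\[
\|T\mathcal{A} f_T\|_{L^\infty} \leq C\, (T^{\frac{1}{3}})^\gamma\, [f]_\beta^\lambda\, [f]_\alpha^{1-\lambda}.
\]
Dividing by $(T^{\frac{1}{3}})^\gamma$ and taking the supremum over $T\in(0,1]$ then gives \eqref{eq:interpolation-hoelder} for $\gamma\in(0,\tfrac{3}{2})\setminus\{1\}$.

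The main obstacle is the critical exponent $\gamma=1$, for which only the direction $\sup_T (T^{\frac{1}{3}})^{-1}\|T\mathcal{A} f_T\|_{L^\infty}\lesssim [f]_1$ is available in \eqref{alpha=1} (the reverse is generally false at integer scaling levels, since the semigroup characterization produces a Zygmund-type rather than Hölder norm). To handle this, I would work directly from Definition~\ref{def:pos_holder}: given $x\neq y$ with $h=d(x,y)$, I split $|f(y)-f(x)|\leq 2\|f-f_T\|_{L^\infty}+|f_T(y)-f_T(x)|$, bound $\|f-f_T\|_{L^\infty}\leq \int_0^T\|\mathcal{A} f_t\|_{L^\infty}\,\mathrm{d}t \lesssim (T^{\frac{1}{3}})^\alpha[f]_\alpha$ via \eqref{eq:pos_hoelder_A_char}, and estimate the smooth part by an anisotropic mean-value $|f_T(y)-f_T(x)|\leq d(x,y)\|\partial_1 f_T\|_{L^\infty} + |y_2-x_2|\,\|\partial_2 f_T\|_{L^\infty}$; the derivative norms $\|\partial_i f_T\|_{L^\infty}$ admit both an $[f]_\alpha$-bound (exploiting the vanishing integral of $\partial_i\Psi_{T/2}$) and an $[f]_\beta$-bound (via Young's inequality), so choosing $T^{\frac{1}{3}}\sim h$ and applying the same min-trick produces the desired inequality. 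The auxiliary case $\gamma=0$ with vanishing average is handled analogously by the splitting $\|f\|_{L^\infty}\leq \|f-f_T\|_{L^\infty}+\|f_T\|_{L^\infty}\lesssim (T^{\frac{1}{3}})^\alpha[f]_\alpha+(T^{\frac{1}{3}})^\beta[f]_\beta$ and optimization over $T>0$, which is permissible for vanishing-average distributions by the last assertion of Lemma~\ref{lem:hoelder_char}.
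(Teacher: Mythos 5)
Your argument for $\gamma\in(0,\tfrac32)\setminus\{1\}$ is correct and notably cleaner than the paper's: you go through the equivalence \eqref{eq:pos_hoelder_A_char} of Lemma~\ref{lem:hoelder_char} and the $\min(a,b)\le a^\lambda b^{1-\lambda}$ trick, so the interpolation happens at the level of $\|T\mathcal{A}f_T\|_{L^\infty}$ and you never need to touch the pointwise Hölder seminorm. The paper instead works directly from Definition~\ref{def:pos_holder} and controls the increments $f_{2T}-f_T$, $\partial_1 f_{2T}-\partial_1 f_T$, $f_T(x)-f_T(y)$ individually by both $[f]_\beta$ and $[f]_\alpha$ (its displays \eqref{00}--\eqref{22}) and then sums a dyadic telescope; this is longer but does not require the reverse direction of the semigroup characterization, which is precisely what goes wrong at $\gamma=1$. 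So both approaches correctly identify that $\gamma=1$ is the obstruction, and both fall back to a different device there. Your handling of $\gamma=0$ is also fine: the splitting $\|f\|_{L^\infty}\le\|f-f_T\|_{L^\infty}+\|f_T\|_{L^\infty}$ and optimization over $T>0$ (permissible for vanishing average) recovers exactly what the paper does.

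The genuine gap is in the $\gamma=1$ case. You claim that $\|\partial_i f_T\|_{L^\infty}$ admits an $[f]_\alpha$-bound ``exploiting the vanishing integral of $\partial_i\Psi_{T/2}$,'' which would give $\|\partial_1 f_T\|_{L^\infty}\lesssim (T^{1/3})^{\alpha-1}[f]_\alpha$. This is false whenever $\alpha>1$ (and $\gamma=1$ forces $\alpha\in(1,\tfrac32)$). The simplest counterexample is $f(x)=\sin(2\pi x_1)$: then $\|\partial_1 f_T\|_{L^\infty}=2\pi e^{-T(2\pi)^3}\to 2\pi$ as $T\downarrow 0$, while $(T^{1/3})^{\alpha-1}\to 0$. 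The structural reason is that for $\alpha>1$ the seminorm $[f]_\alpha$ subtracts the first-order Taylor correction $\partial_1 f(x)(y-x)_1$ and therefore sees $f$ only modulo linear-in-$x_1$ behaviour; moreover, the vanishing-integral trick does not apply to $\partial_1\Psi_{T/2}$ beyond zeroth moment because $\int y_1\,\partial_1\Psi_{T/2}(y)\,\dd y=-1\neq 0$, so one cannot subtract the Taylor correction for free. What does hold with the $[f]_\alpha$-rate is the estimate on dyadic differences $\|\partial_1 f_{2T}-\partial_1 f_T\|_{L^\infty}\lesssim (T^{1/3})^{\alpha-1}[f]_\alpha$ (this is exactly \eqref{22} in the paper), and one recovers a usable bound only by telescoping these — which is the device the paper's proof uses, and which you would have to reintroduce. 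Alternatively you could argue as the paper's \textsc{Case} $\gamma=1$ does, by approximation $\gamma_n\searrow 1$, using the $\gamma\in(1,\tfrac32)$ result already established (though one then has to track the $\gamma$-dependence of the constant). As written, your $\gamma=1$ step does not close.
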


\begin{proof}
	By \eqref{eq:neg_hoelder_char} and \eqref{eq:supl_numa}, we have for every 
	$T\in(0,1]$ and $x,y\in \TT^2$:
	\begin{equation}
	\label{00}
		\begin{cases}
		& |f_{2T} (x) - f_T(x)|  \leq  \|f_{T}\|_{L^{\infty}}+ \|f_{2T}\|_{L^{\infty}} \lesssim [f]_\beta \left(T^{\frac{1}{3}}\right)^\beta, \\
		& |\partial_1 f_{2T} (x) - \partial_1 f_T(x)|  \leq (\|f_{\frac{3T}{2}}\|_{L^{\infty}}+ \|f_{\frac{T}{2}}\|_{L^{\infty}}) 
		\int_{\RR^2} |\partial_1 \psi_{\frac{T}{2}}(z)| \, \dd z \lesssim [f]_\beta \left(T^{\frac{1}{3}}\right)^{\beta-1}, \\
		& |f_T(x) - f_T(y)|  \leq 2 \|f_{T}\|_{L^{\infty}} \lesssim [f]_{\beta} \left(T^{\frac{1}{3}}\right)^\beta,\\
		& |\partial_1 f_T(y)(x_1-y_1)|\leq \|\partial_1 \psi_{\frac{T}{2}}\|_{L^{1}(\RR^2)} 
		\|f_{\frac{T}{2}}\|_{L^{\infty}} d(x,y) \lesssim  [f]_{\beta} \left(T^{\frac{1}{3}}\right)^\beta \frac{d(x,y)}{T^{\frac{1}{3}}},\\
		& |f_T(x) - f_T(y)-\partial_1 f_T(y)(x_1-y_1)|  \lesssim [f]_{\beta} \left(T^{\frac{1}{3}}\right)^\beta 
		\max\left(1, \frac{d(x,y)}{T^{\frac{1}{3}}}\right),
		\end{cases}
	\end{equation}
	where we used \cite[equation (26)]{IO19}. In the case $\alpha\in (0,1]$, we claim that for every $T\in(0,1]$ and $x,y\in \TT^2$:
	\begin{equation}
		\label{11}
		|f_{2T} (x) - f_T(x)|  \lesssim [f]_\alpha \left(T^{\frac{1}{3}}\right)^\alpha \quad \textrm{and} \quad		|f_T(x) - f_T(y)|  \lesssim [f]_{\alpha} d^{\alpha}(x,y). 
	\end{equation}
	Indeed, by \cite[equation (26)]{IO19}, we deduce 
	\begin{equs}
	 |f_T(x) - f_T(y)|
	 &\leq \int_{\RR^2} |\psi_T(z)| |f(x-z)-f(y-z)| \, \dd z
	 \leq [f]_\alpha d(x,y)^\alpha \int_{\RR^2} |\psi_T(z)| \, \dd z \\
	 &\lesssim [f]_{\alpha} d^{\alpha}(x,y),
	 \\
	 |f_{2T} (x) - f_T(x)| 
	 &\leq \int_{\RR^2} |\psi_T(z)| |f_T(x-z)-f_T(x)| \, \dd z
	 \lesssim [f]_{\alpha} \int_{\RR^2} |\psi_T(z)| d(z,0)^\alpha\, \dd z \\
	 &\lesssim [f]_\alpha \left(T^{\frac{1}{3}}\right)^\alpha.
	\end{equs}
	In the case $\alpha\in (1, \frac32)$, arguing as above, we also have for every $T\in(0,1]$ and $x,y\in \TT^2$:
	\begin{equation} \label{22}
			\begin{cases}
			& |f_T(x) - f_T(y)-\partial_1 f_T(y)(x_1-y_1)|  \lesssim [f]_{\alpha} d^{\alpha}(x,y),\\
			& |f_{2T} (x) - f_T(x)|\leq \int_{\RR^2} |\psi_T(z)| |f_T(x-z)-f_T(x)+\partial_1 f_T(x)z_1|\, \dd z
			\lesssim [f]_\alpha \left(T^{\frac{1}{3}}\right)^\alpha,
			\\
			& |\partial_1 f_{2T} (x) - \partial_1 f_T(x)|  \leq 
			\int_{\RR^2} |\partial_1 \psi_{\frac{T}{2}}(z)| |f_{\frac{3T}{2}}(x-z)-f_{\frac{T}{2}}(x-z)| \, \dd z
			\lesssim [f]_\alpha \left(T^{\frac{1}{3}}\right)^{\alpha-1},
			\end{cases}
	\end{equation}
	where we used $\int_{\RR^2} \psi_T \, \dd z=1$ and $\int_{\RR^2} z_1\psi_T(z) \, \dd z=0$. To prove \eqref{eq:interpolation-hoelder} we distinguish three different cases for $\gamma$.
	
	\smallskip
	
	\noindent\textsc{Case} $\gamma\in (0,1)$: First, assume that $\alpha\in (0,1]$. Interpolating \eqref{00} and
	\eqref{11}, we obtain for every $T\in (0,1]$ and $x,y\in \TT^2$:
	\begin{align*}
		|f_{2T} (x) - f_T(x)| & \lesssim [f]_\beta^{\lambda} [f]_\alpha^{1-\lambda} \left(T^{\frac{1}{3}}\right)^{\gamma} \\
		|f_T(x) - f_T(y)| & \lesssim [f]_\beta^{\lambda} [f]_\alpha^{1-\lambda} \left(T^{\frac{1}{3}}\right)^{\lambda\beta} d(x,y)^{(1-\lambda)\alpha}.
	\end{align*}
	If $n_0\in\ZZ$ is the largest integer such that  $2^{-\frac{n_0}{3}} \geq d(x,y)$,
	then for every $n\geq n_0$ 
	\begin{equs}
		&|f_{2^{-n}} (x) - f_{2^{-n}}(y)| 
		\\
		& \quad \leq |f_{2^{-n}} (x) - f_{2^{-n_0}}(x)| + |f_{2^{-n_0}} (x) - f_{2^{-n_0}}(y)| 
		+ |f_{2^{-n}} (y) - f_{2^{-n_0}}(y)|
		\\
		& \quad \leq \sum_{k=n_0}^{n-1} |f_{2^{-(k+1)}} (x) - f_{2^{-k}}(x)| + |f_{2^{-n_0}} (x) - f_{2^{-n_0}}(y)| 
		 + \sum_{k=n_0}^{n-1} |f_{2^{-(k+1)}} (y) - f_{2^{-k}}(y)|
		 \\
		& \quad \lesssim [f]_\beta^{\lambda} [f]_\alpha^{1-\lambda} \left( \sum_{k=n_0}^{n-1} \left(2^{-\frac{k}{3}}\right)^{\gamma} + 
		\left(2^{-\frac{n_0}{3}}\right)^{\lambda\beta} d(x,y)^{(1-\lambda)\alpha}\right) 
		\\
		& \quad \lesssim  [f]_\beta^{\lambda} [f]_\alpha^{1-\lambda} \left( \left(2^{-\frac{n_0}{3}}\right)^{\gamma} 
		+ \left(2^{-\frac{n_0}{3}}\right)^{\lambda\beta} d(x,y)^{(1-\lambda)\alpha}\right) \lesssim  [f]_\beta^{\lambda} [f]_\alpha^{1-\lambda} d(x,y)^\gamma,
		\label{eq:alpha_small}
	\end{equs}
	which in turn implies \eqref{eq:interpolation-hoelder} by letting $n\to\infty$. 
	
	In the case $\alpha\in (1, \frac32)$, first, one needs to choose $n_0\in\ZZ$ such that $2^{-\frac{n_0}{3}} \geq \kappa d(x,y)>2^{-\frac{n_0+1}{3}}$ with a constant 
	$\kappa>0$, depending only on $\gamma$, which we fix below. Second, one needs to estimate
	the intermediate term $|f_{2^{-n_0}} (x) - f_{2^{-n_0}}(y)|$ differently.
	For this, we need to use that for every $T\in(0,1]$, by \eqref{eq:neg_hoelder_char}, \eqref{eq:supl_numa}, Definition \ref{def:neg_holder}, 
	and since $\gamma<1$,
	\begin{equs}
	 \|\partial_1 f_T\|_{L^{\infty}}  \lesssim [\partial_1 f]_{\gamma-1} (T^{\frac{1}{3}})^{\gamma-1}\leq c_0  
	 [f]_{\gamma} (T^{\frac{1}{3}})^{\gamma-1}
	\end{equs}
	where $c_0>0$ depends only on $\gamma$. By interpolating between \eqref{00} and \eqref{22}, we estimate the intermediate term  
	for a constant $C>0$ (depending on $\alpha$, $\beta$, $\gamma$) by, 
	\begin{align*}
	& |f_{2^{-n_0}} (x) - f_{2^{-n_0}}(y)| 
	\\
	& \quad \leq  
	|f_{2^{-n_0}} (x) - f_{2^{-n_0}}(y)-\partial_1 f_{2^{-n_0}}(y)(x_1-y_1)| + |\partial_1 f_{2^{-n_0}}(y)(x_1-y_1)|
	\\
	& \quad \leq C [f]_\beta^{\lambda} [f]_\alpha^{1-\lambda} \left(2^{-\frac{n_0}{3}}\right)^{\lambda\beta} 
	d(x,y)^{(1-\lambda)\alpha} \max\left(1, \frac{d(x,y)}{2^{-\frac{n_0}{3}}}\right)^{\lambda} + c_0  [f]_{\gamma} (2^{-\frac{n_0}{3}})^{\gamma-1} d(x,y)
	\\
	& \quad \leq C_{\kappa} [f]_\beta^{\lambda} [f]_\alpha^{1-\lambda} d(x,y)^\gamma+ c_0  [f]_{\gamma} \kappa^{\gamma-1} d(x,y)^\gamma.
	\end{align*}
	Choosing $\kappa>0$ such that $c_0 \kappa^{\gamma-1}=\frac{1}{2}$ and proceeding as in \eqref{eq:alpha_small} (where now the change of $n_0$ 
	affects the implicit constant by a factor depending on $\kappa$), after passing to the limit $n\to \infty$ we obtain 
	\begin{equs}
	 |f(x)-f(y)|\leq \left (C  [f]_\beta^{\lambda} [f]_\alpha^{1-\lambda} +\frac12 [f]_{\gamma}\right) d(x,y)^\gamma.
	\end{equs}
	Dividing by $d(x,y)^\gamma$ and taking the supremum over $x\neq y$ we finally obtain \eqref{eq:interpolation-hoelder}.
	
	\smallskip
	
	\noindent\textsc{Case} $\gamma\in (1, \frac{3}{2})$: Since $\alpha>\gamma$ we also have $\alpha\in (1, \frac{3}{2})$. 
	Interpolating \eqref{00} and \eqref{22}, we obtain for every $T\in (0,1]$ and $x,y\in \TT^2$,
	\begin{align*}
		|f_{2T} (x) - f_T(x)| & \lesssim [f]_\beta^{\lambda} [f]_\alpha^{1-\lambda} \left(T^{\frac{1}{3}}\right)^{\gamma},
		\\
		|\partial_1f_{2T} (x) - \partial_1 f_T(x)| & \lesssim [f]_\beta^{\lambda} [f]_\alpha^{1-\lambda} \left(T^{\frac{1}{3}}\right)^{\gamma-1},
		\\
		|f_T(x) - f_T(y)-\partial_1 f_T(y)(x_1-y_1)| & \lesssim [f]_\beta^{\lambda} [f]_\alpha^{1-\lambda} 
		\left(T^{\frac{1}{3}}\right)^{\lambda\beta} d(x,y)^{(1-\lambda)\alpha} 
		\max\left(1, \frac{d(x,y)}{T^{\frac{1}{3}}}\right)^{\lambda}.
	\end{align*}
	If $n_0\in\ZZ$ is the largest integer such that  $2^{-\frac{n_0}{3}} \geq d(x,y)$, the same argument as in the previous case
	yields for $n\geq n_0$,
	\begin{equs}
		& |f_{2^{-n}} (x) - f_{2^{-n}}(y)-\partial_1  f_{2^{-n}}(y)(x_1-y_1) |
		\\
		& \quad \leq  |f_{2^{-n_0}} (x) - f_{2^{-n_0}}(y)-\partial_1  f_{2^{-n_0}}(y)(x_1-y_1)|
		+ |f_{2^{-n}} (x) - f_{2^{-n_0}}(x)| 
		\\
		& \quad \quad + |f_{2^{-n}} (y) - f_{2^{-n_0}}(y)| 
		+ |\partial_1 f_{2^{-n}} (y) - \partial_1  f_{2^{-n_0}}(y)| |x_1-y_1| \\
		& \quad \lesssim [f]_\beta^{\lambda} [f]_\alpha^{1-\lambda} 
		\left(\left(2^{-\frac{n_0}{3}}\right)^{\lambda\beta} d(x,y)^{(1-\lambda)\alpha}
		+ \sum_{k=n_0}^{n-1} \left(2^{-\frac{k}{3}}\right)^{\gamma}
		+ d(x,y) \sum_{k=n_0}^{n-1} \left(2^{-\frac{k}{3}}\right)^{\gamma-1}\right) 
		\\
		& \quad \lesssim  [f]_\beta^{\lambda} [f]_\alpha^{1-\lambda} d(x,y)^\gamma,
	\end{equs}
	which yields \eqref{eq:interpolation-hoelder} by letting $n\to\infty$. 

\smallskip
\noindent\textsc{Case} $\gamma=1$: Take a sequence $(\gamma_n)\subset(1, \frac{3}{2})$ such that $\gamma_n \searrow 1$, and consider the corresponding exponents $\lambda_n \to \lambda$. Then by \cite[Remark 2]{IO19} and the previous case, we have 
\begin{align*}
	[f]_1 \lesssim [f]_{\gamma_n} \lesssim [f]_{\beta}^{\lambda_n} [f]_{\alpha}^{1-\lambda_n},
\end{align*}
with implicit constants independent of $n$. We can therefore perform the limit $n\to\infty$ to conclude the estimate for $\gamma=1$.

\smallskip
\noindent\textsc{Case} $\gamma =0$ and $f$ has vanishing average: If $[f]_\alpha=0$, then $f\equiv 0$,\footnote{This is clear for $\alpha\in (0,1)$ since $f$ has vanishing average. For $\alpha\in(1,\frac{3}{2})$ notice that by \cite[Lemma 12]{IO19}, $\|\partial_1 f\|_{L^{\infty}} = 0$, hence by Definition \ref{def:pos_holder} $f$ is constant, and this constant is $0$ since $f$ has vanishing average.} so \eqref{eq:interpolation-hoelder} holds trivially. Assume that $[f]_\alpha\neq 0$. If $\alpha\in (0,1]$,
	we have for every $T>0$, 
	\begin{align*}
		|f(x)| & \leq |f(x) - f_T(x)| + |f_T(x)| 
		\lesssim  \int_{\RR^2} |\psi_T(z)| |f(x-z)-f(x)| \, \dd z 
		+ \left(T^{\frac{1}{3}}\right)^\beta [f]_\beta 
		\\
		& \lesssim \left(T^{\frac{1}{3}}\right)^\alpha [f]_\alpha + \left(T^{\frac{1}{3}}\right)^\beta [f]_\beta, 
	\end{align*}
	while if $\alpha\in (1, \frac{3}{2})$,
	\begin{align*}
		|f(x)| & \leq |f(x) - f_T(x)| + |f_T(x)|
		\lesssim  \int_{\RR^2} |\psi_T(z)| |f(x-z)-f(x)+\partial_1 f(x) z_1| \, \dd z
		+ \left(T^{\frac{1}{3}}\right)^\beta [f]_\beta
		\\
		& \quad \lesssim \left(T^{\frac{1}{3}}\right)^\alpha [f]_\alpha + \left(T^{\frac{1}{3}}\right)^\beta [f]_\beta, 
	\end{align*}
	where we used $\int_{\RR^2} z_1 \psi_T(z)\, \dd z=0$ and Lemma \ref{lem:hoelder_char} in the case of distributions with vanishing average, as $T$ can be larger than $1$.
	Choosing $T^{\frac{1}{3}} = \left(\frac{[f]_{\beta}}{[f]_{\alpha}}\right)^{\frac{1}{\alpha-\beta}}$ leads to the conclusion.
\end{proof}

\begin{remark} \label{rem:partial_1_interpolation} One can also prove that for $-\frac{1}{2} < \beta <1< \alpha < \frac{3}{2}$ 
the interpolation estimate
\begin{equs}
 \|\partial_1 f\|_{L^\infty} & \lesssim [f]_\beta^\lambda [f]_\alpha^{1-\lambda}\label{eq:partial_1_interpolation}
\end{equs}
holds for $\lambda \in(0,1)$ given by $1 = \lambda \beta + (1-\lambda) \alpha$. Indeed, by Lemma \ref{lem:interpolation} (the case $\gamma=0$) we know that
\begin{equs}
 \|\partial_1 f\|_{L^\infty} & \lesssim [\partial_1 f]_{\beta-1}^\lambda [\partial_1 f]_{\alpha-1}^{1-\lambda}. 
\end{equs}
By Definition \ref{def:neg_holder}, we have $[\partial_1 f]_{\beta-1} \lesssim [f]_{\beta}$ and by \cite[Lemma 12]{IO19}, we know $[\partial_1 f]_{\alpha-1} \lesssim [f]_{\alpha}$, so the desired estimate follows. 
\end{remark}

We also need the following lemma for the Hilbert transform acting on H\"older spaces.

\begin{lemma} \label{lem:R} \hfill
\begin{enumerate}
 \item For $\beta\in(-\frac{3}{2},0)$ and $\eps>0$ such that $\beta-\eps\in (-\frac{3}{2},0)$, there
 exists a constant $C>0$ such that for every periodic distribution $f$,
 \begin{equs}
  {[}R_1f{]}_{\beta-\varepsilon} \leq C [f]_\beta. \label{eq:R_neg}
 \end{equs}
 \item For $\alpha \in (0,\frac{3}{2})$ and $\eps>0$ such that $\alpha-\eps\in(0,\frac{3}{2})$ there exists a constant $C>0$ such that for every $f:\TT^2\to \RR$,
 \begin{equs}
  {[}R_1f{]}_{\alpha-\varepsilon} \leq C [f]_\alpha. \label{eq:R_pos}
 \end{equs}
\end{enumerate}
\end{lemma}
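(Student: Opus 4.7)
My plan is to reduce both estimates to the heat-kernel characterization of H\"older norms from Lemma~\ref{lem:hoelder_char}, and to pay the endpoint failure of $R_1$ at $L^\infty$ by an arbitrarily small loss of regularity. The key facts I would use are: (a) $R_1$ is bounded on $L^p(\TT^2)$ for every $1<p<\infty$; (b) $R_1$ commutes with the convolution semigroup $\{\psi_T\}_{T>0}$; and (c) the sharp scaling \eqref{lp_psi}--\eqref{lp_psi_per} of $\psi_T$ in $L^p$.

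For the negative case \eqref{eq:R_neg}, by Lemma~\ref{lem:hoelder_char} (combined, if necessary, with the inclusion $\C^{\beta'}\subset\C^{\beta}$ for $\beta'>\beta$ in order to avoid the critical exponents $-1,-\tfrac{1}{2}$), it suffices to bound $(T^{\frac{1}{3}})^{-(\beta-\varepsilon)}\|(R_1 f)_T\|_{L^\infty}$ uniformly in $T\in(0,1]$. Using that $R_1$ commutes with $\psi_T$ and the semigroup identity $\psi_T=\psi_{T/2}*\psi_{T/2}$, I would write
\begin{equation*}
(R_1 f)_T = (R_1\psi_{T/2})*f_{T/2},
\end{equation*}
and then apply Young's convolution inequality with conjugate exponents $p'=1+\delta$ and $p=(1+\delta)/\delta$. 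The $L^{1+\delta}$-boundedness of $R_1$ together with \eqref{lp_psi_per} yields $\|R_1\psi_{T/2}\|_{L^{1+\delta}}\lesssim (T^{\frac{1}{3}})^{-\frac{5\delta}{2(1+\delta)}}$, while \eqref{eq:supl_numa} applied to $f$ gives $\|f_{T/2}\|_{L^{(1+\delta)/\delta}}\leq \|f_{T/2}\|_{L^\infty}\lesssim (T^{\frac{1}{3}})^{\beta}[f]_\beta$. Choosing $\delta$ so small that $\frac{5\delta}{2(1+\delta)}\leq\varepsilon$ delivers the claimed bound.

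For the positive case \eqref{eq:R_pos} I would invoke the equivalent characterization \eqref{eq:pos_hoelder_A_char} (or its one-sided version \eqref{alpha=1} at $\alpha=1$, again sidestepping the critical value by shrinking $\varepsilon$ if needed). Since $\mathcal{A}$ and $R_1$ are both Fourier multipliers, and both commute with $\psi_T$, the decomposition becomes
\begin{equation*}
T\mathcal{A}(R_1 f)_T = (R_1\psi_{T/2})*(T\mathcal{A} f_{T/2}).
\end{equation*}
The same Young's inequality yields $\|R_1\psi_{T/2}\|_{L^{1+\delta}}\lesssim (T^{\frac{1}{3}})^{-\frac{5\delta}{2(1+\delta)}}$ as before, while \eqref{eq:pos_hoelder_A_char}/\eqref{alpha=1} applied at time $T/2$ provides $\|T\mathcal{A} f_{T/2}\|_{L^\infty}\lesssim (T^{\frac{1}{3}})^{\alpha}[f]_\alpha$. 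After the analogous choice of $\delta$, I obtain $\|T\mathcal{A}(R_1 f)_T\|_{L^\infty}\lesssim (T^{\frac{1}{3}})^{\alpha-\varepsilon}[f]_\alpha$, and concluding via \eqref{eq:pos_hoelder_A_char} gives \eqref{eq:R_pos}.

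The only genuine obstacle is the unboundedness of $R_1$ on $L^\infty$, which is precisely why the $\varepsilon$-loss in the H\"older exponent is unavoidable within this semigroup strategy; everything else is a routine convolution estimate once the heat-kernel bookkeeping is in place. The remaining mild annoyance is the handling of the critical exponents $\beta\in\{-1,-\tfrac{1}{2}\}$ and $\alpha=1$ at which the equivalences in Lemma~\ref{lem:hoelder_char} degenerate to one-sided bounds; this is cheaply resolved by perturbing $\varepsilon$ by an even smaller amount and using the embedding between H\"older spaces.
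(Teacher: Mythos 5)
Your argument for part (1) is essentially identical to the paper's: both write $(R_1f)_T = R_1\Psi_{T/2}\ast f_{T/2}$, apply Young's inequality with the kernel measured in $L^{q}$ for $q$ slightly above $1$ (your $q=1+\delta$ corresponds to the paper's conjugate $p/(p-1)$ with $p=5/(2\varepsilon)$), invoke boundedness of $R_1$ on $L^q(\TT^2)$, use the heat-kernel bound \eqref{lp_psi_per} and the one-sided estimate \eqref{eq:supl_numa}, and then handle the critical exponents $\beta-\varepsilon\in\{-1,-\tfrac12\}$ by a further tiny perturbation together with the H\"older-space embedding.

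For part (2), the paper disposes of the statement by reducing to vanishing average and citing \cite[Lemma 7]{IO19}, whereas you give a self-contained proof by replaying the identical semigroup/Young argument on $T\mathcal{A}f_T$ and invoking the $\mathcal{A}$-characterization \eqref{eq:pos_hoelder_A_char} (and its one-sided version \eqref{alpha=1} at $\alpha=1$). This is sound: the decomposition $T\mathcal{A}(R_1f)_T=(R_1\Psi_{T/2})\ast(T\mathcal{A}f_{T/2})$ is valid since $R_1$ and $\mathcal{A}$ commute as Fourier multipliers, and the one-sided bounds in Lemma~\ref{lem:hoelder_char} point in exactly the directions you need on each side. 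Your version is marginally more work than the citation but has the virtue of being self-contained; there is no gap.
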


\begin{proof} To prove \eqref{eq:R_neg}, we claim that for every $\varepsilon\in (0,\frac{3}{2})$ and $T\in (0,1]$, we have that
\begin{equs}
 \|R_1f_T\|_{L^\infty} \lesssim \left(T^{\frac{1}{3}}\right)^{-\varepsilon} \|f_{\frac{T}{2}}\|_{L^\infty}.
\end{equs}
Indeed, if we write $\Psi_T$ for the periodization of $\psi_T$, by the semigroup property and Young's inequality for convolution
we have for $p=\frac{5}{2\varepsilon}>1$,
\begin{equs}
 \|R_1f_T\|_{L^\infty} \lesssim \|f_{\frac{T}{2}}\|_{L^p} \|R_1\Psi_{\frac{T}{2}}\|_{L^{\frac{p}{p-1}}} 
 \lesssim \|f_{\frac{T}{2}}\|_{L^p} \|\Psi_{\frac{T}{2}}\|_{L^{\frac{p}{p-1}}}
 \lesssim \|f_{\frac{T}{2}}\|_{L^\infty} \left(T^{\frac{1}{3}}\right)^{-\frac{5}{2p}}
\end{equs}
where we used that the Hilbert transform is bounded\footnote{This follows from the fact that the Hilbert transform $R_1$ is
bounded on $L^q(\RR^2)$ for any $q\in (1,\infty)$ and the transference of multipliers method \cite[Theorem 4.3.7]{Gra14}.}
on $L^{\frac{p}{p-1}}(\TT^2)$ for $\frac{p}{p-1}>1$ and the bound $\|\Psi_{\frac{T}{2}}\|_{L^{\frac{p}{p-1}}}\lesssim (T^{\frac{1}{3}})^{-\frac{5}{2p}}$, which follows from Remark~\ref{rem:perio}. Then \eqref{eq:R_neg} follows via the characterization of negative H\"older norms \eqref{eq:neg_hoelder_char} and 
\eqref{eq:supl_numa} if $\beta - \epsilon \neq -1, -\frac{1}{2}$. In those cases, consider $\gamma\in(\beta-\epsilon, \beta)$ and use the previous case together with \cite[Remark 2]{IO19}.

Equation \eqref{eq:R_pos} is essentially \cite[Lemma 7]{IO19}, noting that we can assume that $f$ is of vanishing average,
as $R_1$ annihilates constants and $[f-\int_{\TT^2} f \, \dd x]_\alpha\leq [f]_\alpha$.
\end{proof}

\begin{lemma}\label{lem:frac-deriv-hoelder}
	Let $\alpha\in(-\frac{3}{2}, \frac{3}{2}) \setminus \{0\}$ and $s>0$ such that $\alpha-s \in (-\frac{3}{2}, \frac{3}{2}) \setminus \{-1, -\frac{1}{2}, 0, 1\}$. 
	There exists a constant $C>0$ such that for every periodic $f\in \mathcal{C}^{\alpha}$,
	\begin{align*}
		\left[|\partial_1|^s f\right]_{\alpha-s} \leq C [f]_{\alpha}.
	\end{align*}
\end{lemma}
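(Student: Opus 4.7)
The plan is to use the equivalent characterization of H\"older norms via $\|T\mathcal{A} f_T\|_{L^\infty}$ from Lemma~\ref{lem:hoelder_char}, exploiting that $|\partial_1|^s$ commutes with both $\mathcal{A}$ and convolution with $\Psi_T$. Since $|\partial_1|^s$ annihilates constants, I may assume without loss of generality that $f$ has vanishing average (one checks via Definition~\ref{def:neg_holder} that this only decreases $[f]_\alpha$); then $|\partial_1|^s f$ also has vanishing average on $\TT^2$, so the characterizations \eqref{eq:neg_hoelder_A_char} and \eqref{eq:pos_hoelder_A_char} are applicable on both sides.

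Combining the commutation with the semigroup property yields the key identity
\begin{equation*}
T\mathcal{A}(|\partial_1|^s f)_T \,=\, |\partial_1|^s (T\mathcal{A} f_T) \,=\, (|\partial_1|^s \Psi_{T/2}) \ast (T\mathcal{A} f_{T/2}),
\end{equation*}
so that Young's convolution inequality produces the bound
\begin{equation*}
\|T\mathcal{A}(|\partial_1|^s f)_T\|_{L^\infty} \,\leq\, \||\partial_1|^s \Psi_{T/2}\|_{L^1} \cdot \|T\mathcal{A} f_{T/2}\|_{L^\infty}.
\end{equation*}
The second factor is controlled by $(T^{\frac{1}{3}})^\alpha [f]_\alpha$ uniformly in $T\in(0,1]$ via the upper-bound direction of the $\mathcal{A}$-characterization, which holds for \emph{every} $\alpha \in (-\frac{3}{2},\frac{3}{2}) \setminus \{0\}$ thanks to \eqref{eq:supl_in_A} and \eqref{alpha=1}; this explains why the critical values of $\alpha$ are allowed in the hypothesis.

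The main obstacle is the first factor, since the scaling estimate \eqref{lp_psi_per} is only recorded for $D_j \in \{\partial_j, |\partial_j|\}$ rather than for arbitrary fractional powers. I would prove $\||\partial_1|^s \Psi_{T/2}\|_{L^1(\TT^2)} \lesssim (T^{\frac{1}{3}})^{-s}$ directly from the scaling identity \eqref{eq:heat_ker}, which yields $\||\partial_1|^s \psi_T\|_{L^1(\RR^2)} = (T^{\frac{1}{3}})^{-s}\||\partial_1|^s \psi\|_{L^1(\RR^2)}$, and dominate the periodization by the same Fubini argument as in Remark~\ref{rem:perio}. The finiteness of $\||\partial_1|^s \psi\|_{L^1(\RR^2)}$ for $s > 0$ follows from the tensor product structure $\hat\psi(k_1,k_2) = e^{-|k_1|^3}e^{-k_2^2}$: writing $|\partial_1|^s \psi(x_1,x_2) = g_s(x_1)\,\psi_2(x_2)$ with $\hat g_s(k_1) = |k_1|^s e^{-|k_1|^3}$, the symbol $|k_1|^s$ is smooth away from the origin (giving Schwartz decay of $g_s$ at infinity coming from the $e^{-|k_1|^3}$ factor, overridden only by the singularity at $k_1=0$) but has a non-smoothness of order $s$ at $k_1 = 0$, which translates to decay $|g_s(x_1)| \lesssim |x_1|^{-1-s}$ for large $|x_1|$; combined with $\|g_s\|_{L^\infty} \leq \|\hat g_s\|_{L^1} < \infty$, this gives $g_s \in L^1(\RR)$ for every $s > 0$.

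Putting the two factors together yields $\|T\mathcal{A}(|\partial_1|^s f)_T\|_{L^\infty} \lesssim (T^{\frac{1}{3}})^{\alpha - s}[f]_\alpha$; taking the supremum over $T \in (0,1]$ and applying the reverse direction of the $\mathcal{A}$-characterization from Lemma~\ref{lem:hoelder_char}—valid precisely because $\alpha - s \notin \{-1, -\tfrac{1}{2}, 0, 1\}$—concludes the bound $[|\partial_1|^s f]_{\alpha - s} \lesssim [f]_\alpha$.
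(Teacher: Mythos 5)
Your proposal is correct and follows essentially the same route as the paper: reduce to vanishing average, commute $|\partial_1|^s$ with $T\mathcal{A}$ and the semigroup, apply Young's inequality with $|\partial_1|^s\psi\in L^1(\RR^2)$, and conclude via the $\mathcal{A}$-characterization of Lemma~\ref{lem:hoelder_char}. The only difference is that you spell out why $|\partial_1|^s\psi\in L^1(\RR^2)$ (tensor structure plus decay from the $|k_1|^s$-singularity at the origin), a fact the paper simply asserts.
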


\begin{proof}
	Without loss of generality, we may assume that $f$ is of vanishing average because $[f-\int_{\TT^2} f \, \dd x]_\alpha\leq [f]_\alpha$ and $\left[|\partial_1|^s f\right]_{\alpha-s}$ is invariant by adding a constant to $f$. Then by the semigroup property,
	$|\partial_1|^{s} f_T = \left( \frac{T}{2} \right)^{-\frac{s}{3}} f_{\frac{T}{2}} * (|\partial_1|^{s}\psi)_{\frac{T}{2}}$ for every $T\in (0,1]$ and since $|\partial_1|^{s}\psi\in L^1(\RR^2)$, we deduce
	$\|T \mathcal{A} |\partial_1|^s f_T\|_{L^{\infty}} \lesssim \left( T^{\frac{1}{3}} \right)^{-s} \|T \mathcal{A} f_{\frac{T}{2}} \|_{L^{\infty}}$.
	Hence, we obtain that
	\begin{align*}
		\left( T^{\frac{1}{3}} \right)^{-(\alpha-s)} \|T\mathcal{A} |\partial_1|^s f_T \|_{L^{\infty}}
		\lesssim \left(T^{\frac{1}{3}}\right)^{-\alpha} \|T \mathcal{A} f_{\frac{T}{2}} \|_{L^{\infty}},
	\end{align*}
	and the conclusion follows by Lemma \ref{lem:hoelder_char}.
\end{proof}

\begin{lemma} \label{lem:comp_emb} Let $\alpha\in(-\frac{3}{2}, \frac{3}{2}) \setminus \{0\}$. For every sequence $\{f_n\}_{n\geq 1}\subset \C^\alpha$
with $\sup_{n\geq 1} [f_n]_\alpha <\infty$, there exists $f\in \C^\alpha$ with 
\begin{equs}
 {} [f]_\alpha \leq \liminf_{n\to\infty} [f_n]_\alpha,
\end{equs}
such that $f_n\to f$ in $\C^{\alpha-\eps}$ for every $\eps>0$ along a subsequence. In particular, the embedding 
$\C^{\alpha}\hookrightarrow \C^{\alpha-\eps}$ is compact.
\end{lemma}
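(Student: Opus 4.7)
The strategy is to extract a subsequence along which the mollifications $(f_n)_T$ converge uniformly for a dense family of scales $T$, then use the semigroup structure of $\{\psi_T\}_{T>0}$ to assemble a limit that itself lies in $\mathcal{C}^\alpha$, and finally promote uniform convergence on scales to convergence in $\mathcal{C}^{\alpha-\varepsilon}$ by a standard interpolation. Without loss of generality I normalize each $f_n$ to have vanishing average on $\TT^2$ (adding a constant affects neither $[\cdot]_\alpha$ nor convergence in $\mathcal{C}^{\alpha-\varepsilon}$), and I first treat the generic case $\alpha\notin\{-1,-\tfrac12,1\}$; the critical exponents are then handled by passing to $\alpha'<\alpha$ with $\alpha'$ slightly smaller via the continuous inclusion $\mathcal{C}^\alpha\hookrightarrow\mathcal{C}^{\alpha'}$ from \cite[Remark~2]{IO19}.

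First I would invoke the kernel characterization of Lemma~\ref{lem:hoelder_char} to convert the uniform bound $\sup_n[f_n]_\alpha\le M$ into $\|T\mathcal{A}(f_n)_T\|_{L^\infty}\lesssim(T^{\frac{1}{3}})^{\alpha}$ for $T\in(0,1]$ uniformly in $n$. Differentiating $\psi_T$ and applying Young's inequality in the form of Remark~\ref{rem:perio} produces, for each fixed dyadic $T=2^{-k}$, a uniform-in-$n$ bound on $(f_n)_T$ in $C^1(\TT^2)$. Arzelà--Ascoli combined with a Cantor diagonal extraction then yields a subsequence $\{f_{n_j}\}$ such that $(f_{n_j})_T\to g_T$ uniformly on $\TT^2$ for every dyadic $T\in(0,1]$. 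The semigroup identity $(f_{n_j})_{T+T'}=((f_{n_j})_T)_{T'}$ is stable under uniform limits, so $g_{T+T'}=(g_T)_{T'}$ for dyadic $T,T'$, and this extends consistently to all $T\in(0,1]$ by continuity.

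From the coherent family $\{g_T\}$ I construct the limit $f$ as follows. In the case $\alpha>0$, a telescoping argument using $\|f_n-(f_n)_T\|_{L^\infty}\lesssim(T^{\frac{1}{3}})^{\alpha}[f_n]_\alpha$ shows that $\{g_T\}$ is Cauchy in $L^\infty$ as $T\downarrow 0$ and converges to a continuous function $f$. In the case $\alpha<0$ I define $f$ as the distribution $\varphi\mapsto\lim_{T\downarrow0}\int_{\TT^2}g_T\varphi\,\dd x$, the limit existing thanks to $\|g_T\|_{L^\infty}\lesssim(T^{\frac{1}{3}})^{\alpha}$ combined with the representation $g_T-g_{T'}=(g_{\min(T,T')})_{|T-T'|}-g_{\min(T,T')}$ paired against smooth $\varphi$. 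By construction $f_T=g_T$ for every $T\in(0,1]$, so Lemma~\ref{lem:hoelder_char} applied to $f$ yields both $f\in\mathcal{C}^\alpha$ and the lower semicontinuity
\[
[f]_\alpha\;\lesssim\;\sup_{T\in(0,1]}(T^{\frac{1}{3}})^{-\alpha}\|T\mathcal{A}g_T\|_{L^\infty}\;\le\;\liminf_{j\to\infty}[f_{n_j}]_\alpha.
\]

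Finally, to upgrade to convergence in $\mathcal{C}^{\alpha-\varepsilon}$ I write $f_{n_j}-f=\bigl((f_{n_j}-f)-(f_{n_j}-f)_T\bigr)+(f_{n_j}-f)_T$ and estimate each piece separately: the first has $\mathcal{C}^{\alpha-\varepsilon}$-seminorm bounded by $(T^{\frac{1}{3}})^{\varepsilon/2}[f_{n_j}-f]_{\alpha-\varepsilon/2}\lesssim(T^{\frac{1}{3}})^{\varepsilon/2}M$ via a semigroup estimate in the spirit of Proposition~\ref{prop:sem_est}, while the second is a smooth function whose $\mathcal{C}^{\alpha-\varepsilon}$-seminorm is controlled (with a $T$-dependent but $j$-independent constant coming from $\|\mathcal{A}\psi_T\|_{L^1}$) by $\|(f_{n_j}-f)_T\|_{L^\infty}$, which tends to $0$ as $j\to\infty$ by the uniform convergence above. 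Choosing $T$ small first and then $j$ large gives $[f_{n_j}-f]_{\alpha-\varepsilon}\to 0$, from which the compactness of the embedding $\mathcal{C}^\alpha\hookrightarrow\mathcal{C}^{\alpha-\varepsilon}$ follows at once. The main obstacle will be the identification $f_T=g_T$ for every $T>0$ (not merely dyadic $T$) in the negative-Hölder regime, since it is precisely this identity that lets Lemma~\ref{lem:hoelder_char} be applied to the distributional limit $f$ itself and thus secures both $f\in\mathcal{C}^\alpha$ and the lower semicontinuity bound; the semigroup property of $\{\psi_T\}_{T>0}$ together with the uniform kernel bound from Step~1 is what makes this identification go through.
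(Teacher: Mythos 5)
Your overall strategy — extract a distributional limit along a subsequence, identify its regularity via the kernel characterization of Lemma~\ref{lem:hoelder_char}, and upgrade to $\mathcal{C}^{\alpha-\varepsilon}$ convergence by splitting into a semigroup-smoothed low-frequency piece and a high-frequency remainder estimated by Proposition~\ref{prop:sem_est} — is parallel in spirit to the paper's, but executed quite differently. The paper cites \cite[Lemma~13]{IO19} as a black box to obtain the distributional subsequential limit $f\in\mathcal{C}^\alpha$ together with the sharp bound $[f]_\alpha\le\liminf_n[f_n]_\alpha$; you re-derive that step from scratch via Arzelà--Ascoli applied to the mollifications $(f_n)_T$ for dyadic $T$ plus a Cantor diagonal. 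For the $\mathcal{C}^{\alpha-\varepsilon}$ upgrade the paper works at the level of $\mathcal{A}(f-f_n)_T$, uses a three-term triangle decomposition with an auxiliary scale $t$, and sends the middle term to zero by combining pointwise (distributional) convergence of $\mathcal{A}(f-f_n)_t$, the uniform $\mathcal{C}^\alpha$ bound, dominated convergence in $L^p$, and Young's inequality. You instead split $f_{n_j}-f$ into two pieces and use the uniform convergence of $(f_{n_j})_T$ — which you already have from Arzelà--Ascoli — for the low-frequency piece. Both routes are sound; yours obtains a stronger intermediate conclusion (uniform convergence at every fixed mollification scale) at the cost of more set-up, while the paper's pointwise-plus-dominated-convergence argument is leaner precisely because it delegates the extraction to the cited lemma.

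There is, however, one genuine gap in your lower-semicontinuity step. The kernel characterization in Lemma~\ref{lem:hoelder_char} is only an equivalence of seminorms \emph{up to universal constants} (it reads $[f]_\alpha\sim\sup_T(\cdot)$, not $[f]_\alpha=\sup_T(\cdot)$), so passing through $\sup_{T}(T^{1/3})^{-\alpha}\|T\mathcal{A}g_T\|_{L^\infty}$ only yields $[f]_\alpha\lesssim\liminf_j[f_{n_j}]_\alpha$, not the constant-free inequality $[f]_\alpha\le\liminf_j[f_{n_j}]_\alpha$ that the lemma asserts and that the applications (e.g.\ the proof of Corollary~\ref{cor:regularity}) use. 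To recover the sharp constant you must go back to Definitions~\ref{def:pos_holder} and~\ref{def:neg_holder} directly: for $\alpha\in(0,\tfrac32)$ argue from pointwise convergence of $f_{n_j}$ (and of $\partial_1 f_{n_j}$ when $\alpha>1$), which your uniform convergence at each fixed scale combined with the uniform $\mathcal{C}^\alpha$ bound and vanishing-average normalization indeed gives; for $\alpha<0$ one needs to extract convergent (suitably normalized) near-optimal decompositions $f_n=c_n+\partial_1 g_n+\partial_2 h_n$ and pass to the limit in the infimum. This is precisely the content packaged into \cite[Lemma~13]{IO19}, and it is not automatic from the kernel characterization alone. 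You flag the identification $f_T=g_T$ for all $T$ as the main obstacle; that part actually goes through without trouble, whereas the constant in the lower semicontinuity is the step that needs more care.
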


\begin{proof} First, assume that $\alpha-\eps\neq -1, -\frac{1}{2}, 0, 1$.
Let $\{f_n\}_{n\geq 1}\subset \C^\alpha$ such that
\begin{equs}
 K := \sup_{n\geq 1} [f_n]_\alpha <\infty, 
\end{equs}
and, if $\alpha \in(0,\frac{3}{2})$, we assume in addition that $\{\|f_n\|_{L^{\infty}}\}_{n\geq1}$ is uniformly bounded (hence, up to subtraction, we may assume that $f_n$ has zero average for any $n\in\NN$).

By \cite[Lemma 13]{IO19} there exists $f\in \C^\alpha$ such that $f_n\to f$ (along a subsequence) in the sense of distributions and  
\begin{equs}
 {} [f]_\alpha \leq \liminf_{n\to\infty} [f_n]_\alpha \leq K.  
\end{equs}
For $T,t> 0$ we have that
\begin{equs}
 \|(\mathcal{A}f-\mathcal{A}f_n)_T\|_{L^\infty} & \leq \|(\mathcal{A}f-\mathcal{A}f_t)_T\|_{L^\infty} + \|(\mathcal{A}f-\mathcal{A}f_n)_{t+T}\|_{L^\infty} 
 \\
 & \quad + \|((\mathcal{A}f_n)_t - \mathcal{A}f_n)_T\|_{L^\infty}. \label{eq:triangle_f}
\end{equs}
By \eqref{eq:neg_hoelder_A_char}, \eqref{eq:pos_hoelder_A_char}, and \eqref{eq:sem_est} below the following estimates hold 
\begin{equs}
 \|(\mathcal{A}f-\mathcal{A}f_t)_T\|_{L^\infty} & \lesssim \left(T^{\frac{1}{3}}\right)^{\alpha-\eps-3} [f-f_t]_{\alpha-\eps} \lesssim K \left(T^{\frac{1}{3}}\right)^{\alpha-\eps-3} \left(t^{\frac{1}{3}}\right)^{\eps}, 
 \\
 \|((\mathcal{A}f_n)_t - \mathcal{A}f_n)_T\|_{L^\infty} & \lesssim \left(T^{\frac{1}{3}}\right)^{\alpha-\eps-3} [(f_n)_t - f_n]_{\alpha-\eps} \lesssim K \left(T^{\frac{1}{3}}\right)^{\alpha-\eps-3} \left(t^{\frac{1}{3}}\right)^{\eps}.
 \end{equs}
 By Young's inequality for convolution and \eqref{lp_psi} we further have that
 \begin{equs}
 \|(\mathcal{A}f-\mathcal{A}f_n)_{t+T}\|_{L^\infty} & \lesssim \left(T^{\frac{1}{3}}\right)^{\alpha-\eps-3} 
 \|(\mathcal{A}f-\mathcal{A}f_n)_t\|_{L^{\frac{5}{2(3-\alpha+\eps)}}}.
\end{equs}
Since $f_n\to f$ in the sense of distributions, $(\mathcal{A}f-\mathcal{A}f_n)_t\to 0$ as $n\to\infty$  pointwise for every $t\in(0,1]$, and 
by \eqref{eq:neg_hoelder_A_char} and \eqref{eq:pos_hoelder_A_char} we know that
\begin{equs}
 \|(\mathcal{A}f-\mathcal{A}f_n)_t\|_{L^\infty} \lesssim \left(t^{\frac{1}{3}}\right)^{\alpha-3} 
 [f-f_n]_\alpha \lesssim K \left(t^{\frac{1}{3}}\right)^{\alpha-3}.
\end{equs}
Hence, by dominated convergence theorem, $\|(\mathcal{A}f-\mathcal{A}f_n)_t\|_{L^{\frac{5}{2(3-\alpha+\eps)}}}\to0$ as $n\to\infty$ for every $t\in(0,1]$. Taking $n\to\infty$
in \eqref{eq:triangle_f} and using again \eqref{eq:neg_hoelder_A_char} and \eqref{eq:pos_hoelder_A_char} we obtain that
\begin{equs}
 \limsup_{n\to\infty} [f-f_n]_{\alpha-\eps} \lesssim K \left(t^{\frac{1}{3}}\right)^{\eps},
\end{equs}
which completes the proof if we let $t\to 0$.

If $\alpha-\eps \in \{-1, -\frac{1}{2}, 0, 1\}$, consider $\gamma\in (\alpha-\eps, \alpha)$; in view of \cite[Remark 2]{IO19} and the above result we then have $[f-f_n]_{\alpha-\eps} \lesssim [f-f_n]_{\gamma} \to 0$ as $n\to \infty$.
\end{proof}

\begin{proposition} \label{prop:sem_est} For every $\alpha\in(-\frac{3}{2}, \frac{3}{2}) \setminus \{0\}$ and $\eps>0$ such that 
$\alpha-\eps \in (-\frac{3}{2}, \frac{3}{2}) \setminus \{-1, -\frac{1}{2}, 0, 1\}$ the following estimate holds:
\begin{equs}
 {} [f-f_t]_{\alpha-\eps} \lesssim \left(t^{\frac{1}{3}}\right)^{\eps} [f]_\alpha. \label{eq:sem_est}
\end{equs}
%
\end{proposition}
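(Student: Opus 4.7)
My plan is to derive the estimate using the heat-kernel characterization of $\C^{\alpha-\eps}$ from Lemma \ref{lem:hoelder_char}, combined with the semigroup identity $\partial_T f_T = -\mathcal{A} f_T$ (which follows from the Fourier representation \eqref{eq:psiThat}) and explicit pointwise bounds on $\mathcal{A}^2 f_R$.

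Since $\alpha - \eps \in (-\tfrac{3}{2}, \tfrac{3}{2}) \setminus \{-1, -\tfrac{1}{2}, 0, 1\}$ and $f - f_t$ has vanishing average on $\TT^2$, by Lemma \ref{lem:hoelder_char} it suffices to bound
\begin{equs}
\sup_{T \in (0,1]} (T^{\tfrac{1}{3}})^{-(\alpha-\eps)} \|T\mathcal{A}(f-f_t)_T\|_{L^\infty} \lesssim (t^{\tfrac{1}{3}})^\eps [f]_\alpha.
\end{equs}
Using $\partial_T f_T = -\mathcal{A} f_T$, I would write
\begin{equs}
(f-f_t)_T = f_T - f_{T+t} = \int_0^t \mathcal{A} f_{T+s}\, ds, \quad\text{so}\quad T\mathcal{A}(f-f_t)_T = T\int_0^t \mathcal{A}^2 f_{T+s}\, ds.
\end{equs}
I would then prove the pointwise bound
\begin{equs}
\|\mathcal{A}^2 f_R\|_{L^\infty} \lesssim (R^{\tfrac{1}{3}})^{\alpha-6}[f]_\alpha, \quad R \in (0,2],
\end{equs}
via the factorization $\mathcal{A}^2 f_R = (\mathcal{A} f_{R/2}) * \mathcal{A}\Psi_{R/2}$, Young's inequality, the $L^1$-bound $\|\mathcal{A}\Psi_R\|_{L^1(\TT^2)} \lesssim R^{-1}$ from Remark \ref{rem:perio} and \eqref{lp_psi}, and the estimate $\|\mathcal{A} f_{R/2}\|_{L^\infty} \lesssim (R^{\tfrac{1}{3}})^{\alpha-3}[f]_\alpha$ coming from the characterizations of $[f]_\alpha$ in Lemma \ref{lem:hoelder_char}.

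Finally, I would estimate the time integral by splitting into the regimes $T \geq t$ and $T < t$. In the first regime, monotonicity of the integrand yields $\int_0^t(T+s)^{(\alpha-6)/3}\, ds \leq tT^{(\alpha-6)/3}$, so $\|T\mathcal{A}(f-f_t)_T\|_{L^\infty} \lesssim tT^{(\alpha-3)/3}[f]_\alpha$; combined with the prefactor $(T^{\tfrac{1}{3}})^{-(\alpha-\eps)}$ this gives $t(T^{\tfrac{1}{3}})^{\eps-3}[f]_\alpha \leq (t^{\tfrac{1}{3}})^\eps[f]_\alpha$, using $T \geq t$ together with $\eps < 3$ (automatic since $\eps < \alpha + \tfrac{3}{2} < 3$). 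In the regime $T < t$, explicit integration of the decreasing power (using $(\alpha-6)/3 + 1 < 0$, which holds since $\alpha < 3$) gives $\int_0^t(T+s)^{(\alpha-6)/3}\, ds \lesssim T^{(\alpha-3)/3}$, so $\|T\mathcal{A}(f-f_t)_T\|_{L^\infty} \lesssim (T^{\tfrac{1}{3}})^\alpha[f]_\alpha$; multiplying by the prefactor and using $T < t$ produces $(T^{\tfrac{1}{3}})^\eps[f]_\alpha \leq (t^{\tfrac{1}{3}})^\eps[f]_\alpha$.

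The main subtlety lies in the regime $T < t$: the naive monotonicity bound used in the case $T\geq t$ would yield the wrong scaling in $t$, and one must instead integrate the decaying power $(T+s)^{(\alpha-6)/3}$ explicitly in order to recover the correct factor $(T^{\tfrac{1}{3}})^\alpha$. Everything else relies only on the semigroup structure provided by the heat kernel $\psi_T$ and on the H\"older characterizations already established in \cite[Lemma 10]{IO19}.
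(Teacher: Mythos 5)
Your proposal is correct and follows essentially the same route as the paper's proof: the semigroup identity $\mathcal{A}(f-f_t)_T = \int_T^{T+t}\mathcal{A}^2 f_s\,\dd s$, the factorization $\mathcal{A}^2 f_s = \mathcal{A}\psi_{s/2}*\mathcal{A}f_{s/2}$ combined with Young's inequality and $\|\mathcal{A}\psi_{s/2}\|_{L^1}\lesssim s^{-1}$, and the appeal to the heat-kernel characterization of Lemma~\ref{lem:hoelder_char} are all identical. The only variation is cosmetic, in the final time integral: you split into the regimes $T\geq t$ and $T<t$ (the latter being the ``subtlety'' you flag), whereas the paper handles both at once by factoring $(s^{\frac{1}{3}})^{-3+\alpha}=(s^{\frac{1}{3}})^{-3+\alpha-\eps}(s^{\frac{1}{3}})^{\eps}$, bounding $(s^{\frac{1}{3}})^{-3+\alpha-\eps}\leq(T^{\frac{1}{3}})^{-3+\alpha-\eps}$ for $s\geq T$, and then using $\int_T^{T+t}s^{\frac{\eps}{3}-1}\,\dd s\lesssim t^{\frac{\eps}{3}}$ via subadditivity of $x\mapsto x^{\frac{\eps}{3}}$, which makes the case distinction unnecessary.
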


\begin{proof} To prove \eqref{eq:sem_est} we use the definition of $\psi_t$ and the semigroup property to estimate for
$t, T\in (0,1]$
\begin{equs}
 \|\mathcal{A}\left(f - f_t\right)_T \|_{L^\infty} 
 & \leq \int_T^{t+T} \|\mathcal{A}\partial_s f_s\|_{L^\infty}  \, \dd s 
 = \int_T^{t+T} \|\mathcal{A}^2 f_s\|_{L^\infty} \, \dd s \\
 &= \int_T^{t+T} \|\mathcal{A}\psi_{\frac{s}{2}} * \mathcal{A}f_{\frac{s}{2}}\|_{L^\infty} \, \dd s
 \leq \int_T^{t+T} \|\mathcal{A}\psi_{\frac{s}{2}}\|_{L^1} \|\mathcal{A}f_{\frac{s}{2}}\|_{L^\infty} \, \dd s.
 \end{equs}
 Since $\|\mathcal{A}\psi_{\frac{s}{2}}\|_{L^1} \lesssim s^{-1}$, 
 \eqref{eq:neg_hoelder_A_char} and \eqref{eq:pos_hoelder_A_char} imply that 
 \begin{equs}
 \|\mathcal{A}\left(f - f_t\right)_T \|_{L^\infty} 
 & \lesssim [f]_{\alpha} \int_T^{t+T} (s^{\frac{1}{3}})^{-3+\alpha} \,\frac{\dd s }{s}
 \lesssim [f]_{\alpha} \left( T^{\frac{1}{3}} \right)^{-3 + \alpha-\eps} \int_T^{t+T} (s^{\frac{1}{3}})^{\eps} \,\frac{\dd s }{s} \\
 & \lesssim [f]_{\alpha}  \left(T^{\frac{1}{3}}\right)^{-3+\alpha-\eps} \left(t^{\frac{1}{3}}\right)^{\eps},
 \end{equs}
so that \eqref{eq:sem_est} follows from \eqref{eq:neg_hoelder_A_char} and \eqref{eq:pos_hoelder_A_char}.
\end{proof}

\hyphenation{an-iso-tropic}
\section{Besov spaces} \label{s:besov_spaces}

In the next lemma we summarize some useful properties of the Besov seminorms that we often use in this article.

\begin{lemma}\label{lem:classic} Let $0<s<s'\leq 1$, $ 1\leq p\leq q<\infty$, and $j\in\{1,2\}$. The following estimates hold
	\begin{align} 
		\|f\|_{\dotB^{s'}_{p;1}} & \leq [f]_{s'}\label{eq:besov_to_holder_1}\, , \\
		\|f\|_{\dotB^s_{p;2}} & \leq [f]_{\frac{3}{2}s}  \label{eq:besov_to_holder_2}\, , \\
		\|f\|_{\dotB^{s}_{p;j}}  \leq \|f\|_{\dotB^{s'}_{p;j}} \quad \text{and}\quad \|f\|_{\dotB^{s'}_{p;j}} 
		& \leq \|f\|_{\dotB^{s'}_{q;j}} \label{eq:besov_s_p_q}\, , \\
		\||\partial_j|^sf\|_{L^p} & \leq C(s,s') \|f\|_{\dotB^{s'}_{p;j}}\, , \label{eq:sobolev_to_besov} 
	\end{align}
	for every function $f:\TT^2 \to \RR$, where the constant $C(s,s')>0$ depends only on $s$ and $s'$.
\end{lemma}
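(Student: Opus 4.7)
The estimates \eqref{eq:besov_to_holder_1} and \eqref{eq:besov_to_holder_2} are immediate from Definition \ref{def:pos_holder} combined with the anisotropic scaling of the distance $d(x+he_1,x)=h$ and $d(x+he_2,x)=h^{2/3}$. For \eqref{eq:besov_to_holder_1} with $s'\in(0,1]$, one has the pointwise bound $|\partial_1^h f(x)|\leq[f]_{s'}h^{s'}$, which after integration and division by $h^{s'}$ yields the claim. For \eqref{eq:besov_to_holder_2}, the analogous pointwise estimate $|\partial_2^h f(x)|\leq[f]_{\frac{3}{2}s}h^{s}$ follows directly when $\frac{3}{2}s\leq 1$. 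For the remaining subrange $\frac{3}{2}s\in(1,\frac{3}{2})$, the Hölder norm carries a Taylor correction $\partial_1f(x)(y-x)_1$, but this term vanishes identically for $y=x+he_2$ since $(y-x)_1=0$, so the same pointwise bound persists.

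For \eqref{eq:besov_s_p_q}, the first inequality is the observation that $h^{s'-s}\leq 1$ on $h\in(0,1]$ when $s<s'$, so $h^{-s}\|\partial_j^h f\|_{L^p}\leq h^{-s'}\|\partial_j^h f\|_{L^p}$; taking suprema yields the bound. The second inequality is Jensen's inequality on the unit-volume probability space $\TT^2$: $\|g\|_{L^p(\TT^2)}\leq\|g\|_{L^q(\TT^2)}$ for $p\leq q$, applied to $g=\partial_j^h f$ and then taking the supremum in $h$.

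The main work lies in \eqref{eq:sobolev_to_besov}. The plan is to start from the Fourier-side identity $|k_j|^s=c_s\int_\RR(1-\cos(k_jh))|h|^{-1-s}\,\dd h$, valid for $s\in(0,2)$ with a dimensionless constant $c_s>0$ obtained after a change of variables $u=k_jh$. This lifts to the distributional identity
\begin{equs}
|\partial_j|^s f(x)=-c_s\int_\RR\frac{\partial_j^h f(x)}{|h|^{1+s}}\,\dd h,
\end{equs}
where the $k_j=0$ mode is automatically annihilated by the multiplier. Taking $L^p$ norms and applying Minkowski's integral inequality then gives
\begin{equs}
\||\partial_j|^s f\|_{L^p}\leq c_s\int_\RR\frac{\|\partial_j^h f\|_{L^p}}{|h|^{1+s}}\,\dd h.
\end{equs}
I would split the integral at $|h|=1$. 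On the near-diagonal piece the Besov bound $\|\partial_j^h f\|_{L^p}\leq|h|^{s'}\|f\|_{\dotB^{s'}_{p;j}}$ yields a convergent integral of value $\frac{2}{s'-s}\|f\|_{\dotB^{s'}_{p;j}}$ thanks to $s'>s$. On the tail $|h|>1$, periodicity of $f$ reduces $h$ modulo $1$ so that $\|\partial_j^h f\|_{L^p}=\|\partial_j^{\{h\}} f\|_{L^p}\leq\|f\|_{\dotB^{s'}_{p;j}}$ uniformly, and the remaining $\int_1^\infty h^{-1-s}\,\dd h=\frac{1}{s}$ absorbs the rest. Summing the two contributions gives a constant of the form $C(s,s')\sim\frac{1}{s'-s}+\frac{1}{s}$, depending only on $s$ and $s'$ as required.

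The main obstacle, and the only step requiring genuine care, is the justification of the representation formula and the convergence of the Fourier-type integral at infinity. The imaginary part $-\mathrm{i}\sin(k_jh)|h|^{-1-s}$ vanishes as a principal value by odd symmetry, leaving the absolutely convergent real integral whose value at $k_j\neq 0$ is $c_s^{-1}|k_j|^s$ by scaling. The near-zero behavior $1-\cos(k_jh)=O(|h|^2)$ gives integrability there, and decay at infinity comes from the weight. Once this representation is in hand, the $L^p$ bound is a routine application of Minkowski's inequality together with the two-region split described above.
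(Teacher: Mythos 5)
Your proof is correct and follows essentially the same strategy as the paper: pointwise bounds from the Hölder seminorm (with the right observation that the Taylor correction $\partial_1 f(x)(y-x)_1$ vanishes for $e_2$-increments when $\frac{3}{2}s>1$), monotonicity of $h^{s'-s}$ plus Jensen for \eqref{eq:besov_s_p_q}, and the fractional-Laplacian integral representation followed by Minkowski and a split at $|h|=1$ for \eqref{eq:sobolev_to_besov}. The only cosmetic difference is in the $|h|\geq 1$ tail, where you reduce $h$ modulo $1$ by periodicity (which is what the paper does in a footnote to Definition \ref{def:besov_s_p}), whereas the paper's proof of the lemma instead bounds $\|\partial_j^h f\|_{L^p}\leq 2\|f\|_{L^p}$ and handles $\|f\|_{L^p}$ by subtracting the partial average in $x_j$ — both give the same $C(s,s')\sim\frac{1}{s'-s}+\frac1s$.
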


\begin{proof} 
	Estimates \eqref{eq:besov_to_holder_1} and \eqref{eq:besov_to_holder_2} are immediate from Definitions 
	\ref{def:pos_holder} and \ref{def:besov_s_p}. Both estimates in \eqref{eq:besov_s_p_q} follow from Definition \ref{def:besov_s_p} and Jensen's inequality.
	To prove \eqref{eq:sobolev_to_besov} we first notice that by a simple calculation of the Fourier coefficients 
	we have the identity 
	\begin{align*}
		|\partial_j|^s f(x) = k_s \int_\RR \frac{\partial_j^hf(x)}{|h|^{s+1}} \, \dd h, \quad x \in \TT^2,
	\end{align*}
	for some constant $k_s>0$ which depends only on $s$, where we interpret the integral as a principle value. Then by Minkowski's 
	inequality we get
	\begin{align*}
		\||\partial_j|^s f\|_{L^p} & 
		\leq k_s \int_\RR \left(\int_{\TT^2} \frac{|\partial_j^hf(x)|^p}{|h|^{(s+1)p}} \, \dd x \right)^{\frac{1}{p}} \, \dd h
		\\ 
		& \leq k_s \int_{|h|\leq 1} \left(\int_{\TT^2} \frac{|\partial_j^hf(x)|^p}{|h|^{(s+1)p}} \, \dd x \right)^{\frac{1}{p}} \, \dd h 
		+ k_s \int_{|h|\geq 1} \left(\int_{\TT^2} \frac{|\partial_j^hf(x)|^p}{|h|^{(s+1)p}} \, \dd x \right)^{\frac{1}{p}} \, \dd h.
	\end{align*}
	The first term in the last inequality is estimated by
	\begin{align*}
		\int_{|h|\leq 1} \left(\int_{\TT^2} \frac{|\partial_j^hf(x)|^p}{|h|^{(s+1)p}} \, \dd x \right)^{\frac{1}{p}} \, \dd h
		& =2 \int_0^1 \left(\int_{\TT^2} \frac{|\partial_j^hf(x)|^p}{h^{(s+1)p}} \, \dd x \right)^{\frac{1}{p}} \, \dd h \\
		& \leq 2 \int_0^1 \frac{h^{s'-s}}{h} \, \dd h \, \|f\|_{\dotB^{s'}_{p;j}} = \frac2{s'-s} \|f\|_{\dotB^{s'}_{p;j}},
	\end{align*}
	where we also used translation invariance of the torus and that $s'>s$. The second term is estimated by Minkowski's inequality
	\begin{align*}
		\int_{|h|\geq 1} \left(\int_{\TT^2} \frac{|\partial_j^hf(x)|^p}{|h|^{(s+1)p}} \, \dd x \right)^{\frac{1}{p}} \, \dd h
		& \leq 4 \int_1^\infty \frac{1}{h^{s+1}} \, \dd h \, \|f\|_{L^p}=\frac4s \|f\|_{L^p},
	\end{align*}
	where we used again translation invariance of the torus.  If $f$ has vanishing average
	in $x_j$, then $\|f\|_{L^p}\leq \|f\|_{\dotB^{s'}_{p;j}}$. Otherwise  $\|f- \int_0^1 f \, \dd x_j\|_{L^p} \leq \|f\|_{\dotB^{s'}_{p;j}}$
	and we can replace $f$ by $f - \int_0^1 f \, \dd x_j$. 
	This completes the proof. 
\end{proof}

\begin{lemma} \label{lem:leibniz} \hfill
	\begin{enumerate}[label=(\roman*)]
	\item \label{item:leibniz-1} For every $s\in (0,1)$, $p\geq 1$ and $f,g: \TT^2 \to \RR$ we have
	\begin{align*}
		\|fg\|_{\dotB^s_{p;1}} & \leq \|f\|_{\dotB^s_{p;1}} \|g\|_{L^\infty} + \|f\|_{L^p} [g]_{s}, \\
		\|fg\|_{\dotB^s_{p;2}} & \leq  \|f\|_{\dotB^s_{p;2}} \|g\|_{L^\infty} + \|f\|_{L^p} [g]_{\frac{3}{2}s}.
	\end{align*}
	\item \label{item:leibniz-2} For every $s\in(0,1)$, there exists a constant $C(s)>0$ depending only on $s$ such that
	for every $\varepsilon\in (0,1-s)$ and $f,g: \TT^2 \to \RR$ we have
	\begin{equs}
		\||\partial_1|^s(fg)\|_{L^2} & \leq 2\||\partial_1|^sf\|_{L^2} \|g\|_{L^\infty} +\frac{C(s)}{\sqrt\eps} \|f\|_{L^2} [g]_{s+\varepsilon}, \\
		\||\partial_2|^s(fg)\|_{L^2} & \leq 2 \||\partial_2|^sf\|_{L^2} \|g\|_{L^\infty} +\frac{C(s)}{\sqrt\eps} \|f\|_{L^2} [g]_{\frac{3}{2}(s+\varepsilon)}.
	\end{equs}
	\end{enumerate}
\end{lemma}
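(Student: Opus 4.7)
\smallskip

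\noindent\textbf{Proposal for the proof of Lemma B.2.}

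For part \ref{item:leibniz-1}, the plan is to exploit the elementary product identity
\begin{equation*}
\partial_j^h(fg)(x) = \partial_j^h f(x)\, g(x+he_j) + f(x)\, \partial_j^h g(x),
\end{equation*}
take the $L^p$ norm in $x$, and use Minkowski's inequality to get
$\|\partial_j^h(fg)\|_{L^p} \leq \|\partial_j^h f\|_{L^p}\|g\|_{L^\infty} + \|f\|_{L^p}\|\partial_j^h g\|_{L^\infty}$. For $j=1$, the anisotropic metric satisfies $d(x,x+he_1) = |h|$, so $\|\partial_1^h g\|_{L^\infty}\le [g]_s|h|^s$ for $h\in(0,1]$; dividing by $h^s$ and taking the supremum over $h\in(0,1]$ yields the first inequality. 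For $j=2$, using $d(x,x+he_2)=|h|^{2/3}$ we get $\|\partial_2^h g\|_{L^\infty}\le [g]_{3s/2}|h|^s$, and the same argument gives the second inequality.

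For part \ref{item:leibniz-2}, the strategy is to combine the same product identity with the integral representation of the fractional derivative provided in \eqref{eq:fourier}: for $s\in(0,1)$,
\begin{equation*}
\||\partial_j|^s f\|_{L^2}^2 = c_s\int_\RR \frac{1}{|h|^{2s+1}}\,\|\partial_j^h f\|_{L^2}^2\,\dd h.
\end{equation*}
Setting $A(h) := \|\partial_j^h f\|_{L^2}\|g\|_{L^\infty}$ and $B(h) := \|f\|_{L^2}\|\partial_j^h g\|_{L^\infty}$, the product rule gives $\|\partial_j^h(fg)\|_{L^2}\le A(h)+B(h)$. Applying Minkowski's inequality in $L^2(\RR,|h|^{-(2s+1)}\,\dd h)$ then yields
\begin{equation*}
\||\partial_j|^s(fg)\|_{L^2} \le \|g\|_{L^\infty}\,\||\partial_j|^s f\|_{L^2} + \sqrt{c_s}\,\|f\|_{L^2}\left(\int_\RR \frac{\|\partial_j^h g\|_{L^\infty}^2}{|h|^{2s+1}}\,\dd h\right)^{\!1/2}.
\end{equation*}

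The remaining work, and the only spot requiring care, is to estimate the last integral. For $j=1$, I would split $\RR = \{|h|\le 1\}\cup\{|h|>1\}$: on the short-range part, the Hölder bound $\|\partial_1^h g\|_{L^\infty}\le [g]_{s+\varepsilon}|h|^{s+\varepsilon}$ produces $\int_{|h|\le 1}|h|^{2\varepsilon-1}\,\dd h = \varepsilon^{-1}$, which is precisely the source of the $\varepsilon^{-1/2}$ blow-up in the statement; on the long-range part, periodicity in $x_1$ lets me replace $h$ by its fractional part $h_{\mathrm{fr}}\in(0,1]$, so $\|\partial_1^h g\|_{L^\infty}\le [g]_{s+\varepsilon}h_{\mathrm{fr}}^{s+\varepsilon}\le [g]_{s+\varepsilon}$, and $\int_{|h|>1}|h|^{-(2s+1)}\,\dd h$ contributes a finite, $s$-dependent constant. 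Combining both ranges gives the bound $C(s)^2\varepsilon^{-1}[g]_{s+\varepsilon}^2$. For $j=2$, the metric replacement yields $\|\partial_2^h g\|_{L^\infty}\le [g]_{\frac32(s+\varepsilon)}|h|^{s+\varepsilon}$ and the argument is otherwise identical.

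There is no real obstacle beyond tracking the constants; the only mildly delicate point is the $\varepsilon$-sensitivity of the short-range integral, and an additional safety factor of $2$ (as in the statement) can be obtained simply by applying $(a+b)\le 2\max(a,b)$ before Minkowski if desired, absorbing minor discrepancies coming from the direction-by-direction splitting of the integration range.
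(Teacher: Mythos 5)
Your proof is correct and follows essentially the same route as the paper for both parts: the discrete Leibniz rule for $\partial_j^h$, the integral representation of $\||\partial_j|^s \cdot\|_{L^2}$ from \eqref{eq:fourier}, and the split of the $h$-integral into $|h|\le 1$ (yielding the $\varepsilon^{-1}$) and $|h|>1$ (controlled by periodicity). The only cosmetic difference is that in part (ii) you apply Minkowski's inequality in the weighted space $L^2(\RR,|h|^{-2s-1}\,\dd h)$, whereas the paper applies the pointwise bound $(a+b)^2\le 2a^2+2b^2$ inside the double integral and then $\sqrt{a+b}\le\sqrt a+\sqrt b$; this is why the paper's statement carries the (non-sharp) factor $2$ on the first term, while your Minkowski step actually gives $1$, which of course is still $\le 2$.
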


\begin{proof}
	\begin{enumerate}[label=(\roman*)]
	\item \label{item:leibniz-proof-1} For $j\in \{1,2\}$ we have that 
	\begin{align*}
		\partial_j^h(fg)(x) = (\partial_j^h f)(x) g(x+h e_j) + f(x) (\partial_j^h g)(x),
	\end{align*}
	hence by Minkowski's inequality, $\|\partial_j^h (fg)\|_{L^p} \leq \|(\partial_j^h f) g(\cdot+h e_j)\|_{L^p} + \|f(\partial_j^h g)\|_{L^p}.$
	It follows that $\|fg\|_{\dotB^s_{p;j}} \leq \|f\|_{\dotB^s_{p;j}} \|g\|_{L^\infty} + \|f\|_{L^p} [g]_{s_j}$ with $s_1=s$ and $s_2=\frac32s$.
	
	\item By \eqref{eq:fourier} we know that 
	\begin{equs}
		\||\partial_j|^2(fg)\|^2_{L^2} = c_s \int_\RR \frac{1}{|h|^{2s}} \int_{\TT^2} |\partial_j^h(fg)(x)|^2 \, \dd x \frac{\dd h}{|h|}.
	\end{equs}
	Similarly to \ref{item:leibniz-proof-1} we can prove that 
	\begin{equs}
		& c_s \int_\RR \frac{1}{|h|^{2s}} \int_{\TT^2} |\partial_j^h(fg)(x)|^2 \, \dd x \frac{\dd h}{|h|} 
		\\
		& \quad \leq 2 c_s \int_\RR \frac{1}{|h|^{2s}} \int_{\TT^2} |\partial_j^hf(x)|^2 \, \dd x \frac{\dd h}{|h|} \|g\|^2_{L^\infty}
		+ 2 c_s \|f\|^2_{L^2} \int_\RR \frac{1}{|h|^{2s}} \sup_{x\in\TT^2}|\partial_j^h g(x)|^2 \frac{\dd h}{|h|}
		\\
		& \quad =2 \||\partial_j|^s f\|_{L^2}^2 \|g\|^2_{L^\infty}
		+  2 c_s\|f\|^2_{L^2} \int_\RR \frac{1}{|h|^{2s}} \sup_{x\in\TT^2}|\partial_j^h g(x)|^2 \frac{\dd h}{|h|},
	\end{equs}
	where in the last step we used again \eqref{eq:fourier}. Using the periodicity of $g$ and the fact that for $h>1$
	we can write $h=h_{\mathrm{fr}}+h_{\mathrm{int}}$ with $h_{\mathrm{fr}}\in (0,1]$ and $h_{\mathrm{int}}\in \ZZ$, we notice that
	$|\partial_j^h g(x)|=|\partial_j^{h_{\mathrm{fr}}} g(x)|\lesssim [g]_{s_j}h_{\mathrm{fr}}^{s_j}\lesssim [g]_{s_j}$ with 
	$s_j\in\{s+\eps, \frac{3}{2}(s+\eps)\}$. Then the result follows from  
	\begin{equs}
		\left(\int_0^1+\int_1^\infty\right) \frac{1}{h^{2s}} \sup_{x\in\TT^2}|\partial_j^h g(x)|^2 \frac{\dd h}{h}
		& \lesssim \bigg(\frac1\eps+\int_1^\infty \frac{\dd h}{h^{1+2s}}\bigg) [g]^2_{s_j}.
	\end{equs}
	\end{enumerate}
\end{proof}

%

\begin{lemma} \label{lem:dual_ineq}
	For every $s\in (0,1)$ and $\gamma \in(s,1]$, there exists a constant $C(s,\gamma)>0$ depending only on $s$ and $\gamma$
	such that the following duality estimate holds for every $f,g:\TT^2 \to \RR$,
	\begin{align*}
		\left | \int_{\TT^2} f g  \, \dd x \right| \leq C(s,\gamma) 
		\left(\||\partial_1|^\gamma f\|_{L^1} + \||\partial_2|^{\frac{2}{3}\gamma} f\|_{L^1} 
		+ \|f\|_{L^1}\right) [g]_{-s}.
	\end{align*}
	\end{lemma}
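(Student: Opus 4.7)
The plan is to decompose $g$ dyadically in the heat-semigroup scale, then move fractional derivatives off $g_t$ and onto $f$ by self-adjointness, exploiting the $L^{\infty}$-bound on $g_t$ provided by the characterization of negative H\"older norms in Lemma \ref{lem:hoelder_char}. The hypothesis $\gamma>s$ will enter exactly as the threshold that makes a certain time integral convergent at $t=0$.

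Concretely, for $T_n:=2^{-n}$ I would first write, assuming $f$ smooth (the general case following by approximation, since the estimates are uniform),
\begin{equs}
\int_{\TT^2} f g \, \dd x = \int_{\TT^2} f g_1 \, \dd x + \sum_{n=0}^\infty \int_{\TT^2} f \big(g_{T_{n+1}} - g_{T_n}\big)\, \dd x.
\end{equs}
The first piece is immediate: $|\int_{\TT^2} f g_1\, \dd x| \leq \|f\|_{L^1}\|g_1\|_{L^\infty} \lesssim \|f\|_{L^1}[g]_{-s}$ by \eqref{eq:neg_hoelder_char}--\eqref{eq:supl_numa}. For each dyadic piece, using $\partial_t g_t = -\mathcal{A}g_t$ with $\mathcal{A} = |\partial_1|^3 - \partial_2^2$, I would write
\begin{equs}
\int_{\TT^2} f\big(g_{T_{n+1}} - g_{T_n}\big)\, \dd x = -\int_{T_{n+1}}^{T_n}\int_{\TT^2} f\,\mathcal{A} g_t \, \dd x\, \dd t,
\end{equs}
and, by self-adjointness of $|\partial_1|^\gamma$ and $|\partial_2|^{\frac{2}{3}\gamma}$, split the integrand as
\begin{equs}
\int_{\TT^2} f\,\mathcal{A}g_t\,\dd x = \int_{\TT^2}|\partial_1|^\gamma f\cdot |\partial_1|^{3-\gamma}g_t \, \dd x - \int_{\TT^2}|\partial_2|^{\frac{2}{3}\gamma}f \cdot (-|\partial_2|^{2-\frac{2}{3}\gamma})g_t\, \dd x.
\end{equs}

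The sup norms of $|\partial_1|^{3-\gamma}g_t$ and $|\partial_2|^{2-\frac{2}{3}\gamma}g_t$ would then be estimated by writing $g_t = g_{t/2} * \Psi_{t/2}$, applying Young's inequality together with the kernel bounds \eqref{lp_psi_per} (which produce the factors $(t^{\frac{1}{3}})^{-(3-\gamma)}$ and $(t^{\frac{1}{3}})^{-(3-\gamma)}$ respectively), and \eqref{eq:supl_numa} (giving $\|g_{t/2}\|_{L^\infty} \lesssim (t^{\frac{1}{3}})^{-s}[g]_{-s}$). The two contributions produce the same blow-up rate
\begin{equs}
\||\partial_1|^{3-\gamma}g_t\|_{L^\infty} + \||\partial_2|^{2-\frac{2}{3}\gamma}g_t\|_{L^\infty} \lesssim_\gamma \left(t^{\frac{1}{3}}\right)^{-s-3+\gamma}[g]_{-s}.
\end{equs}

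Finally, I would integrate in $t$ and sum over $n$: after the substitution $u=t^{\frac{1}{3}}$, the exponent of $u$ in the time integrand is $\gamma-s-1$, so
\begin{equs}
\int_{T_{n+1}}^{T_n}\left(t^{\frac{1}{3}}\right)^{-s-3+\gamma}\dd t \lesssim_{s,\gamma} \left(T_n^{\frac{1}{3}}\right)^{\gamma-s},
\end{equs}
and summation yields a geometric series $\sum_{n\geq 0}2^{-n(\gamma-s)/3}$, finite precisely because $\gamma>s$. Combining the three estimates gives the claim. The main obstacle is essentially just tracking the right exponent so that the threshold $\gamma>s$ appears sharply, and justifying the termwise manipulations (self-adjointness, telescoping, dominated convergence in the distributional pairing) by first working with smooth $g$ and passing to the limit $t\downarrow 0$ using that all bounds depend on $g$ only through the translation-invariant seminorm $[g]_{-s}$.
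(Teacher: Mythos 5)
Your proof is correct and takes essentially the same route as the paper's. The paper uses a continuous time integral $\int_0^{1/2}\partial_T g_{2T}\,\dd T$ instead of your dyadic telescoping, and factors $|\partial_1|^3 f_T=|\partial_1|^\gamma f * |\partial_1|^{3-\gamma}\psi_T$ (putting the singular power on the kernel rather than estimating $\||\partial_1|^{3-\gamma}g_t\|_{L^\infty}$ directly), but both versions rest on the same heat-kernel rates from \eqref{lp_psi}, the same $L^\infty$ bound from \eqref{eq:supl_numa}, and the same observation that the resulting time integral converges precisely when $\gamma>s$.
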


\begin{proof} 
	By the mean value theorem and the definition of the kernel $\psi_T$ we have that
	\begin{align*}
		\int_{\TT^2} f (g - g_1)  \, \dd x & = \int_0^\frac{1}{2} \int_{\TT^2} f \, \partial_Tg_{2T} \, \dd x \, \dd T
		= \int_0^\frac{1}{2} \int_{\TT^2} f \, \left(|\partial_1|^3 - \partial_2^2\right)g_{2T} \, \dd x \,\dd T.
	\end{align*}
	Let $\psi^{(1)}  = |\partial_1|^{3-\gamma}\psi$ and $\psi^{(2)} = |\partial_2|^{2-\frac{2}{3}\gamma}\psi$. Recalling that 
	$|\partial_1|^\alpha \psi, |\partial_2|^\alpha \psi\in L^1(\RR^2)$ for every $\alpha\geq 0$,\footnote{\label{footnote:tensorization}As explained in 
	\cite[Proof of Lemma 10]{IO19}, the kernel $\psi$ factorizes
	into a Gaussian in $x_2$ and a kernel 
	$\varphi(x_1)$ that is smooth and $|x_1|^3\partial^k_1 \varphi\in L^\infty(\RR)$ for every $k\geq 0$ because in Fourier space $\xi_1^k e^{-|\xi_1|^3}$ has integrable derivatives up to order $3$.}
	integrating by parts and using the semigroup property we obtain by \eqref{eq:neg_hoelder_char} and Remark~\ref{rem:perio},
	\begin{align*}
		\left |\int_0^\frac{1}{2} \int_{\TT^2} f \, \left(|\partial_1|^3 - \partial_2^2\right)g_{2T} \, \dd x \,\dd T \right| & = \left|\int_0^\frac{1}{2} \int_{\TT^2} \left(|\partial_1|^3 - \partial_2^2\right)f_T \, g_T \, \dd x \,\dd T\right| \\
		& \lesssim_s \int_0^{\frac{1}{2}} \int_{\TT^2} T^{-\frac{3-\gamma}{3}} \left||\partial_1|^\gamma f * \psi^{(1)}_T\right| \, [g]_{-s} T^{-\frac{s}{3}} \, \dd x \, \dd T \\
		& \quad + \int_0^{\frac{1}{2}} \int_{\TT^2} T^{-\frac{3-\gamma}{3}} \left||\partial_2|^{\frac{2}{3}\gamma}f*\psi^{(2)}_T\right| \, [g]_{-s} T^{-\frac{s}{3}} \, \dd x \, \dd T \\
		& \lesssim_s  \left(\||\partial_1|^\gamma f\|_{L^1} + \||\partial_2|^{\frac{2}{3}\gamma} f\|_{L^1}\right) [g]_{-s} \int_0^{\frac{1}{2}} T^{-\frac{3-\gamma+s}{3}} \, \dd T.
	\end{align*}
	The proof is complete since the last integral is finite for $\gamma>s$ and by \eqref{eq:neg_hoelder_char} we also have the estimate
	\begin{align*}
		\left|\int_{\TT^2} f g_1 \dd x \right| \leq \|f\|_{L^1} \|g_1\|_{L^\infty} 
		\lesssim_s \|f\|_{L^1} [g]_{-s}.
	\end{align*}
\end{proof}

The following Lemma establishes an optimal Sobolev embedding with respect to our anisotropic metric. Recall that in our context the (scaling) dimension 
of the space is $\mathrm{dim}=\frac{5}{2}$, and  $\partial_2$ costs as much as $\frac{3}{2}$ of $\partial_1$. Therefore, the critical exponent of the 
embedding $H^1_{\mathrm{anisotropic}} \subset L^{2^*}$ is given by $2^* = \frac{2\,\mathrm{dim}}{\mathrm{dim}-2} = 10$. 

\begin{lemma} \label{lem:an_embedding} There exists a constant $C>0$ such that for any function $f:\TT^2 \to \RR$ of vanishing average in $x_1$,
\begin{align}
 \|f\|_{L^{10}} & \leq C\left( \|\partial_1 f\|_{L^2} + \||\partial_2|^{\frac{2}{3}} f \|_{L^2}\right), \label{eq:H^1_an_emb}
 \\
 \|f\|_{L^{\frac{10}{3}}} & \leq C\left( \||\partial_1|^{\frac{1}{2}} f\|_{L^2} + \||\partial_2|^{\frac{1}{3}} f \|_{L^2}\right),
 \label{eq:H^1/2_an_emb}
 \\
 \|f\|_{L^{\frac{5}{1-2\eps}}} & \leq C \left( \||\partial_1|^{\frac{3}{4}+\epsilon} f\|_{L^2} 
 + \||\partial_2|^{\frac{2}{3}(\frac{3}{4}+\epsilon)} f \|_{L^2} \right), \quad \eps\in[0,\tfrac{1}{4}).
 \label{eq:H^3/4_an_emb}
\end{align}
\end{lemma}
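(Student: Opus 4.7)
All three estimates share the common form
\begin{equation*}
\|f\|_{L^q} \lesssim \||\partial_1|^\alpha f\|_{L^2} + \||\partial_2|^{\frac{2\alpha}{3}} f\|_{L^2}
\end{equation*}
with $(\alpha,q)\in\bigl\{(1,10),\,(\tfrac12,\tfrac{10}{3}),\,(\tfrac{3}{4}+\varepsilon,\tfrac{5}{1-2\varepsilon})\bigr\}$, all satisfying the anisotropic dimensional identity $\tfrac{1}{q}=\tfrac12-\tfrac{2\alpha}{5}$ consistent with $\mathrm{dim}=\tfrac52$, and all with $\alpha\in(0,\tfrac54)$. I would prove the three inequalities simultaneously by establishing this unified statement via Hedberg's trick adapted to the semigroup $\{\psi_T\}_{T>0}$.

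\textbf{Reduction.} Passing to the Fourier representation on $\TT^2$ and using the elementary two-sided bound $|k_1|^{2\alpha}+|k_2|^{\frac{4\alpha}{3}}\sim(|k_1|^3+k_2^2)^{\frac{2\alpha}{3}}$ (valid since $\tfrac{2\alpha}{3}\in(0,1)$ throughout our range), Parseval identifies the right-hand side up to constants with $\|\mathcal{A}^{\alpha/3}f\|_{L^2}$. Since $f$ has vanishing average in $x_1$, the operator $\mathcal{A}^{-\alpha/3}$ is well-defined on $f$, and the subordination identity for $\psi_T=\mathrm{e}^{-T\mathcal{A}}$ gives
\begin{equation*}
f = \mathcal{A}^{-\alpha/3} g = c_\alpha \int_0^\infty T^{\frac{\alpha}{3}-1}\, g_T \,\dd T, \qquad g:=\mathcal{A}^{\alpha/3}f,
\end{equation*}
so it suffices to prove $\|f\|_{L^q}\lesssim\|g\|_{L^2}$.

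\textbf{Hedberg's trick.} For any $T_0>0$ and $x\in\TT^2$, I split the $T$-integral at $T_0$. On $(0,T_0)$, I bound $|g_T(x)|$ pointwise by the maximal function $Mg(x):=\sup_{T>0}|g_T(x)|$, giving a contribution $\lesssim T_0^{\alpha/3}\,Mg(x)$. On $(T_0,\infty)$, Young's inequality for convolution (see Remark~\ref{rem:perio}) together with the estimate $\|\Psi_T\|_{L^2}\lesssim(T^{1/3})^{-5/4}$ from \eqref{lp_psi_per} yields $|g_T(x)|\lesssim T^{-5/12}\|g\|_{L^2}$, so the tail integrates to $\lesssim T_0^{\frac{\alpha}{3}-\frac{5}{12}}\|g\|_{L^2}$, convergent at infinity precisely because $\alpha<\tfrac54$. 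Balancing the two pieces by choosing $T_0$ such that $T_0^{5/12}=\|g\|_{L^2}/Mg(x)$ produces the pointwise inequality
\begin{equation*}
|f(x)| \lesssim (Mg(x))^{1-\frac{4\alpha}{5}}\,\|g\|_{L^2}^{\frac{4\alpha}{5}}.
\end{equation*}
Raising to the $q$-th power, integrating in $x$, and exploiting the exact balance $(1-\tfrac{4\alpha}{5})q=2$ together with the $L^2$-boundedness of $M$ yields $\|f\|_{L^q}\lesssim\|g\|_{L^2}$, as desired.

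\textbf{Main obstacle.} The technical crux is the $L^2$-boundedness of $Mg(x)=\sup_{T>0}|g_T(x)|$. Although $\{\psi_T\}$ is symmetric, the kernel $\psi$ is not pointwise positive (the non-smoothness of the symbol $|k_1|^3$ at zero precludes a probabilistic interpretation), so Stein's maximal theorem for diffusion semigroups does not apply directly. I would instead establish a pointwise majorization $|g_T(x)|\leq C\,M_d g(x)$ by the anisotropic Hardy--Littlewood maximal operator over $d$-balls, using the rapid decay of $\psi$ already exploited in \cite[Proof of Lemma 10]{IO19} together with a dyadic shell decomposition; the $L^2$-boundedness of $M_d$ then follows from the classical Hardy--Littlewood maximal theorem, since $d$-balls enjoy the doubling property.
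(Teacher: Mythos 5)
Your proof is correct, but takes a genuinely different route from the paper's. The paper anchors the lemma on the $L^{10}$ case \eqref{eq:H^1_an_emb}, proving it by applying one-dimensional Gagliardo--Nirenberg--Sobolev inequalities separately in $x_1$ and $x_2$, combining via mixed-norm H\"older, and then absorbing the copy of $\|f\|_{L^{10}}$ that reappears on the right-hand side by Young's inequality; the remaining cases \eqref{eq:H^1/2_an_emb} and \eqref{eq:H^3/4_an_emb} are then obtained by Stein--Weiss complex interpolation with change of measure (Bergh--L\"ofstr\"om, Corollary 5.5.4) between this $L^{10}$ bound and the trivial $L^2$ endpoint, after rewriting the right-hand sides as weighted $\ell^2$-norms of $\widehat f$. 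You instead prove all three at once as instances of the embedding $\|\mathcal{A}^{-\alpha/3}g\|_{L^q}\lesssim\|g\|_{L^2}$ for $\alpha\in(0,\tfrac54)$, via Hedberg's truncation of the subordination integral. Your arithmetic is consistent throughout (the symbol equivalence holds since $\tfrac{2\alpha}{3}<1$, the balance exponents $(1-\tfrac{4\alpha}{5})q=2$ and $\tfrac{4\alpha q}{5}=q-2$ check out, and the tail of the $T$-integral converges exactly because $\alpha<\tfrac54$), and the obstacle you flag is real: $\psi$ is a signed kernel, so Stein's maximal theorem for symmetric diffusion semigroups does not apply. Your remedy is the right one and does go through: as the paper records in the footnote to Lemma \ref{lem:dual_ineq}, $\psi$ factorizes into a Gaussian in $y_2$ and a smooth kernel $\varphi(y_1)$ with $|y_1|^3\varphi\in L^\infty(\RR)$, so a dyadic shell decomposition in the rescaled $d$-metric gives $\sup_{d(0,\hat y)\sim 2^j}|\psi(\hat y)|\lesssim 2^{-3j}$, and since $3>\tfrac52=\dim$ the shell sum converges, yielding $|g_T(x)|\lesssim M_d g(x)$; the doubling property of $d$-balls then gives $L^2$-boundedness of $M_d$. (Two small points: the failure of positivity of $\psi$ comes from the symbol $|k_1|^3$ growing superquadratically rather than from its non-smoothness at zero; and you should either restrict the subordination integral to $T\le1$ and handle $T>1$ by the spectral gap $e^{-cT}$ of $\mathcal{A}$ on mean-zero functions, or work directly with the periodized kernel $\Psi_T$ in the maximal-function argument.) In exchange for the extra maximal-function machinery, your approach treats the whole range of exponents uniformly and avoids any reliance on abstract interpolation.
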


\begin{proof} We first prove \eqref{eq:H^1_an_emb}. For that, by the one-dimensional Gagliardo-Nirenberg-Sobolev inequalities, using that
$f(\cdot, x_2)$ has vanishing average, we have that for every $x_1,x_2\in [0,1)$,
\begin{align*}
	\|f(\cdot, x_2)\|_{L^{\infty}_{x_1}} & \lesssim \|\partial_1 f(\cdot, x_2) \|_{L^2_{x_1}}^{\frac{1}{6}} \|f(\cdot, x_2)\|_{L^{10}_{x_1}}^{\frac{5}{6}}, 
	\\
	\|f(x_1,\cdot)\|_{L^{\infty}_{x_2}} & \lesssim \||\partial_2|^{\frac{2}{3}} f(x_1,\cdot)\|_{L^2_{x_2}}^{\frac{3}{8}} \|f(x_1,\cdot)\|_{L^{10}_{x_2}}^{\frac{5}{8}}
	+ \|f(x_1,\cdot)\|_{L^1_{x_2}}.
\end{align*}
Hence, H\"older's inequality implies 
\begin{align*}
	\left\| \|f\|_{L^{\infty}_{x_1}} \right\|_{L^6_{x_2}} & \lesssim \|\partial_1 f \|_{L^2}^{\frac{1}{6}} \|f\|_{L^{10}}^{\frac{5}{6}}, 
	\\ 
	\left\| \|f\|_{L^{\infty}_{x_2}} \right\|_{L^4_{x_1}} & \lesssim \||\partial_2|^{\frac{2}{3}} f \|_{L^2}^{\frac{3}{8}} \|f\|_{L^{10}}^{\frac{5}{8}} + \|f\|_{L^4}\lesssim \||\partial_2|^{\frac{2}{3}} f \|_{L^2}^{\frac{3}{8}} \|f\|_{L^{10}}^{\frac{5}{8}} + \|f\|_{L^{10}}. 
\end{align*}
Therefore, we obtain  
\begin{align*}
	\|f\|_{L^{10}} 
	&\leq \left\| \|f\|_{L^{\infty}_{x_1}} \right\|_{L^6_{x_2}}^{\frac{3}{5}} \left\| \|f\|_{L^{\infty}_{x_2}} \right\|_{L^4_{x_1}}^{\frac{2}{5}} 
	\lesssim \|\partial_1 f \|_{L^2}^{\frac{1}{10}} \| |\partial_2|^{\frac{2}{3}} f\|_{L^2}^{\frac{3}{20}} \|f\|_{L^{10}}^{\frac{3}{4}} 
	+ \|\partial_1 f\|_{L^2}^{\frac{1}{10}} \|f\|_{L^{10}}^{\frac9{10}}.
\end{align*}
It follows by Young's inequality that for any $\varepsilon \in (0,1)$ there exists a constant $C(\varepsilon)>0$ such that 
\begin{align*}
	\|f\|_{L^{10}}
	&\leq \varepsilon \|f\|_{L^{10}} + C(\varepsilon) \left( \|\partial_1 f \|_{L^2} + \| |\partial_2|^{\frac{2}{3}} f\|_{L^2} \right),
\end{align*}
from which \eqref{eq:H^1_an_emb} follows. 

For the second inequality observe that $\frac{3}{10} = \frac{1}{2}\cdot \frac{1}{10} + \frac{1}{2} \cdot \frac{1}{2}$, so that
\eqref{eq:H^1/2_an_emb} follows from complex interpolation (with change of measure). Indeed, by inequality \eqref{eq:H^1_an_emb}
in the form 
\begin{align*}
	\|f\|_{L^{10}}^2 \lesssim  \sum_{k\in (2\pi \ZZ)^2} (|k_1|^2 + |k_2|^{\frac{4}{3}}) |\widehat{f}(k)|^2,
\end{align*}
it 
follows that $\mathrm{Id}: L^2((2\pi\ZZ)^2,w \,\dd\chi) \to L^{10}(\TT^2,\dd x)$ is bounded, where $w(k):=|k_1|^2 + |k_2|^{\frac{4}{3}}$ 
and $\chi$ is the counting measure, while Parseval's identity $\|f\|_{L^2}^2 = \sum_{k\in(2\pi\ZZ)^2} |\widehat{f}(k)|^2$
shows that $\mathrm{Id}: L^2((2\pi\ZZ)^2,\dd\chi) \to L^2(\TT^2, \dd x)$ is bounded. Hence, by interpolation (see \cite[Corollary 5.5.4]{BL76}), it follows that $\mathrm{Id}: L^2((2\pi\ZZ)^2, w^\frac{1}{2}\,\dd\chi) \to L^{\frac{10}{3}}(\TT^2,\,\dd x)$ is bounded,
which implies \eqref{eq:H^1/2_an_emb}. The bound \eqref{eq:H^3/4_an_emb} follows similarly by interpolation.
\end{proof}

\begin{lemma}\label{lem:frac-sobolev}
There exists a constant $C>0$ such that for every $s_1\in[0,1]$ and $s_2\in[0,\frac{2}{3}]$ with $s_1+\frac{3}{2}s_2 \leq 1$ and every
periodic function $f:\TT^2\to \RR$ of vanishing average in $x_1$ the following holds 
\begin{equs}
 \||\partial_1|^{s_1} |\partial_2|^{s_2} f\|_{L^2}^2 \leq C \mathcal{H}(f).
\end{equs}
In particular, any sublevel set of $\mathcal{E}$ (respectively $\HH$) over $\mathcal{W}$ is relatively compact in $L^2$.
\end{lemma}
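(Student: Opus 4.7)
The estimate is a pure Fourier-multiplier statement, so my plan is to pass immediately to Fourier space. Because $f$ has vanishing average in $x_1$ we have $\widehat{f}(0,k_2)=0$ for all $k_2\in 2\pi\ZZ$, and Parseval gives
\begin{equation*}
\||\partial_1|^{s_1}|\partial_2|^{s_2}f\|_{L^2}^2 = \sum_{k_1\neq 0} |k_1|^{2s_1}|k_2|^{2s_2}|\widehat{f}(k)|^2, \qquad \HH(f) = \sum_{k_1\neq 0}\left(k_1^2 + \frac{k_2^2}{|k_1|}\right)|\widehat{f}(k)|^2.
\end{equation*}
Hence it suffices to establish the pointwise bound
\begin{equation*}
|k_1|^{2s_1}|k_2|^{2s_2} \leq C\left(k_1^2 + \frac{k_2^2}{|k_1|}\right) \quad \text{for every } k\in(2\pi\ZZ)^2 \text{ with } k_1\neq 0,
\end{equation*}
with $C$ independent of $k$ (but allowed to depend on $s_1,s_2$).

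The key algebraic identity, with $A:=k_1^2$ and $B:=k_2^2/|k_1|$, is $|k_1|^{2s_1}|k_2|^{2s_2}=A^{s_1+s_2/2}B^{s_2}$, since $k_2^2=|k_1|B=A^{1/2}B$. In the critical case $s_1+\tfrac{3}{2}s_2 = 1$, Young's inequality with conjugate exponents $p=(s_1+s_2/2)^{-1}$ and $q=s_2^{-1}$ (which are conjugate precisely because $1/p+1/q=s_1+\tfrac{3}{2}s_2=1$) yields $A^{s_1+s_2/2}B^{s_2}\leq \tfrac{1}{p}A+\tfrac{1}{q}B\leq A+B$, as desired. In the subcritical case $s_1+\tfrac{3}{2}s_2 < 1$, I replace $s_1$ by $\tilde s_1 := 1-\tfrac{3}{2}s_2 \geq s_1$; since $|k_1|\geq 2\pi\geq 1$ on the lattice, $|k_1|^{2s_1}\leq |k_1|^{2\tilde s_1}$, and we reduce to the critical case. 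The degenerate configurations ($s_2=0$, or $k_2=0$) are handled directly using $|k_1|\geq 2\pi$ and $s_1\leq 1$.

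For the compactness statement, the plan is to apply the estimate at two well-chosen small exponents, $(s_1,s_2)=(\varepsilon,0)$ and $(s_1,s_2)=(0,\varepsilon)$ with $\varepsilon$ small enough that $s_1+\tfrac{3}{2}s_2\leq 1$. A uniform bound on $\HH(f_n)$ then yields a uniform bound on $\||\partial_1|^\varepsilon f_n\|_{L^2}+\||\partial_2|^\varepsilon f_n\|_{L^2}$, which is a uniform bound in an (anisotropic) fractional Sobolev space of positive regularity among periodic functions with vanishing $x_1$-average. The classical Rellich--Kondrachov embedding then provides relative compactness in $L^2$. The statement for sublevel sets of $\EE$ is an immediate corollary: by Proposition~\ref{prop:bound-harmonic}, $\HH(w)\lesssim 1+\EE(w)^{3/2+\kappa}$, so any $\EE$-sublevel set is contained in an $\HH$-sublevel set, and the compactness just proved applies.

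I do not anticipate a genuine obstacle here; the only mild point is to match Young's exponents to the constraint $s_1+\tfrac{3}{2}s_2\leq 1$ and to treat the boundary cases ($s_2=0$, $s_1=0$, $k_2=0$) cleanly. Both are straightforward given that the Fourier lattice keeps $|k_1|$ bounded away from zero.
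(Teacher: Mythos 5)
Your proof is correct and is essentially the same approach as the paper's: both pass to Fourier space and bound the symbol $|k_1|^{2s_1}|k_2|^{2s_2}$ by the symbol of $\HH$ via an interpolation argument (the paper cites Hölder's inequality between the two sums making up $\HH(f)$, going through \eqref{eq:bound-2/3}; your pointwise Young's inequality on the multipliers with $A=k_1^2$, $B=k_2^2/|k_1|$ is the same idea in a marginally more direct form). The compactness argument — a uniform anisotropic fractional Sobolev bound plus Rellich, followed by \eqref{eq:bound-harmonic} to cover $\EE$-sublevel sets — matches the paper as well, which uses $(s_1,s_2)=(\tfrac12,0)$ and $(0,\tfrac12)$ and the compact embedding $H^{1/2}\subset L^2$ instead of your small-$\varepsilon$ choice.
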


\begin{proof} 
The desired estimate is immediate by H\"older's inequality in Fourier space and \eqref{eq:bound-2/3}.
To prove that the sublevel sets of $\mathcal{E}$ (respectively $\HH$) over $\mathcal{W}$ are relatively compact, we notice that
$\mathcal{H}(f)$ controls $\||\partial_1|^{\frac{1}{2}}f\|_{L^2}$ and $\||\partial_2|^{\frac{1}{2}}f\|_{L^2}$, hence also $\|f\|_{L^2}$ since $f$ has vanishing average. By the compact embedding 
$H^{\frac{1}{2}}\subset L^2$ and \eqref{eq:bound-harmonic}, we deduce that any sublevel set of $\mathcal{E}$ (respectively $\HH$) over $\mathcal{W}$ is relatively compact in $L^2$.
\end{proof}

Below we use the following notation for $s>0$ and a periodic function $f:\TT^2 \to \RR$ with vanishing average in $x_1$, 
	\begin{equs}
		\Hnorm{s}{f}^2 := \int_{\TT^2} \left( |\partial_1|^s f \right)^2\,\dd x 
		+ \int_{\TT^2} \left( |\partial_1|^{-\frac{s}{2}} |\partial_2|^{s} f \right)^2 \,\dd x. 
		\label{eq:Hnorm}
	\end{equs}

\begin{lemma}\label{lem:fractional-H}
	Let $s>0$. There exists a constant $C>0$ such that for any periodic function $f:\TT^2 \to \RR$ with vanishing average in $x_1$ there holds
	\begin{align}\label{eq:fractional-H}
		\||\partial_1|^s f\|_{L^2} + \||\partial_2|^{\frac{2}{3}s} f \|_{L^2} \leq C \Hnorm{s}{f}.
	\end{align}
\end{lemma}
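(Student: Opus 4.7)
The plan is to pass to Fourier space and reduce everything to an elementary Young's inequality. The bound on $\||\partial_1|^s f\|_{L^2}$ is trivial since $\||\partial_1|^s f\|_{L^2}^2$ is precisely the first of the two terms defining $\Hnorm{s}{f}^2$; so the only nontrivial task is to control $\||\partial_2|^{\frac{2}{3}s} f\|_{L^2}$ by $\Hnorm{s}{f}$.

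By Parseval's identity, and using the fact that $f$ has vanishing average in $x_1$ (so $\widehat{f}(0, k_2) = 0$ for every $k_2 \in 2\pi\ZZ$), the claim reduces to showing the pointwise inequality
\begin{equs}
|k_2|^{\frac{4s}{3}} \leq C \left( |k_1|^{2s} + |k_1|^{-s} |k_2|^{2s} \right)
\end{equs}
uniformly for $k = (k_1, k_2) \in (2\pi\ZZ)^2$ with $k_1 \neq 0$. One then multiplies by $|\widehat{f}(k)|^2$ and sums.

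The key observation is the algebraic identity $|k_2|^{\frac{4s}{3}} = \bigl(|k_1|^{2s}\bigr)^{1/3} \bigl(|k_1|^{-s} |k_2|^{2s}\bigr)^{2/3}$, to which Young's inequality with conjugate exponents $p = 3$ and $q = \frac{3}{2}$ applies to give
\begin{equs}
|k_2|^{\frac{4s}{3}} \leq \tfrac{1}{3} |k_1|^{2s} + \tfrac{2}{3} |k_1|^{-s}|k_2|^{2s}.
\end{equs}
This is exactly the desired pointwise bound (with $C = 1$), so summing against $|\widehat{f}(k)|^2$ over $k_1 \neq 0$ yields
\begin{equs}
\||\partial_2|^{\frac{2}{3}s} f\|_{L^2}^2 \leq \tfrac{1}{3} \||\partial_1|^s f\|_{L^2}^2 + \tfrac{2}{3} \||\partial_1|^{-\frac{s}{2}} |\partial_2|^s f\|_{L^2}^2 \leq \Hnorm{s}{f}^2.
\end{equs}
Combined with the trivial estimate $\||\partial_1|^s f\|_{L^2} \leq \Hnorm{s}{f}$, this gives the claim with $C = 2$.

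There is no real obstacle here: the matching of Young exponents is forced by requiring the power of $|k_1|$ to cancel, namely $\frac{2s}{p} - \frac{s}{q} = 0$ together with $\frac{2s}{q} = \frac{4s}{3}$, which uniquely determines $(p,q) = (3, \tfrac{3}{2})$. The only place where the vanishing-average hypothesis enters is to ensure that the Fourier sum excludes the frequencies $k_1 = 0$, where $|k_1|^{-s}$ would otherwise be ill-defined.
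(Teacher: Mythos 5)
Your proof is correct and takes essentially the same approach as the paper: both work in Fourier space and exploit the exponent arithmetic $\tfrac{4s}{3}=\tfrac13(2s)+\tfrac23(2s)$ together with the cancellation $\tfrac13(2s)-\tfrac23 s=0$ of the $|k_1|$-powers. The only cosmetic difference is that you apply Young's inequality pointwise in $k$ before summing, whereas the paper first applies H\"older's inequality to the sum and then Young's inequality to the resulting two numbers; the two routes give identical bounds.
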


\begin{proof}
	Note that by the definition of $\Hnorm{s}{f}$ we only have to bound $\||\partial_2|^{\frac{2}{3}s} f \|_{L^2} \lesssim \Hnorm{s}{f}$. This follows easily by an application of Hölder's inequality in Fourier space. Indeed, by \eqref{eq:fourier} we have that\footnote{Recall that $f$ has vanishing average in $x_1$, in particular $\widehat{f}(0, k_2) = 0$ for all $k_2 \in 2\pi\ZZ$.} 
	\begin{align*}
		\int_{\TT^2} | |\partial_2|^{\frac{2}{3}s} f|^2 \mathrm{d}x 
		&= \sum_{k\in (2\pi\ZZ)^2} |k_2|^{\frac{4}{3}s} |\widehat{f}(k)|^2 
		= \sum_{k\in (2\pi\ZZ)^2} |k_1|^{-\frac{2}{3}s} |k_2|^{\frac{4}{3}s} |\widehat{f}(k)|^{\frac{4}{3}} \, |k_1|^{\frac{2}{3}s} |\widehat{f}(k)|^{\frac{2}{3}}\\
		&\leq \left( \sum_{k\in (2\pi\ZZ)^2} |k_1|^{-s} |k_2|^{2s} |\widehat{f}(k)|^2 \right)^{\frac{2}{3}} \left( \sum_{k\in (2\pi\ZZ)^2} |k_1|^{2s} |\widehat{f}(k)|^2 \right)^{\frac{1}{3}} \\
		&= \left( \int_{\TT^2} | |\partial_1|^{-\frac{s}{2}} |\partial_2|^s f |^2 \mathrm{d}x \right)^{\frac{2}{3}} \left( \int_{\TT^2} |\partial_1 f|^{s}\,\mathrm{d}x \right)^{\frac{1}{3}} \\
		&\leq \frac{2}{3} \int_{\TT^2} | |\partial_1|^{-\frac{s}{2}} |\partial_2|^s f |^2 \mathrm{d}x + \frac{1}{3} \int_{\TT^2} ||\partial_1|^s f|^2\,\mathrm{d}x
		\leq \frac{2}{3} \Hnorm{s}{f}^2.
	\end{align*}
\end{proof}

\begin{lemma}\label{lem:1/2-1-3/4+}
	Let $\epsilon\in (0,\frac{1}{4}]$. There exists a constant $C>0$ such that for any $f,g:\TT^2 \to \RR$ with vanishing average in $x_1$ there holds 
	\begin{align*}
		\left\| |\partial_1|^{\frac{1}{2}} (fg) \right\|_{L^2} \leq C \Hnorm{1}{f} \Hnorm{\frac{3}{4}+\epsilon}{g}.
	\end{align*}  
	In particular, we have that $\| |\partial_1|^{\frac{1}{2}}(fg)\|_{L^2} \leq C \HH(f)^{\frac{1}{2}} \HH(g)^{\frac{1}{2}}$.
\end{lemma}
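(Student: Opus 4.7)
The plan is to decouple the two directions by applying the classical one-dimensional Kato--Ponce fractional Leibniz inequality in the $x_1$-direction at each fixed $x_2$, and then reassemble via Fubini and H\"older in $x_2$. Choosing the paired exponents $(10/3,5)$ and $(10,5/2)$ (both satisfying $1/p+1/q=1/2$), the 1D inequality gives slicewise
\[
\||\partial_1|^{1/2}(fg)(\cdot,x_2)\|_{L^2_{x_1}}\lesssim \||\partial_1|^{1/2}f\|_{L^{10/3}_{x_1}}\|g\|_{L^5_{x_1}}+\|f\|_{L^{10}_{x_1}}\||\partial_1|^{1/2}g\|_{L^{5/2}_{x_1}}.
\]
Squaring, integrating in $x_2$, and then H\"older in $x_2$ with the corresponding conjugate exponents recombine the mixed Lebesgue norms into genuine $L^p(\TT^2)$ norms by Fubini and yield
\[
\||\partial_1|^{1/2}(fg)\|_{L^2}\lesssim \||\partial_1|^{1/2}f\|_{L^{10/3}}\|g\|_{L^5}+\|f\|_{L^{10}}\||\partial_1|^{1/2}g\|_{L^{5/2}}.
\]

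Three of the four factors can be handled with the tools already in the paper. First, $\|f\|_{L^{10}}\lesssim \HH(f)^{1/2}$ is immediate from \eqref{eq:H^1_an_emb} and Lemma~\ref{lem:fractional-H}; second, $\|g\|_{L^5}\lesssim \Hnorm{3/4+\epsilon}{g}$ follows from \eqref{eq:H^3/4_an_emb}, Lemma~\ref{lem:fractional-H}, and the embedding $L^{5/(1-2\epsilon)}\hookrightarrow L^5$ on $\TT^2$. For $\||\partial_1|^{1/2}f\|_{L^{10/3}}\lesssim \HH(f)^{1/2}$, I would apply \eqref{eq:H^1/2_an_emb} to $h=|\partial_1|^{1/2}f$, reducing the matter to controlling $\||\partial_2|^{1/3}|\partial_1|^{1/2}f\|_{L^2}$; this is H\"older in Fourier using the factorisation $|k_1|\,|k_2|^{2/3}=(|k_1|^2)^{2/3}(|k_1|^{-1}|k_2|^2)^{1/3}$ and the two defining ingredients of $\HH(f)$.

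The main obstacle is the remaining factor $\||\partial_1|^{1/2}g\|_{L^{5/2}}$. My plan is to first establish $\Hnorm{1/4+\epsilon}{|\partial_1|^{1/2}g}\lesssim\Hnorm{3/4+\epsilon}{g}$: the first defining ingredient $\||\partial_1|^{1/4+\epsilon}\,|\partial_1|^{1/2}g\|_{L^2}=\||\partial_1|^{3/4+\epsilon}g\|_{L^2}$ is built into $\Hnorm{3/4+\epsilon}{g}$, and the second, $\||\partial_1|^{3/8-\epsilon/2}|\partial_2|^{1/4+\epsilon}g\|_{L^2}$, follows from H\"older in Fourier with exponents $\alpha=2/(3+4\epsilon)$ and $\beta=(1+4\epsilon)/(3+4\epsilon)$ (so $\alpha+\beta=1$) applied to the factorisation
\[
|k_1|^{3/4-\epsilon}|k_2|^{1/2+2\epsilon}=(|k_1|^{3/2+2\epsilon})^{\alpha}\bigl(|k_1|^{-(3/2+2\epsilon)/2}|k_2|^{3/2+2\epsilon}\bigr)^{\beta}.
\]
Once $|\partial_1|^{1/2}g$ sits in $H^{1/4+\epsilon}_{\mathrm{an}}$ with the correct bound, the desired $L^{5/2}$ estimate follows from the anisotropic Sobolev embedding $H^{1/4+\epsilon}_{\mathrm{an}}\hookrightarrow L^{5/(2-2\epsilon)}\hookrightarrow L^{5/2}$, valid for $\epsilon>0$. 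This embedding extends Lemma~\ref{lem:an_embedding} to a scaling exponent between those explicitly proved there, and I would derive it by interpolating between the trivial $L^2=H^0_{\mathrm{an}}$ and \eqref{eq:H^1/2_an_emb} at $\theta=1/2+2\epsilon$: this yields exactly $1/p=(1-\theta)/2+\theta\cdot 3/10=(2-2\epsilon)/5$. This extension is the technical crux of the proof.

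The ``in particular'' statement follows from the main estimate together with $\Hnorm{3/4+\epsilon}{g}\lesssim\HH(g)^{1/2}$, which is one final H\"older interpolation in Fourier: use $(|k_1|^{-1/2}|k_2|)^{3/4+\epsilon}=(|k_1|^{-1}|k_2|^2)^{(3/4+\epsilon)/2}$ with exponents $1/(3/4+\epsilon)$ and $1/(1/4-\epsilon)$, together with the Poincar\'e inequality $\|g\|_{L^2}\lesssim\|\partial_1 g\|_{L^2}$, which holds because $g$ has vanishing average in $x_1$.
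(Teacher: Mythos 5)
Your proposal is correct in substance but follows a genuinely different route from the paper. The paper's proof never invokes a Leibniz rule: it starts directly from the representation $\||\partial_1|^{1/2}(fg)\|_{L^2}^2 = c\int_{\RR}\int_{\TT^2}|\partial_1^h(fg)|^2\,\dd x\,\frac{\dd h}{|h|^2}$ (equation~\eqref{eq:fourier}), splits the finite difference $\partial_1^h(fg)$, and then applies H\"older in $x$ and $L^p$-interpolation of the difference quotients; the only Sobolev inputs are \eqref{eq:H^1_an_emb} for $\|f\|_{L^{10}}$ and \eqref{eq:H^3/4_an_emb} for $\|g\|_{L^{5/(1-2\epsilon)}}$, together with Lemma~\ref{lem:fractional-H}. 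You instead apply the one-dimensional homogeneous Kato--Ponce inequality slicewise in $x_1$ and recombine by Fubini--H\"older, which reduces the lemma to four factors; for the genuinely extra one, $\||\partial_1|^{1/2}g\|_{L^{5/2}}$, you correctly derive the intermediate control $\Hnorm{1/4+\epsilon}{|\partial_1|^{1/2}g}\lesssim\Hnorm{3/4+\epsilon}{g}$ and the interpolated embedding $\Hnorm{1/4+\epsilon}{\cdot}\hookrightarrow L^{5/(2-2\epsilon)}$ (which is legitimately in the spirit of the interpolations already done in Lemma~\ref{lem:an_embedding}). The trade-off: your argument is conceptually tidy (Leibniz plus Sobolev), but it imports Kato--Ponce as a black box, which the paper deliberately avoids---the difference-quotient proof is entirely self-contained. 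Two small points: at $\epsilon = 1/4$ the estimate $\|g\|_{L^5}\lesssim\Hnorm{3/4+\epsilon}{g}$ should be cited to \eqref{eq:H^1_an_emb} rather than \eqref{eq:H^3/4_an_emb}, since the latter is stated only for $\epsilon\in[0,\frac14)$; and for the ``in particular'' statement at $\epsilon=\frac14$ the conclusion is immediate from $\Hnorm{1}{g}=\HH(g)^{1/2}$, so your final Fourier--H\"older step is only needed if one proves the main inequality exclusively for $\epsilon<\frac14$.
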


\begin{proof}
	By \eqref{eq:fourier}, writing $g^h := g(\cdot + h)$, we have 
	\begin{align}
		\left\| |\partial_1|^{\frac{1}{2}} (fg) \right\|_{L^2}^2 
		&= \int_{\RR} \int_{\TT^2} |\partial_1^h(fg)|^2\,\dd x \frac{\dd h}{|h|^{2}} \nonumber\\
		&\lesssim \int_{\RR} \int_{\TT^2} |\partial_1^h f|^2 |g^h|^2\,\dd x \frac{\dd h}{|h|^{2}} + \int_{\RR} \int_{\TT^2} |\partial_1^h g|^2 |f|^2\,\dd x \frac{\dd h}{|h|^{2}}. \label{eq:1-3/4-splitting}
	\end{align}
	To estimate the first term on the right-hand side of \eqref{eq:1-3/4-splitting} we use Lemma~\ref{lem:an_embedding}. We fix 
	$\epsilon \in (0,\frac{1}{4}]$ and let $p_{\epsilon}' = \frac{5+ \kappa_{\epsilon}}{2}$ with 
	$\kappa_{\epsilon} = \frac{10\epsilon}{1-2\epsilon}$, and $p_{\epsilon} = \frac{5 + \kappa_{\epsilon}}{3+\kappa_{\epsilon}}>1$. 
	Then $\tfrac{1}{p_{\epsilon}} + \tfrac{1}{p_{\epsilon}'} = 1$, so that by Hölder's inequality,
	\begin{align*}
		\int_{\RR} \int_{\TT^2} |\partial_1^h f|^2 |g^h|^2\,\dd x \frac{\dd h}{|h|^{2}}
		&\leq \int_{\RR} \left(\int_{\TT^2} |\partial_1^h f|^{2p_{\epsilon}}\,\dd x\right)^{\frac{1}{p_{\epsilon}}} \left( \int_{\TT^2} |g^h|^{5+\kappa_{\epsilon}}\,\dd x\right)^{\frac{2}{5+\kappa_{\epsilon}}} \frac{\dd h}{|h|^{2}} \\
		&= \|g\|_{L^{5+\kappa_{\epsilon}}}^2 \int_{\RR} \left(\int_{\TT^2} |\partial_1^h f|^{2p_{\epsilon}}\,\dd x\right)^{\frac{1}{p_{\epsilon}}} \frac{\dd h}{|h|^{2}},
	\end{align*}
	where in the last step we also used translation invariance. Note that the exponent $2 p_{\epsilon} \in (2, 10)$, so that we may interpolate
	\begin{align*}
		\int_{\RR} \left(\int_{\TT^2} |\partial_1^h f|^{2p_{\epsilon}}\,\dd x\right)^{\frac{1}{p_{\epsilon}}} \frac{\dd h}{|h|^{2}} 
		&= \int_{\RR} \left(\int_{\TT^2} |\partial_1^h f|^{2}\,\dd x\right)^{\theta_{\epsilon}} \left(\int_{\TT^2} |\partial_1^h f|^{10}\,\dd x\right)^{\frac{1-\theta_{\epsilon}}{5}} \frac{\dd h}{|h|^{2}}\\
		&\lesssim \|f\|_{L^{10}}^{2(1-\theta_{\epsilon})} \int_{\RR} \left(\int_{\TT^2} |\partial_1^h f|^{2}\,\dd x\right)^{\theta_{\epsilon}}\frac{\dd h}{|h|^{2}},
	\end{align*}
	with $\theta_{\epsilon} = \frac{1}{2}+\epsilon$. By Jensen's inequality, the mean-value theorem, and Poincaré's inequality for functions with zero average in $x_1$, it then follows that 
	\begin{align*}
		\int_{\RR} \left(\int_{\TT^2} |\partial_1^h f|^{2}\,\dd x\right)^{\theta_{\epsilon}}\frac{\dd h}{|h|^{2}} 
		\lesssim \|f\|_{L^2}^{1+2\epsilon} \left(\int_{|h|\geq 1} \frac{\dd h}{|h|^{2}}\right)^{\theta_{\epsilon}} + \|\partial_1 f\|_{L^2}^{1+2\epsilon} \left(\int_{|h|<1} \frac{\dd h}{|h|^{1-2\epsilon}}\right)^{\theta_{\epsilon}} 
		\lesssim \|\partial_1 f\|_{L^2}^{1+2\epsilon}.
	\end{align*}
	By Lemmata \ref{lem:an_embedding} and \ref{lem:fractional-H} we can estimate $\|f\|_{L^{10}} \lesssim \Hnorm{1}{f}$ and $\|g\|_{L^{5+\kappa_{\epsilon}}} \lesssim \Hnorm{\frac{3}{4}+\epsilon}{g}$, hence 
	\begin{align*}
		\int_{\RR} \int_{\TT^2} |\partial_1^h f|^2 |g^h|^2\,\dd x \frac{\dd h}{|h|^{2}} 
		\lesssim \Hnorm{1}{f}^2 \Hnorm{\frac{3}{4}+\epsilon}{g}^2.
	\end{align*}
	It remains to bound the second term on the right-hand side of \eqref{eq:1-3/4-splitting}. Similar to the fist term, we get by Hölder's inequality and interpolation that
	\begin{align*}
		\int_{\RR}\int_{\TT^2} |\partial_1^h g|^2 |f|^2 \, \dd x \frac{\dd h}{|h|^2} 
		&\leq \int_{\RR} \left( \int_{\TT^2} |\partial_1^h g|^{\frac{5}{2}} \,\dd x \right)^{\frac{4}{5}} \left(\int_{\TT^2} |f|^{10} \,\dd x \right)^{\frac{1}{5}} \frac{\dd h}{|h|^2} \\
		&\leq \|f\|_{L^{10}}^2 \int_{\RR} \left( \int_{\TT^2} |\partial_1^h g|^2 \,\dd x \right)^{\frac{2}{3}} \left(\int_{\TT^2} |\partial_1^h g|^5 \,\dd x \right)^{\frac{2}{15}} \frac{\dd h}{|h|^2} \\
		&\lesssim \|f\|_{L^{10}}^2 \|g\|_{L^5}^{\frac{2}{3}} \int_{\RR} \left( \int_{\TT^2} |\partial_1^h g|^2 \,\dd x \right)^{\frac{2}{3}} \frac{\dd h}{|h|^2}.
	\end{align*}
	If $\eps<\frac{1}{4}$, splitting the integral in $h$, it follows with Jensen's inequality that
	\begin{align*}
		\int_{\RR} \left( \int_{\TT^2} |\partial_1^h g|^2 \,\dd x \right)^{\frac{2}{3}} \frac{\dd h}{|h|^2} 
		&= \int_{|h|\geq 1} \left( \int_{\TT^2} |\partial_1^h g|^2 \,\dd x \right)^{\frac{2}{3}} \frac{\dd h}{|h|^2} + \int_{|h|<1} \left( \int_{\TT^2} |\partial_1^h g|^2 \,\dd x \right)^{\frac{2}{3}} \frac{\dd h}{|h|^2} \\
		&\lesssim \|g\|_{L^2}^{\frac{4}{3}} \left( \int_{|h|\geq 1} \frac{\dd h}{|h|^2} \right)^{\frac{2}{3}} + \int_{|h|<1} \left( \int_{\TT^2} \frac{|\partial_1^h g|^2}{|h|^{\frac{3}{2}+6\epsilon}} \,\dd x \right)^{\frac{2}{3}} \frac{\dd h}{|h|^{1-4\epsilon}} \\
		&\lesssim \|g\|_{L^2}^{\frac{4}{3}} + \left( \int_{|h|<1} \int_{\TT^2} \frac{|\partial_1^h g|^2}{|h|^{\frac{3}{2}+2\epsilon}} \,\dd x  \frac{\dd h}{|h|}\right)^{\frac{2}{3}} 
		\lesssim \|g\|_{L^2}^{\frac{4}{3}} + \| |\partial_1|^{\frac{3}{4}+\epsilon} g \|_{L^2}^{\frac{4}{3}}.
	\end{align*}
	Using again Lemmata \ref{lem:an_embedding} and \ref{lem:fractional-H}, we may bound 
	$\|f\|_{L^{10}} \lesssim \Hnorm{1}{f}$ and $\|g\|_{L^{5}} \lesssim \Hnorm{\frac{3}{4}+\epsilon}{g}$,
	the conclusion follows with $\|g\|_{L^2}\leq \Hnorm{\frac{3}{4}+\epsilon}{g}$.
	If $\eps=\frac{1}{4}$, we use instead the estimate
	\begin{equs}
	 \int_{|h|<1} \left( \int_{\TT^2} |\partial_1^h g|^2 \,\dd x \right)^{\frac{2}{3}} \frac{\dd h}{|h|^2} 
	 \leq \|\partial_1 g\|_{L^2}^{\frac{4}{3}} \int_{|h|<1} \frac{\dd h}{|h|^{\frac{2}{3}}} 
	 \lesssim \Hnorm{1}{g}^{\frac{4}{3}}.
	\end{equs}
\end{proof}

As for the Sobolev embedding in Lemma \ref{lem:an_embedding}, we next prove the embedding $H^1_{\mathrm{anistropic}}\subset \mathcal{C}^{\alpha}$, which is optimal and the critical exponent is given by $\alpha = 1-\frac{\mathrm{dim}}{2}= -\frac{1}{4}$. 

\begin{lemma}[Besov embedding into Hölder spaces] \label{lem:hoelder-embedding} 
	There exists a constant $C>0$ such that for any periodic distribution $f:\TT^2 \to \RR$ of vanishing average, and $s\in (0, \frac{11}{4}) \setminus \{\frac{1}{4}, \frac{3}{4}, \frac{5}{4}, \frac{9}{4}\}$, 
	there holds
	\begin{align}
		\left[f\right]_{-\frac{5}{4}+s} \leq C( \||\partial_1|^s f\|_{L^2} + \||\partial_2|^{\frac{2}{3}s} f \|_{L^2}), 
		\label{eq:s_emb}
	\end{align}
	In particular, $[w]_{-\frac{1}{4}}\leq C \mathcal{H}(w)^{\frac{1}{2}}$ for every $w\in \mathcal W$. 
\end{lemma}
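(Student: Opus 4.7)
The strategy is to apply the heat-kernel characterizations of H\"older norms in Lemma~\ref{lem:hoelder_char}: for the negative target $\beta=-\tfrac{5}{4}+s<0$ (i.e.\ $s<\tfrac{5}{4}$) I use \eqref{eq:neg_hoelder_char}, and for the positive target $\alpha=-\tfrac{5}{4}+s>0$ (i.e.\ $s>\tfrac{5}{4}$) I use \eqref{eq:pos_hoelder_A_char}. In both cases it suffices to establish, for every $T\in(0,1]$, the bound
\begin{equs}
\|f_T\|_{L^\infty}\lesssim \bigl(T^{\frac{1}{3}}\bigr)^{-\frac{5}{4}+s}\bigl(\||\partial_1|^s f\|_{L^2}+\||\partial_2|^{\frac{2}{3}s}f\|_{L^2}\bigr)
\end{equs}
when $s<\tfrac{5}{4}$, and the analogous bound for $\|T\mathcal{A}f_T\|_{L^\infty}$ when $s>\tfrac{5}{4}$. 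The forbidden values $s\in\{\tfrac{1}{4},\tfrac{3}{4},\tfrac{5}{4},\tfrac{9}{4}\}$ correspond exactly to the borderline targets $\beta\in\{-1,-\tfrac{1}{2}\}$ or $\alpha\in\{0,1\}$ at which \eqref{eq:neg_hoelder_char} and \eqref{eq:pos_hoelder_A_char} are no longer two-sided equivalences.

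The key estimate is obtained by a weighted Cauchy--Schwarz in the Fourier expansion of $f$ (recall $\hat f(0)=0$). Setting $A_s(k):=(|k_1|^s+|k_2|^{\frac{2}{3}s})^2$, Parseval together with the elementary bound $A_s(k)\lesssim |k_1|^{2s}+|k_2|^{\frac{4}{3}s}$ gives
\begin{equs}
\sum_{k\neq 0}A_s(k)|\hat f(k)|^2\lesssim \||\partial_1|^s f\|_{L^2}^2+\||\partial_2|^{\frac{2}{3}s}f\|_{L^2}^2.
\end{equs}
For $s<\tfrac{5}{4}$, Cauchy--Schwarz then yields
\begin{equs}
\|f_T\|_{L^\infty}^2\leq \Big(\sum_{k\neq 0}A_s(k)|\hat f(k)|^2\Big)\Big(\sum_{k\neq 0}\frac{|\hat\psi_T(k)|^2}{A_s(k)}\Big).
\end{equs}
The second factor is comparable to its continuous analogue on $\RR^2$, and the anisotropic scaling $k_1=T^{-1/3}\tilde k_1$, $k_2=T^{-1/2}\tilde k_2$ (which respects $T(|k_1|^3+k_2^2)=|\tilde k_1|^3+\tilde k_2^2$) produces exactly the prefactor $(T^{1/3})^{-\frac{5}{2}+2s}$ times the dimensionless integral
\begin{equs}
I_s:=\int_{\RR^2}\frac{e^{-2(|\tilde k_1|^3+\tilde k_2^2)}}{(|\tilde k_1|^s+|\tilde k_2|^{\frac{2}{3}s})^2}\,\dd \tilde k.
\end{equs}
The case $s>\tfrac{5}{4}$ is identical except that the second Fourier factor gains the multiplier $T(|k_1|^3+k_2^2)|\hat\psi_T(k)|$, so the integrand of $I_s$ acquires the numerator $(|\tilde k_1|^3+\tilde k_2^2)^2$, which vanishes at the origin and only helps convergence. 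Combining with $[f]_{-\frac{5}{4}+s}\sim \sup_{T\in(0,1]}(T^{\frac{1}{3}})^{\frac{5}{4}-s}\|f_T\|_{L^\infty}$ (resp.\ with $\|T\mathcal{A}f_T\|_{L^\infty}$) then proves \eqref{eq:s_emb}.

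The principal obstacle is the convergence of $I_s$ near the origin. Performing the change of variables $u_2=\tilde k_2^{\frac{2}{3}}$ reduces the question to the finiteness of $\int_{|\tilde k_1|,u_2\leq 1}u_2^{1/2}(|\tilde k_1|^s+u_2^s)^{-2}\,\dd\tilde k_1\,\dd u_2$; after the further substitution $u_2=\tilde k_1 v$ and splitting the range of $v$ at $1$, one checks convergence for all $s\in(0,\tfrac{5}{4})$ (with a marginal logarithm appearing only at $s=\tfrac{3}{4}$, which is anyway excluded). For $s\in(\tfrac{5}{4},\tfrac{11}{4})$ the extra factor $(|\tilde k_1|^3+\tilde k_2^2)^2$ pushes convergence out to $s<\tfrac{15}{4}$, comfortably covering the stated range. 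Finally, the concluding assertion $[w]_{-\frac{1}{4}}\leq C\mathcal{H}(w)^{1/2}$ is the specialization $s=1$ of \eqref{eq:s_emb} combined with Lemma~\ref{lem:fractional-H}, which gives $\||\partial_1|w\|_{L^2}+\||\partial_2|^{\frac{2}{3}}w\|_{L^2}\lesssim \Hnorm{1}{w}=\mathcal{H}(w)^{1/2}$.
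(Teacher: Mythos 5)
Your proof is correct, but it takes a different route from the paper's. The paper uses the characterization $[f]_{-\frac{5}{4}+s}\sim \sup_{T}(T^{\frac{1}{3}})^{\frac{5}{4}-s}\|T\mathcal{A}f_T\|_{L^\infty}$ uniformly for the whole range (both \eqref{eq:neg_hoelder_A_char} and \eqref{eq:pos_hoelder_A_char} apply since $f$ has vanishing average), then factors $|\partial_1|^3 f_T=(|\partial_1|^s f)*(|\partial_1|^{3-s}\psi_T)$ and $\partial_2^2 f_T=-(|\partial_2|^{\frac{2}{3}s}f)*(|\partial_2|^{2-\frac{2}{3}s}\psi_T)$, applies Young's inequality $L^2*L^2\to L^\infty$ for the periodized kernel (Remark~\ref{rem:perio}), and reads off the needed power of $T$ from the pre-established kernel estimate \eqref{lp_psi}. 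You instead apply a weighted Cauchy--Schwarz in Fourier space with the custom weight $A_s(k)$, and recover the scaling by hand. These are essentially dual manifestations of the same $L^2$-$L^2$ duality, but the paper's version is more economical: it completely avoids the verification that the dimensionless integral $I_s$ converges (this information is already packaged in the finiteness of $\||\partial_1|^{3-s}\psi\|_{L^2(\RR^2)}$, cited as established in \cite[proof of Lemma 10]{IO19}), and it entirely bypasses the comparison between the Fourier-mode sum $\sum_{k\in(2\pi\ZZ)^2\setminus\{0\}}$ and its continuous analogue $\int_{\RR^2}$, which you assert but do not justify. That lattice-vs-integral step is true here, but it does require an argument -- the singularity of $A_s(k)^{-1}$ at $k=0$ lies off-lattice, and one still needs to check that the discrete sum reproduces the same power of $T$ as the scaled integral; in the paper this is subsumed by the convolution estimate \eqref{lp_psi_per} for the periodization $\Psi_T$, which is exactly the periodic-lattice version of the continuum kernel bound. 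A secondary cosmetic difference: you split the range at $s=\frac{5}{4}$ and use two different H\"older characterizations, whereas the paper uses the $\|T\mathcal{A}f_T\|_{L^\infty}$ form throughout, avoiding the case distinction. Your deduction of the final inequality $[w]_{-\frac{1}{4}}\lesssim \mathcal{H}(w)^{1/2}$ from $s=1$ and Lemma~\ref{lem:fractional-H} matches the paper (which cites the equivalent \eqref{eq:bound-2/3}).
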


\begin{proof}
	Since $f$ is of vanishing average, by \eqref{eq:neg_hoelder_A_char} and \eqref{eq:pos_hoelder_A_char} we know
	that for $s\in (0, \frac{11}{4}) \setminus \{\frac{1}{4}, \frac{3}{4}, \frac{5}{4}, \frac{9}{4}\}$,
	\begin{equs}
	 {[f]}_{-\frac{5}{4}+s} \sim \sup_{T\in(0,1]} \left(T^{\frac{1}{3}} \right)^{\frac{5}{4}-s} \|T \mathcal{A}f_T\|_{L^\infty}.
	\end{equs}
	Writing $|\partial_1|^{3} f_T=|\partial_1|^s f*|\partial_1|^{3-s} \psi_{T}$ and 
	$\partial_2^{2} f_T=|\partial_2|^{\frac{2}{3}s} f*|\partial_2|^{2-\frac{2}{3}s} \psi_{T}$,
	and using that $|\partial_j|^{\alpha} \psi \in L^2(\RR^2)$ for every $\alpha\geq 0$ and $j=1,2$, we deduce with Young's 
	inequality for convolution of functions (as in Remark~\ref{rem:perio}) that
	\begin{equs}
	\|\mathcal{A}f_T\|_{L^\infty} & \leq \||\partial_1|^s f\|_{L^2} \||\partial_1|^{3-s}\psi_{T}\|_{L^2}
	 + \||\partial_2|^{\frac{2}{3}s} f\|_{L^2} \||\partial_2|^{2-\frac{2}{3}s} \psi_{T}\|_{L^2} \\
	 &\lesssim \left(T^{\frac{1}{3}}\right)^{-3 +s - \frac{5}{4}} \left(  \||\partial_1|^s f\|_{L^2} + \||\partial_2|^{\frac{2}{3}s} f\|_{L^2} \right).
	\end{equs}
	This implies that 
	\begin{align*}
	 [f]_{-\frac{5}{4}+s}
	 \sim \sup_{T\in(0,1]} \left(T^{\frac{1}{3}} \right)^{\frac{5}{4}-s} \|T \mathcal{A}f_T\|_{L^\infty}  
	 \lesssim \||\partial_1|^s f\|_{L^2} + \||\partial_2|^{\frac{2}{3}s} f\|_{L^2}.
	\end{align*}
	Combining \eqref{eq:s_emb} for $s=1$ and \eqref{eq:bound-2/3}, we conclude that $[w]_{-\frac{1}{4}}\lesssim \mathcal{H}(w)^{\frac{1}{2}}$ for
	every $w\in \mathcal W$. 
\end{proof}

The next proposition is the classical $1$-dimensional embedding of Besov spaces into $L^p$ spaces in the periodic setting.
\footnote{The nonperiodic version of the statement is essentially a combination of 
\cite[Proposition 2.20 and Theorem 2.36]{BCD11}.} 

\begin{proposition} \label{prop:1d_besov_emb} 
	For every $p\in(1,\infty]$ and $q \in[1,p]$ with $(p,q)\neq (\infty,1)$, there exists a constant $C(p,q)>0$ such that 
	for every periodic $f:[0,1) \to \RR$ with vanishing average
	\begin{align}
		\left(\int_0^1 |f(z)|^p \, \dd z\right)^{\frac{1}{p}}
		\leq C(p,q) \int_0^1 \frac{1}{h^{\frac{1}{q}-\frac{1}{p}}} \left(\int_0^1 |\partial_1^hf(z)|^q \, \dd z\right)^{\frac{1}{q}} \frac{\dd h}{h},
		\label{eq:1d_besov_emb}
	\end{align}
	with the usual interpretation for $p=\infty$ or $q=\infty$. 
\end{proposition}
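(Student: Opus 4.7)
The plan is to adapt the classical argument for the one-dimensional Besov embedding $\dotB^{1/q-1/p}_{q,1}\hookrightarrow L^p$ to the periodic torus, exploiting the hypothesis that $f$ has vanishing average to bypass any issue with the constant mode. Concretely, I would work with the one-dimensional Gaussian heat kernel $\Psi_t(z)=(4\pi t)^{-1/2}\mathrm{e}^{-z^2/(4t)}$, periodized on $[0,1)$; this forms a smooth convolution semigroup with $\partial_t\Psi_t=\partial^2\Psi_t$. Because $-\partial^2$ has a spectral gap on the subspace of vanishing-average functions, $\|\Psi_t*f\|_{L^q}$ decays exponentially as $t\to\infty$, while $\Psi_t*f\to f$ in $L^q$ as $t\to 0^+$. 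These two facts together produce the representation
\begin{equs}
f=-\int_0^\infty\partial_t(\Psi_t*f)\,\dd t=-\int_0^\infty\partial^2\Psi_t*f\,\dd t
\end{equs}
in $L^q$, which serves as the starting point.

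Taking $L^p$-norms and applying Minkowski, I would then bound each integrand by combining a Bernstein-type gain with a cancellation coming from $\int\partial^2\Psi_{t/2}(y)\,\dd y=0$. For the first step, the semigroup factorization $\partial^2\Psi_t=\Psi_{t/2}*\partial^2\Psi_{t/2}$ and Young's convolution inequality with $\tfrac{1}{r}=1+\tfrac{1}{p}-\tfrac{1}{q}$ (so $r\geq 1$ precisely when $q\leq p$, as assumed) give
\begin{equs}
\|\partial^2\Psi_t*f\|_{L^p}\leq\|\Psi_{t/2}\|_{L^r}\|\partial^2\Psi_{t/2}*f\|_{L^q}\lesssim t^{-(1/q-1/p)/2}\|\partial^2\Psi_{t/2}*f\|_{L^q}.
\end{equs}
For the second step, rewriting $\partial^2\Psi_{t/2}*f(x)=\int\partial^2\Psi_{t/2}(y)(f(x-y)-f(x))\,\dd y$ (using the vanishing integral) and applying Minkowski together with the Gaussian scaling $|\partial^2\Psi_s(y)|=s^{-3/2}g(y/\sqrt{s})$ for some rapidly decaying $g$ yields
\begin{equs}
\|\partial^2\Psi_{t/2}*f\|_{L^q}\lesssim t^{-1}\int g(u)\,\|\partial^{-\sqrt{t}u}f\|_{L^q}\,\dd u.
\end{equs}
Substituting, invoking Fubini, and changing variables $h=\sqrt{t}\,|u|$ absorbs the integrable weight $g(u)|u|^{1/q-1/p}$ and collapses the double integral to the Besov-type expression $\int_0^\infty h^{-(1/q-1/p)}\|\partial^h f\|_{L^q}\frac{\dd h}{h}$.

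The last step is to restrict the integration range to $h\in(0,1]$, using periodicity: for $h\geq 1$ one has $\|\partial^h f\|_{L^q}\leq 2\|f\|_{L^q}$, while the vanishing-average identity $f(x)=-\int_0^1\partial^{-y}f(x)\,\dd y$ combined with Minkowski gives $\|f\|_{L^q}\leq\int_0^1\|\partial^y f\|_{L^q}\,\dd y$, which is in turn dominated by the $(0,1]$-integral that appears on the right-hand side of the proposition (since $y^{-(1/q-1/p)}/y\geq 1$ for $y\in(0,1]$). I expect the main technical work to lie in the careful bookkeeping of the Young--Bernstein step and the verification of the admissibility of the indices; in particular, one must check that $r\in[1,\infty]$ (equivalent to $q\leq p$) and that the weight $g(u)|u|^{1/q-1/p}$ remains integrable. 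The excluded endpoint $(p,q)=(\infty,1)$ sits at the borderline of these constraints, where the representation formula and Young step would demand extra care.
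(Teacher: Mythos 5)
Your plan shares the paper's strategy (Gaussian semigroup, Young's inequality to extract the Bernstein gain, cancellation from $\int\partial^2\psi=0$, vanishing average to handle the constant mode), but implements it via a continuous Duhamel representation $f=-\int_0^\infty\partial^2\Psi_t*f\,\dd t$ with the real-line scaling of $\partial^2\psi$, whereas the paper telescopes dyadically, $f=\sum_{k\ge1}\bigl(f*\Phi_{2\cdot2^{-k}}-f*\Phi_{2^{-k}}\bigr)-f*\Phi_1$, and integrates $h$ only against the \emph{periodized} kernel. This difference is what creates two real gaps in your write-up. First, the bound $\|\Psi_{t/2}\|_{L^r}\lesssim t^{-(1/q-1/p)/2}$ is false for $t\gtrsim1$: on the torus $\|\Psi_{t/2}\|_{L^r}\to1$ as $t\to\infty$, so for $p>q$ the claimed right-hand side tends to $0$ and your displayed Young--Bernstein inequality is actually reversed; since your $t$-integral runs over all of $(0,\infty)$, this is an error, not a cosmetic issue. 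You would have to split at $t=1$ and, on the tail, invoke the exponential decay of $\|\partial^2\Psi_{t/2}*f\|_{L^q}$ (spectral gap on mean-zero functions) together with the identity $\|f\|_{L^q}\le\int_0^1\|\partial_1^yf\|_{L^q}\,\dd y$ to get back to the right-hand side of the claim. The paper never faces this because it telescopes only downward from $T=1$ and treats $f*\Phi_1$ as a separate additive term.

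Second, after Fubini and the substitution $h=\sqrt t\,|u|$ your Besov integral runs over $h\in(0,\infty)$. The reduction of the tail via $\|\partial_1^hf\|_{L^q}\le2\|f\|_{L^q}$ and $\int_1^\infty h^{-\alpha-1}\,\dd h<\infty$ (with $\alpha=1/q-1/p$) requires $\alpha>0$; for $p=q$ that integral diverges, so the passage from $(0,\infty)$ to $(0,1]$ as written does not go through (the $p=q$ case is of course trivial directly from the vanishing-average identity, but then you should say so rather than route it through a divergent integral). The paper avoids this issue altogether because its key identity, $f*\Phi_{2T}-f*\Phi_T=\int_{-1/2}^{1/2}\bigl(\partial_1^{-h}f*\Phi_T\bigr)\Phi_T(h)\,\dd h$, keeps $h$ confined to $[-1/2,1/2]$ from the very start. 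This is also where the excluded endpoint $(p,q)=(\infty,1)$, i.e.\ $\alpha=1$, enters for the paper (the geometric sum in the estimate of $\Sigma^{(2)}$ requires $\alpha<1$); in your continuous-scale argument the endpoint constraint is not visible, which should give you pause about whether the indices are being tracked correctly there. The underlying plan is sound and can be repaired along the lines indicated, but as written it contains genuine gaps in exactly the places that the paper's dyadic telescoping and bounded-range periodized integration are designed to avoid.
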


\begin{proof}
	We prove the statement for $p\in(1,\infty)$ (the case $p=\infty$ follows similarly).  
	For $T\in(0,1]$ let $\varphi_T(h)=\frac{1}{\sqrt{4\pi T}} \mathrm{e}^{-\frac{h^2}{4T}}$, $h\in \RR$, be the heat semigroup,
	and denote its periodization by
	\begin{equs}
	 \Phi_T(h)=\frac1{\sqrt{4\pi T}} \sum_{k\in \ZZ} \mathrm{e}^{-\frac{(h-k)^2}{4T}}=\sum_{k \in \ZZ} \mathrm{e}^{-4\pi^2k^2 T}
	 \mathrm{e}^{2\pi i k h}, \quad h\in [0,1).
	\end{equs}
	Note that $\Phi_T$ is smooth, $\Phi_T\geq 0$, $\|\Phi_T\|_{L^1}=\int_0^1 \Phi_T\, \dd h=1$, and
	for every $T\in(0,1]$
	\begin{equs}
	 \|\Phi_T\|_{L^\infty}\leq 1+\sum_{k \in \ZZ\setminus \{0\}} \mathrm{e}^{-4\pi^2k^2 T}\lesssim 1+\int_{\RR}
	 \widehat \varphi_T(\xi)\, \dd\xi
	 \lesssim 1+ \varphi_T(0)
	 \lesssim \frac{1}{\sqrt T}.
	\end{equs}
	Therefore, by interpolation, for every $r\in [1, \infty]$ and $T\in (0,1]$ we have 
	\begin{equation}
	\label{lp_period}
	\|\Phi_T\|_{L^r}\lesssim \sqrt{T}^{-(1-\frac{1}{r})}. 
	\end{equation}
	We also claim that for every $T\in (0,1]$
	\begin{equation}
	\label{pavlos}
	\sup_{|h|\leq \frac{1}{2}} \frac{h^2}{\sqrt T} \Phi_T(h)\lesssim 1.
	\end{equation}
	Indeed, using that for every $k\in \ZZ\setminus \{0\}$, $|h|\leq \frac12$, and $T\in (0,1]$, 
	$\frac{(h-k)^2}{4T}\geq \frac{\left(|k|-\frac{1}{2}\right)^2}{4T}\geq \frac{1}{16}$,
	we obtain\footnote{Recall that $\mathrm{e}^{-x}\lesssim \frac1x$ for $x\geq \frac{1}{16}$.}
	$\mathrm{e}^{-\frac{(h-k)^2}{4T}}\lesssim \frac{4T}{(h-k)^2}\lesssim \frac{T}{k^2}$.
	It then follows that
	\begin{align*}
	\frac{\sqrt{4\pi}h^2}{\sqrt T} \Phi_T(h) & =\frac{h^2}{T} \mathrm{e}^{-\frac{h^2}{4T}}
	+\sum_{k\in \ZZ\setminus \{0\}} \frac{h^2}T \mathrm{e}^{-\frac{(h-k)^2}{4T}}
	\lesssim \sup_{z\in \RR} z^2 \mathrm{e}^{-\frac{z^2}{4}}+h^2 \sum_{k\in \ZZ\setminus \{0\}} \frac{1}{k^2}\lesssim 1.
	\end{align*}
	
	By the semigroup property and the periodicity of $f$ we know that for $z\in (-\frac{1}{2}, \frac{1}{2})$,
		\begin{align*}
		f*\Phi_{2T}(z) - f*\Phi_T(z) = \int_{-\frac{1}{2}}^{\frac{1}{2}} \left(\partial_1^{-h}f * \Phi_{T}(z)\right) \Phi_{T}(h) \, \dd h.  
	\end{align*}
	Using Minkowski's inequality and Young's inequality for convolution with exponents 
	$1+\frac{1}{p} = \frac{1}{r}+\frac{1}{q}$ with $r\in [1, \infty)$ we deduce by \eqref{lp_period},
	\begin{equs}
		&\left(\int_{-\frac{1}{2}}^{\frac{1}{2}} |f*\Phi_{2T}(z) - f*\Phi_T(z)|^p \,\mathrm{d} z \right)^{\frac{1}{p}} 
		\lesssim \int_{-\frac{1}{2}}^{\frac{1}{2}} \Phi_T(h) \left( \int_{-\frac{1}{2}}^{\frac{1}{2}} |\partial_1^{-h}f * \Phi_{T}(z)|^p \, \mathrm{d} z \right)^{\frac{1}{p}} \, \dd h
		\\  
		&\quad \lesssim \left(\int_{-\frac{1}{2}}^{\frac{1}{2}} |\Phi_T(z)|^r \,\dd z\right)^{\frac{1}{r}}  \int_{-\frac{1}{2}}^{\frac{1}{2}} \Phi_T(h) \left(\int_{-\frac{1}{2}}^{\frac{1}{2}} |\partial_1^hf(z)|^q 
		\, \mathrm{d} z \right)^{\frac{1}{q}} \, \mathrm{d} h 
		\\
		&\quad \lesssim \frac{1}{\sqrt{T}^{\left(\frac{1}{q} - \frac{1}{p}\right)}} \int_{-\frac{1}{2}}^{\frac{1}{2}} \Phi_T(h)
		\left(\int_{-\frac{1}{2}}^{\frac{1}{2}} |\partial_1^hf(z)|^q \, \mathrm{d} z \right)^{\frac{1}{q}} \, \mathrm{d} h,
		\label{eq:conv_incr_est}
	\end{equs}
	where we also used a change of variables and periodicity to replace $-h$ by $h$. 
	
	To prove \eqref{eq:1d_besov_emb} we now write
	\begin{align*}
		f(z) = \sum_{k=1}^\infty \left(f * \Phi_{2\cdot 2^{-k}}(z) - f *\Phi_{2^{-k}}(z)\right) - f*\Phi_1(z),
	\end{align*}
	and obtain by \eqref{eq:conv_incr_est} that
	\begin{align}\label{eq:1d-Lp-besov-decomp}
		\left( \int_0^1 |f(z)|^p \, \mathrm{d} z \right)^{\frac{1}{p}} =
		\left( \int_{-\frac{1}{2}}^{\frac{1}{2}} |f(z)|^p \, \mathrm{d} z \right)^{\frac{1}{p}} 
		& \lesssim \Sigma^{(1)}+\Sigma^{(2)}  + \left(\int_{-\frac{1}{2}}^{\frac{1}{2}} |f*\Phi_1(z)|^p \, \dd z \right)^{\frac{1}{p}},
	\end{align}
	where 
	\begin{align*}
		\Sigma^{(1)} &=  \sum_{k=1}^\infty  {\sqrt{2}}^{k\left(\frac{1}{q} - \frac{1}{p}\right)} \int_{\{|h|\leq \frac{1}{2}: \, |h|\sqrt{2}^k\leq 1\}} \Phi_{2^{-k}}(h) \left(\int_{-\frac{1}{2}}^{\frac{1}{2}} |\partial_1^h f(z)|^q \, \mathrm{d} z \right)^{\frac{1}{q}} \, \mathrm{d} h, \\
		\Sigma^{(2)} &=  \sum_{k=1}^\infty  {\sqrt{2}}^{k\left(\frac{1}{q} - \frac{1}{p}\right)} 
		\int_{\{|h|\leq \frac{1}{2}:\, |h|\sqrt{2}^k> 1\}} \Phi_{2^{-k}}(h)
		\left(\int_{-\frac{1}{2}}^{\frac{1}{2}} |\partial_1^hf(z)|^q \, \mathrm{d} z \right)^{\frac{1}{q}} \, \mathrm{d} h.
	\end{align*}
	Using that by \eqref{lp_period}, $\sup_{|h|\leq \frac{1}{2}}|\Phi_{2^{-k}}(h)| \lesssim \sqrt{2}^k$, we can estimate $\Sigma^{(1)}$ as follows:
	\begin{equs}
		\Sigma^{(1)} 
		& \lesssim \int_{-\frac{1}{2}}^{\frac{1}{2}} \sum_{\{k\geq 1: \, {|h|\sqrt{2}}^k\leq 1\}} 
		{\sqrt{2}}^{k\left(\frac{1}{q} - \frac{1}{p}+1\right)} 
		\left(\int_{-\frac{1}{2}}^{\frac{1}{2}} |\partial_1^hf(z)|^q \, \mathrm{d} z \right)^{\frac{1}{q}} \, \mathrm{d} h
		\\
		& \lesssim \int_{-\frac{1}{2}}^{\frac{1}{2}} \frac{1}{|h|^{\frac{1}{q}-\frac{1}{p}}} \left(\int_{-\frac{1}{2}}^{\frac{1}{2}}
		|\partial_1^hf(z)|^q \, \mathrm{d} z \right)^{\frac{1}{q}} \, \frac{\mathrm{d} h}{|h|}.  
	\end{equs}
	For $\Sigma^{(2)}$, by \eqref{pavlos} and the fact that $\frac{1}{q} - \frac{1}{p}-1 <0$ (since $(p,q)\neq (\infty, 1)$),
	we have
	\begin{equs}
		\Sigma^{(2)} 
		& \lesssim  \sum_{k=1}^\infty {\sqrt{2}}^{k\left(\frac{1}{q} - \frac{1}{p}-1\right)} 
		\int_{\{|h|\leq \frac{1}{2}:\, |h|\sqrt{2}^k> 1\}} \frac{1}{|h|} \left(\int_{-\frac{1}{2}}^{\frac{1}{2}} 
		|\partial_1^hf(z)|^q \, \mathrm{d} z \right)^{\frac{1}{q}} \, \frac{\mathrm{d} h}{|h|}
		\\
		& \lesssim_{\frac{1}{q}-\frac{1}{p}} \int_{-\frac{1}{2}}^{\frac{1}{2}} \frac{1}{|h|^{\frac{1}{q}-\frac{1}{p}}} 
		\left(\int_{-\frac{1}{2}}^{\frac{1}{2}} |\partial_1^hf(z)|^q \, \mathrm{d} z \right)^{\frac{1}{q}}
		\, \frac{\mathrm{d} h}{|h|}.
	\end{equs}
	For the last term in \eqref{eq:1d-Lp-besov-decomp} we use that $f$ has vanishing average and periodicity, which implies that 
	$\int_{-\frac{1}{2}}^{\frac{1}{2}} f*\Phi_1(z+h) \, \dd h =0$ for every $z\in (-\frac{1}{2},\frac{1}{2})$, to obtain the bound 
	\begin{equs}
		&\left(\int_{-\frac{1}{2}}^{\frac{1}{2}} |f*\Phi_1(z)|^p \, \dd z \right)^{\frac{1}{p}} 
		= \left(\int_{-\frac{1}{2}}^{\frac{1}{2}} \left|f*\Phi_1(z)- \int_{-\frac{1}{2}}^{\frac{1}{2}} f*\Phi_1(z+h) \, \dd h\right|^p \, \dd z \right)^{\frac{1}{p}}
		\\
		&\quad\leq \int_{-\frac{1}{2}}^{\frac{1}{2}} \left(\int_{-\frac{1}{2}}^{\frac{1}{2}} |\partial_1^h f* \Phi_1(z)|^p \, \dd z \right)^{\frac{1}{p}} \, \dd h
		 \lesssim \int_{-\frac{1}{2}}^{\frac{1}{2}} \left(\int_{-\frac{1}{2}}^{\frac{1}{2}} |\partial_1^h f(z)|^q \, \dd z\right)^{\frac{1}{q}} \, \dd h
		\\
		&\quad \lesssim \int_{-\frac{1}{2}}^{\frac{1}{2}} \frac{1}{|h|^{\frac{1}{q}-\frac{1}{p}}} \left(\int_{-\frac{1}{2}}^{\frac{1}{2}} |\partial_1^h f(z)|^q \, \mathrm{d} z \right)^{\frac{1}{q}} \frac{\mathrm{d} h}{|h|},
		\label{eq:vanishing_aver_contr}
	\end{equs}
	where we also used Minkowski's inequality, Young's inequality for convolution and the fact that $\frac{1}{q} - \frac{1}{p}+1 >0$. 
	The right hand side of \eqref{eq:vanishing_aver_contr} is estimated by twice the right hand side of \eqref{eq:1d_besov_emb}, 
	so the conclusion follows.
\end{proof}

The next lemma allows us to connect the estimate \eqref{eq:GJO_clas} with regularity in Besov spaces.

\begin{lemma} \label{lem:besov-infty-p}
	There exists a constant $C>0$ such that for every $s\in(0,1)$, $p\in [1, \infty)$ and every periodic function $f:[0,1) \to \RR$
	the following estimate holds:
	\begin{equs}
		\sup_{h\in(0,1]} \frac{1}{h^s} \left(\int_0^1 |\partial_1^hf(z)|^p \, \dd z\right)^{\frac{1}{p}} 
		\leq C \sup_{h\in(0,1]} \frac{1}{h^s} 
		\left(\frac{1}{h}\int_0^h \int_0^1 |\partial_1^{h'}f(z)|^p \, \dd z \, \dd h'\right)^{\frac{1}{p}}. \label{eq:h_aver_contr}
	\end{equs}
	\end{lemma}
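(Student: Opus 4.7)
The plan is to prove the pointwise (in $h$) inequality
\begin{equs}
\frac{1}{h^s}\|\partial_1^h f\|_{L^p([0,1))} \leq 4 \frac{1}{h^s}\left(\frac{1}{h}\int_0^h\|\partial_1^{h'}f\|_{L^p([0,1))}^p\,\dd h'\right)^{\frac{1}{p}},
\end{equs}
which after taking the supremum over $h\in(0,1]$ on the left yields \eqref{eq:h_aver_contr} with $C=4$.

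The starting point is the two-scale identity $\partial_1^h f(z)=\partial_1^{h'} f(z)+\partial_1^{h-h'}f(z+h')$, valid for all $0<h'<h$, together with translation invariance of $\|\cdot\|_{L^p}$ on the torus; these combine via the triangle inequality into
\begin{equs}
\|\partial_1^h f\|_{L^p}\leq \|\partial_1^{h'} f\|_{L^p}+\|\partial_1^{h-h'} f\|_{L^p}.
\end{equs}
Averaging this bound over $h'\in(h/2,h)$ (so that the map $h'\mapsto h-h'$ takes values in $(0,h/2)$ and the change of variable $h''=h-h'$ is clean) and extending both resulting integrals to $(0,h)$ by positivity gives
\begin{equs}
\|\partial_1^h f\|_{L^p}\leq \frac{4}{h}\int_0^h \|\partial_1^{h'} f\|_{L^p}\,\dd h'.
\end{equs}

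Finally Jensen's inequality upgrades the $L^1$-in-$h'$ average to an $L^p$-in-$h'$ average,
\begin{equs}
\frac{1}{h}\int_0^h \|\partial_1^{h'}f\|_{L^p}\,\dd h' \leq \left(\frac{1}{h}\int_0^h \|\partial_1^{h'}f\|_{L^p}^p\,\dd h'\right)^{\frac{1}{p}}=\left(\frac{1}{h}\int_0^h\int_0^1 |\partial_1^{h'}f(z)|^p\,\dd z\,\dd h'\right)^{\frac{1}{p}},
\end{equs}
and dividing through by $h^s$ yields the claimed pointwise bound. Since the argument is elementary and the constant $C=4$ is explicit and independent of $s\in(0,1)$ and $p\in[1,\infty)$, there is no real obstacle: the only delicate choice is averaging over $(h/2,h)$ rather than $(0,h)$, which is needed so that the translated increment $\partial_1^{h-h'}f$ lands on scales comparable to $h$ and can be re-absorbed into the average of $\|\partial_1^{h'}f\|_{L^p}$ over $(0,h)$.
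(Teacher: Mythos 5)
Your proof is correct and follows essentially the same route as the paper: the same two-scale identity $\partial_1^h f = \partial_1^{h'}f + \partial_1^{h-h'}f(\cdot+h')$, the same averaging over $h'\in(h/2,h)$, and the same change of variable $h''=h-h'$; the only deviation is that you apply the triangle inequality at the level of $L^p$-norms and then invoke Jensen at the end, whereas the paper works directly with $\int|\partial_1^h f|^p$ via the elementary inequality $|a+b|^p\leq 2^{p-1}(|a|^p+|b|^p)$, avoiding Jensen. Incidentally your constant $4$ is slightly wasteful: after the substitution the two integrals live on the complementary ranges $(h/2,h)$ and $(0,h/2)$, so they combine exactly to $\int_0^h$ and you could have concluded with $C=2$.
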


\begin{proof}
	Let $h\in(0,1]$. Then for $h'\in(0,h]$ we have that
	\begin{equs}
		\int_0^1 |\partial_1^hf(z)|^p \, \dd z \leq 2^{p-1} \int_0^1 |\partial_1^{h'}f(z)|^p \, \dd z + 
		2^{p-1} \int_0^1 |\partial_1^{h-h'}f(z+h')|^p \, \dd z. 
	\end{equs}
	Integrating over $h'\in[\frac{h}{2},h]$ we obtain that
	\begin{equs}
		h \int_0^1 |\partial_1^hf(z)|^p \, \dd z \leq 2^p \int_{\frac{h}{2}}^h \int_0^1 |\partial_1^{h'}f(z)|^p \, \dd z \, \dd h' + 
		2^p\int_{\frac{h}{2}}^h \int_0^1 |\partial_1^{h-h'}f(z+h')|^p \, \dd z \, \dd h'.
	\end{equs}
	By the change of variables $h'' = h-h'$ and $z' = z + h -h''$, upon relabelling, we see that
	\begin{equs}
		\int_{\frac{h}{2}}^h \int_0^1 |\partial_1^{h-h'}f(z+h')|^p \, \dd z \, \dd h' = 
		\int_0^{\frac{h}{2}} \int_0^1 |\partial_1^{h'}f(z)|^p \, \dd z \, \dd h',
	\end{equs}
	where we also used periodicity in $z$. Hence we have proved that
	\begin{equs}
		\int_0^1 |\partial_1^hf(z)|^p \, \dd z \leq \frac{2^p}{h} 
		\int_0^h \int_0^1 |\partial_1^{h'}f(z)|^p \, \dd z \, \dd h',
	\end{equs}
	which in turn implies \eqref{eq:h_aver_contr}.
\end{proof}

\section{Stochastic estimates} \label{app:stochastic}	

We show that solutions of the linearized equation 
\begin{align}\label{eq:linearized}
	\mathcal{L}v = P\xi
\end{align}
almost surely have (negative) infinite total energy 
(see \eqref{eq:total-energy} for the definition) under the law of white noise. 

\begin{proposition}\label{prop:inf-energy}
	Assume that $\langle\cdot\rangle$ is the law of white noise. 
	If $v$ is the solution of vanishing average in $x_1$-direction to $\mathcal{L}v = P\xi$, then 
	$E_{tot}(v) = -\infty$ $\langle\cdot\rangle$-almost surely.
\end{proposition}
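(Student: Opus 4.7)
The plan is to regularize $v$ via the convolution semigroup and show that the regularized energy diverges to $-\infty$ almost surely. For $T>0$, set $v_T := \psi_T \ast v$; then $v_T$ is smooth, periodic with vanishing average in $x_1$, and satisfies $\mathcal{L}v_T = P\xi_T$, so $E_{tot}(v_T)$ is well-defined by \eqref{eq:total-energy}. I interpret the claim of the proposition as $E_{tot}(v_T) \to -\infty$ almost surely as $T \downarrow 0$.

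I decompose
\begin{equation*}
E_{tot}(v_T) = \Bigl[\mathcal{H}(v_T) - 2\int_{\TT^2} \xi v_T\,\dd x\Bigr] + \bigl[\mathcal{E}(v_T) - \mathcal{H}(v_T)\bigr].
\end{equation*}
Using Parseval and $\widehat v(k) = \widehat\Gamma(k)\widehat\xi(k)$ for $k_1 \neq 0$ (with $\widehat\Gamma$ as in \eqref{eq:gamma}), a direct Fourier computation yields
\begin{equation*}
M_T := 2\int_{\TT^2}\xi v_T\,\dd x - \mathcal{H}(v_T) = \sum_{k_1\neq 0}\widehat\psi_T(k)\bigl(2-\widehat\psi_T(k)\bigr)\,\widehat\Gamma(k)\,|\widehat\xi(k)|^2.
\end{equation*}
Every summand is non-negative, and since $0 \leq \widehat\psi_T \leq 1$ with $\widehat\psi_T \uparrow 1$ pointwise as $T\downarrow 0$, the coefficient $\widehat\psi_T(2-\widehat\psi_T)$ increases monotonically to $1$. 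Monotone convergence gives $M_T \uparrow \sum_{k_1\neq 0}\widehat\Gamma(k)|\widehat\xi(k)|^2$. The deterministic series $\sum_{k_1\neq 0}\widehat\Gamma(k) = \sum_{k_1\neq 0}|k_1|/(|k_1|^3+k_2^2)$ diverges: summing in $k_2$ yields an $O(|k_1|^{-1/2})$ contribution, which is non-summable in $k_1$ (this is precisely the statement that $\xi\notin \mathcal{C}^{-5/4}$). Since the $|\widehat\xi(k)|^2$ (grouped by conjugate symmetry) are independent non-negative random variables with uniformly positive means, Kolmogorov's three-series theorem yields $M_T \to +\infty$ almost surely.

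It remains to control the anharmonic remainder uniformly in $T$. Expanding the square,
\begin{equation*}
\mathcal{E}(v_T) - \mathcal{H}(v_T) = -\int_{\TT^2}\bigl(|\partial_1|^{-\frac12}\partial_2 v_T\bigr)\bigl(|\partial_1|^{-\frac12}\partial_1 v_T^2\bigr)\,\dd x + \tfrac14\bigl\||\partial_1|^{\frac12}v_T^2\bigr\|_{L^2}^2.
\end{equation*}
The quartic term is bounded by $C[v]_{\frac34-\varepsilon}^4$ uniformly in $T$ via the fractional Leibniz rule (Lemma~\ref{lem:leibniz}), the embeddings \eqref{eq:sobolev_to_besov}, \eqref{eq:besov_to_holder_1}, and the fact that convolution with $\psi_T$ does not increase the Hölder seminorm. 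The cubic cross-term, by Cauchy--Schwarz and the same estimate on the quartic piece, is bounded by $C\mathcal{H}(v_T)^{1/2}[v]_{\frac34-\varepsilon}^2$. Since $\widehat\psi_T^2 \leq \widehat\psi_T(2-\widehat\psi_T)$ (as $\widehat\psi_T\le 1$), we have $\mathcal{H}(v_T) \leq M_T$, hence
\begin{equation*}
E_{tot}(v_T) \leq -M_T + C M_T^{1/2}[v]_{\frac34-\varepsilon}^2 + C[v]_{\frac34-\varepsilon}^4.
\end{equation*}
With $M_T \to +\infty$ almost surely and $[v]_{\frac34-\varepsilon}$ almost surely finite (Corollary~\ref{cor:xi_v}), the right-hand side diverges to $-\infty$ almost surely.

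The main obstacle is the cubic cross-term: unlike the quartic term, it cannot be controlled purely by the (almost surely finite) Hölder seminorm of $v$, because $|\partial_1|^{-\frac12}\partial_2 v$ has negative regularity $-\frac14-$. The saving is that Cauchy--Schwarz bounds its magnitude by $\mathcal{H}(v_T)^{1/2}$ times an almost surely finite quantity, which is subdominant to the leading negative term $-M_T$, so the divergence of $M_T$ dictates the sign of the limit.
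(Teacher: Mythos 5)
Your proof is correct and follows essentially the same structure as the paper's. Both arguments compute in Fourier space that $\int_{\TT^2}\xi v\,\dd x = \mathcal{H}(v) = \sum_{k_1\neq 0}\widehat\Gamma(k)|\widehat\xi(k)|^2$, show this sum diverges almost surely by exploiting independence of the (conjugate-paired) Fourier modes of white noise, and bound the quartic term $\int(|\partial_1|^{-1/2}\partial_1\frac{v^2}{2})^2\,\dd x$ by $C[v]_{\frac34-\eps}^4$, which is almost surely finite by \eqref{eq:v_bd}. The differences are technical rather than structural. For the divergence of $\mathcal{H}(v)$ you invoke the three-series theorem (equivalently, two-series plus the a.s.\ convergence of $\sum\widehat\Gamma(k)(|\widehat\xi(k)|^2-\langle|\widehat\xi(k)|^2\rangle)$, using $\sum\widehat\Gamma(k)^2 = \|\Gamma\|_{L^2}^2<\infty$), whereas the paper proves an explicit lower bound $\frac{p}{2-p}$ on the divergence probability via indicator truncations and then upgrades it with Kolmogorov's $0$--$1$ law; both are valid. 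For the cubic cross-term, the paper applies Young's inequality to absorb $-2\int(|\partial_1|^{-1/2}\partial_2 v)(|\partial_1|^{-1/2}\partial_1\frac{v^2}{2})\,\dd x$ into $\frac12\mathcal{H}(v)$, arriving directly at $E_{tot}(v)\leq -\frac12\mathcal{H}(v)+3\cdot(\text{quartic})$, from which the conclusion is immediate; you keep the term via Cauchy--Schwarz and note it is $O(M_T^{1/2})=o(M_T)$, which also works but is slightly less economical — applying the same Young step to $\mathcal{E}(v_T)-\mathcal{H}(v_T)$ would give $E_{tot}(v_T)\leq -\frac12 M_T + C[v]_{\frac34-\eps}^4$ in one stroke. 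Finally, you regularize with $\psi_T$ and take $T\downarrow 0$, making every manipulation pointwise rigorous; the paper performs the same algebra formally at $T=0$, interpreting the identity $\int\xi v = \mathcal{H}(v)$ as an equality of (possibly infinite) non-negative Fourier series. Your regularization is a legitimate and arguably more careful way of reading the statement $E_{tot}(v)=-\infty$.
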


\begin{proof}
Recall that in Fourier space we have an explicit representation of the solution to \eqref{eq:linearized} as
\begin{align*}
	\widehat{v}(k) &= \frac{\widehat{\xi}(k)}{k_1^2 + |k_1|^{-1}k_2^2} \quad \text{for } k_1 \neq 0 \quad \text{and} \quad 
	\widehat{v}(0, k_2) = 0 \quad \text{for all } k_2 \in 2\pi\ZZ.
\end{align*}
A short calculation shows that the harmonic part of the energy is
\begin{align*}
	\mathcal{H}(v) = \int_{\TT^2} \xi v \,\mathrm{d}x,
\end{align*}
so that 
\begin{align*}
	E_{tot}(v) = -\mathcal{H}(v) + \int_{\TT^2} \left( |\partial_1|^{-\frac{1}{2}} \partial_1 \frac{v^2}{2} \right)^2\,\mathrm{d}x - 2 \int_{\TT^2} \left( |\partial_1|^{-\frac{1}{2}} \partial_1 \frac{v^2}{2} \right) \left( |\partial_1|^{-\frac{1}{2}} \partial_2 v \right)\,\mathrm{d}x.
\end{align*}
By Young's inequality, we have
\begin{align}\label{eq:Etot-bound}
	E_{tot}(v) \leq -\frac{1}{2} \mathcal{H}(v) + 3 \int_{\TT^2} \left( |\partial_1|^{-\frac{1}{2}} \partial_1 \frac{v^2}{2} \right)^2\,\mathrm{d}x.
\end{align}
By \eqref{eq:besov_to_holder_1}, \eqref{eq:sobolev_to_besov} and \cite[Lemma 12]{IO19}, we may estimate
\begin{align*}
	 \int_{\TT^2} \left(|\partial_1|^{-\frac{1}{2}} \partial_1\frac{v^2}{2}\right)^2 \, \dd x &
	 = \int_{\TT^2} \left(R_1|\partial_1|^{\frac{1}{2}} \frac{v^2}{2}\right)^2 \, \dd x
	 = \int_{\TT^2} \left(|\partial_1|^{\frac{1}{2}}\frac{v^2}{2}\right)^2 \, \dd x 
	  \lesssim [v^2]_{\frac{1}{2} + \varepsilon}^2 \lesssim [v]_{\frac{3}{4}-\varepsilon}^4,
\end{align*}
where $[v]_{\frac{3}{4}-\varepsilon}$ is finite $\lng\cdot\rng$-almost surely by \eqref{eq:v_bd}

We next show that 
\begin{equs}
	\mathcal{H}(v) = \sum_{k_1\neq 0} \frac{|\widehat{\xi}(k)|^2}{k_1^2 + |k_1|^{-1} k_2^2} 
\end{equs}
diverges $\lng\cdot\rng$-almost surely. Since $\hat \xi(-k) = -\overline{\hat \xi(k)}$, we have that
\begin{equs}
 \mathcal{H}(v) = 2 \sum_{k\in 2\pi \ZZ^2\setminus\{k_1\leq 0\}} \frac{|\widehat{\xi}(k)|^2}{k_1^2 + |k_1|^{-1} k_2^2}
 = : 2 \sum_{k\in 2\pi \ZZ^2\setminus\{k_1\leq 0\}} a_k |\widehat{\xi}(k)|^2.
\end{equs}
By the independence of $\{|\hat\xi(k)|\}_{k\in 2\pi \ZZ^2\setminus\{k_1\leq 0\}}$ and Kolmogorov's $0$-$1$ law, we know that the probability of the event 
$\{\mathcal{H}(v)= +\infty\}$ is either $1$ or $0$. Hence, it is enough to show that $\lng \{\mathcal{H}(v)= +\infty\}\rng >0$. 
We first notice that 
\begin{equs}
 \sum_{k\in 2\pi \ZZ^2\setminus\{k_1\leq 0\}} a_k |\widehat{\xi}(k)|^2 \geq 
 \sum_{k\in 2\pi \ZZ^2\setminus\{k_1\leq 0\}} a_k \mathbf{1}_{\{|\widehat{\xi}(k)|^2\geq 1\}}
 =: \sum_{k\in 2\pi \ZZ^2\setminus\{k_1\leq 0\}} a_k X_k
\end{equs}
and since the random variables $\xi(k)$ are identically distributed, there exists $p\in(0,1)$ such that 
$\lng X_k\rng = p$ for every $k\in2\pi\ZZ^2\setminus\{k_1\leq 0\}$. Given $M\geq 1$, there exists 
a finite subset $J\subset2\pi\ZZ^2\setminus\{k_1\leq 0\}$ such that $\frac{p}{2} \sum_{k\in J} a_k \geq M$ and the following estimate
holds
\begin{equs}
 p \sum_{k\in J} a_k & = \left\lng \sum_{k\in J} a_k X_k \right\rng 
 \\
 & \leq \frac{p}{2} \sum_{k\in J} a_k \left(1- \bigg\lng \bigg\{\sum_{k\in J} a_k X_k \geq M\bigg\}\bigg\rng\right)
 + \sum_{k\in J} a_k \bigg\lng \bigg\{\sum_{k\in J} a_k X_k \geq M\bigg\}\bigg\rng.
\end{equs}
Then, it is easy to see that
\begin{equs}
 \bigg\lng \bigg\{\sum_{k\in J} a_k X_k \geq M\bigg\}\bigg\rng \geq \frac{p}{2-p} >0,
\end{equs}
which in turn implies 
\begin{equs} 
 \bigg\lng \bigg\{\sum_{k\in 2\pi\ZZ^2\setminus\{k_1\leq 0\}} a_k X_k =+\infty \bigg\}\bigg\rng
 = \lim_{M\uparrow +\infty} 
 \bigg\lng \bigg\{\sum_{k\in 2\pi\ZZ^2\setminus\{k_1\leq 0\}} a_k X_k \geq M\bigg\}\bigg\rng \geq \frac{p}{2-p} >0.
\end{equs}
Thus, we obtain that
\begin{equs}
 \lng \{\mathcal{H}(v)= +\infty\}\rng \geq \bigg\lng \bigg\{\sum_{k\in 2\pi\ZZ^2\setminus\{k_1\leq 0\}} a_k X_k =+\infty \bigg\}\bigg\rng>0,
\end{equs}
which proves the desired claim. 
\end{proof}

The next lemma is a Kolmogorov-type criterion for periodic random fields.

\begin{lemma} \label{lem:mod} Let $\{g(x)\}_{x\in \TT^2}$ be a random field and assume that for some 
$\alpha\in (0, \frac{3}{2})$ and every $1\leq p<\infty$ 
\begin{equs}
 \sup_{T\in (0,1]} \left(T^{\frac{1}{3}}\right)^\alpha \sup_{x\in \TT^2} \lng |g_T(x)|^p \rng^{\frac{1}{p}} <\infty.
\end{equs}
Then, for every $\varepsilon\in(0,\frac{3}{2}-\alpha)$, $g\in\C^{-\alpha-\varepsilon}$ 
almost surely and for every $1\leq p<\infty$ there exists $C(\eps, p)>0$ such that 
\begin{equs}
 \left\lng [g]_{-\alpha-\varepsilon}^p \right\rng^{\frac{1}{p}} \leq C(\eps,p).
\end{equs}
\end{lemma}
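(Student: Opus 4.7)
The strategy is to promote the single-point-in-$L^p_{\lng\cdot\rng}$ hypothesis to an $L^\infty_x$-in-$L^p_{\lng\cdot\rng}$ estimate on $\|g_T\|_{L^\infty}$, at the cost of an arbitrarily small loss $\varepsilon$ in the scaling exponent, and then to invoke the heat-kernel characterization of negative H\"older norms. By \eqref{eq:neg_hoelder_char}, for $\alpha+\varepsilon\in(0,\tfrac{3}{2})\setminus\{\tfrac{1}{2},1\}$ it suffices to prove
\begin{align*}
  \lng \sup_{T\in(0,1]}(T^{\tfrac{1}{3}})^{(\alpha+\varepsilon)p}\|g_T\|_{L^\infty}^p\rng \leq C(\varepsilon,p)
\end{align*}
for all sufficiently large $p$; the case of arbitrary $p\geq 1$ then follows by Jensen's inequality, and the two critical exponents $\alpha+\varepsilon\in\{\tfrac{1}{2},1\}$ are reached by running the argument for any $\varepsilon'\in(0,\varepsilon)$ with $\alpha+\varepsilon'\notin\{\tfrac{1}{2},1\}$ and appealing to the continuous embedding $\C^{-\alpha-\varepsilon'}\hookrightarrow\C^{-\alpha-\varepsilon}$ (cf.~\cite[Remark 2]{IO19}).

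First I would reduce the supremum in $T$ to a dyadic one. For $T\in(2^{-(n+1)},2^{-n}]$ the semigroup property $g_T=g_{2^{-(n+1)}}*\Psi_{T-2^{-(n+1)}}$, Young's inequality, and the uniform bound $\|\Psi_t\|_{L^1}\lesssim 1$ for $t\in(0,1]$ (which follows from \eqref{lp_psi_per} with $\alpha=\beta=0,\,p=1$) yield $\|g_T\|_{L^\infty}\lesssim\|g_{2^{-(n+1)}}\|_{L^\infty}$, while $(T^{\tfrac{1}{3}})^{\alpha+\varepsilon}\lesssim(2^{-(n+1)/3})^{\alpha+\varepsilon}$; so it suffices to control the dyadic supremum $\sup_{n\geq 0}(2^{-n/3})^{(\alpha+\varepsilon)p}\|g_{2^{-n}}\|_{L^\infty}^p$ in $L^1_{\lng\cdot\rng}$. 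At each dyadic scale $T_n=2^{-n}$ I would apply once more the semigroup identity $g_{T_n}=g_{T_n/2}*\Psi_{T_n/2}$ together with Young's inequality with exponents $(p,\tfrac{p}{p-1},\infty)$ and estimate \eqref{lp_psi_per} in the form $\|\Psi_{T_n/2}\|_{L^{p/(p-1)}}\lesssim(T_n^{\tfrac{1}{3}})^{-5/(2p)}$. Taking $p$-th expectations and invoking Fubini together with the standing hypothesis gives
\begin{align*}
  \lng\|g_{T_n}\|_{L^\infty}^p\rng
  &\lesssim (T_n^{\tfrac{1}{3}})^{-5/2}\int_{\TT^2}\lng|g_{T_n/2}(x)|^p\rng\,\dd x
  \lesssim (T_n^{\tfrac{1}{3}})^{-\tfrac{5}{2}-\alpha p}.
\end{align*}

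Multiplying by $(T_n^{\tfrac{1}{3}})^{(\alpha+\varepsilon)p}$ produces $(T_n^{\tfrac{1}{3}})^{\varepsilon p-\tfrac{5}{2}}=2^{-n(\varepsilon p-5/2)/3}$, which is geometrically summable in $n\geq 0$ as soon as $p>\tfrac{5}{2\varepsilon}$; a crude union bound then controls the dyadic supremum in $L^1_{\lng\cdot\rng}$ for such $p$, completing the argument. The only mild subtlety I anticipate is the obligatory $\tfrac{5}{2p}$-loss when passing from $L^p_x$ to $L^\infty_x$ via convolution with $\Psi_{T_n/2}$, which is what forces the arbitrarily small loss $\varepsilon$ in the final exponent and compels one to take $p$ large before falling back on Jensen's inequality for the remaining values of $p$; morally this is the anisotropic analogue of the classical Kolmogorov continuity step.
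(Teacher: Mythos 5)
Your argument is correct and follows essentially the same route as the paper's proof: reduce the supremum over $T$ to dyadic scales via the semigroup property, pay the factor $(T^{\frac13})^{-\frac{5}{2p}}$ when passing from $L^p_x$ to $L^\infty_x$ by Young's inequality against $\Psi_{T/2}$, demand $p>\frac{5}{2\varepsilon}$ for geometric summability, fall back on Jensen for smaller $p$, and handle the critical exponents $\alpha+\varepsilon\in\{\tfrac12,1\}$ via the embedding from \cite[Remark 2]{IO19}. The one cosmetic difference is that you bound the dyadic supremum by a union bound on $p$-th powers (so a $\sum_n\lng\cdot\rng$ argument), whereas the paper first proves a pathwise inequality $[g]_{-\alpha-\varepsilon}\lesssim\sum_n 2^{-\frac{n}{3}(\varepsilon-\frac{5}{2p_\varepsilon})}2^{-\frac{n}{3}\alpha}\|g_{2^{-n}}\|_{L^{p_\varepsilon}}$ and then applies Minkowski's inequality in $L^p_{\lng\cdot\rng}$; both manipulations are equivalent in strength and yield the same threshold on $p$.
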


\begin{proof} First assume that $\alpha+\eps \neq 1,\frac{1}{2}$.
Let $p_\varepsilon>\frac{5}{2\varepsilon}$. We first claim that
\begin{equs}
 {} [g]_{-\alpha-\eps}
 \lesssim \sum_{n\geq 1}  
 \left(2^\frac{1}{3}\right)^{-n\left(\eps-\frac{5}{2p_\varepsilon}\right)}
 \left(2^{\frac{1}{3}}\right)^{-n\alpha} \|g_{2^{-n}}\|_{L^{p_\varepsilon}}.
\end{equs}
Indeed, for every $T\in(0,1]$ we can find $n\geq 1$ such that $2^{-n} < T \leq 2^{-n+1}$ and by the semigroup property and Remark 
\ref{rem:perio}, we obtain 
\begin{align*}
 \left(T^{\frac{1}{3}}\right)^{\alpha+\varepsilon} \|g_T\|_{L^\infty} 
 &\lesssim \left(2^{\frac{1}{3}}\right)^{-n(\alpha+\varepsilon)} \|(g_{2^{-n}})_{T-2^{-n}}\|_{L^\infty} 
 \lesssim \left(2^{\frac{1}{3}}\right)^{-n(\alpha+\varepsilon)}  \|g_{2^{-n}}\|_{L^\infty}
 \\
 & \lesssim \left(2^{\frac{1}{3}}\right)^{-n(\alpha+\varepsilon)}
 \left(2^\frac{1}{3}\right)^{(n+1)\frac{5}{2p_\varepsilon}} \|g_{2^{-n-1}}\|_{L^{p_\varepsilon}}
 \\
 & \lesssim \sum_{n\geq 1}  
 \left(2^\frac{1}{3}\right)^{-n\left(\eps-\frac{5}{2p_\varepsilon}\right)}
 \left(2^{\frac{1}{3}}\right)^{-n\alpha} \|g_{2^{-n}}\|_{L^{p_\varepsilon}}
\end{align*}
and, taking the supremum over all $T\in (0,1]$, we conclude the above claim via \eqref{eq:neg_hoelder_char}.

Then, for every $p\geq p_\varepsilon$, Minkowski's and Jensen's inequality imply
\begin{equs}
 \left\lng [g]_{-\alpha-\varepsilon}^p \right\rng^{\frac{1}{p}} 
 & \lesssim \left\lng\left(\sum_{n\geq 1}  
 \left(2^\frac{1}{3}\right)^{-n\left(\eps-\frac{5}{2p_\varepsilon}\right)}
 \left(2^{\frac{1}{3}}\right)^{-n\alpha} \|g_{2^{-n}}\|_{L^{p_\varepsilon}}\right)^p\right\rng^{\frac{1}{p}}
 \\
 & \leq \sum_{n\geq 1} \left(2^{\frac{1}{3}}\right)^{-n \left(\varepsilon - \frac{5}{2p_\varepsilon}\right)}   
 \left(2^{\frac{1}{3}}\right)^{-n\alpha} \left\lng \|g_{2^{-n}}\|_{L^{p_\varepsilon}}^p \right\rng^{\frac{1}{p}} 
 \\
 & \leq \sum_{n\geq 1} \left(2^{\frac{1}{3}}\right)^{-n \left(\varepsilon - \frac{5}{2p_\varepsilon}\right)}   
 \left(2^{\frac{1}{3}}\right)^{-n\alpha} \left\lng\|g_{2^{-n}}\|_{L^p}^p\right\rng^{\frac{1}{p}}
 \\
 & \lesssim \sum_{n\geq 1} \left(2^{\frac{1}{3}}\right)^{-n \left(\varepsilon - \frac{5}{2p_\varepsilon}\right)}
 \sup_{T\in(0,1]} \left(T^{\frac{1}{3}}\right)^\alpha \sup_{x\in \TT^2} \lng|g_T(x)|^p\rng^{\frac{1}{p}}, 
\end{equs}
where in the final step we used the estimate $\left\lng\|g_T\|_{L^p}^p\right\rng^{\frac{1}{p}}\leq \sup_{x\in \TT^2} \lng|g_T(x)|^p\rng^{\frac{1}{p}}$. As $\eps>\frac{5}{2 p_{\eps}}$, our hypothesis yields the conclusion. 
For $p\in [1, p_\eps)$, one concludes via Jensen's inequality.

In the critical case $\alpha+\eps=1,\frac{1}{2}$, one considers $\gamma<\alpha+\eps$ and applies the above to conclude $g\in \C^{-\gamma}$, which together with \cite[Remark 2]{IO19} gives $g\in \C^{-\alpha-\eps}$.
\end{proof}


\section{Some estimates for the linear equation}

\begin{lemma}\label{lem:regularised}
There exists $C>0$ such that for every $\eps\in (0,\frac1{8})$ and every $\xi\in\mathcal{C}^{-\frac{5}{4}-\eps}$,
the solution $v$ of vanishing average in $x_1$ to the equation $\mathcal{L} v = P\xi$ satisfies 
$|\partial_1|^{-1} \partial_2 v \in \mathcal{C}^{\frac{1}{4}-\eps}$ with 
	\begin{align*}
		\left[|\partial_1|^{-1} \partial_2 v\right]_{\frac{1}{4}-\eps} \leq C [\xi]_{-\frac{5}{4}-\eps}.
	\end{align*}
\end{lemma}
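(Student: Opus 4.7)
The plan is to reduce the claim to a direct Schauder-type estimate for the operator $\mathcal{A}=|\partial_1|^3-\partial_2^2=|\partial_1|\mathcal{L}$, of which $\psi_T$ is the convolution semigroup; once one identifies the right operator, the estimate is almost immediate from the negative and positive H\"older characterizations \eqref{eq:neg_hoelder_char}--\eqref{eq:pos_hoelder_A_char}.

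First, I would verify in Fourier space that $g:=|\partial_1|^{-1}\partial_2 v$ satisfies the identity
\begin{equs}
\mathcal{A} g = \partial_2 P\xi.
\end{equs}
Indeed, since $v$ has vanishing average in $x_1$, for every $k\in(2\pi\ZZ)^2$ with $k_1\neq 0$ one has $\widehat{v}(k)=\widehat{\xi}(k)/(k_1^2+|k_1|^{-1}k_2^2)$, so that
\begin{equs}
\widehat{g}(k) = \frac{\mathrm{i}\,k_2}{|k_1|\bigl(k_1^2+|k_1|^{-1}k_2^2\bigr)}\widehat{\xi}(k) = \frac{\mathrm{i}\,k_2}{|k_1|^3+k_2^2}\widehat{\xi}(k),
\end{equs}
which reads $\widehat{\mathcal{A} g}(k) = \mathrm{i}\,k_2\widehat{\xi}(k)=\widehat{\partial_2 P\xi}(k)$ for $k_1\neq 0$ (and both sides vanish for $k_1=0$). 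In particular $g$ has vanishing average in $x_1$, so that $[g]_{\frac{1}{4}-\varepsilon}$ is controlled via the characterization \eqref{eq:pos_hoelder_A_char} (note $\frac{1}{4}-\eps\in(0,1)$ is noncritical for $\eps\in(0,\tfrac18)$).

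Next, I would bound $\|T\mathcal{A} g_T\|_{L^\infty}$ for $T\in(0,1]$ by commuting convolution with $\psi_T$ through the identity above and using the semigroup property. This gives $\mathcal{A} g_T=(\mathcal{A} g)_T=(\partial_2 P\xi)_T$, and therefore, writing $\xi_T=(\xi_{T/2})_{T/2}$ and using Young's inequality for convolution (as in Remark~\ref{rem:perio}),
\begin{equs}
\|T\mathcal{A} g_T\|_{L^\infty} = T\,\|(\partial_2\psi_{T/2})\ast P\xi_{T/2}\|_{L^\infty} \leq T\,\|\partial_2 \psi_{T/2}\|_{L^1(\RR^2)}\,\|P\xi_{T/2}\|_{L^\infty}.
\end{equs}
By \eqref{lp_psi} one has $\|\partial_2\psi_{T/2}\|_{L^1(\RR^2)}\lesssim (T^{\frac{1}{3}})^{-\frac{3}{2}}$, while the characterization \eqref{eq:neg_hoelder_char} of negative H\"older norms (valid since $-\frac{5}{4}-\eps\in(-\frac{3}{2},0)\setminus\{-1,-\frac{1}{2}\}$ for $\eps\in(0,\tfrac18)$), together with the boundedness of $P$ on all these spaces, yields
\begin{equs}
\|P\xi_{T/2}\|_{L^\infty}\lesssim (T^{\frac{1}{3}})^{-\frac{5}{4}-\eps}[\xi]_{-\frac{5}{4}-\eps}.
\end{equs}
Combining the two bounds gives $\|T\mathcal{A} g_T\|_{L^\infty}\lesssim (T^{\frac{1}{3}})^{\frac{1}{4}-\eps}[\xi]_{-\frac{5}{4}-\eps}$, and taking the supremum over $T\in(0,1]$ in \eqref{eq:pos_hoelder_A_char} concludes the proof.

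I do not anticipate a significant obstacle: the only delicate point is to justify that the symbolic manipulations above are legitimate at the level of periodic distributions of vanishing $x_1$-average, which is transparent once expressed in Fourier coefficients. Everything else is an application of Young's inequality on the heat kernel $\psi_T$ and the standard H\"older-norm characterizations of Lemma~\ref{lem:hoelder_char}.
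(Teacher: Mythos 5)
Your proposal is correct and follows essentially the same route as the paper's proof: identify $g:=|\partial_1|^{-1}\partial_2 v$ with $\mathcal{A}g=\partial_2 P\xi$, use the semigroup property together with Young's inequality to bound $\|T\mathcal{A}g_T\|_{L^\infty}$ by $T\,\|\partial_2\psi_{T/2}\|_{L^1}\,\|P\xi_{T/2}\|_{L^\infty}$, then apply \eqref{lp_psi} and the negative/positive H\"older characterizations of Lemma~\ref{lem:hoelder_char}. The Fourier-space verification and the check that $\frac14-\eps$ is a noncritical positive exponent for $\eps\in(0,\frac18)$ are both correct and match the paper's argument.
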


\begin{proof} 
	Recalling our notation $\mathcal A=|\partial_1|{\mathcal L}$, we have that 
	\begin{align*}
		P \xi_T =\mathcal{L} v_T = |\partial_1|^{-1} \mathcal{A} v_T.
	\end{align*}
	For $g=|\partial_1|^{-1} \partial_2 v$, this yields 
	\begin{align*}
		\mathcal{A} g_T = \mathcal{A} |\partial_1|^{-1} \partial_2 v_T = \partial_2 P \xi_T.
	\end{align*}
	Hence, for $T\in (0,1]$ we have that
	\begin{align*}
		\|T\mathcal{A}g_{T}\|_{L^{\infty}} 
		&= T \|\partial_2 P \xi_{T} \|_{L^{\infty}}
		= T \|P \xi_{\frac{T}{2}} * \partial_2 \psi_{\frac{T}{2}} \|_{L^{\infty}}
		= 2^{\frac{1}{2}} \left( T^{\frac{1}{3}} \right)^{3-\frac{3}{2}} \| P \xi_{\frac{T}{2}} * (\partial_2 \psi)_{\frac{T}{2}} \|_{L^{\infty}} \\
		&\lesssim \left( T^{\frac{1}{3}} \right)^{\frac{3}{2}} \| P \xi_{\frac{T}{2}}\|_{L^{\infty}} 
		\lesssim \left( T^{\frac{1}{3}} \right)^{\frac{1}{4}-\eps} [P\xi]_{-\frac{5}{4}-\eps},
	\end{align*}
	where we used the characterisation of negative Hölder spaces from Lemma \ref{lem:hoelder_char}.
	Note that the implicit constant is universal for $\eps$ small, in particular for $\eps\in (0, \frac18)$. Hence, we obtain that
	\begin{align*}
		\left[|\partial_1|^{-1} \partial_2 v\right]_{\frac{1}{4}-\eps} = [g]_{\frac{1}{4}-\eps}
		\lesssim \sup_{T\in (0,1]} \left( T^{\frac{1}{3}} \right)^{-\frac{1}{4}+\eps} \|T\mathcal{A}g_{T}\|_{L^{\infty}}
		\lesssim [\xi]_{-\frac{5}{4}-\eps}.
	\end{align*}
\end{proof}

%
%
%
%
%
%
%
%

\begin{proposition} \label{prop:sobolev_reg} There exists a constant $C>0$ such that for every 
$\delta \xi\in L^2(\TT^2)$, the solution $\delta v$ of vanishing average in $x_1$ to 
$\mathcal{L}\delta v = P \delta \xi$ satisfies the following estimates:
 \begin{equs}
  {[\delta v]}_{\frac{3}{4}} & \leq C \|\delta \xi\|_{L^2}, \label{eq:hoelder_reg}
  \\
  \|\partial_1 \delta v\|_{L^{10}} & \leq C \|\delta \xi\|_{L^2}, \label{eq:sobolev_reg_1} 
  \\
  \|\partial_2 \delta v\|_{L^{\frac{10}{3}}} & \leq C \|\delta \xi\|_{L^2}. \label{eq:sobolev_reg_3}
 \end{equs}
 \end{proposition}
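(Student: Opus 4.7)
The strategy for all three estimates is identical: pass to the Fourier side, where the explicit solution formula
\begin{equation*}
\widehat{\delta v}(k)=\frac{\widehat{\delta\xi}(k)}{k_1^2+|k_1|^{-1}k_2^2}\quad(k_1\neq 0)
\end{equation*}
shows that each relevant seminorm of $\delta v$ is controlled by a Fourier multiplier of the form
$m_{\alpha,\beta}(k):=\frac{|k_1|^{\alpha}|k_2|^{\beta}}{k_1^2+|k_1|^{-1}k_2^2}$. The key algebraic observation is that, setting $a=k_1^2$ and $b=|k_1|^{-1}k_2^2$, we have $|k_1|^{\alpha}|k_2|^{\beta}=a^{\alpha/2}(ab)^{\beta/2}=a^{(\alpha+\beta)/2}b^{\beta/2}$, so
\begin{equation*}
m_{\alpha,\beta}(k)\;=\;\frac{a^{(\alpha+\beta)/2}\,b^{\beta/2}}{(a+b)^{2}}\;\leq\;1
\quad\text{whenever}\quad \tfrac{\alpha+\beta}{2}+\tfrac{\beta}{2}\leq 2,\ \text{i.e.\ }\alpha+\beta\leq 4-\beta,
\end{equation*}
by weighted AM--GM. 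Equivalently, the multiplier is $L^{\infty}$ bounded whenever $\alpha+\tfrac{3}{2}\beta\leq 2$, which is exactly the expected gain of two anisotropic derivatives from $\mathcal{L}^{-1}$.

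For \eqref{eq:sobolev_reg_1} I would apply the anisotropic Sobolev embedding \eqref{eq:H^1_an_emb} to $f=\partial_1\delta v$ (which still has vanishing average in $x_1$), reducing matters to the bound
\begin{equation*}
\|\partial_1^2\delta v\|_{L^2}+\||\partial_2|^{\frac{2}{3}}\partial_1\delta v\|_{L^2}\lesssim \|\delta\xi\|_{L^2}.
\end{equation*}
By Plancherel this amounts to the boundedness of $m_{2,0}$ and $m_{1,2/3}$, and both satisfy $\alpha+\tfrac{3}{2}\beta\leq 2$, so the observation above applies. For \eqref{eq:sobolev_reg_3} the same recipe works with \eqref{eq:H^1/2_an_emb} applied to $f=\partial_2\delta v$ (which has vanishing average in $x_1$ because $\partial_2$ preserves this property), reducing to the $L^2$ estimate of multipliers $m_{1/2,1}$ and $m_{0,4/3}$, again of critical degree.

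For \eqref{eq:hoelder_reg} I would use the Hölder embedding of Lemma \ref{lem:hoelder-embedding} with $s=2$, which yields
\begin{equation*}
[\delta v]_{\frac{3}{4}}=[\delta v]_{-\frac{5}{4}+2}\lesssim \||\partial_1|^{2}\delta v\|_{L^2}+\||\partial_2|^{\frac{4}{3}}\delta v\|_{L^2},
\end{equation*}
and these are exactly the $L^2$-norms associated to the multipliers $m_{2,0}$ and $m_{0,4/3}$ already handled above. Note that the exclusion set $\{1/4,3/4,5/4,9/4\}$ in Lemma \ref{lem:hoelder-embedding} does not contain $s=2$, so the embedding applies.

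I do not anticipate any serious obstacle: once the multiplier calculation $m_{\alpha,\beta}\leq 1$ for $\alpha+\tfrac{3}{2}\beta\leq 2$ is in place, all three estimates fall out by reading off $(\alpha,\beta)$ in each case and invoking the appropriate embedding from Appendix \ref{app:hoelder_spaces}. The only minor care needed is that $\delta v$ (and $\partial_1\delta v$, $\partial_2\delta v$) have vanishing average in $x_1$, which is required both for the Fourier representation to make sense (no issue at $k_1=0$) and for the embeddings \eqref{eq:H^1_an_emb}, \eqref{eq:H^1/2_an_emb}, and Lemma \ref{lem:hoelder-embedding} to apply.
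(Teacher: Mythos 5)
Your approach is essentially the same as the paper's: Plancherel on the explicit Fourier representation of $\mathcal{L}^{-1}P$, then the anisotropic Sobolev embeddings of Lemma~\ref{lem:an_embedding} applied to $\partial_1\delta v$ and $\partial_2\delta v$, and Lemma~\ref{lem:hoelder-embedding} with $s=2$ for the Hölder bound. The applications and the final criterion $\alpha+\tfrac{3}{2}\beta\le 2$ are correct, and each of your six multipliers sits on the critical line, so the conclusions do follow. However, the algebra you give to justify the criterion is off. With $a=k_1^2$ and $b=|k_1|^{-1}k_2^2$ one has $k_2^2=|k_1|\,b=a^{1/2}b$, hence $|k_2|^\beta=a^{\beta/4}b^{\beta/2}$, \emph{not} $(ab)^{\beta/2}=|k_1|^{\beta/2}|k_2|^{\beta}$. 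The correct numerator is therefore $|k_1|^\alpha|k_2|^\beta=a^{\alpha/2+\beta/4}\,b^{\beta/2}$, and, since the symbol of $\mathcal{L}$ is $a+b$ (not $(a+b)^2$), the relevant bound is
\begin{equation*}
\frac{a^{\alpha/2+\beta/4}\,b^{\beta/2}}{a+b}\le 1\quad\text{whenever}\quad \frac{\alpha}{2}+\frac{\beta}{4}+\frac{\beta}{2}\le 1,\ \text{i.e.}\ \alpha+\tfrac{3}{2}\beta\le 2.
\end{equation*}
The condition you actually derive, $\tfrac{\alpha+\beta}{2}+\tfrac{\beta}{2}\le 2$ i.e.\ $\alpha+2\beta\le 4$, is not equivalent to $\alpha+\tfrac32\beta\le 2$ (take $\alpha=0,\beta=2$); you reach the right criterion only because you quote the scaling heuristic at the end. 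The paper avoids this by citing a single concrete Young inequality, $|k_2|^{8/3}\lesssim k_1^4+k_2^4/k_1^2$, which is exactly the bound for $m_{0,4/3}$, and then says "the same argument" handles the other pairs; this is equivalent to what you want to do, and cleaner than the abstract $m_{\alpha,\beta}$ formalism once the algebra is fixed.
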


\begin{proof} Writing $\partial_1^2 \delta v$ and $|\partial_2|^{\frac{4}{3}} \delta v$ as Fourier series 
and using Young's inequality in the form
%
 %
$ |k_2|^{\frac{8}{3}}\lesssim k_1^4+\frac{k_2^4}{k_1^2}$
 %
%
for every $k_1, k_2\in 2\pi \ZZ$, we obtain that 
\begin{equs}
 \|\partial^2_1 \delta v\|_{L^2}^2 + \||\partial_2|^{\frac{4}{3}} \delta v\|_{L^2}^2 
 & \lesssim \|{\mathcal L}\delta v\|_{L^2}^2 \lesssim \|P\delta \xi\|_{L^2}^2  \lesssim \|\delta \xi\|_{L^2}^2,
\end{equs}
which leads to \eqref{eq:hoelder_reg} via \eqref{eq:s_emb} for $s=2$. Using the same argument, we also obtain that 
\begin{equs}
 \|\partial_1^2\delta v\|_{L^2}^2 + \||\partial_2|^{\frac{2}{3}}\partial_1\delta v\|_{L^2}^2 & \lesssim \|\delta \xi\|_{L^2}^2, 
 \label{eq:H^2_an_est_1} 
\\
\|\partial_1|\partial_2|^{\frac{2}{3}}\delta v\|_{L^2}^2 + \||\partial_2|^{\frac{4}{3}}\delta v\|_{L^2}^2 & \lesssim \|\delta \xi\|_{L^2}^2,
\label{eq:H^2_an_est_2}
 \\
 \||\partial_1|^{\frac{1}{2}}\partial_2\delta v\|_{L^2}^2 + \||\partial_2|^{\frac{1}{3}}\partial_2\delta v\|_{L^2}^2
 & \lesssim \|\delta \xi\|_{L^2}^2 \label{eq:H^2_an_est_3}.
\end{equs}
By the embedding result in Lemma \ref{lem:an_embedding} we know that 
\begin{equs}
 \|\partial_1\delta v\|_{L^{10}} & \lesssim 
 \|\partial_1^2\delta v\|_{L^2} + \||\partial_2|^{\frac{2}{3}}\partial_1\delta v\|_{L^2}, \label{eq:emb_1}
 \\
\|\partial_2^h\delta v\|_{L^{10}} & \lesssim 
\|\partial_1 \partial_2^h\delta v\|_{L^2} + \||\partial_2|^{\frac{2}{3}}\partial_2^h\delta v\|_{L^2}, \label{eq:emb_2}
\\
 \|\partial_2 \delta v\|_{L^{\frac{10}{3}}} 
 & \lesssim \||\partial_1|^{\frac{1}{2}}\partial_2\delta v\|_{L^2} 
 + \||\partial_2|^{\frac{1}{3}}\partial_2\delta v\|_{L^2}. \label{eq:emb_3}
\end{equs}
Combining \eqref{eq:H^2_an_est_1} with \eqref{eq:emb_1} and \eqref{eq:H^2_an_est_3} with 
\eqref{eq:emb_3} we obtain \eqref{eq:sobolev_reg_1} and \eqref{eq:sobolev_reg_3}. 
\end{proof}

\begin{lemma}\label{lem:qualitative-regularity-linear} Let $\xi$ be smooth. Then the solution $v$ of $\mathcal{L}v = P\xi$ with vanishing average in $x_1$ is smooth. 
\end{lemma}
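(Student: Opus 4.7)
The plan is to work entirely on the Fourier side, exploiting the fact that smoothness of a periodic function is equivalent to rapid decay of its Fourier coefficients. Since $\xi$ is a smooth periodic function on $\TT^2$, its Fourier coefficients $\hat\xi(k)$, $k \in (2\pi\ZZ)^2$, decay faster than any polynomial: for every $N \geq 0$ there is a constant $C_N$ with $|\hat\xi(k)| \leq C_N(1+|k|)^{-N}$. I will show that $\hat v(k)$ inherits this rapid decay, which immediately gives $v \in C^\infty(\TT^2)$.

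First I would write down the explicit formula for $v$ in Fourier space. Since $P$ is the $L^2$-orthogonal projection onto functions of vanishing average in $x_1$, we have $\widehat{P\xi}(k) = \hat\xi(k)$ for $k_1 \neq 0$ and $\widehat{P\xi}(0,k_2) = 0$. The operator $\mathcal{L}$ has symbol $k_1^2 + |k_1|^{-1}k_2^2$ for $k_1 \neq 0$, and the constraint that $v$ has vanishing average in $x_1$ means $\hat v(0,k_2) = 0$. Thus
\begin{equation*}
\hat v(k) = \begin{cases} \dfrac{\hat\xi(k)}{k_1^2 + |k_1|^{-1}k_2^2} & \text{if } k_1 \neq 0, \\ 0 & \text{if } k_1 = 0. \end{cases}
\end{equation*}

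The key observation is that the symbol is bounded below on the relevant part of the lattice: for $k_1 \in 2\pi\ZZ \setminus \{0\}$ one has $|k_1| \geq 2\pi$, so $k_1^2 + |k_1|^{-1}k_2^2 \geq (2\pi)^2$. Consequently $|\hat v(k)| \leq (2\pi)^{-2} |\hat\xi(k)|$ for all $k$, and hence $\hat v$ decays faster than any polynomial.

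From this, for every multi-index $(\alpha,\beta) \in \NN_0^2$, the series
\begin{equation*}
\sum_{k \in (2\pi\ZZ)^2} |k_1|^\alpha |k_2|^\beta |\hat v(k)|
\end{equation*}
converges, so the formal Fourier series for $\partial_1^\alpha \partial_2^\beta v$ converges absolutely and uniformly. This identifies $\partial_1^\alpha \partial_2^\beta v$ with a continuous periodic function for every $(\alpha,\beta)$, and hence $v \in C^\infty(\TT^2)$. There is no real obstacle here: the nonlocality of $\mathcal{L}$ would be a concern near $k_1 = 0$, but the projection $P$ removes precisely those modes, leaving a symbol that is bounded below on its domain.
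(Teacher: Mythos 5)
Your proof is correct and follows essentially the same route as the paper: both work in Fourier space and exploit that the symbol $k_1^2 + |k_1|^{-1}k_2^2$ is bounded below on the lattice modes with $k_1 \neq 0$. The paper's version bounds $\|\partial_1^m\partial_2^n v\|_{L^2}$ by $\|\partial_1^{m+1}\partial_2^n \xi\|_{L^2}$ and invokes Sobolev embedding, whereas you bound $|\hat v(k)| \lesssim |\hat\xi(k)|$ pointwise and conclude from rapid decay of the coefficients that the derivative series converge absolutely and uniformly; this is a minor repackaging of the same observation, and if anything your version is marginally cleaner since it avoids the extra $\partial_1$ derivative and the Sobolev embedding step.
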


\begin{proof}
	If $\xi$ is smooth, then $\partial_1^{m} \partial_2^{n} \xi \in L^2(\TT^2)$ for all $m,n \in \NN_0$. It follows by a simple calculation in Fourier space that 
		\begin{align*}
			\|\partial_1^{m} \partial_2^{n} v\|_{L^2}^2 
			&= \sum_k |k_1|^{2m} |k_2|^{2n} |\widehat{v}(k)|^2 
			= \sum_{k_1\neq 0} |k_1|^{2m} |k_2|^{2n} \frac{|k_1|^2 |\widehat{\xi}(k)|^2 }{|k_1|^3 + |k_2|^2} \\
			&\leq \sum_{k_1\neq 0} |k_1|^{2m+2} |k_2|^{2n}|\widehat{\xi}(k)|^2 
			= \|\partial_1^{m+1} \partial_2^{n} \xi\|_{L^2}^2<\infty
		\end{align*}
		for all $m,n\in\NN_0$, in particular, $v$ is smooth by Sobolev embedding. 
\end{proof}

\section{Regularity of finite-energy solutions for smooth data}\label{app:regularity}
In this section we develop an $L^2$-based regularity theory for weak solutions $u$ with finite energy $\HH(u)<\infty$ of the Euler--Lagrange equation
\begin{align}\label{eq:EL-appendix}
	\mathcal{L} u = 
	- P\left(u R_1\partial_2 u - \frac{1}{2} u R_1 \partial_1 u^2\right) 
	- \frac{1}{2} R_1 \partial_2  u^2  
	+  P\xi.
\end{align}


For $h\in(0,1]$, define the difference quotients
\begin{align}\label{eq:diffquot}
	D_i^h u = |h|^{-\alpha_i} \partial_i^h u, \quad \alpha_1 = 1, \alpha_2 = \frac{2}{3}.
\end{align}

\begin{proposition}[$H^{2-}$ estimate]\label{prop:H2}
	Assume that $\xi \in L^2$. There exists a constant $C>0$ such that for any solution $u$ of the Euler--Lagrange equation \eqref{eq:EL-appendix} with $\HH(u)<\infty$ we have 
	\begin{align*}
		\sup_{h\in(0,1]} \HH(D_i^h u) \leq C \left( 1 + \|\xi\|_{L^2}^2 + \HH(u)^{12} \right).
	\end{align*}
	In particular, for any $s\in[0,2)$ we have that $\||\partial_1|^s u\|_{L^2} + \||\partial_2|^{\frac{2}{3}s}u\|_{L^2}<\infty$.
\end{proposition}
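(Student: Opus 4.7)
The plan is to follow a standard difference-quotient argument, carefully adapted to the anisotropic nonlocal operator $\mathcal{L}$ and the specific nonlinear structure of \eqref{eq:EL-appendix}. Let me fix $i\in\{1,2\}$ and $h\in(0,1]$. Since $\mathcal{L}$, $R_1$, $P$, and $\partial_2$ all commute with the shift $\tau_h^i f=f(\cdot+he_i)$, applying $\partial_i^h$ to both sides of \eqref{eq:EL-appendix} and testing against $|h|^{-2\alpha_i}\partial_i^h u=D_i^h u$, one obtains
\begin{equs}
\HH(D_i^h u) &= \int_{\TT^2} D_i^h u\cdot D_i^h P\xi\,\dd x - \int_{\TT^2} D_i^h u \cdot D_i^h P\!\left(uR_1\partial_2 u -\tfrac12 uR_1\partial_1 u^2\right)\dd x \\
&\quad -\tfrac12 \int_{\TT^2} D_i^h u \cdot D_i^h R_1\partial_2 u^2\,\dd x.
\end{equs}
All subsequent work consists of bounding the three integrals on the right by either $\|\xi\|_{L^2}^2$, a power of $\HH(u)$, or a term of the form $\lambda\HH(D_i^h u)+C_\lambda \HH(u)^N$ that can be absorbed into the left-hand side.

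For the forcing term, the adjoint identity $\int (D_i^h f)g\,\dd x=-\int f\cdot D_i^{-h}g\,\dd x$ together with $P^*=P$ gives
$\int D_i^h u\cdot D_i^h P\xi\,\dd x = -\int P\xi\cdot D_i^{-h}D_i^h u\,\dd x$, and the elementary bound $\|D_i^{-h}D_i^h u\|_{L^2}\leq 2\|D_i^h u\|_{L^2}\lesssim \||\partial_i|^{\alpha_i}u\|_{L^2}$ reduces this, via Lemma~\ref{lem:fractional-H} and Cauchy--Schwarz, to $C\|\xi\|_{L^2}\HH(u)^{1/2}$. This is the easy part and gives the $\|\xi\|_{L^2}^2$ term (after Young's inequality).

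The main work, and the main obstacle, lies in the nonlinear integrals. For each nonlinearity one expands using the discrete Leibniz rule $D_i^h(fg)=(D_i^h f)\tau_h^i g+f\,D_i^h g$ (plus, where helpful, an additional integration by parts to distribute the derivative $\partial_1$ or $\partial_2$ between factors). This generates, schematically, three families of terms: (a) quadratic-in-$D_i^h u$ terms weighted by $u$ or low-order derivatives of $u$; (b) linear-in-$D_i^h u$ terms involving one more derivative of $D_i^h u$, which must be paired via Cauchy--Schwarz with $\|\partial_1 D_i^h u\|_{L^2}$ or $\||\partial_1|^{-1/2}\partial_2 D_i^h u\|_{L^2}\lesssim \HH(D_i^h u)^{1/2}$; and (c) residual bilinear pieces involving $u$ itself. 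The bounds to be used are those already assembled in the paper: the $L^p$ and directional Besov estimates $\|u\|_{L^p}\lesssim \HH(u)^{\text{power}}$ and $\|u^2\|_{\dotB^s_{2;1}}\lesssim \HH(u)^{\text{power}}$ coming from Proposition~\ref{prop:a_priori_est} via \eqref{eq:bound-harmonic}, the fractional Leibniz Lemma~\ref{lem:leibniz}, the boundedness of $R_1$, the Sobolev embedding Lemma~\ref{lem:an_embedding}, and the duality Lemma~\ref{lem:dual_ineq} where negative H\"older seminorms of the perturbation $D_i^h u$ appear.

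The genuinely delicate term is the cubic $P(uR_1\partial_1 u^2)$. Its expansion produces, among others, contributions like $\int D_i^h u\cdot u\cdot R_1\partial_1(u\,D_i^h u)\,\dd x$, which carries a full derivative on a quadratic factor built from $D_i^h u$. I plan to handle it by the same trick used in the cubic coercivity estimate of Theorem~\ref{thm:minimizers}\ref{it:coercivity}: combine $\||\partial_1|^{1/2}(u\,D_i^h u)\|_{L^2}^2$ with Lemma~\ref{lem:1/2-1-3/4+} and the fractional Leibniz, so that the ``dangerous'' derivative is paid for by one half-derivative on each of $u$ and $D_i^h u$, both controlled by $\HH(u)^{1/2}$ and $\HH(D_i^h u)^{1/2}$ respectively. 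After collecting every estimate and applying Young's inequality with a small parameter $\lambda$ to absorb all contributions of the form $\lambda\HH(D_i^h u)$, one lands on an inequality $\HH(D_i^h u)\leq C(1+\|\xi\|_{L^2}^2+\HH(u)^N)$; tracking the largest exponent through the cubic term yields $N=12$, uniformly in $h\in(0,1]$. The embedding statement about $\||\partial_1|^s u\|_{L^2}+\||\partial_2|^{\frac{2}{3}s}u\|_{L^2}$ for $s<2$ then follows by letting $h\downarrow 0$ along a subsequence and using the standard characterization of Sobolev regularity via uniform $L^2$-bounds on difference quotients.
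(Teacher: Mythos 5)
Your plan follows the paper's proof almost verbatim: test \eqref{eq:EL-appendix} against $D_i^{-h}D_i^h u$ (equivalently, apply the difference quotient and test against $D_i^h u$), expand the nonlinearities by the discrete Leibniz rule, and close by interpolation in the $\Hnorm{s}{\cdot}$ scale plus Young's inequality. The paper packages the core estimates into purpose-built Lemmata~\ref{lem:box1} and~\ref{lem:box2} together with Lemma~\ref{lem:diffquot}; your sketch instead points to Proposition~\ref{prop:a_priori_est} and \eqref{eq:bound-harmonic}, which bound things in terms of $\EE$ rather than $\HH$ and are not what is actually used, although the intent is clearly the same and can be carried through with Lemmata~\ref{lem:an_embedding}, \ref{lem:fractional-H}, and \ref{lem:1/2-1-3/4+}.

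There is, however, one genuine error. You claim $\|D_i^{-h}D_i^h u\|_{L^2}\leq 2\|D_i^h u\|_{L^2}$; this is false, since $D_i^{-h}f = |h|^{-\alpha_i}\partial_i^{-h}f$, so the naive triangle inequality only yields $\|D_i^{-h}D_i^h u\|_{L^2}\leq 2|h|^{-\alpha_i}\|D_i^h u\|_{L^2}$, which blows up as $h\downarrow 0$. The correct estimate — which the paper uses — applies Lemma~\ref{lem:diffquot} to $D_i^h u$ in place of $u$, giving $\|D_i^{-h}(D_i^h u)\|_{L^2}\lesssim\Hnorm{1}{D_i^h u}=\HH(D_i^h u)^{1/2}$; consequently the forcing term is $\lesssim\|\xi\|_{L^2}\HH(D_i^h u)^{1/2}$ and must be absorbed into the left-hand side by Young's inequality, not bounded independently of $D_i^h u$ as your chain suggests. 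A related caution for the cubic term: when invoking Lemma~\ref{lem:1/2-1-3/4+} on $\||\partial_1|^{\frac{1}{2}}(u^h D_i^h u)\|_{L^2}$, the lemma is asymmetric; you must assign the $\Hnorm{1}{\cdot}$-norm to the $u$-factor and the weaker $\Hnorm{\frac{3}{4}+\epsilon}{\cdot}$-norm to $D_i^h u$, so that interpolating $\Hnorm{\frac{3}{4}+\epsilon}{D_i^h u}$ between $\|D_i^h u\|_{L^2}$ and $\Hnorm{1}{D_i^h u}$ produces a subquadratic power of $\HH(D_i^h u)$ that can be absorbed. With the opposite assignment one is left with a full factor $\Hnorm{1}{D_i^h u}^2$ that cannot be absorbed.
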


\begin{proposition}[$H^{3-}$ estimate]\label{prop:H3}
	Assume that $\xi$ satisfies $\HH(\xi) <\infty$. There exists a constant $C>0$ such that for any solution $u$ of the Euler--Lagrange equation \eqref{eq:EL-appendix} with $\HH(u)<\infty$ we have 
	\begin{align*}
		\sup_{h,h'\in(0,1]} \HH(D_i^h D_i^{h'} u) \leq C \left( 1+ \HH(\xi)^{22} + \HH(u)^{132} \right).
	\end{align*}
	In particular, for any $s\in[0,3)$ we have that $\||\partial_1|^s u\|_{L^2} + \||\partial_2|^{\frac{2}{3}s}u\|_{L^2}<\infty$.
\end{proposition}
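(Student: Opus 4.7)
The plan is to iterate the strategy of Proposition~\ref{prop:H2}, applying two discrete derivatives $D_i^h D_i^{h'}$ in place of one. Since $\mathcal{L}$ has constant coefficients it commutes with translations and hence with each $D_i^h$, so the Euler--Lagrange equation \eqref{eq:EL-appendix} yields
\begin{equs}
\mathcal{L}(D_i^h D_i^{h'} u) = -P D_i^h D_i^{h'}(u R_1 \partial_2 u) + \tfrac{1}{2} P D_i^h D_i^{h'}(u R_1 \partial_1 u^2) - \tfrac{1}{2} R_1 D_i^h D_i^{h'} \partial_2 u^2 + P D_i^h D_i^{h'} \xi.
\end{equs}
Testing against $D_i^h D_i^{h'} u$ and using $\int f\,\mathcal{L}f\,\dd x = \HH(f)$ for periodic $f$ with vanishing average in $x_1$, the task becomes to bound each contribution on the right by $\lambda\HH(D_i^h D_i^{h'} u) + C(1 + \HH(\xi)^p + \HH(u)^q)$ for some small $\lambda > 0$ and appropriate $p,q$, then absorb the $\lambda$-term on the left via Young's inequality. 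Since this bound is uniform in $h,h' \in (0,1]$, the $H^{s}$ claims for $s \in [0,3)$ then follow from standard difference-quotient characterizations of anisotropic $L^2$-Sobolev spaces exactly as in Proposition~\ref{prop:H2}.

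The noise term is treated via the auxiliary solution $v := \mathcal{L}^{-1} P\xi$. I claim that $\HH(D_i^h D_i^{h'} v) \lesssim \HH(\xi)$ uniformly in $h,h' \in (0,1]$: a Fourier-side computation gives $\HH(D_i^h D_i^{h'} v) = \int |m_{h,h'}(k_i)|^2 |\widehat\xi(k)|^2 (k_1^2 + |k_1|^{-1}k_2^2)^{-1}\,\dd k$, the difference-quotient multiplier satisfies $|m_{h,h'}(k_i)| \leq C|k_i|^{2\alpha_i}$ with $\alpha_1=1$, $\alpha_2=\tfrac23$, and the symbol inequality $|k_i|^{4\alpha_i} \lesssim (k_1^2 + |k_1|^{-1}k_2^2)^2$ is a Young-type bound expressing that $\mathcal{L}$ gains two anisotropic derivatives. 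Since $D_i^h D_i^{h'}$ commutes with $\mathcal{L}$, Cauchy--Schwarz then gives
\begin{equs}
\Bigl|\int D_i^h D_i^{h'} u \cdot P D_i^h D_i^{h'} \xi\,\dd x\Bigr|
= \Bigl|\int \mathcal{L}^{\frac12}(D_i^h D_i^{h'} u) \cdot \mathcal{L}^{\frac12}(D_i^h D_i^{h'} v)\,\dd x\Bigr|
\leq \HH(D_i^h D_i^{h'} u)^{\frac12}\,\HH(\xi)^{\frac12},
\end{equs}
which is absorbable by Young.

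For the three nonlinear terms I would expand $D_i^h D_i^{h'}$ via the iterated discrete Leibniz rule $\partial_i^h(fg) = (\partial_i^h f)\,g(\cdot + he_i) + f\,\partial_i^h g$. For the quadratic contributions $u R_1\partial_2 u$ and $R_1 \partial_2 u^2$ this produces four summands each; for the cubic contribution $u R_1\partial_1(u \cdot u)$ it produces a larger but still finite collection in which the two discrete derivatives are distributed across two or three $u$-factors. In every summand I would pair $D_i^h D_i^{h'} u$ against the factor carrying the most discrete derivatives using Cauchy--Schwarz, extracting $\HH(D_i^h D_i^{h'} u)^{\frac12}$ for absorption, and bound the remaining factors using Lemma~\ref{lem:an_embedding} in $L^{10}, L^{10/3}$ or $L^5$ combined with the $H^{2-}$ bound from Proposition~\ref{prop:H2}. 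The intermediate summands in which the two discrete derivatives split across different factors are controlled by the fractional Leibniz rule (Lemma~\ref{lem:leibniz}) and interpolation between $\HH(u)$ and the Proposition~\ref{prop:H2} estimate; the Hilbert transform $R_1$ is harmless on each $L^p$ with $p\in(1,\infty)$.

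The main technical obstacle will be the cubic term $u R_1 \partial_1 u^2$: distributing two discrete derivatives across three $u$-factors (plus the continuous $\partial_1$) produces many summands, and the asymmetric scaling of $\HH$ (one $\partial_1$ versus a $\tfrac23$-fractional $\partial_2$) forces a careful, case-by-case choice of H\"older exponents so that each bound can be closed using Proposition~\ref{prop:H2} and absorbed. Once this bookkeeping is done and Young's inequality is applied separately on every summand, the polynomial dependence on $\HH(u)$ and $\HH(\xi)$ follows; the large exponents $22$ and $132$ simply reflect the multiplicity of $u$-factors in the cubic term together with the $\HH(u)^{12}$ and $\|\xi\|_{L^2}^2$ dependence already present in Proposition~\ref{prop:H2}.
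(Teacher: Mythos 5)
Your proposal follows essentially the same route as the paper: test the Euler--Lagrange equation with a second pair of difference quotients, integrate by parts discretely, bound the trilinear and four-linear contributions by anisotropic Sobolev embeddings, and close via interpolation with the baseline estimate from Proposition~\ref{prop:H2}. The one genuinely different step is your treatment of the noise: you pass through the linear solution $v = \mathcal{L}^{-1}P\xi$ and a Fourier-symbol inequality to get $\HH(D_i^h D_i^{h'} v)\lesssim\HH(\xi)$, whereas the paper simply moves one discrete derivative onto $\xi$ and one onto the test function, reducing to $\|D_i^{h'}\xi\|_{L^2}\,\|D_i^{-h}D_i^{h'}D_i^h u\|_{L^2}\lesssim\Hnorm{1}{\xi}\,\Hnorm{1}{D_i^{h'}D_i^h u}$ via Lemma~\ref{lem:diffquot}. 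Both deliver the same product $\HH(\xi)^{1/2}\HH(D_i^{h'}D_i^h u)^{1/2}$; the paper's version is more elementary and sidesteps the symbol computation for $v$.

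The one place I would push you to be concrete is the phrase ``extracting $\HH(D_i^h D_i^{h'}u)^{1/2}$ for absorption.'' In the summands where both discrete derivatives land on a single $u$-factor, the integral involves $D_i^{h'}D_i^h u$ twice — once under $\partial_2$ or $\partial_1$, once bare — and a naive Cauchy--Schwarz produces $\Hnorm{1}{D_i^{h'}D_i^h u}^2$ on the right, which is not absorbable. The paper avoids this by distributing a fractional $|\partial_1|^{\pm 1/2}$ between the factors (the trilinear bound in Lemma~\ref{lem:box1} and the four-linear bound in Lemma~\ref{lem:box2}), yielding $\Hnorm{1}{u}\,\Hnorm{1}{D_i^{h'}D_i^h u}\,\Hnorm{s}{D_i^{h'}D_i^h u}$ with $s\in\{\tfrac{4}{5},\tfrac{1}{2}\}$, and then interpolating $\Hnorm{s}{D_i^{h'}D_i^h u}\leq\|D_i^{h'}D_i^h u\|_{L^2}^{1-s}\Hnorm{1}{D_i^{h'}D_i^h u}^{s}\lesssim\Hnorm{1}{D_i^h u}^{1-s}\Hnorm{1}{D_i^{h'}D_i^h u}^{s}$, so that the exponent on $\Hnorm{1}{D_i^{h'}D_i^h u}$ stays strictly below $2$. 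Your mention of ``interpolation between $\HH(u)$ and the Proposition~\ref{prop:H2} estimate'' is gesturing at exactly this, but writing out the exponents is where the argument actually closes; otherwise the bookkeeping for the cubic term cannot be verified.
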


For the proof of Propositions \ref{prop:H2} and \ref{prop:H3} it is convenient to work with the scale of norms
	\begin{align*}
		\Hnorm{s}{f}^2 = \int_{\TT^2} \left( |\partial_1|^s f \right)^2\,\dd x 
		+ \int_{\TT^2} \left( |\partial_1|^{-\frac{s}{2}} |\partial_2|^{s} f \right)^2 \,\dd x,
	\end{align*}
for $s>0$ and periodic functions $f:\TT^2 \to \RR$ with vanishing average in $x_1$, as defined in \eqref{eq:Hnorm}. These norms are adapted to the harmonic 
energy $\HH$, in particular, we have that $\Hnorm{1}{f}=\mathcal{H}(f)^{\frac{1}{2}}$, and one may think of the norms
$\Hnorm{s}{\cdot}$ defining a scale of (anisotropic) Sobolev spaces. Indeed, as shown in Lemma~\ref{lem:fractional-H} the norms $\Hnorm{s}{\cdot}$ control an anisotropic 
fractional gradient in $L^2$. 

\begin{lemma}\label{lem:diffquot}
	There exists a constant $C>0$ such that for any $u\in \mathcal{W}$ there holds 
	\begin{align*}
		\sup_{h\in(0,1]} \|D_i^h u\|_{L^2} \leq C \Hnorm{1}{u}, \quad i = 1,2.
	\end{align*} 
\end{lemma}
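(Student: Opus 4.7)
The plan is to handle the two directions separately and use Fourier analysis for the $x_2$-direction.

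For $i=1$, the bound is essentially immediate. Writing $u(x+he_1) - u(x) = \int_0^h \partial_1 u(x+se_1)\,\mathrm{d}s$ and applying Minkowski's inequality in $L^2_x$ gives
\begin{equation*}
  \|D_1^h u\|_{L^2} = |h|^{-1}\|\partial_1^h u\|_{L^2} \leq \|\partial_1 u\|_{L^2} \leq \Hnorm{1}{u},
\end{equation*}
uniformly in $h\in(0,1]$.

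For $i=2$, I would pass to the Fourier side. Since $u\in\mathcal{W}$ has vanishing average in $x_1$, $\widehat{u}(0,k_2)=0$ for all $k_2\in 2\pi\mathbb{Z}$, so Parseval gives
\begin{equation*}
  \|D_2^h u\|_{L^2}^2 = \sum_{k_1\neq 0} |h|^{-\frac{4}{3}}|\mathrm{e}^{\mathrm{i}hk_2}-1|^2 |\widehat{u}(k)|^2.
\end{equation*}
The elementary inequality $|\mathrm{e}^{\mathrm{i}\theta}-1|^2 = 4\sin^2(\theta/2) \leq 4\min(\theta^2,1)\leq 4|\theta|^{\frac{4}{3}}$, valid for all $\theta\in\mathbb{R}$ (since $\theta^2\leq|\theta|^{\frac{4}{3}}$ when $|\theta|\leq 1$ and $1\leq|\theta|^{\frac{4}{3}}$ when $|\theta|\geq 1$), yields $|h|^{-\frac{4}{3}}|\mathrm{e}^{\mathrm{i}hk_2}-1|^2 \leq 4|k_2|^{\frac{4}{3}}$. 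Hence
\begin{equation*}
  \|D_2^h u\|_{L^2}^2 \leq 4\sum_{k_1\neq 0}|k_2|^{\frac{4}{3}}|\widehat{u}(k)|^2 = 4\||\partial_2|^{\frac{2}{3}}u\|_{L^2}^2,
\end{equation*}
uniformly in $h\in(0,1]$.

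It then remains to bound $\||\partial_2|^{\frac{2}{3}}u\|_{L^2}$ by $\Hnorm{1}{u}$, which is exactly the content of Lemma~\ref{lem:fractional-H} with $s=1$. Combining the two directions gives the claim with a universal constant $C$. No step is a genuine obstacle here; the only point worth noting is the use of the vanishing average of $u$ in $x_1$, which ensures that the sum in Fourier space runs over $k_1\neq 0$ and therefore that $\Hnorm{1}{u}$ indeed controls $|k_2|^{\frac{4}{3}}$ via Young's inequality $|k_2|^{\frac{4}{3}}\leq \frac{1}{3}k_1^2 + \frac{2}{3}|k_1|^{-1}k_2^2$.
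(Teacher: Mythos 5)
Your proof is correct, and you treat the two directions the same way for $i=1$ but take a genuinely different route for $i=2$. The paper reduces the pointwise-in-$h$ supremum to an $h$-averaged expression via Lemma~\ref{lem:besov-infty-p} (the averaged difference-quotient bound, a physical-space argument), and only then converts that average into $\||\partial_2|^{2/3}u\|_{L^2}^2$ through the Plancherel-type identity \eqref{eq:fourier}. You work entirely on the Fourier side with the elementary symbol bound $|h|^{-4/3}\,|\mathrm{e}^{\mathrm{i}hk_2}-1|^2 \leq 4|k_2|^{4/3}$, valid uniformly in $h$ and $k_2$ by your two-case check of $\min(\theta^2,1)\leq|\theta|^{4/3}$. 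This is more self-contained: it bypasses Lemma~\ref{lem:besov-infty-p} and the averaged characterization of the fractional seminorm entirely, and even produces an explicit constant in this step. Both approaches then close in the same way, namely by Lemma~\ref{lem:fractional-H} with $s=1$ (equivalently, the Young-type inequality $|k_2|^{4/3}\leq\tfrac{1}{3}k_1^2+\tfrac{2}{3}|k_1|^{-1}k_2^2$ you state, which uses that the Fourier sum runs over $k_1\neq 0$ thanks to $u\in\mathcal{W}$). Your version is arguably the cleaner proof of this particular lemma; the paper's phrasing is natural in context because Lemma~\ref{lem:besov-infty-p} is already established and reused elsewhere.
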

\begin{proof}
	We treat the two directions $i=1,2$ separately. For $i=1$, the claim follows easily by the mean-value theorem, which implies that 
	\begin{align*}
		\sup_{h\in(0,1]}\|D_1^h u\|_{L^2} \leq \|\partial_1 u\|_{L^2}.
	\end{align*}
	For $i=2$ we appeal to Lemma \ref{lem:besov-infty-p} (or rather its analogue for functions on $\TT^2$) to estimate
	\begin{align*}
		\sup_{h\in(0,1]} \|D_2^h u\|_{L^2}^2 
		&= \sup_{h\in(0,1]} \frac{1}{h^{\frac{4}{3}}} \int_{\TT^2} |\partial_2^h u|^2\,\dd x 
		\lesssim \sup_{h\in(0,1]} \frac{1}{h^{\frac{4}{3}}} \frac{1}{h} \int_0^h \int_{\TT^2} |\partial_2^{h'} u|^2\,\dd x\,\dd h' \\
		&\lesssim  \sup_{h\in(0,1]} \int_0^h \int_{\TT^2} \frac{|\partial_2^{h'} u|^2}{h'^{\frac{4}{3}}} \,\dd x \frac{\dd h'}{h'} 
		 \lesssim \int_{\RR} \int_{\TT^2} \frac{|\partial_2^{h} u|^2}{|h|^{\frac{4}{3}}} \,\dd x \frac{\dd h}{|h|} 
		\stackrel{\eqref{eq:fourier}}{\lesssim} \||\partial_2|^{\frac{2}{3}} u \|_{L^2}^2.
	\end{align*}
	The conclusion then follows from Lemma \ref{lem:fractional-H}.
\end{proof}

The proof of Propositions \ref{prop:H2} and \ref{prop:H3} mainly relies on the following two lemmata: 
\begin{lemma}\label{lem:box1}
	Let $\epsilon\in (0,\frac{1}{4})$. There exists a constant $C_{\epsilon}>0$ such that for all periodic functions $f,g,\varphi:\TT^2 \to \RR$ of vanishing average in $x_1$ we have 
	\begin{align*}
		\left| \int_{\TT^2} f (\partial_2 g) \varphi \,\dd x \right| \leq C_{\epsilon} \Hnorm{1}{f} \Hnorm{1}{g} \Hnorm{\frac{3}{4}+\epsilon}{\varphi}.
	\end{align*}
\end{lemma}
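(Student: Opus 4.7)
The key idea is that the factor $\Hnorm{1}{g}$ controls $|\partial_1|^{-\frac{1}{2}}\partial_2 g$ in $L^2$, so the plan is to rewrite the $\partial_2 g$ in the trilinear form in a way that exposes this ``good'' quantity, pairing it against the remaining product $f\varphi$ via duality. Concretely, since $g$ has vanishing average in $x_1$ and $|\partial_1|^{\frac{1}{2}}$ is self-adjoint, I would use the pointwise-in-Fourier identity $\partial_2 g = |\partial_1|^{\frac{1}{2}}\bigl(|\partial_1|^{-\frac{1}{2}}\partial_2 g\bigr)$ and transpose the operator $|\partial_1|^{\frac{1}{2}}$ onto $f\varphi$:
\begin{equation*}
\int_{\TT^2} f(\partial_2 g)\varphi\,\dd x
= \int_{\TT^2} |\partial_1|^{\frac{1}{2}}(f\varphi)\,\bigl(|\partial_1|^{-\frac{1}{2}}\partial_2 g\bigr)\,\dd x.
\end{equation*}

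Then a single application of Cauchy--Schwarz yields
\begin{equation*}
\Bigl|\int_{\TT^2} f(\partial_2 g)\varphi\,\dd x\Bigr|
\leq \bigl\||\partial_1|^{\frac{1}{2}}(f\varphi)\bigr\|_{L^2}\;\bigl\||\partial_1|^{-\frac{1}{2}}\partial_2 g\bigr\|_{L^2},
\end{equation*}
and the second factor is immediately bounded by $\Hnorm{1}{g}$ by the very definition \eqref{eq:Hnorm}.

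For the first factor, which is the only nontrivial piece, I would invoke Lemma~\ref{lem:1/2-1-3/4+}, which is tailor-made for exactly this estimate: since $f$ and $\varphi$ both have vanishing average in $x_1$, it applies to give $\||\partial_1|^{\frac{1}{2}}(f\varphi)\|_{L^2}\leq C\,\Hnorm{1}{f}\,\Hnorm{\frac{3}{4}+\epsilon}{\varphi}$ for any $\epsilon\in(0,\tfrac14]$. Combining the two bounds yields the claim. The ``hard part'' is therefore not in this lemma itself but has already been done in Lemma~\ref{lem:1/2-1-3/4+}, whose proof used the anisotropic Sobolev embedding and a careful interpolation of the quadratic Besov-type quantity $\int\int |\partial_1^h(fg)|^2/|h|^2\,\dd x\,\dd h/|h|$; here all that remains is the correct algebraic rearrangement that places the ``bad'' derivative $\partial_2 g$ into the form $|\partial_1|^{\frac{1}{2}}\cdot(|\partial_1|^{-\frac{1}{2}}\partial_2 g)$ so that Lemma~\ref{lem:1/2-1-3/4+} can be applied directly.
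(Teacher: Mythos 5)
Your proof is correct and takes exactly the same route as the paper's: transpose $|\partial_1|^{\frac{1}{2}}$ onto $f\varphi$, apply Cauchy--Schwarz to split off $\||\partial_1|^{-\frac12}\partial_2 g\|_{L^2}\le \Hnorm{1}{g}$, and then invoke Lemma~\ref{lem:1/2-1-3/4+} for the remaining factor. The paper's proof is a one-liner that does precisely this.
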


\begin{proof}
	This follows easily from the definition of $\Hnorm{1}{\cdot}$ and Lemma \ref{lem:1/2-1-3/4+}. Indeed, we have 
	\begin{align*}
		\left| \int_{\TT^2} f (\partial_2 g) \varphi \,\dd x \right|
		&= \left| \int_{\TT^2} |\partial_1|^{-\frac{1}{2}} \partial_2 g \, |\partial_1|^{\frac{1}{2}}(f \varphi) \,\dd x \right| 
		\lesssim \Hnorm{1}{g} \| |\partial_1|^{\frac{1}{2}}(f \varphi) \|_{L^2} 
		\lesssim \Hnorm{1}{g} \Hnorm{1}{f} \Hnorm{\frac{3}{4}+\epsilon}{\varphi}.
	\end{align*}
\end{proof}

\begin{lemma}\label{lem:box2}
	There exists a constant $C>0$ such that for all periodic functions $f,g,h,\varphi:\TT^2 \to \RR$ of vanishing average in $x_1$ we have 
	\begin{align*}
		\left| \int_{\TT^2} f R_1(g \partial_1 h) \varphi \,\dd x \right| \leq C \Hnorm{1}{f} \Hnorm{1}{g} \Hnorm{1}{h} \Hnorm{\frac{1}{2}}{\varphi}.
	\end{align*}
\end{lemma}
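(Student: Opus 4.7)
My plan to prove Lemma \ref{lem:box2} is a short Hölder/Sobolev embedding argument that exploits the fact that the anisotropic scaling dimension of our setup is $\frac{5}{2}$, for which the critical Sobolev exponents are $H^{1} \hookrightarrow L^{10}$ and $H^{1/2} \hookrightarrow L^{10/3}$ (in the anisotropic sense). The key observation is that the exponents $(1,1,1,\tfrac{1}{2})$ appearing on the right-hand side precisely balance with these embeddings and two applications of Hölder's inequality.

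First, I would apply Hölder's inequality with exponents $\frac{5}{2}$ and $\frac{5}{3}$ to get
$$\left| \int_{\TT^2} f R_{1}(g \partial_{1} h) \varphi \,\dd x \right| \leq \|f \varphi\|_{L^{5/2}}\,\|R_{1}(g \partial_{1} h)\|_{L^{5/3}}.$$
Since the Hilbert transform $R_{1}$ is bounded on $L^{p}(\TT^{2})$ for every $1 < p < \infty$, I can drop $R_{1}$ at the cost of a constant and then use Hölder with exponents $10$ and $2$ to obtain $\|R_{1}(g \partial_{1} h)\|_{L^{5/3}} \lesssim \|g\|_{L^{10}}\,\|\partial_{1} h\|_{L^{2}}$. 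A second application of Hölder, now with exponents $10$ and $\frac{10}{3}$, gives $\|f \varphi\|_{L^{5/2}} \leq \|f\|_{L^{10}}\,\|\varphi\|_{L^{10/3}}$.

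It then remains to translate the $L^{10}$ and $L^{10/3}$ norms into the anisotropic scale $\Hnorm{\,\cdot\,}{\,\cdot\,}$ from \eqref{eq:Hnorm}. Combining the embeddings \eqref{eq:H^1_an_emb} and \eqref{eq:H^1/2_an_emb} of Lemma \ref{lem:an_embedding} with Lemma \ref{lem:fractional-H}, I can bound $\|u\|_{L^{10}} \lesssim \Hnorm{1}{u}$ and $\|u\|_{L^{10/3}} \lesssim \Hnorm{1/2}{u}$ for any periodic function $u$ with vanishing average in $x_{1}$, which applies to $f$, $g$, and $\varphi$ under the hypotheses of the lemma. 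The estimate $\|\partial_{1} h\|_{L^{2}} \leq \Hnorm{1}{h}$ is immediate from the definition. Chaining these inequalities yields
$$\left| \int_{\TT^2} f R_{1}(g \partial_{1} h) \varphi \,\dd x \right| \lesssim \Hnorm{1}{f}\,\Hnorm{1}{g}\,\Hnorm{1}{h}\,\Hnorm{1/2}{\varphi},$$
which is the desired bound.

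I do not expect any real obstacle here. The only delicate point worth checking is that the exponents fit together exactly: the Hölder relation $\tfrac{1}{5/2}+\tfrac{1}{5/3}=1$, together with the (anisotropic) critical Sobolev exponents $10$ and $10/3$ of $H^{1}$ and $H^{1/2}$, distribute a total of $\tfrac{1}{10}+\tfrac{1}{10}+\tfrac{1}{2}+\tfrac{3}{10}=1$ units of integrability among the four factors, which is precisely what allows the $\frac{1}{2}$-regularity on $\varphi$ (as opposed to the $\frac{3}{4}+$ that would be needed, for instance, to apply Lemma \ref{lem:1/2-1-3/4+} after moving the Hilbert transform).
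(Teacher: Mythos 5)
Your proof is correct and follows essentially the same route as the paper: after two applications of Hölder and the $L^p$-boundedness of $R_1$ (on $L^{5/3}$ in your case, $L^{5/2}$ in the paper's, where $R_1$ is first moved onto $f\varphi$ by skew-adjointness), both arrive at the factorization $\|f\|_{L^{10}}\|g\|_{L^{10}}\|\partial_1 h\|_{L^2}\|\varphi\|_{L^{10/3}}$ and conclude with the anisotropic Sobolev embeddings \eqref{eq:H^1_an_emb}, \eqref{eq:H^1/2_an_emb} and Lemma~\ref{lem:fractional-H}.
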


\begin{proof}
	We first use Cauchy--Schwarz to bound
	\begin{align*}
		\left| \int_{\TT^2} f R_1(g \partial_1 h) \varphi \,\dd x \right| 
		=\left| \int_{\TT^2} (\partial_1 h) g R_1(f\varphi) \,\dd x \right| 
		\leq \|\partial_1 h \|_{L^2} \| g R_1(f\varphi) \|_{L^2}
		\leq \Hnorm{1}{h} \| g R_1(f\varphi) \|_{L^2} .
	\end{align*}
	By Hölder's inequality and the boundedness of $R_1$ on $L^{\frac{5}{2}}(\TT^2)$ (\ref{lem:R}) , we may further estimate
	\begin{align*}
		\| g R_1(f\varphi) \|_{L^2} 
		\leq \|g\|_{L^{10}} \|R_1(f \varphi)\|_{L^{\frac{5}{2}}}
		\lesssim \|g\|_{L^{10}} \|f \varphi\|_{L^{\frac{5}{2}}}
		\lesssim \|g\|_{L^{10}} \|f\|_{L^{10}} \|\varphi\|_{L^{\frac{10}{3}}},
	\end{align*}
	from which the claim follows with the Sobolev-type embeddings
	\begin{align*}
		\|g\|_{L^{10}} &\stackrel{\eqref{eq:H^1_an_emb}}{\lesssim} \|\partial_1 g\|_{L^2} + \||\partial_2|^{\frac{2}{3}} g\|_{L^2} \stackrel{\eqref{eq:fractional-H}}{\lesssim} \Hnorm{1}{g},\\
		\|\varphi\|_{L^{\frac{10}{3}}} &\stackrel{\eqref{eq:H^1/2_an_emb}}{\lesssim} \||\partial_1|^{\frac{1}{2}} \varphi\|_{L^2} + \||\partial_2|^{\frac{1}{3}} \varphi\|_{L^2} \stackrel{\eqref{eq:fractional-H}}{\lesssim} \Hnorm{\frac{1}{2}}{\varphi}.
	\end{align*}
\end{proof}

\begin{proof}[Proof of Proposition \ref{prop:H2}]
	Since $\Hnorm{1}{u}^2 = \HH(u) <\infty$, we can test the Euler--Lagrange equation \eqref{eq:EL-appendix} with $\psi = D_i^{-h} D_i^h u$, where $D_i^h$ denotes the difference quotient introduced in \eqref{eq:diffquot} for $h\in (0,1]$. Then the left-hand side of \eqref{eq:EL-appendix} turns into
	\begin{align*}
		\int_{\TT^2} \mathcal{L} u D_i^{-h} D_i^h u \,\dd x = \HH(D_i^h u) = \Hnorm{1}{D_i^h u}^2.
	\end{align*}
	We continue with estimating the terms on the right-hand side of the Euler--Lagrange equation:
	\begin{enumerate}[label=\textsc{Term \arabic*},leftmargin=0pt,labelsep=*,itemindent=*]
		\item ($u R_1\partial_2 u$). By a discrete integration by parts, we can write
		\begin{align*}
			\int_{\TT^2} u R_1 \partial_2 u D_i^{-h} D_i^h u \,\dd x 
			= \int_{\TT^2} D_i^h u (\partial_2 R_1 u) D_i^h u \,\dd x + \int_{\TT^2} u^h \partial_2(D_i^h R_1 u) D_i^h u \,\dd x,
		\end{align*}
		where $u^h = u( \cdot + h)$.
		Applying Lemma \ref{lem:box1} to each of these two terms, together with 
		\begin{align}\label{eq:Ruh}
			\Hnorm{1}{R_1u} = \Hnorm{1}{u} \quad  \text{and}  \quad \Hnorm{1}{u^h} = \Hnorm{1}{u}, 
		\end{align}
		then yields
		\begin{align*}
			\left| \int_{\TT^2} D_i^h u (\partial_2 R_1 u) D_i^h u \,\dd x \right|
			&\lesssim \Hnorm{1}{D_i^h u} \Hnorm{1}{R_1u} \Hnorm{\frac{4}{5}}{D_i^h u}
			\lesssim \Hnorm{1}{D_i^h u} \Hnorm{1}{u} \Hnorm{\frac{4}{5}}{D_i^h u}, \\
			\left| \int_{\TT^2} u^h \partial_2(D_i^h R_1 u) D_i^h u \,\dd x \right|
			&\lesssim \Hnorm{1}{u^h} \Hnorm{1}{R_1 D_i^h u} \Hnorm{\frac{4}{5}}{D_i^h u}
			\lesssim \Hnorm{1}{u} \Hnorm{1}{D_i^h u} \Hnorm{\frac{4}{5}}{D_i^h u}.
		\end{align*}
		Note that by interpolation (which is easily seen in Fourier space) and Lemma \ref{lem:diffquot}, there holds
		\begin{align}\label{eq:H3/4-interpolation}
			\Hnorm{\frac{4}{5}}{D_i^h u} \leq \|D_i^h u\|_{L^2}^{\frac{1}{5}} \Hnorm{1}{D_i^h u}^{\frac{4}{5}}
			\lesssim \Hnorm{1}{u}^{\frac{1}{5}} \Hnorm{1}{D_i^h u}^{\frac{4}{5}},
		\end{align}
		hence 
		\begin{align*}
			\left|\int_{\TT^2} u R_1 \partial_2 u D_i^{-h} D_i^h u \,\dd x \right| 
			\lesssim \Hnorm{1}{u}^{\frac{6}{5}} \Hnorm{1}{D_i^h u}^{\frac{9}{5}}.
		\end{align*}
		
		\item ($R_1\partial_2 u^2$). As for the first term, after a discrete integration by parts
		\begin{align*}
			\int_{\TT^2} R_1 \partial_2 u^2 D_i^{-h} D_i^h u \,\dd x 
			&= \int_{\TT^2} D_i^{h}(u\partial_2 u) R_1D_i^h u \,\dd x \\
			&= \int_{\TT^2} (D_i^{h}u) \partial_2 u R_1D_i^h u \,\dd x + \int_{\TT^2} u^h D_i^{h}\partial_2 u R_1D_i^h u \,\dd x,
		\end{align*}
		an application of Lemma \ref{lem:box1}, the interpolation \eqref{eq:H3/4-interpolation}, and Lemma \ref{lem:diffquot} implies
		\begin{align*}
			\left| \int_{\TT^2} R_1 \partial_2 u^2 D_i^{-h} D_i^h u \,\dd x \right|
			&\lesssim \Hnorm{1}{D_i^h u} \Hnorm{1}{u} \Hnorm{\frac{4}{5}}{D_i^h u}
			\lesssim \Hnorm{1}{u}^{\frac{6}{5}} \Hnorm{1}{D_i^h u}^{\frac{9}{5}}.
		\end{align*}
		
		\item ($u\partial_1 R_1 u^2$). For the cubic term in $u$ we have 
		\begin{align*}
			\int_{\TT^2}u\partial_1 R_1 u^2 D_i^{-h} D_i^h u \,\dd x
			&= \int_{\TT^2} (D_i^h u) R_1( u\partial_1 u) D_i^h u \,\dd x
			+\int_{\TT^2} u^h R_1( D_i^h u\partial_1 u) D_i^h u \,\dd x \\
			&\quad +\int_{\TT^2} u^h R_1( u^h \partial_1 D_i^h u) D_i^h u \,\dd x.
		\end{align*}
		By Lemma \ref{lem:box2} we may estimate the first term by 
		\begin{align*}
			\int_{\TT^2} (D_i^h u) R_1( u\partial_1 u) D_i^h u \,\dd x 
			\lesssim \Hnorm{1}{D_i^h u} \Hnorm{1}{u} \Hnorm{1}{u} \Hnorm{\frac{1}{2}}{D_i^h u},
		\end{align*}
		and similarly for the other two terms, so that, together with \eqref{eq:Ruh},
		\begin{align*}
			\left|\int_{\TT^2}u\partial_1 R_1 u^2 D_i^{-h} D_i^h u \,\dd x \right|
			\lesssim \Hnorm{1}{u}^2 \Hnorm{1}{D_i^h u} \Hnorm{\frac{1}{2}}{D_i^h u}.
		\end{align*}
		Interpolation and Lemma \ref{lem:diffquot} then yield
		\begin{align*}
			\Hnorm{\frac{1}{2}}{D_i^h u} 
			\leq \|D_i^h u\|_{L^2}^{\frac{1}{2}} \Hnorm{1}{D_i^h u}^{\frac{1}{2}}
			\lesssim \Hnorm{1}{u}^{\frac{1}{2}} \Hnorm{1}{D_i^h u}^{\frac{1}{2}},
		\end{align*}
		hence
		\begin{align*}
			\left|\int_{\TT^2}u\partial_1 R_1 u^2 D_i^{-h} D_i^h u \,\dd x \right|
			\lesssim \Hnorm{1}{u}^{\frac{5}{2}} \Hnorm{1}{D_i^h u}^{\frac{3}{2}}.
		\end{align*}
		
		\item ($P\xi$). Note that by assumption $\xi \in L^2$, so that Cauchy--Schwarz and Lemma \ref{lem:diffquot} imply
		\begin{align*}
			\left| \int_{\TT^2} P\xi D_i^{-h} D_i^h u \,\dd x \right| 
			\leq \|P\xi\|_{L^2} \|D_i^{-h} D_i^h u \|_{L^2} 
			\lesssim \|P\xi\|_{L^2} \Hnorm{1}{D_i^h u}.
		\end{align*}
	\end{enumerate}
	
	With the above estimates on the three superlinear terms on the right-hand side of the Euler--Lagrange equation, we have therefore shown that
	\begin{align*}
		\Hnorm{1}{D_i^h u}^2 \lesssim \Hnorm{1}{u}^{\frac{6}{5}} \Hnorm{1}{D_i^h u}^{\frac{9}{5}} + \Hnorm{1}{u}^{\frac{5}{2}} \Hnorm{1}{D_i^h u}^{\frac{3}{2}} + \|\xi\|_{L^2} \Hnorm{1}{D_i^h u}, 
	\end{align*}
	which by Young's inequality implies that
	\begin{align*}
		\Hnorm{1}{D_i^h u}^2 \lesssim \Hnorm{1}{u}^{12} + \Hnorm{1}{u}^{10} + \|P\xi\|_{L^2}^2 
		\lesssim 1 + \Hnorm{1}{u}^{12} + \|P\xi\|_{L^2}^2,
	\end{align*}
	with an implicit constant that does not depend on $h$. Finally, we notice that a bound on the quantity $\sup_{h\in(0,1]} \Hnorm{1}{D_i^h u}$ implies a bound on 
	$\||\partial_1|^s u\|_{L^2} + \||\partial_2|^{\frac{2}{3}s}u\|_{L^2}$ for $s\in[0,1)$ by 
	Lemma~\ref{lem:fractional-H} and \eqref{eq:fourier}.
\end{proof}

\begin{proof}[Proof of Proposition \ref{prop:H3}]
	The proof is very similar to the proof of Proposition \ref{prop:H2}. Under the assumptions of Proposition \ref{prop:H3}, Proposition \ref{prop:H2} implies that 
	\begin{align}\label{eq:propH2-consequence}
		\sup_{h\in(0,1]}\Hnorm{1}{D_i^h u} \lesssim 1 + \Hnorm{1}{u}^6 + \|P\xi\|_{L^2} 
		\lesssim 1 + \Hnorm{1}{u}^6 + \Hnorm{1}{\xi}, 
	\end{align}
	so that we may test the Euler--Lagrange equation \eqref{eq:EL-appendix} with $\psi=D_i^{-h'} D_i^{h'} D_i^{-h} D_i^h u$. Then the left-hand side of \eqref{eq:EL-appendix} turns into
	\begin{align*}
		\int_{\TT^2} \mathcal{L} u \, D_i^{-h'} D_i^{h'} D_i^{-h} D_i^h u \,\dd x = \Hnorm{1}{D_i^{h'}D_i^h u}^2.
	\end{align*}
	We proceed by estimating each term on the right-hand side of \eqref{eq:EL-appendix}. 
	\begin{enumerate}[label=\textsc{Term \arabic*},leftmargin=0pt,labelsep=*,itemindent=*]
		\item ($u R_1\partial_2 u$). Integrating by parts twice (with respect to $D_i^{-h'}$ and $D_i^{-h}$, we obtain four terms, all of which can be estimated by either
		\begin{align}\label{eq:term1-bounds}
			\Hnorm{1}{u} \Hnorm{1}{D_i^{h'}D_i^h u} \Hnorm{\frac{4}{5}}{D_i^{h'} D_i^h u}, \quad {\text{or}} \quad 
			\Hnorm{1}{D_i^{h'} u} \Hnorm{1}{D_i^h u} \Hnorm{\frac{4}{5}}{D_i^{h'} D_i^h u}.
		\end{align}
		By interpolation, see \eqref{eq:H3/4-interpolation}, and Lemma \ref{lem:diffquot}, there holds
		\begin{align*}
			\Hnorm{\frac{4}{5}}{D_i^{h'} D_i^h u} \leq \| D_i^{h'} D_i^h u \|_{L^2}^{\frac{1}{5}} \Hnorm{1}{D_i^{h'} D_i^h u}^{\frac{4}{5}} 
			\lesssim \Hnorm{1}{D_i^h u}^{\frac{1}{5}} \Hnorm{1}{D_i^{h'} D_i^h u}^{\frac{4}{5}} 
		\end{align*}
		\item ($R_1\partial_2 u^2$). This term is treated like the previous one, with the same bounds \eqref{eq:term1-bounds}.
		\item ($u\partial_1 R_1 u^2$). For this term we again integrate by parts twice to obtain nine terms, all of which can be bounded by one of the expressions
		\begin{align*}
			\Hnorm{1}{u} \Hnorm{1}{D_i^{h'} u} \Hnorm{1}{D_i^h u} \Hnorm{\frac{1}{2}}{D_i^{h'} D_i^h u}, \quad \text{or} \quad 
			\Hnorm{1}{u}^2 \Hnorm{1}{D_i^{h'} D_i^h u} \Hnorm{\frac{1}{2}}{D_i^{h'} D_i^h u},
		\end{align*}
		where by interpolation and Lemma \ref{lem:diffquot}
		\begin{align*}
			\Hnorm{\frac{1}{2}}{D_i^{h'} D_i^h u} \leq \|D_i^{h'} D_i^h u\|_{L^2}^{\frac{1}{2}} \Hnorm{1}{D_i^{h'} D_i^h u}^{\frac{1}{2}} 
			\lesssim \Hnorm{1}{D_i^h u}^{\frac{1}{2}} \Hnorm{1}{D_i^{h'} D_i^h u}^{\frac{1}{2}}.
		\end{align*}
		\item ($P\xi$). Under the regularity assumption $\Hnorm{1}{\xi}<\infty$ on $\xi$ we can also estimate with Cauchy--Schwarz and Lemma \ref{lem:diffquot}
		\begin{align*}
			\left| \int_{\TT^2} P\xi D_i^{-h'} D_i^{h'} D_i^{-h} D_i^h u \,\dd x \right|
			&= \left| \int_{\TT^2} D_i^{h'}  P\xi D_i^{h'} D_i^{-h} D_i^h u \,\dd x \right|
			\leq \|D_i^{h'} \xi\|_{L^2} \|D_i^{-h} D_i^{h'} D_i^h u\|_{L^2} \\
			&\leq \Hnorm{1}{\xi} \Hnorm{1}{D_i^{h'} D_i^h u}.
		\end{align*}
	\end{enumerate}
	In total, we can therefore bound
	\begin{align*}
		\Hnorm{1}{D_i^{h'}D_i^h u}^2 &\lesssim \Hnorm{1}{u} \Hnorm{1}{D_i^{h} u}^{\frac{1}{5}} \Hnorm{1}{D_i^{h'} D_i^h u}^{\frac{9}{5}} + \Hnorm{1}{D_i^{h'} u} \Hnorm{1}{D_i^h u}^{\frac{6}{5}} \Hnorm{1}{D_i^{h'} D_i^h u}^{\frac{9}{5}} \\
		&\quad+ \Hnorm{1}{u} \Hnorm{1}{D_i^{h'} u} \Hnorm{1}{D_i^{h} u}^{\frac{3}{2}} \Hnorm{1}{D_i^{h'} D_i^h u}^{\frac{1}{2}} + \Hnorm{1}{u}^2 \Hnorm{1}{D_i^{h} u}^{\frac{1}{2}}\Hnorm{1}{D_i^{h'} D_i^h u}^{\frac{3}{2}} \\
		&\quad +\Hnorm{1}{\xi} \Hnorm{1}{D_i^{h'} D_i^h u},
	\end{align*}
	from which it follows by Young's inequality and \eqref{eq:propH2-consequence} that 
	\begin{align*}
		\Hnorm{1}{D_i^{h'}D_i^h u}^2 \lesssim 1 + \Hnorm{1}{u}^{132} + \Hnorm{1}{\xi}^{22},
	\end{align*}
	with an implicit constant that does not depend on $h,h'$. As in the proof of Proposition \ref{prop:H2}, we notice that a bound on 
	$\sup_{h,h'\in(0,1]} \Hnorm{1}{D_i^hD_i^{h'} u}^2$ implies a bound on $\||\partial_1|^s u\|_{L^2} + \||\partial_2|^{\frac{2}{3}s}u\|_{L^2}$ 
	for $s\in[2,3)$ by Lemma \ref{lem:fractional-H} and \eqref{eq:fourier}.
\end{proof}
\section{Approximation of quadratic functionals of the noise by cylinder functionals} \label{app:cylinder}

In this section we show the following approximation result, which seems classical but since we could not find a proof in the literature we include it here:

\begin{lemma} \label{lem:cylinder}
	Let $K$ be a linear operator on the space of Schwartz distributions $\mathcal{S}'(\TT^2)$ such that $K(\mathcal{S}'(\TT^2))\subseteq\C^{\infty}(\TT^2)$ and $K: L^2(\TT^2) \to L^2(\TT^2)$ is Hilbert--Schmidt. 
	Assume further that the probability measure $\langle\cdot\rangle$ satisfies Assumption~\ref{ass}~\ref{item:Def-SG}. 
	Consider the quadratic functional $G: \mathcal{S}'(\TT^2) \to \RR$ given by 
	\begin{align*}
		G(\xi) := \xi(K\xi).
	\end{align*}
	Then under the assumption that $\langle |G(\xi)| \rangle < \infty$, $G$ is well-approximated by cylindrical functionals in with respect to
	the norm $\lng |G(\xi)|^{2p} \rng^{\frac{1}{2p}} + \lng \|\frac{\partial G}{\partial\xi}(\xi)\|_{L^2}^{2p} \rng^{\frac{1}{2p}}$ for every 
	$1\leq p<\infty$.
\end{lemma}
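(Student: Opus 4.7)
The plan is to approximate $G(\xi)=\xi(K\xi)$ by truncating its expansion in a smooth orthonormal basis. Fix an orthonormal basis $\{e_i\}_{i\geq 1}\subset \mathcal{C}^{\infty}(\TT^2)$ of $L^2(\TT^2)$ (e.g.\ the Fourier basis) and set $k_{ij}:=(Ke_j,e_i)_{L^2}$, so that the Hilbert--Schmidt hypothesis reads $\sum_{i,j}k_{ij}^2=\|K\|_{\mathrm{HS}}^2<\infty$. Define the candidate cylinder functionals
\[
G_n(\xi):=\sum_{i,j\leq n}k_{ij}\,\xi(e_i)\,\xi(e_j)=g_n\bigl(\xi(e_1),\ldots,\xi(e_n)\bigr),\qquad g_n(x)=\sum_{i,j\leq n}k_{ij}x_ix_j.
\]
Each $g_n$ is a polynomial, so $G_n$ is cylindrical in the sense of Assumption~\ref{ass}~\ref{item:Def-SG}; by Remark~\ref{rem:Lp_sg} all moments of $\xi(e_i)$ are finite, whence $G_n\in L^q_{\langle\cdot\rangle}$ for every $q<\infty$.

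First I establish convergence of the derivatives. A direct computation gives
\[
\frac{\partial G_n}{\partial\xi}=K_n\xi+K_n^{*}\xi,\qquad K_n:=P_nKP_n,
\]
with $K_n\xi$ and $K_n^{*}\xi$ interpreted as random elements of $L^2(\TT^2)$ through the $L^2$-extension of Remark~\ref{rem:Lp_sg}. Using the Parseval identity $\|(K_n-K_m)\xi\|_{L^2}^2=\sum_i|\xi((K_n-K_m)^{*}e_i)|^2$ together with Minkowski's inequality in $L^p_{\langle\cdot\rangle}(\ell^1)$ and the $L^{2p}$-spectral gap bound $\langle|\xi(\phi)|^{2p}\rangle^{1/p}\lesssim_p\|\phi\|_{L^2}^2$ applied to each linear functional, I obtain
\[
\big\langle\|K_n\xi-K_m\xi\|_{L^2}^{2p}\big\rangle^{1/p}\lesssim_p\sum_i\|(K_n-K_m)^{*}e_i\|_{L^2}^2 = \|K_n-K_m\|_{\mathrm{HS}}^2\xrightarrow{m,n\to\infty}0,
\]
and likewise for the adjoint. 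Hence $\{\partial_\xi G_n\}$ is Cauchy in $L^{2p}_{\langle\cdot\rangle}(L^2)$ for every $p<\infty$, with limit $(K+K^{*})\xi$. Since $G_n-G_m$ is itself cylindrical, Proposition~\ref{prop:SG_mult} applies directly and combined with the above derivative bound yields
\[
\big\langle|(G_n-\langle G_n\rangle)-(G_m-\langle G_m\rangle)|^{2p}\big\rangle^{1/(2p)}\lesssim_p\|K_n-K_m\|_{\mathrm{HS}}\xrightarrow{m,n\to\infty}0.
\]
Thus $\{G_n-\langle G_n\rangle\}$ is Cauchy in $L^{2p}_{\langle\cdot\rangle}$ for every $p$, converging to a centered limit $\tilde G$.

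The delicate step, which I expect to be the main obstacle, is identifying $\tilde G$ with $G-\langle G\rangle$ (equivalently, showing $\langle G_n\rangle\to\langle G\rangle$ and $G_n\to G$ in $L^{2p}_{\langle\cdot\rangle}$). I will argue this via pointwise convergence $G_n(\xi)\to G(\xi)$ for $\langle\cdot\rangle$-almost every $\xi$: Assumption~\ref{ass} together with Proposition~\ref{Satz1_pavlos} concentrates $\langle\cdot\rangle$ on $\mathcal{C}^{-5/4-}\subset H^{-s}$ for sufficiently large $s$, while the smoothing hypothesis $K(\mathcal{S}'(\TT^2))\subseteq \mathcal{C}^{\infty}(\TT^2)$ combined with the Hilbert--Schmidt property justifies the expansion $K\xi=\sum_j\xi(e_j)\,Ke_j$ in a suitable topology; writing $G_n(\xi)=(P_n\xi,\,KP_n\xi)_{L^2}$ and passing to the limit by continuity of the duality pairing between the relevant Sobolev spaces then gives $G_n(\xi)\to\xi(K\xi)=G(\xi)$ $\langle\cdot\rangle$-almost surely. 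Combined with the uniform $L^{2p}$-bound on $\{G_n-\langle G_n\rangle\}$ and the hypothesis $\langle|G|\rangle<\infty$, Vitali's convergence theorem yields $\langle G_n\rangle\to\langle G\rangle$ and $G_n\to G$ in $L^{2p}_{\langle\cdot\rangle}$ for every $p<\infty$. Finally, replacing each $G_n$ by the adjusted cylindrical functional $G_n+(\langle G\rangle-\langle G_n\rangle)$ (which preserves cylindricality and leaves the derivative unchanged) delivers the sought cylindrical approximating sequence in the norm $\langle|\cdot|^{2p}\rangle^{1/(2p)}+\langle\|\partial_\xi\cdot\|_{L^2}^{2p}\rangle^{1/(2p)}$.
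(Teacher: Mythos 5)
Your setup and the two Cauchy estimates are correct and in line with what the paper does: choosing a smooth orthonormal basis, writing $G_n$ as a quadratic polynomial in $\xi(e_1),\dots,\xi(e_n)$, using the $L^{2p}$ form of the spectral gap inequality on linear functionals to control $\partial_\xi G_n$ in $L^{2p}_{\langle\cdot\rangle}(L^2)$, and applying Proposition~\ref{prop:SG_mult} to the cylindrical differences $G_n-G_m$ to get the Cauchy property of $G_n-\langle G_n\rangle$. (The paper uses the spectral decomposition $K=\sum_n\lambda_n(\varphi_n,\cdot)\varphi_n$ of the symmetrized $K$ rather than a Fourier truncation $P_nKP_n$, but that is a cosmetic difference.)

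The genuine gap is in the identification of the limit, i.e.\ in establishing pointwise convergence $G_n(\xi)\to G(\xi)$. First, you invoke concentration of $\langle\cdot\rangle$ on $\C^{-5/4-}$ via Proposition~\ref{Satz1_pavlos}, but that proposition requires all of Assumption~\ref{ass} (centeredness, stationarity, reflection invariance, and the spectral gap), whereas the lemma only assumes item \ref{item:Def-SG}. As stated, the lemma must hold for any probability measure satisfying only the spectral gap inequality, so your argument proves a strictly weaker statement than the lemma. Second, even granting concentration on a fixed negative Sobolev space $H^{-s}$, your ``continuity of the duality pairing'' step would need $K$ (or at least $K(P_n-\mathrm{Id})$) to map $H^{-s}$ continuously into $H^{s}$, with $KP_n\xi\to K\xi$ strongly in $H^s$; but the hypotheses only give that $K$ is Hilbert--Schmidt on $L^2$ and maps $\mathcal{S}'$ qualitatively into $\C^\infty$, which does not provide a uniform $H^{-s}\to H^s$ bound. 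The paper's proof sidesteps both problems: it proves $G_N(\xi)\to G(\xi)$ for \emph{every} $\xi\in\mathcal{S}'(\TT^2)$ by inserting the mollification $\xi_t=\xi*\psi_t$; the terms comparing $\xi$ with $\xi_t$ are controlled because $K\phi$ and $K_N\phi$ are smooth test functions (using only $\xi_t\to\xi$ in $\mathcal{S}'$ and symmetry of $K,K_N$), and the term comparing $K_N\xi_t$ with $K\xi_t$ is controlled by Bessel's inequality since $\xi_t\in L^2$, where the Hilbert--Schmidt property applies. This elementary argument neither requires concentration of $\langle\cdot\rangle$ nor any smoothing bound on $K$ beyond the given hypotheses, and you would need to replace your Step~4 with something along those lines.

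Your final bookkeeping (passing from a.s.\ convergence of $G_n$ and the $L^{2p}$ Cauchy property of $G_n-\langle G_n\rangle$ to $\langle G_n\rangle\to\langle G\rangle$ and $G_n\to G$ in $L^{2p}$) agrees with the paper's Step~5 and is sound once the pointwise convergence is in place.
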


\begin{proof}
Without loss of generality, we may assume that $K$ is symmetric. Since $K$ is Hilbert--Schmidt, there exists an orthonormal system 
$\{\varphi_n\}_{n\in\NN}$ of $L^2(\TT^2)$ and a sequence $\{\lambda_n\}_{n\in\NN}\subset \RR$ such that 
\begin{align}\label{eq:kernel-decomposition}
	K = \sum_{n\in\NN} \lambda_n (\varphi_n, \cdot) \varphi_n, \quad \text{and} \quad \|K\|_{HS}^2 = \sum_n \lambda_n^2 < \infty.
\end{align}
Note that by the assumption $K(\mathcal{S}'(\TT^2)) \subseteq \C^{\infty}(\TT^2)$ there holds $\{\varphi_n\}_{n\in\NN} \subset \C^{\infty}(\TT^2)$, in particular $\xi(\varphi_n)$ is well-defined for any $n\in\NN$, and given $N\in\NN$ we may define $K_N \xi := \sum_{n\leq N} \lambda_n \xi(\varphi_n) \varphi_n$. 

\begin{enumerate}[label=\textsc{Step \arabic*},leftmargin=0pt,labelsep=*,itemindent=*]
\item We first show that for any $\xi\in\mathcal{S}'(\TT^2)$, $K_N \xi \to K\xi$ in $\mathcal{S}'(\TT^2)$ as $N\to \infty$. 

Indeed, with $\xi_t := \xi*\psi_t \in \C^{\infty}(\TT^2)$, for any $\phi \in \C^{\infty}(\TT^2)$ we have 
\begin{align*}
	|(K_N \xi - K\xi, \phi)| 
	\leq |(K_N \xi - K_N \xi_t, \phi)| + |(K_N \xi_t - K \xi_t, \phi)| + |(K\xi_t - K\xi, \phi)|.
\end{align*}
Note that $\xi_t \to \xi$ in $\mathcal{S}'$ as $t\downarrow 0$, hence for any $\epsilon > 0$ we may choose $t>0$ small enough so that by symmetry of $K_N$ and $K$,
\begin{align*}
	|(K_N \xi - K_N \xi_t, \phi)| &= |(\xi-\xi_t)(K_N \phi)| < \frac{\epsilon}{3},\\
	|(K\xi_t - K\xi, \phi)| &= |(\xi-\xi_t)(K \phi)| < \frac{\epsilon}{3},
\end{align*}
for any $N\in\NN$. The remaining term can be bounded using Cauchy-Schwarz and Bessel's inequality for orthonormal systems to obtain
\begin{align*}
	|(K_N \xi_t - K \xi_t, \phi)| 
	&= \left| \left( \sum_{n>N} \lambda_n (\varphi_n,\xi_t) (\varphi_n, \phi) \right)\right| 
	\leq  \left( \sum_{n>N} \lambda_n^2 |(\varphi_n,\xi_t)|^2 \right)^{\frac{1}{2}} \left( \sum_{n\in\NN} |(\varphi_n, \phi)|^2 \right)^{\frac{1}{2}} \\
	&\leq \|\xi_t\|_{L^2} \left( \sum_{n>N} \lambda_n^2 \right)^{\frac{1}{2}} \|\phi\|_{L^2}
	< \frac{\epsilon}{3}
\end{align*}
if $N$ is chosen suitably large, since $\{\lambda_n\}_n$ is square-summable. 
\medskip
\item We next claim that $G_N(\xi) := \xi(K_N\xi)$ satisfies $G_N(\xi) \to G(\xi)$ as $N\to\infty$ for all $\xi\in\mathcal{S}'(\TT^2)$.

For the proof of this claim we again appeal to the convergence $\xi_t \to \xi$ in $\mathcal{S'}$ by splitting 
\begin{align*}
	|G_N(\xi) - G(\xi)| 
	&= | \xi(K_N \xi) - \xi(K\xi) | \\
	&\leq | \xi(K_N \xi) - \xi_t(K_N \xi) | + |\xi_t(K_N \xi) - \xi_t(K \xi)| + |\xi_t(K\xi) - \xi(K\xi)|.
\end{align*}
As in \textsc{Step 1}, there holds
\begin{align*}
	| \xi(K_N \xi) - \xi_t(K_N \xi) | \to 0, \quad 
	|\xi_t(K\xi) - \xi(K\xi)| \to 0 
\end{align*}
as $t\downarrow 0$ for any $N\in\NN$, since $K_N \xi, K\xi \in \C^{\infty}(\TT^2)$. Moreover, by \textsc{Step 1}, we have
\begin{align*}
	|\xi_t(K_N \xi) - \xi_t(K \xi)| 
	= |(K_N \xi - K\xi, \xi_t) | \to 0 
\end{align*}
as $N\to \infty$.

\medskip
\item We show that $\frac{\partial G_N(\xi)}{\partial\xi} \to \frac{\partial G(\xi)}{\partial\xi}$ as $N\to\infty$ in $L^{p}_{\langle\cdot\rangle}L^2_x$ for any $1\leq p<\infty$. 

Indeed, by symmetry of $K$ and $K_N$ we have that 
\begin{align*}
	\frac{\partial G_N(\xi)}{\partial\xi} = 2 K_N\xi = 2 \sum_{n\leq N} \lambda_n \xi(\varphi_n) \varphi_n, \quad \text{and} \quad 
	\frac{\partial G(\xi)}{\partial\xi} = 2 K \xi = 2 \sum_{n\in\NN} \lambda_n \xi(\varphi_n) \varphi_n.
\end{align*}
With this we obtain by orthonormality of the $\{\varphi_n\}_n$ that
\begin{align*}
	\left\| \frac{\partial (G-G_N)(\xi)}{\partial\xi} \right\|_{L^2}^{2} 
	= 4 \sum_{n> N} \lambda_n^2 \xi(\varphi_n)^2,
\end{align*}
hence, applying \eqref{eq:SG_mult} in the form of 
\begin{align*}
	\left\langle \xi(\varphi_n)^{2p} \right\rangle^{\frac{1}{2p}} 
	\lesssim_p \|\varphi_n\|_{L^2}
	\lesssim_p 1,
\end{align*}
for all $n\in\NN$, it follows that 
\begin{align*}
	\left\langle \left\| \frac{\partial G(\xi)}{\partial\xi} -  \frac{\partial G_N(\xi)}{\partial\xi} \right\|_{L^2}^{2p}\right\rangle^{\frac{1}{2p}} 
	&= \left\langle \left( 4 \sum_{n>N} \lambda_n^2 \xi(\varphi_n)^2 \right)^p \right\rangle^{\frac{1}{2p}} 
	\leq 2 \left(\sum_{n>N} \lambda_n^2 \left\langle \xi(\varphi_n)^{2p} \right\rangle^{\frac{1}{p}} \right)^{\frac{1}{2}} \\
	&\lesssim_p \left(\sum_{n>N} \lambda_n^2 \right)^{\frac{1}{2}}
	\stackrel{N\to\infty}{\longrightarrow} 0,
\end{align*}
by the finiteness of $\|K\|_{HS}$. 
\medskip
\item We show that the sequence $\{G_N(\xi) - \langle G_N(\xi) \rangle\}_{N\in\NN}$ is Cauchy 
in $L^{2p}_{\langle\cdot\rangle}$ for any $1\leq p<\infty$, in particular, there exists a centred random
variable $G_*(\xi)$ such that $G_N(\xi) - \langle G_N(\xi) \rangle \to G_*(\xi)$ for
$\langle\cdot\rangle$-almost every $\xi$ as $N\to\infty$ along a subsequence. 

Indeed,
since $G_N(\xi)$ is a cylinder functional and $\langle\cdot\rangle$ satisfies Assumption~\ref{ass}~\ref{item:Def-SG}, in particular Proposition~\ref{prop:SG_mult}, we can apply \eqref{eq:SG_mult} to obtain the bound 
\begin{align*}
	\left\langle |G_N(\xi) - \langle G_N(\xi)\rangle - (G_M(\xi) - \langle G_M(\xi)\rangle)|^{2p} \right\rangle^{\frac{1}{2p}} 
	&= \left\langle |G_N(\xi) - G_M(\xi) - \langle G_N(\xi) - G_M(\xi) \rangle|^{2p}\right\rangle^{\frac{1}{2p}} \\
	&\lesssim_p \left\langle \left\| \frac{\partial (G_N-G_M)(\xi)}{\partial\xi} \right\|_{L^2}^{2p}\right\rangle^{\frac{1}{2p}},
\end{align*}
for $N\geq M$. Hence, by \textsc{Step 3},
\begin{align*}
	\left\langle |G_N(\xi) - \langle G_N(\xi)\rangle - (G_M(\xi) - \langle G_M(\xi)\rangle)|^{2p} \right\rangle^{\frac{1}{2p}}
	\stackrel{M,N\to\infty}{\longrightarrow} 0,
\end{align*}
for any $1\leq p<\infty$.

\medskip
\item We claim that $G_N(\xi) \to G(\xi)$ in $L^{2p}_{\lng\cdot\rng}$ as $N\to\infty$. 

Indeed, by \textsc{Step 2} we know that $G_N(\xi) \to G(\xi)$ almost surely, so that with the result from \textsc{Step 4} we may conclude that  
\begin{align*}
	\langle G_N(\xi) \rangle = G_N(\xi) - (G_N(\xi) - \langle G_N(\xi) \rangle) \to G(\xi) - G_*(\xi)
\end{align*}
almost surely along a subsequence as $N\to\infty$. But since $\langle G_N (\xi) \rangle$ is constant in $\xi$ 
(recall that $\xi$ denotes the dummy variable over which $\langle\cdot\rangle$ integrates), the random variable $G(\xi) - G_*(\xi)$ 
must be almost surely constant, i.e.
\begin{align*}
	G(\xi) - G_*(\xi) = \langle G(\xi) - G_*(\xi) \rangle = \langle G(\xi) \rangle ,
\end{align*}
since $\langle G\rangle < \infty$ by assumption and $G_*$ is centred. Hence $\langle G_N (\xi)\rangle \to \langle G(\xi) \rangle$ as $N\to \infty$ along a subsequence.
Since the above argument can be repeated for any subsequence, we obtain that $\langle G_N (\xi)\rangle \to \langle G(\xi) \rangle$ as $N\to \infty$. Since 
$G_N(\xi) \to G(\xi)$ almost surely, we conclude that $G_*(\xi)= G(\xi) - \langle G (\xi)\rangle$ almost surely, which together with \textsc{Step 4}
implies that $G_N(\xi) \to G(\xi)$ in $L^{2p}_{\lng\cdot\rng}$ as $N\to\infty$. 
\end{enumerate}

By \textsc{Step 3--5}, we conclude that $G_N\to G$ with respect to the norm $\lng |G(\xi)|^{2p} \rng^{\frac{1}{2p}} + \lng \|\frac{\partial G}{\partial\xi}(\xi)\|_{L^2}^{2p} \rng^{\frac{1}{2p}}$.
\end{proof}


\bigskip
\end{document}